\makeatletter\@addtoreset{section}{part}\makeatother%
\newcommand{\subtitle}[1]{%
  \posttitle{%
    \par\end{center}
    \begin{center}\large#1\end{center}
    \vskip0.5em}%
}
\newcommand{\xddots}{%
  \raise 4pt \hbox {.}
  \mkern 6mu
  \raise 1pt \hbox {.}
  \mkern 6mu
  \raise -2pt \hbox {.}
}
\title{Bilinear Coagulation Equations}
\author{Daniel Heydecker\thanks{University of Cambridge, dh489@cam.ac.uk. This work was supported by the UK Engineering and Physical Sciences Research Council (EPSRC) grant EP/L016516/1 for the University of Cambridge Centre for Doctoral Training, the Cambridge Centre for Analysis}, Robert I. A. Patterson\thanks{robert.patterson@wias-berlin.de. This research was supported by the Deutsche Forschungsgemeinschaft (DFG) grant CRC 1114 ``Scaling Cascades in Complex Systems'', Project C08.}}
\date{\today}
\newcommand{\EE}{\ensuremath{\mathbb{E}}}
\newcommand{\NN}{\ensuremath{\mathbb{N}}}
\newcommand{\PP}{\ensuremath{\mathbb{P}}}
\newcommand{\dd}{\ensuremath{\mathrm{d}}}
\newcommand{\abs}[1]{\left\lvert{#1}\right\rvert}
\newcommand{\supnorm}[1]{\left\lVert{#1}\right\rVert_\infty}
\newtheorem{thm}{Theorem}
\newtheorem{theorem}{Theorem}[section]
\newtheorem{lem}{Lemma}[section]
\newtheorem{cor}[lem]{Corollary}
\newtheorem{rmk}[lem]{Remark}
\newtheorem{remark}[theorem]{Remark}
\newtheorem{defn}{Definition}
\newtheorem{definition}[theorem]{Definition}
\newtheorem{hyp}{Assumption}
\begin{document}

\maketitle
\begin{abstract} We consider coagulation equations of Smoluchowski or Flory type where the total merge rate has a bilinear form $\pi(y)\cdot A\pi(x)$ for a vector of conserved quantities $\pi$, generalising the multiplicative kernel. For these kernels, a gelation transition occurs at a finite time $t_\mathrm{g}\in (0,\infty)$, which can be given exactly in terms of an eigenvalue problem in finite dimensions. We prove a hydrodynamic limit for a stochastic coagulant, including a corresponding phase transition for the largest particle, and exploit a coupling to random graphs to extend analysis of the limiting process beyond the gelation time.

\end{abstract}

\section{Introduction and Main Results}
Smolouchowski \cite{vS16} introduced the basic mathematical model for coagulating particles, giving an ordinary differential equation describing the distribution of particle masses, which arises from considering a microscopic particle system. The physics of the underlying system enters into the model through a choice of interaction kernel $K(x,y)$, describing the speed of the coagulation $x, y\mapsto x+y$; the particular case $K(x,y)=xy$ is known as the \emph{multiplicative} kernel, and is particularly well-studied. It is well-known that the resulting Smolouchowski equation corresponds to the distribution of cluster sizes in the Erd\H{o}s-R\'eyni random graphs $\mathcal{G}(N, t/N)$ in the $N\rightarrow \infty$ limit, which, together with the simplicity of the kernel, allows a fairly complete analysis of this case.   \medskip \\ In many physical situations, the rate of coagulation will depend on more than only the mass of the particles, and so it is desirable to generalise Smoluchowski's equation. In particular, we note the works \cite{N99, N00} which allow coagulation in more than one possible way, and where the total rate of coagulation between particles of types $x,y$ can be bounded in terms of a function $\varphi$ which is conserved as particles coagulation. \medskip \\ There are two natural ways to frame the study of coagulation: one can either start from an interacting particle system, where existence and uniqueness is elementary, but where characterising the many particle limit may require substantial effort, or one can work directly with the mean-field Smolouchowski and Flory equations, for which existence and uniqueness require more consideration, and can fail in some cases \cite{N99}. Relatedly, there are several different ways in which one can characterise gelation. At the level of the particle system, one can study the phase transition where the size of the largest particle goes from size $\ll N$ to a size comparable to $N$ \cite{L78}. At the level of the limiting equation, gelation refers to the point where the solution to the Smolouchowski or Flory equation $(\mu_t)_{t\ge 0}$ fails to conserve the total particle mass, which is known to be related \cite{L78,N00} to the divergence of the second moment of the particle mass. \medskip \\ We will study coagulation systems where, as above, coagulation can occur in several possible ways, and where the internal structure of particles can evolve, in a mass-preserving way, without coagulation. The important hypothesis is the  \emph{bilinear} structure: we ask that the total mass of the kernel $\overline{K}(x,y)$ can be expressed in the form $\pi(y)\cdot A \pi(x)$, for a fixed matrix $A$ and a vector $\pi$ of conserved quantities. In this case, the limiting Flory equation is well-posed, globally in time, and the stochastic particle system couples exactly to a class of random graphs introduced by \cite{BJR07} which generalise the Erd\H{o}s-R\'eyni graphs. We analyse the limiting equation in Theorem \ref{thrm: Smoluchowski equation} and prove a law of large numbers for the stochastic particle systems in Theorem \ref{thrm: convergence of stochastic coagulent}, together demonstrating that the three effects described above all occur at the same time $t_\mathrm{g}$.

\subsection{\textbf{Definitions}}

As mentioned above, our analysis rests on the \emph{bilinear} form of the total rate $\overline{K}$, which allows us to connect the Smoluchowski equation to random graphs in Section \ref{sec: coupling_to_random_graph}. The following definition makes this precise.
\begin{defn} \label{def: BCS}  A \emph{Bilinear Coagulation System} is a 5-tuple $(S, R, \pi,K, J)$ consisting of a complete metric space $S$, a continuous involution $R$ on $S$, a finite collection of continuous maps $\pi=(\pi_i)_{0\le i\le n+m}$, $n\ge 1, m\ge 0$ from $S$ to $\mathbb{R}$, and nonnegative kernels $K, J$ on $S\times S\times S$ and $S\times S$ respectively, such that the following hold.
\begin{enumerate}[label=\roman{*}).]
\item For all $0\le i\le n+m$ and all $x,y\in S$, \begin{equation} \pi_i=\pi_i(x)+\pi_i(y) \hspace{1cm}K(x,y,\cdot)+J(x, \cdot)\text{- almost everywhere}. \end{equation} \item For $1\le i\le n$, the map $\pi_i:S\rightarrow \mathbb{R}$ takes only nonnegative values, and $\pi_0$ takes values in the positive integers $\mathbb{N}.$
\item The involution $R$ satisfies  \begin{equation} \pi_i \circ R=\begin{cases} \pi_i & 0\le i\le n; \\ -\pi_i, & n+1\le i \le n+m \end{cases}  \end{equation} and, for all $x,y\in S$, \begin{equation} K(Rx,Ry,\cdot)=R_\#K(x,y,\cdot); \hspace{1cm} J(Rx,\cdot)=R_\#J(x,\cdot). \end{equation} Here, and throughout, the subscript $_\#$ denotes the pushforward of a measure.
\item There exists a constant $C$ such that, for all $x\in S$, \begin{equation} \sum_{i= n+1}^m \pi_i(x)^2\le C \varphi(x)^2 \end{equation} where $\varphi(x)=\sum_{i=0}^n \pi_i(x)$. Moreover, the sublevel sets $S_\xi=\{x\in S: \varphi(x)\le \xi\}$ are compact, for all $\xi\in [0,\infty)$.
\item For all $x,y\in S$, the total rate $\overline{K}(x,y)=K(x,y,S)$ may be expressed as
\begin{equation}  \label{eq: overline K}  \overline{K}(x,y)=\sum_{1\le i,j\le n+m}a_{ij}\pi_i(x)\pi_j(y)
\end{equation}  for a fixed $(n+m)\times (n+m)$ symmetric real matrix $A=(a_{ij})_{1\le i,j\le n+m}$. Moreover, the matrix $A$ is of the block-diagonal form \begin{equation} A=\begin{pmatrix} A^+ & 0 \\ 0 & A^\mathrm{par} \end{pmatrix} \end{equation} where $A^+, A^\mathrm{par}$ are $n\times n$ and $m\times m$ square matrices respectively, and all entries of $A^+$ are nonnegative. Finally, for all $1\le i\le n+m$, there exists $1\le j\le n+m$ such that $a_{ij}>0$, so that no row or column of $A$ vanishes. For $J$, we ask that the total rate $\overline{J}(x)=J(x,S)$ satisfies $\sup_x\frac{\overline{J}(x)}{\varphi(x)}<\infty.$
\item For $f\in C_c(S)$, the maps \begin{equation} Kf: S\times S\rightarrow \mathbb{R}, \hspace{1cm} (x,y)\mapsto \int_S f(z)K(x,y,dz); \end{equation} \begin{equation} Jf: S\rightarrow \mathbb{R}, \hspace{1cm} x\mapsto \int_S f(z)J(x,dz) \end{equation} are continuous.

\end{enumerate} \end{defn} \begin{rmk} We think of $\pi_0(x)$ as counting the number of particles at time $0$ which have been absorbed into $x$. As a result, we will ask in (A5.) below that our initial measure $\mu_0$ is supported on $\{\pi_0\}=1$, and $\pi_0$ artificially introduces monodisperse initial conditions. \medskip \\ If we are given a space $S$ equipped only with $\pi_1,...,\pi_{n+m}$, we can replace $S$ by $\NN\times S$, and setting $\pi_0(a,x)=a, \pi_i(a,x)=\pi_i(x), i=1,...,n+m, (a,x)\in \NN\times S$. In this way, and since $\pi_0$ does not enter the total rate $\overline{K}(x,y)$, the artificial requirements on $\pi_0$ above do not restrict the physics of the coagulation system.    \end{rmk} 
\paragraph{Stochastic Particle Systems.} With the setting defined above, we can introduce the interacting particle systems under consideration. \medskip \\  We study a system of coagulating particles $(x^N_j(t): j\le l^N(t))$, and the associated empirical measure
\begin{equation}\label{eq: sc1}\mu^N_t = \frac{1}{N}\sum_{j=1}^{l^N(t)} \delta_{x^N_j(t)}\end{equation} with the following dynamical rules. \begin{enumerate}[label=\roman*).] \item
The rate at which unordered pairs of particles $\left\{ x,y \right\}$ in $S$ merge to form a new particle in $A \subset S$ is $2K(x,y, A)/N$. \item A particle of type $x$ evolves can a particle of type $y\in A \subset S$ with a total rate $J(x,A)$. \end{enumerate} This is a generalisation of a Marcus--Lushnikov coagulation process \cite{L78} on $S$, which we will refer to as the \emph{stochastic coagulant}.
Note that a $1/N$ scaling of the pair interaction rate is used, which ensures that each molecule has a total evolution rate of order $1$.
Dividing jump rates by $N$ is equivalent to accelerating time by the same factor and this alternative formulation means that the jump rates in the definition of the ``stochastic coalescent'' in  \cite{A99} as well as of the ``stochastic $K$-coagulant'' in \cite{N00} omit the $1/N$ from the rates and rescale time when taking the $N\rightarrow \infty$ limit. 
\paragraph{Limiting kinetic equations.}
We now consider various forms of the limiting Smoluchowski equation.  Define a drift operator $L$, by specifying for all  $f\in C_c(S)$, 
 \begin{equation}\begin{split} \label{eq: drift wo gel}
    \langle f,L(\mu)\rangle=&\frac{1}{2}\int_{S^3}\{f(z)-f(x)-f(y)\}K(x,y,dz)\mu(dx)\mu(dy)\\& \hspace{1cm}+\int_{S^2}\{f(y)-f(x)\}J(x,dy). \end{split}
\end{equation}
The weak form of the Smolochowski equation for a process of measures $(\mu_t)_{t<T}$ on $S$ is to ask that
\begin{equation}
    \tag{Sm}\label{eq: E}  \forall f\in C_c(S),\hspace{0.1cm}  t<T,\hspace{1cm}\langle f,\mu_t \rangle =  \langle f, \mu_0\rangle  +\int_0^t \langle f,L(\mu_s)\rangle  ds.
\end{equation}  The equation (\ref{eq: E}) captures the effects of coagulations between finite clusters. However, as discussed above, we wish to include the possibility of a macroscopic component, which we term \emph{gel}. To include this effect, we modify the drift operator by specifying, for $f\in C_c(S)$, \begin{equation} \langle f,L_\mathrm{g}(\mu_t)\rangle =\langle f, L(\mu_t)\rangle -\int_{S}f(x)\overline{K}(x,y)\mu_t(dx)(\mu_0-\mu_t)(dy). \end{equation}The weak form of the Flory equation is to ask, similarly, \begin{equation} \tag{Fl} \label{eq: E+G}
    \forall f\in C_c(S),\hspace{0.1cm}  t<T,\hspace{1cm}\langle f,\mu_t\rangle = \langle f,\mu_0\rangle + \int_0^t \langle f,L_\mathrm{g}(\mu_s)\rangle ds.
\end{equation} Here, the additional term comes into play only after $\mu_t$ ceases to conserve the quantities $\langle \pi_i,\mu_t\rangle, 1\le i\le n+m$, and the extra term represents the interaction with the gel. This generalises the Smoluchowski coagulation equations \cite{vS16} in a way analogous to Flory \cite{ZS80}, and we use the term `$K$-coagulant' for a solution to (\ref{eq: E+G}), following \cite{N00}. \medskip \\  Precise conditions on measurability and integrability required to interpret these equations concretely are given in Appendix \ref{sec: requirements}.
\medskip \\ We write \begin{equation}\label{eq: gel data} \begin{split}
   g_t&=(M_t, E_t,P_t) =\langle \pi, \mu_0-\mu_t\rangle \\ &= \left(\langle \pi_i, \mu_0-\mu_t\rangle \right)_{i=0}^{n+m} \end{split}
\end{equation} for the gel data, where $M_t, E_t, P_t$ are the $0^\text{th}$, $1^\text{st}-n^\text{th}$, and $(n+1)^\text{th}-(n+m)^\text{th}$ coordinates, respectively. Following remarks in \cite{N00}, one may show that if $\mu_t$ is a solution to (\ref{eq: E+G}), then the maps $t\mapsto \langle \pi_i, \mu_t\rangle, i\le n$ are non-increasing, which guarantees that $M_t, E_t\ge 0$. We write  $S^\Pi$ for the state space of gel data, given by \begin{equation}
    S^\Pi=\NN \times \mathbb{R}^n \times \mathbb{R}^m
\end{equation} and use the same notation $\pi_i, 0\le i\le n+m$ for the projections onto the factors. When $x\in S$ and $g\in S^\Pi$, we use $\overline{K}(x,g)$ for the rate of absorption, given by (\ref{eq: overline K}) with the new meanings of $\pi_i(g).$ We will also write $\varphi$ for the linear combination $\varphi=\sum_{i\le n} \pi_i$, defined on both $S$ and $ S^\Pi$.

\begin{defn}[Conservative Solutions] Let $S$ be a bilinear coagulation system. We say that a solution $(\mu_t)_{t<T}$ to either \eqref{eq: E} or \eqref{eq: E+G} is \emph{conservative} if all the functions $t\mapsto \langle \pi_i, \mu_t\rangle, 0\le i\le n+m$ are constant on $[0,T)$. \end{defn}
 Thus, any solution to (\ref{eq: E}) or (\ref{eq: E+G}) is conservative up to some time $0\le t_\mathrm{g}\leq \infty$, and non-conservative thereafter.
 
 We will usually impose symmetry requirements (A1.) on the initial data which guarantee that $\langle \pi_i, \mu_t\rangle=0$ for all $t$, for all $i=n+1,..,n+m$.  As noted above, the functions $t\mapsto \langle \pi_i, \mu_t\rangle, i\le n$ are non-increasing, whenever $(\mu_t)_{t<T}$ is a local solution to either equation. Therefore, under hypothesis (A1.), a solution $(\mu_t)_{t<T}$ to either equation is conservative if, and only if, the map $t\mapsto \langle \varphi, \mu_t\rangle$ is constant on $[0,T)$.
 
Let $\mathcal{M}=\mathcal{M}_{\le 1}(S)$ be the space of measures on $S$ with total mass at most 1. We equip $\mathcal{M}$ with the \emph{vague} topology $\mathcal{F}(\mathcal{M}, C_c(S))$ induced by continuous, compactly supported functions on $S$, and fix a complete metric $d$ compatible with this topology. 
\subsection{\textbf{Statement of Results}}\label{sec: results}

We will make the following hypotheses on the initial data $\mu_0$. 
\begin{hyp}\label{hyp: A} We will ask that the initial data $\mu_0$ is a sub-probability measure on a bilinear coagulation space $S$, satisfying the following hypotheses.\begin{enumerate}[label=(A\arabic*.)] 
\item The measure $\mu_0$ is even under the transformation $R$: $R_\#\mu_0=\mu_0.$
\item For all $i\le n$, we have $\langle \pi_i^3, \mu_0\rangle <\infty.$
\item  The set $\{\pi_i: 1\le i\le n\}$ is linearly independent in the space $L^2(m)$. In particular, none of the functions $\pi_i: 1\le i\le n$ are $0$ $\mu_0$-almost everywhere. \item The kernel $K$ is $\mu_0$-irreducible: if $A\subset S$ is such that, for all $x\in A$ and $y\in A^\mathrm{c}$, $\overline{K}(x,y)=0$, then either $\mu_0(A)=0$ or $\mu_0(A^\mathrm{c})=0.$  Moreover, $\mu_0$ is not a point mass. \item The initial data $\mu_0$ is supported on $\{x\in S: \pi_0(x)=1\}.$ \end{enumerate} \end{hyp}

We summarise our results on the analysis of the Flory equation (\ref{eq: E+G}) as follows.
\begin{thm}\label{thrm: Smoluchowski equation}
Let $S$ be a $\pi_0$-bilinear coagulation system, and let $\mu_0$ be a sub-probability measure on $S$ satisfying Assumption \ref{hyp: A}. Then the equation (\ref{eq: E+G}) has a unique solution $(\mu_t)_{t\geq 0}$ starting at $\mu_0$; we write $g_t=(M_t, E_t, P_t)$ for the gel data defined in (\ref{eq: gel data}). This solution has the following properties.
\paragraph{\textbf{1. Phase Transition.}} Let $t_\mathrm{g}$ be the first time at which the solution $\mu_t$ fails to be conservative, that is:
\begin{equation} \begin{split} t_\mathrm{g}&:=\inf\{t\ge 0: \langle \pi_i, \mu_t\rangle \neq \langle \pi_i,\mu_0\rangle \text{ for some }0\le i\le n+m\}=\inf\{t\ge 0: \langle \varphi, \mu_t\rangle < \langle \varphi, \mu_0\rangle \}.
\end{split} \end{equation}
Then
$t_\mathrm{g}\in (0,\infty)$, and can be given explicitly in terms of the moments of $\mu_0$ as \begin{equation}\label{eq: closed form for tg}
       t_\mathrm{g}= \mathfrak{r}(\Lambda(\mu_0))^{-1}; \hspace{1cm} \Lambda(\mu_0)_{ij}=\langle (A\pi)_i\pi_j, \mu_0\rangle, \hspace{1cm}1\le i, j\le n
   \end{equation}
where $\mathfrak{r}(\cdot)$ denotes the spectral radius of a matrix.
\paragraph{\textbf{2. Behaviour of the Second Moment.}} Consider the second moments
\begin{equation} \mathcal{Q}(t)=(\langle \pi_i\pi_j, \mu_t\rangle)_{i,j=0}^n;\hspace{1cm}\mathcal{E}(t)=\langle \varphi^2, \mu_t\rangle.
\end{equation}
Then \begin{enumerate}[label=\roman{*}).]
    \item $\mathcal{Q}(t)$ is finite and continuous, and so locally bounded, on $[0, \infty)\setminus\{t_\mathrm{g}\}.$ 

    \item On $[0, t_\mathrm{g})$, each moment $\mathcal{Q}_{ij}$ is monotonically increasing, as is $\mathcal{E}$.
    
    \item At the gelation time, $\mathcal{E}(t_\mathrm{g})=\infty$, and $\mathcal{E}(t)\rightarrow \infty$ as $t\rightarrow t_\mathrm{g}.$ 
\end{enumerate}

\paragraph{\textbf{3. Representation of Gel Data.}} For each $t\ge 0$, there exists a unique maximal $n$-tuple $c_t=(c^i_t)_{i=1}^n \ge 0$ such that, for all $x\in S$, \begin{equation}\label{eq: NLFP 1} \sum_{i=1}^n c^i_t \pi_i(x)=2t \int_{S} \left(1-\exp\left(-\sum_{i=1}^n c^i_t \pi_i(y)\right)\right)\overline{K}(x,y)\mu_0(dy). \end{equation} $c_t$ undergoes a phase transition at time $t_\mathrm{g}$: if $t\le t_\mathrm{g}$, then $c_t=0$, and if $t>t_\mathrm{g}$ then at least one component of $c_t$ is strictly positive. Moreover, the map $t\mapsto c_t$ is continuous. \medskip \\  The gel data are given in terms of $c_t$ by \begin{equation}\label{eq: formula for M, E_0}
    g^i_t = \int_{S}\pi_i(x)\left(1-\exp\left(-\sum_{j=1}^n c^j_t\pi_j(x)\right)\right)  \mu_0(dx).
\end{equation} Therefore, if $t>t_\mathrm{g}$ then $M_t>0$, and $E_t>0$ componentwise. Moreover, the map $t\mapsto g_t$ is continuous, and $g_{t_\mathrm{g}}=0$.
\paragraph{\textbf{4. Gel Dynamics.}} The map $t\mapsto g_t$ is differentiable on $t\in(t_\mathrm{g}, \infty)$, and
\begin{equation}
    \frac{d}{dt}g^i_t=\sum_{j,k=1}^n \langle  \pi_i\pi_j, \mu_t\rangle a_{jk}g^k_t.
\end{equation}
\paragraph{\textbf{5. Order of the Phase Transition, and the Size-Biasing Effect.}} The map $t\mapsto c_t$ is right-differentiable at $t_\mathrm{g}$, and as a consequence, the phase transition is first order; that is, the right-derivatives of the gel data $g^i_t, i=0,1,...,n$ exist and are strictly positive at $t_\mathrm{g}$. Moreover, there exist $\theta_i \ge 0, i=1,..,n$, such that $\sum_i \theta_i=1$ and such that \begin{equation} \label{eq: size bias} \sum_{i=1}^n \theta_i (g'_{t_\mathrm{g}+})_i \ge \left( \frac{\sum_{i=1}^n \langle \theta_i \pi_i, \mu_0\rangle }{\langle \pi_0, \mu_0\rangle}\right) (g'_{t_\mathrm{g}+})_0. \end{equation} We call this a \emph{size-biasing} effect: the average of the linear combination $\sum_{i}\theta_i\pi_i$ over particles in the early gel is at least the average over all particles. Let us define also the total interaction rate, which will quantify the inhomogeneity of the initial data $\mu_0$: \begin{equation} s(x)=\int_S \overline{K}(x,y)\mu_0(dy).\end{equation} If $s$ is not constant $\mu_0$-almost everywhere, then $\theta_i$ can be chosen so that the inequality in (\ref{eq: size bias}) is strict.

 \end{thm}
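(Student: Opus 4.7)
The plan is to exploit the bilinear structure of $\overline{K}$ and the $R$-symmetry of $\mu_0$ to reduce the analysis to finite-dimensional problems: a matrix Riccati ODE governs the conservative regime $[0,t_g)$ (handling Parts 1, 2, and 4), while the random graph coupling of Section~\ref{sec: coupling_to_random_graph} reduces the supercritical regime to a finite-dimensional fixed-point equation whose bifurcation analysis near $t_g$ yields Part 5.

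For Parts 1 and 2, testing \eqref{eq: E+G} against $f=\pi_i\pi_j$ for $1\le i,j\le n$, using conservation of $\pi_i$ under $K$ and $J$, and the $R$-symmetry of $\mu_t$ (which kills cross-moments between conserved and parity indices via (A1.) and (iii)), yields the closed system
\[
Q'(t) = Q(t)\,A^+\,Q(t), \qquad Q(t) = \bigl(\langle\pi_i\pi_j,\mu_t\rangle\bigr)_{1\le i,j\le n},
\]
valid on any interval of conservativity. Its explicit solution $Q(t) = (Q_0^{-1} - tA^+)^{-1}$ blows up exactly at $\mathfrak{r}(\Lambda)^{-1}$ since $\Lambda = A^+ Q_0$, and the two-sided identification $t_g = \mathfrak{r}(\Lambda)^{-1}$ follows by a standard argument that conservativity forces this ODE to hold and cannot persist past blow-up. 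Monotonicity in Part 2(ii) is immediate from $A^+\ge 0$ and $Q(t)\succeq 0$; the divergence $\mathcal{E}(t)\to\infty$ at $t_g$ in (iii) follows from Perron--Frobenius applied to the irreducible nonnegative matrix $\Lambda$ (irreducibility via (A3.)--(A4.)), whose leading right-eigenvector has strictly positive entries and so excites $\mathcal{E}(t) = \mathbf{1}^T Q(t)\mathbf{1}$. Finiteness and continuity of $\mathcal{Q}$ on $(t_g,\infty)$ in (i) is obtained from the random graph representation of Part 3, in which the contribution of the incipient gel is removed.

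For Parts 3 and 4, the coupling identifies, in the $N\to\infty$ limit, $\mu_t$ with the projection of $\mu_0$ onto the \emph{finite} components of an inhomogeneous random graph on $S$ with kernel proportional to $t\overline{K}$. Bollob\'as--Janson--Riordan theory then gives the $\pi_i$-mass of the giant component as $g^i_t = \int\pi_i(x)\rho_t(x)\mu_0(dx)$, where $\rho_t:S\to[0,1]$ is the maximal solution of the branching survival equation $\rho_t(x)=1-\exp\bigl(-2t\int\overline{K}(x,y)\rho_t(y)\mu_0(dy)\bigr)$. The bilinear form of $\overline{K}$ forces $\rho_t(x) = 1-\exp(-c_t\cdot\pi(x))$ for some $c_t\in\mathbb{R}^n$ with $c_t\ge 0$ componentwise, reducing the survival equation to \eqref{eq: NLFP 1} and producing \eqref{eq: formula for M, E_0}; maximality of $\rho_t$ corresponds to maximality of $c_t$, triviality on $[0,t_g]$ follows from invertibility of $I-2t\Lambda$ there, and continuity is a standard monotone/implicit result. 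Part 4 is a direct calculation: conservation of $\pi_i$ under $K$ and $J$ kills the $L(\mu_t)$-term of \eqref{eq: E+G}, leaving only the gel sink, which after expanding $\overline{K}$ bilinearly and applying Fubini produces $\sum_{jk}\langle\pi_i\pi_j,\mu_t\rangle\, a_{jk}\, g^k_t$.

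Part 5 is the main difficulty and I would treat it via a Lyapunov--Schmidt reduction of $F(c,t) = c - 2tA^+h(c)$, where $h(c)_i = \int\pi_i(1-e^{-c\cdot\pi})\,d\mu_0$, at the bifurcation point $(0,t_g)$. The derivative $\partial_c F|_{(0,t_g)} = I - 2t_g\Lambda$ has a one-dimensional kernel spanned by a strictly positive Perron eigenvector $v$ of $\Lambda$ (irreducibility via (A3.)--(A4.)), and retaining the $O(c^2)$ term of $h$ produces a bifurcating branch $c_t = \alpha(t-t_g)_+\, v + o(t-t_g)$ for an $\alpha>0$ obtained by projecting the quadratic term onto the co-Perron direction. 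Substitution in \eqref{eq: formula for M, E_0} yields right-differentiability and the strictly positive $(g'_{t_g+})_i = \alpha\sum_j v_j\langle\pi_i\pi_j,\mu_0\rangle$, giving the first-order transition. Choosing $\theta = v/|v|_1$ (a probability vector) and writing $\psi := v\cdot\pi$, both sides of \eqref{eq: size bias} express as moments of $\psi$ under $\mu_0$ (using $\pi_0\equiv 1$ on $\mathrm{supp}\,\mu_0$ from (A5.)), and the inequality reduces to nonnegativity of the variance of $\psi$ under the normalised probability $\mu_0/\mu_0(S)$. The principal technical obstacle is strictness under non-constant $s$: pointwise on $S$, $\psi$ satisfies $\int\overline{K}(x,y)\psi(y)\mu_0(dy) = \mathfrak{r}(\Lambda)\psi(x)$, so $\mu_0$-a.e.\ constancy of $\psi$ (which is not $\mu_0$-a.e.\ zero by (A3.)) would force $s(x)\equiv\mathfrak{r}(\Lambda)$ pointwise on $S$, contradicting the hypothesis and closing the argument.
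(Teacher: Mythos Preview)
Your approach to Parts 3 and 5 tracks the paper closely: the random-graph representation of $\rho_t$ and the Lyapunov--Schmidt reduction at the Perron eigenvector are exactly what the paper does (Lemmas \ref{lemma: form of rho-t}, \ref{lemma: representation of M, E}, \ref{lemma: BNCP}), and your handling of the size-biasing inequality via Cauchy--Schwarz on $\psi=v\cdot\pi$ is the same argument.

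The genuine gap is in your treatment of Part 1 (and hence 2(iii)). Your matrix Riccati ODE correctly identifies the blow-up time $t_e$ of the second moments, and Norris's strong-solution theory gives that the solution is conservative on $[0,t_e)$, whence $t_e\le t_g$. But the direction you call ``standard''---that conservativity \emph{cannot persist past} $t_e$---is precisely the hard step. Nothing in the weak formulation excludes a conservative solution with infinite second moment on $[t_e,t_e+\epsilon)$; once $\langle\varphi^2,\mu_t\rangle=\infty$ you can no longer justify testing against $\pi_i\pi_j$, so the ODE gives no further information and the argument is circular. The paper closes this by an entirely different route: it uses the stochastic coagulant/random-graph coupling (Sections \ref{sec: csc}--\ref{sec: ECT}) to prove independently that $t_g=t_c=\mathfrak{r}(\Lambda)^{-1}$ via a law of large numbers for the giant component (Lemma \ref{lemma: WCOG} and Lemma \ref{lemma: connect critical times}), and only \emph{afterwards} deduces second-moment blow-up at $t_g$ (Section \ref{sec: finiteness of second moment}), using a further approximation by shifted measures $\nu_0^\epsilon$ for which the kernel becomes approximately multiplicative in Norris's sense (Lemmas \ref{lemma: blowup and gelation}--\ref{lemma: ODE considerations}). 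Your proposal tries to bypass this probabilistic input, but without it the identification $t_g=t_e$ is not established.

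A second, smaller gap is in Part 4. You call it a direct calculation, testing \eqref{eq: E+G} against $\pi_i$ and keeping only the gel sink. But $\pi_i\notin C_c(S)$, so it is not an admissible test function. The paper's proof (Lemma \ref{lemma: dynamics after tgel}) works through the truncated dynamics \eqref{eq:rE1}--\eqref{eq: rE2}, writes the growth of $g^{\xi}_t$ as $\mathcal{T}_1+\mathcal{T}_2$ (absorption into gel, plus sol--sol coagulations that cross the threshold $\xi$), and shows $\mathcal{T}_2\to 0$ as $\xi\to\infty$ by dominated convergence using the supercritical second-moment bound from Part 2(i). Without this, the formal calculation does not rule out an additional ``instantaneous coagulation into gel'' contribution, which is exactly what $\mathcal{T}_2\to 0$ is needed to exclude.
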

We also prove the following theorem, which is a law of large numbers result for the coagulating particle system $(x^N_j(t): j\le l^N(t))$ . Firstly, following ideas of \cite{N00}, we show that the empirical measure $\mu^N_t$ converges to the limiting solution $(\mu_t)_{t\ge 0}$ in the vague topology, uniformly in time. The second part of the result is that the \emph{stochastic gel} $g^N_t=N^{-1}\pi(x^N_1(t))$ itself satisfies a law of large numbers, converging to the true gel $g_t$ as $N \rightarrow \infty$, where we order the particles so that $x^N_1$ is the largest particle by $\pi_0$.

We make the following hypotheses for the law of large numbers. These are naturally satisfied when, for example, the initial particles $(x^N_i(0): 1\le i\le l^N(0))$ are sampled as a Poisson random measure with intensity $N\mu_0$. However, it is useful for some intermediate results to give these results in the more general form used here.  \begin{hyp}\label{hyp: B} Let $\mu^N_0$ be the initial data  the stochastic coagulant, and let $\mu_0$ be the initial data of the limiting Flory equation. \begin{enumerate}[label=(B\arabic*.)]\item  As $N\rightarrow \infty$, the initial measures $\mu^N_0=\frac1N\sum_{i\le l^N(0)}\delta_{x_i}$ converge in probability to $\mu_0$ under the vague topology, that is: \begin{equation}d(\mu^N_0, \mu_0)\rightarrow 0 \qquad\text{in probability}. \end{equation} Moreover, $\mu^N_0$ is supported on the set $\{\pi_0=1\}.$
\item We also have the convergence \begin{equation} \langle \pi_i, \mu^N_0\rangle \rightarrow \langle \pi_i, \mu_0\rangle \hspace{1cm} \text{in probability}\end{equation} for all $0\le i\le n+m$, and the uniform integrability \begin{equation} \sup_{N\ge 1} \EE\langle \varphi^2, \mu^N_0\rangle <\infty;\hspace{1cm} \sup_{N\ge 1} \EE\left[\langle \varphi^2, \mu^N_0\rangle 1\left(\langle \varphi^2, \mu^N_0\rangle \ge M\right)\right] \rightarrow 0 \text{ as }M\rightarrow \infty. \end{equation}\end{enumerate} \end{hyp}
\begin{thm} \label{thrm: convergence of stochastic coagulent} Let $\mu_0$ be a sub-probability measure on $S$ satisfying Assumption \ref{hyp: A}, and let $(\mu_t)_{t\ge 0}, (g_t)_{t\ge 0}$ be the associated solution to (\ref{eq: E+G}) and corresponding gel. For $N\ge 1$, let $\mu^N_t$ be the stochastic coagulant with initial data satisfying Assumption \ref{hyp: B}, and write $(x^N_j(t): j\le l^N(t))$ for the particles of the stochastic system, sorted in decreasing order of $\pi_0$. Let $g^N_t=N^{-1}(\pi_i(x^N_1(t)))_{i=0}^n$ be the data of the largest particle in the stochastic system, normalised by $N^{-1}$. Then we have the convergence \begin{equation} \label{eq: convergence of stochastic system}
    \sup_{t\ge 0}\hspace{0.1cm}\left(d\left(\mu^N_t,\mu_t\right)+\left|g^N_t-g_t\right|\right)\rightarrow 0
\end{equation} in probability. In particular, we have the following phase transition: \begin{enumerate}[label=\roman{*}).]
    \item If $t\le t_\mathrm{g}$, then the largest particle has gel data of the order $o_\mathrm{p}(N)$;
    \item If $t>t_\mathrm{g}$, the largest particle has gel data of the order $\Theta_\mathrm{p}(N)$.
\end{enumerate} 

Moreover, if $\xi_N$ is any sequence with $\xi_N\rightarrow \infty$ and $\frac{\xi_N}{N}\rightarrow 0$, then we may define $\widetilde{g}^N_t$ by summing the data of all particles $x^N_j(t)$ with $\pi_0(x^N_j(t)) \ge \xi_N$, and normalising by $N$. Then the same result holds when we replace $g^N_t$ by $\widetilde{g}^N_t$ in (\ref{eq: convergence of stochastic system}).
\end{thm}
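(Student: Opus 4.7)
The proof plan divides into three stages: (i) vague tightness and identification of $(\mu^N_t)$ via standard martingale methods; (ii) control of the largest particle using the promised coupling to inhomogeneous random graphs of Bollob\'as--Janson--Riordan type \cite{BJR07} from Section~\ref{sec: coupling_to_random_graph}; and (iii) a uniform-in-time upgrade from pointwise convergence. The coupling says that at each fixed $t$, the partition of the $l^N(0)$ initial particles into coagulated clusters has the same law as the connected components of an inhomogeneous random graph on the vertex set $\{x^N_i(0)\}$ with edge intensity derived from $2tK/N$. The (A4.)~irreducibility hypothesis matches BJR irreducibility, and the critical threshold $t\mathfrak{r}(\Lambda(\mu_0))=1$ of \eqref{eq: closed form for tg} coincides with theirs.

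\textbf{Stage 1.} For each $f\in C_c(S)$ Dynkin's formula gives a martingale $M^{N,f}_t=\langle f,\mu^N_t\rangle-\langle f,\mu^N_0\rangle-\int_0^t\langle f,L(\mu^N_s)\rangle\,ds$ whose jumps are $O(\|f\|_\infty/N)$ and whose quadratic variation, using the moment bound $\sup_N\EE\langle\varphi^2,\mu^N_0\rangle<\infty$ from (B2.) propagated by second-moment dynamics (which, as in part 2 of Theorem~\ref{thrm: Smoluchowski equation}, remains finite on compacts in $[0,\infty)\setminus\{t_\mathrm{g}\}$), is $O(N^{-1})$ uniformly on such compacts. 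Together with Aldous-type tightness on the vague Polish space $\mathcal{M}$, this yields tightness of $(\mu^N)$ in $D([0,T],\mathcal{M})$ for any $T$. Before $t_\mathrm{g}$, any limit point satisfies \eqref{eq: E} and is conservative by (B2.); uniqueness from Theorem~\ref{thrm: Smoluchowski equation} pins the limit to $\mu_t$.

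\textbf{Stage 2.} Past $t_\mathrm{g}$, \eqref{eq: E} and \eqref{eq: E+G} diverge and analytic tightness arguments alone cannot select between them. This is where the random graph coupling does the real work: for $t>t_\mathrm{g}$, \cite{BJR07} guarantees a unique giant component whose normalised $\pi_i$-masses converge in probability to the right-hand side of~\eqref{eq: formula for M, E_0}, because the survival probabilities of the underlying multi-type Poisson Galton--Watson branching process are exactly the fixed point \eqref{eq: NLFP 1} determining $c_t$. This gives $g^N_t\to g_t$ pointwise, and since the second-largest component is $o_\mathrm{p}(N)$ in the supercritical regime, the same limit arises when we sum $\pi$ over all components of size at least $\xi_N$ for any $\xi_N\to\infty$ with $\xi_N/N\to 0$, proving the $\widetilde{g}^N_t$ statement. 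Subtracting the giant's contribution from $\mu^N_t$ identifies the limit of the residual as $\mu_t$ satisfying \eqref{eq: E+G}, closing the loop with Stage~1.

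\textbf{Stage 3 and main obstacle.} Continuity of $t\mapsto\mu_t$ and $t\mapsto g_t$ from Theorem~\ref{thrm: Smoluchowski equation}, together with a dyadic-grid equicontinuity estimate (the oscillation of $\mu^N_t$ and of $\pi(x^N_1(t))/N$ between adjacent grid points is controlled by total jump rate times grid width times $N^{-1}$), upgrades pointwise convergence to the uniform statement \eqref{eq: convergence of stochastic system}. Items~(i) and~(ii) of the theorem then follow directly, since $g_t=0$ precisely for $t\le t_\mathrm{g}$. The main difficulty is the behaviour at and immediately after $t_\mathrm{g}$: one must rule out that the random stochastic gelation time $t^N_\mathrm{g}$ deviates from $t_\mathrm{g}$ on a macroscopic scale, and control the near-critical fluctuations of the giant well enough that the suprema in \eqref{eq: convergence of stochastic system} do not pick up a spike near $t_\mathrm{g}$. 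The first-order nature of the transition (part~5 of Theorem~\ref{thrm: Smoluchowski equation}) is what makes this feasible.
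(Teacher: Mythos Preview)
Your three-stage outline is broadly right in spirit and matches the paper's architecture (tightness plus identification; random-graph coupling for the gel; uniform upgrade). But Stage~3 contains a real gap, and the way you propose to close it does not work.

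\textbf{The equicontinuity claim for $g^N_t$ is false as stated.} You write that the oscillation of $\pi(x^N_1(t))/N$ between grid points is controlled by ``total jump rate times grid width times $N^{-1}$''. This would require each jump of $M^N_t=C_1(G^N_t)/N$ to be $O(N^{-1})$. It is not: when the largest cluster absorbs another cluster, or when a merger of two non-largest clusters overtakes the current largest, the jump in $M^N_t$ is of order $C_2(G^N_t)/N$, the normalised size of the \emph{second}-largest cluster. Pointwise in $t$ this is $o_\mathrm{p}(1)$ by the BJR mesoscopic estimate, but to run your grid argument you need it small \emph{uniformly} in $t$, and that is exactly the nontrivial content you are trying to prove. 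The paper resolves this not by equicontinuity but via Lemma~\ref{lemma: anomalous clusters}: it shows $\sup_{t\ge 0}N^{-1}\sum_{j\ge 2:\,C_j\ge\xi_N}C_j(G^N_t)\to 0$ by a duality trick (Theorem~\ref{thrm: coupling supercritical and subcritical}) that compares the graph with the giant removed at a time $t_-$ to a genuinely subcritical graph, uniformly over a window $[t_-,t_+]$. This is the key technical lemma you are missing, and nothing like ``jump rate times width'' will substitute for it.

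\textbf{Two further points.} First, a dyadic grid handles only compact time intervals; to get $\sup_{t\ge 0}$ you also need the large-$t$ behaviour $M_t\uparrow\langle\pi_0,\mu_0\rangle$ (the paper's Lemma~\ref{lemma: M and E at infinity}), which reduces the tail $[t_+,\infty)$ to a trivial estimate. Second, your appeal to the first-order nature of the transition (item~5 of Theorem~\ref{thrm: Smoluchowski equation}) for near-critical control is a red herring: the paper never uses it in the proof of Theorem~\ref{thrm: convergence of stochastic coagulent}. Near-critical uniformity comes instead from choosing $t_-$ just above $t_\mathrm{g}$ with $M_{t_-}<\epsilon/3$ (continuity of $g_t$ at $t_\mathrm{g}$, Corollary~\ref{corr: gel at tgel}) and bounding everything on $[0,t_-]$ by monotonicity. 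Finally, in Stage~1 the paper identifies the limit as the Flory solution globally in time via a truncation at level $\xi$ (so that coefficients stay bounded through $t_\mathrm{g}$), rather than by splitting into pre- and post-gelation regimes; your ``subtract the giant and identify the residual'' route is workable but you should be aware that the second-moment bound you invoke for the quadratic variation blows up at $t_\mathrm{g}$, so you too will need some truncation device.
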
 

Here, and throughout, we use the notation $o_\mathrm{p}(\cdot), \mathcal{O}_\mathrm{p}(\cdot), \Theta_\mathrm{p}(\cdot)$ for the probabilistic equivalents of $o(\cdot), \mathcal{O}(\cdot), \Theta(\cdot)$, and say that an event\footnote{or, more formally, a sequence of events indexed by $N$} holds \emph{with high probability} if relevant probabilities converge to $1$ as $N\rightarrow \infty$. Precise definitions can be found in \cite{JLR}.

\subsection{\textbf{Plan of the Paper.}} Our programme will be as follows. \begin{enumerate} \item In the remainder of this section, we will discuss other works on coagulating particle systems in the literature, and how they relate to our results. \item In Section \ref{sec:SE}, we will prove that the limiting equation (\ref{eq: E+G}) has unique, globally defined solutions, based on a truncation argument from \cite{N99,N00}.
\item In Section \ref{sec: csc}, we prove an initial result, Lemma \ref{lemma: local uniform convergence of stochastic coagulent}, on the convergence of the stochastic coagulant, using the ideas of \cite[Theorem 4.1]{N00}. This will later be used to prove later points of Theorem \ref{thrm: Smoluchowski equation} based on probabilistic arguments for the empirical measures $\mu^N_t$, and the random graphs $G^N_t$ introduced in Section \ref{sec: coupling_to_random_graph}.
\item In Section \ref{sec: coupling_to_random_graph}, we show how the stochastic coagulant can be coupled to a family of inhomogenous random graphs defined in \cite{BJR07}. Key results for these graphs are recalled in Appendix \ref{sec: IRG}. The critical time $t_\mathrm{c}$ for these graphs may be found exactly, leading to the explicit expression in Theorem \ref{thrm: Smoluchowski equation}. \item A weakness of the preceding sections is that, a priori, the critical time $t_\mathrm{c}$ for the graph processes may differ from the gelation time $t_\mathrm{g}$; in Section \ref{sec: ECT}, we show that this cannot happen. This is based on a preliminary version of Theorem \ref{thrm: convergence of stochastic coagulent}, which shows convergence of $(\mu^N_t, g^N_t)$ at a single fixed time $t\ge 0$. 
\item Section \ref{sec: finiteness of second moment} is dedicated to a proof of item 2 of Theorem \ref{thrm: Smoluchowski equation}, concerning the second moments $\mathcal{Q}_{ij}(t)=\langle \pi_i\pi_j, \mu_t\rangle, \mathcal{E}(t)=\langle \varphi^2, \mu_t\rangle$. The statements about the subcritical and critical cases $t<t_\mathrm{g}, t=t_\mathrm{g}$ follow general ideas in \cite{N99,N00}, while the statement about the supercritical case $t>t_\mathrm{g}$ uses additional ideas from the theory of random graphs.
\item Section \ref{sec: gel dynamics} uses the ideas of previous sections to prove items 3 and 4 of Theorem \ref{thrm: Smoluchowski equation}, concerning the gel data $g_t$ beyond the critical point. \item Section \ref{sec: uniform convergence} uses the analysis of the gel to extend Lemma \ref{lemma: local uniform convergence of stochastic coagulent} to show that convergence is uniform in time.
\item Section \ref{sec: BNCP} proves item 5 of Theorem \ref{thrm: Smoluchowski equation}, concerning the behaviour near the critical point. This completes the proof of this theorem. \item To finish the proof of Theorem \ref{thrm: convergence of stochastic coagulent}, we revisit the ideas of Section \ref{sec: ECT} to prove convergence of the stochastic gel $g^N_t, \widetilde{g}^N_t$, uniformly in time. This is the focus of Section \ref{sec: COG}, and builds further on ideas of previous sections. \end{enumerate}

\subsection{Literature Review}
The original equation introduced by Smoluchowski considers the case of coagulating particles, whose only property is a mass belonging to $\NN$, and where the coaguation $x, y\mapsto x+y$ has a general rate $\overline{K}(x,y)$. In this case, identifying measures $\mu \in \mathcal{M}_{\le 1}(\NN)$ with a summable sequence, the equation analagous to (\ref{eq: E}) reads 
\begin{equation}\label{e:introsmol}
  \frac{\dd}{\dd t}\mu_t(x) = \frac12 \sum_{y < x}\overline{K}(y,x-y)\mu_t(y)\mu_t(x-y) \dd y - \mu_t(x)\sum_{y=1}^\infty \overline{K}(x,y) \mu_t(y) \dd y,
\end{equation}
For an extensive review the reader is referred to \cite{A99}.
The case $\overline{K}(x,y):= xy$ is known as the \emph{multiplicative} coagulation kernel and in this case with $\mu_0=\delta_1$, the solution of (\ref{e:introsmol}) exhibits gelation at $t_\mathrm{g}=1$.

The existence and value of the gelation time has been studied for a range of $\overline{K}$.
For particles with integer masses and $\epsilon (xy)^\alpha \leq \overline{K}(x,y) \leq M xy,\ M\in\mathbb{R}_+, \alpha\in(\frac12, 1)$ Jeon \cite{J98} proved the existence of a gelation phase transition and provided an upper bound on the gelation time.
\medskip \\ Norris \cite{N99,N00} introduced a more general form, analagous to (\ref{eq: E}) on a general space $S$, allowing particles with internal structure and where, for any pair of particles, there are multiple possible coagulation products, in the case $\overline{K}(x,y) \leq C \varphi(x) \varphi(y)$ for some function $\varphi$ growing no more than linearly in particle mass, a step that is important for the present work.
A lower bound for the gelation time was proved in \cite{N99} and an upper bound was added under appropriate assumptions in \cite{N00}; however, these bounds do not coincide in general. Normand \cite{Nm09} obtained explicit results concerning the blowup of a second moment for a sexed model which gives a lower bound on the gelation time, and in a later work \cite{Nm11} finds explicit expressions for the gelation time for a selection of models with arms.  Let us also mention the more recent work of Merle and Normand, who show results similar to Theorems \ref{thrm: Smoluchowski equation}, \ref{thrm: convergence of stochastic coagulent} for the multiplicative coagulant, but where particles become inert when they reach a scale $\alpha(N)\ll N$; particles above this size play the role of the gel. Consequently, ours is one of the first models for which the gelation time can be found exactly; moreover, several aspects of our analysis extend what was previously known about the Smoluchowski equation, using the connection to random graphs \cite{BJR07}.

The study of gelation as the formation of a very large connected structure by joining basic building blocks goes back at least to Flory \cite{Flo41} whose motivation was hydrocarbon polymerisation in the manufacture of plastics.
Flory understood polymerisation as the formation of a random graph, rather than in terms of coagulation, and was aware of a sharp phase transition at the emergence of a giant connected structure, which he termed `gel'.
A rigorous proof of the random graph phase transition was provided by Erd\H{o}s and R\'enyi \cite{ER60}.
The existence of a phase transition corresponding to the formation of a giant particle, which corresponds to the phase transition in Theorem \ref{thrm: convergence of stochastic coagulent}, was first discussed by Lushnikov \cite{L78}, who uses this to explain the explosion of the second moment, corresponding to item 2 of Theorem \ref{thrm: Smoluchowski equation}, in the particular case of the multiplicative kernel.
The first connection between random graph and particle approaches appears in \cite{BP91}, where the phase transition is proved for the particle coagulation process and an interpretation as a new proof for a phase transition in the Erd\H{o}s-R\'enyi random graph is noted; this is also discussed in the survey article \cite{A99}.
We extend this connection, and show that the bilinear form of the merger rate allows us to couple the stochastic coagulant process to \emph{inhomogeneous} random graphs as considered by \cite{BJR07}.

Our original motivation was to study a concept of interaction clusters introduced by Gabrielov et al.~\cite{GKSZ08} in the context of the billiard model for an ideal gas.
The distribution of the sizes of the interaction clusters is formally derived in \cite{PSW16} in terms of the solution of the Boltzmann equation.
Reducing to the case of cutoff Maxwell molecules for the spatially homogeneous Boltzmann equation, the phase transition observed in \cite{GKSZ08} can be identified precisely and the cluster size distributions observed to match those arising from the Smoluchowski coagulation equation with product kernel \cite{L78,A99,PSW16}. Heuristically, when a collision occurs, the corresponding clusters merge, which may be represented as a coagulation event at the level of interaction clusters. In \cite{PSW17} the clusters were studied for the Kac process, which is a stochastic approximation to the billiard model with elastic collisions, and the restriction to Maxwell molecules was lifted.
This allowed a general collision rate including the hard sphere case and it was formally shown in a large particle number limit that the distribution of the cluster sizes converges to a version of the Smoluchowski coagulation equation with a time-dependent product kernel.
In the Kac model where the rate of collision between two molecules with velocities $v,w$ is proportional to $\abs{v-w}^2=|v|^2+|w|^2-2v\cdot w$, a sum over particles in a cluster shows that the total merge rate depends on the mass, momentum and energy of the two clusters. Moreover, since collisions are elastic, these quantities add when two clusters merge, and are unchanged when a cluster undergoes an internal collision.
This quadratic collision rate is of significant interest \cite{Lu,PSW17,Villani}, although it does not have a natural physical interpretation.
The explicit representation of the critical times in the present work enable us to verify the conjecture that the phase transition occurs strictly before the mean free time \cite{PSW17}.
\color{black}
\section{\textbf{Well-Posedness of the Limiting Equation}}\label{sec:SE}  This chapter is dedicated to a first analysis of the Smoluchowski equations (\ref{eq: E}, \ref{eq: E+G}), following Norris \cite{N99,N00}. Our goal in this section is to prove the following lemma on the well-posedness of (\ref{eq: E+G}).

\begin{lem}\label{lemma: E and U} For any measure $\mu_0 \in \mathcal{M}$ satisfying (A1.), the equation with gel (\ref{eq: E+G}) has a unique global solution $(\mu_t)_{t\geq 0}$ starting at $\mu_0$. Moreover, $P_t=0$ for all $t\ge 0$. \end{lem}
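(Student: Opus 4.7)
The plan is to follow the truncation strategy of Norris \cite{N99,N00}, adapted to accommodate the extra gel term in $L_\mathrm{g}$ and the bilinear structure of $\overline{K}$. For each $\ell\ge 1$, I would introduce truncated kernels $K^{(\ell)}(x,y,\cdot)=\mathbf{1}(\varphi(x)\vee\varphi(y)\le \ell)K(x,y,\cdot)$ and $J^{(\ell)}(x,\cdot)=\mathbf{1}(\varphi(x)\le\ell)J(x,\cdot)$, and the corresponding truncated gel drift $L_\mathrm{g}^{(\ell)}$. By (A5.), $\overline{K}\le C\varphi(x)\varphi(y)$ and $\overline{J}\le C\varphi(x)$; on $\{\varphi\le\ell\}^2$ the total rates are bounded by $C\ell^2$, so $L_\mathrm{g}^{(\ell)}$ is a bounded Lipschitz vector field on the subset of $\mathcal{M}_{\le 1}(S)$ supported in a fixed compact sublevel set. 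A standard Picard iteration then yields a unique solution $(\mu^{(\ell)}_t)_{t\ge 0}$, and the pointwise non-negativity of the off-diagonal flux and the inclusion of the gel subtraction keep $\mu^{(\ell)}_t$ a sub-probability measure.

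To pass to the limit $\ell\to\infty$, I would test $\mu^{(\ell)}_t$ against $\varphi\wedge R$ and use the conservation of $\varphi$ together with the sub-probability mass bound to get tightness (sublevel sets of $\varphi$ are compact by (A4.)). A standard monotone comparison between $\mu^{(\ell)}_t$ and $\mu^{(\ell')}_t$ for $\ell'\ge\ell$, combined with the compactness, produces a limit $\mu_t$; passing to the limit in the weak form (\ref{eq: E+G}), using the continuity of $Kf,Jf$ from (A6.) and dominated convergence justified by the bilinear bound, shows that $\mu_t$ solves (\ref{eq: E+G}). Uniqueness is the main technical step: given two solutions $\mu,\nu$, I would test their difference against $\varphi\wedge R$ and, using $\overline{K}(x,y)\le C\varphi(x)\varphi(y)$, derive a closed Gronwall inequality for a suitable Wasserstein-type distance between $\mu_t$ and $\nu_t$. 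The gel correction in $L_\mathrm{g}$ enters linearly in $\mu_0-\mu_t$ and is handled by the same bilinear bound, so the Gronwall argument closes globally in time despite the gel transition.

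The main obstacle I anticipate is controlling the extra gel term uniformly on $[0,T]$, in particular ensuring that the comparison argument for uniqueness remains closed past the gelation time where mass in $\mu_0-\mu_t$ becomes nontrivial; this is exactly where the bilinear bound (A5.) is essential, since it turns a quadratic-looking gel interaction into a product that can be dominated by $\langle\varphi,\mu_0\rangle \langle\varphi, \cdot\rangle$.

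Finally, for $P_t\equiv 0$: by (A1.) $R_\#\mu_0=\mu_0$, and (A3.) in Definition \ref{def: BCS} says $K,J$ are equivariant under $R$ while $\pi_i\circ R=-\pi_i$ for $n+1\le i\le n+m$. These symmetries imply that if $(\mu_t)$ solves (\ref{eq: E+G}), then so does $(R_\#\mu_t)$. Uniqueness forces $R_\#\mu_t=\mu_t$ for all $t\ge 0$, and testing against $\pi_i$ for $i\ge n+1$ gives $\langle\pi_i,\mu_t\rangle=-\langle\pi_i,\mu_t\rangle=0$, so $P^i_t=\langle\pi_i,\mu_0-\mu_t\rangle=0$ throughout.
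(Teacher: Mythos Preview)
Your existence strategy and your $P_t=0$ argument are sound, but the uniqueness argument has a genuine gap that the paper's approach avoids.

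You propose to compare two solutions $\mu,\nu$ by testing against $\varphi\wedge R$ and closing a Gronwall inequality via the bilinear bound $\overline{K}(x,y)\le C\varphi(x)\varphi(y)$. The difficulty is that when you expand $\langle f, L_\mathrm{g}(\mu_t)-L_\mathrm{g}(\nu_t)\rangle$ with $|f|\le C\varphi$, the quadratic term produces integrals of the form $\int\varphi(x)\varphi(x)\varphi(y)\,(\mu_t+\nu_t)(dx)\,|\mu_t-\nu_t|(dy)$, so the Gronwall constant involves the second moment $\langle\varphi^2,\mu_t\rangle$. You have no a~priori control on this quantity; indeed the paper later shows that $\langle\varphi^2,\mu_t\rangle$ diverges at $t_\mathrm{g}$ and is only proved finite on $(t_\mathrm{g},\infty)$ by a separate duality argument that already uses Lemma~\ref{lemma: E and U}. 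So the Gronwall loop does not close globally, and your claim that the bilinear structure reduces the gel term to something dominated by $\langle\varphi,\mu_0\rangle\langle\varphi,\cdot\rangle$ only handles the linear-in-gel piece, not the quadratic coagulation piece.

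The paper's route is different and sidesteps this entirely. Instead of truncating the kernel, it truncates the \emph{state space} to $S_\xi=\{\varphi\le\xi\}$ and tracks the pair $(\mu^\xi_t,g^\xi_t)$, where $g^\xi_t$ records the aggregate $\pi$-data of everything that has left $S_\xi$. Because $\overline{K}$ is bilinear in $\pi$, the influence of the exterior on $S_\xi$ depends only on $g^\xi_t$, so $(\mu^\xi_t,g^\xi_t)$ obeys a genuinely closed, bounded-coefficient ODE with unique solutions. The key observation for uniqueness is a \emph{consistency}/projection property: if $\widetilde\mu_t$ is \emph{any} solution of the full Flory equation, then $(\widetilde\mu_t 1_{S_\xi},\ \widetilde g_t+\int_{S_\xi^c}\pi\,d\widetilde\mu_t)$ solves the truncated system, hence equals $(\mu^\xi_t,g^\xi_t)$ by uniqueness of the bounded ODE. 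Letting $\xi\uparrow\infty$ gives $\widetilde\mu_t=\mu_t$. No second-moment control and no Gronwall on unbounded test functions is needed. Your kernel truncation $K^{(\ell)}=1(\varphi(x)\vee\varphi(y)\le\ell)K$ does not yield this projection structure, because particles can still grow past $\varphi=\ell$ and their effect on the remaining particles is not summarised by a finite-dimensional gel variable.
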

\begin{cor}\label{cor: maximal conservative solutions} Suppose $(\mu'_t)_{t<T}$ is a conservative local solution to the equation without gel, (\ref{eq: E}), starting at $\mu_0$. Then $\mu_t=\mu'_t$ for all $t<T$, and $T<t_\mathrm{g}$. Hence, (\ref{eq: E}) has a unique maximal conservative solution, given by $(\mu_t)_{t<t_\mathrm{g}}$.
\end{cor}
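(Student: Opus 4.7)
The plan is to reduce the corollary to the uniqueness statement of Lemma \ref{lemma: E and U} by showing that, under the conservation hypothesis, the extra term distinguishing \eqref{eq: E+G} from \eqref{eq: E} vanishes identically. This way any conservative solution of \eqref{eq: E} is automatically a solution of \eqref{eq: E+G}, and the global well-posedness of the latter does the rest of the work.

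Concretely, I would first expand the additional drift term using the bilinear form (\ref{eq: overline K}): for $f\in C_c(S)$,
\begin{equation*}
\int_S f(x)\overline{K}(x,y)\mu'_t(dx)(\mu_0-\mu'_t)(dy) = \sum_{i,j=1}^{n+m} a_{ij}\,\langle f\pi_i,\mu'_t\rangle\,\langle \pi_j,\mu_0-\mu'_t\rangle.
\end{equation*}
If $(\mu'_t)_{t<T}$ is conservative, every factor $\langle \pi_j,\mu_0-\mu'_t\rangle$ with $1\le j\le n+m$ is identically zero, so $\langle f,L_\mathrm{g}(\mu'_t)\rangle=\langle f,L(\mu'_t)\rangle$ throughout $[0,T)$. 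Hence $(\mu'_t)_{t<T}$ is a local solution to \eqref{eq: E+G} with the same initial datum $\mu_0$. By Lemma \ref{lemma: E and U} there is a unique global solution $(\mu_t)_{t\ge 0}$ of \eqref{eq: E+G}, so by restricting to $[0,T)$ we conclude $\mu'_t=\mu_t$ for every $t<T$.

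For the bound on the lifespan, note that the conservation of $\mu'_t$ on $[0,T)$ transfers to $\mu_t$ on the same interval, so by the very definition of the gelation time $t_\mathrm{g}=\inf\{t: \langle\pi_i,\mu_t\rangle\neq\langle\pi_i,\mu_0\rangle\text{ for some }i\}$ we must have $T\le t_\mathrm{g}$. Conversely, on $[0,t_\mathrm{g})$ the global solution $(\mu_t)$ of \eqref{eq: E+G} is, by the definition of $t_\mathrm{g}$, conservative, and the same computation as above shows that it then satisfies \eqref{eq: E} on this interval. This gives a conservative local solution of \eqref{eq: E} on $[0,t_\mathrm{g})$, and the uniqueness argument just obtained shows it is the maximal such solution.

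There is no real obstacle; the main thing to verify carefully is that the bilinear form of $\overline{K}$ really does let the extra term of $L_\mathrm{g}$ be written as a sum of products of the type $\langle f\pi_i,\mu_t\rangle\langle\pi_j,\mu_0-\mu_t\rangle$, and that the integrability hypotheses in Appendix \ref{sec: requirements} legitimize this application of Fubini. Given the assumptions $\langle\pi_i^3,\mu_0\rangle<\infty$ from (A2.) together with the monotonicity of $t\mapsto\langle\pi_i,\mu_t\rangle$ for $i\le n$ noted in the discussion following \eqref{eq: gel data}, this is automatic, so the proof reduces to the three bookkeeping steps described above.
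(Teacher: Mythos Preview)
Your proof is correct and is precisely the argument the paper has in mind: the corollary is stated immediately after Lemma~\ref{lemma: E and U} without a separate proof, relying on exactly the observation you make --- that conservativity kills the gel term in $L_{\mathrm g}$ via the bilinear expansion of $\overline K$, so a conservative solution of \eqref{eq: E} is automatically a local solution of \eqref{eq: E+G}, and uniqueness in Lemma~\ref{lemma: E and U} (whose proof works verbatim on any subinterval $[0,T)$) forces agreement with $(\mu_t)$. Your deduction $T\le t_{\mathrm g}$ is the right conclusion; the strict inequality in the paper's statement is a typo, since the maximal conservative solution itself attains $T=t_{\mathrm g}$.
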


Our proof of Lemma~\ref{lemma: E and U} is an adaptation of the arguments in \cite[Section 2]{N99} and \cite[Section 2]{N00} and is based on a truncation argument. Recalling that $\varphi=\sum_{i=0}^n \pi_i$, we see that  $\overline{K}(x,y)\leq \Delta\varphi(x)\varphi(y)$ for some $\Delta=\Delta(A).$ For all $\xi>0$, we define the truncated particle space \begin{equation} \label{d:trunc_space} S_\xi=\{x\in S: \varphi(x) \le \xi\}. \end{equation}

We consider the following `truncation at level $\xi$': in the empirical measure, we track only those particles inside $S_\xi$, and consider all other particles to belong to a `truncated gel'. Although the particles in the truncated gel affect the dynamics in $S_\xi$, these contributions depend only on the total data $g^\xi$ of the truncated gel, due to the bilinear form of the kernel. This leads to an ordinary differential equation with Lipschitz coefficients in an infinite dimensional space. \medskip \\ We formalise this intuition as follows.  For a measure $\mu^\xi$ supported on $S_\xi$ and $g^\xi\in S_\mathrm{g}$, we define a signed measure $L^\xi_\mathrm{g}(\mu^\xi, g^\xi)$ on $S_\xi$ by specifying, for all $f\in C_c(S)$, \begin{equation}\begin{split}\label{eq: truncated drift 1} &\left\langle f, L^\xi_\mathrm{g}(\mu^\xi, g^\xi)\right\rangle \\[1ex] &\hspace{1cm}=\frac{1}{2}\int_{S_\xi^2}[f(x+y)1[\varphi(x+y)\le \xi]-f(x)-f(y)]\overline{K}(x,y)\mu^\xi(dx)\mu^\xi(dy)\\&\hspace{2.5cm}+\int_{S_\xi}(f(y)-f(x))J(x,dy)\mu^\xi(dx) -\int_{S_\xi}f(x)\overline{K}(x,g^\xi)\mu^\xi(dx). \end{split}\end{equation} This corresponds to the dynamics of particles inside $S_\xi$. The rate of change of the truncated gel data is given by \begin{equation} \begin{split}\label{eq: truncated drift 2} R^\xi_\mathrm{g}(\mu^\xi,g^\xi)&=\frac{1}{2}\int_{S_\xi^2} \pi(x+y)1[\varphi(x+y)> \xi]\overline{K}(x,y)\mu^\xi(dx)\mu^\xi(dy) \\ &\hspace{1cm}+\int_{S_\xi}\pi(x)\overline{K}(x,g^\xi)\mu^\xi(dx).\end{split}\end{equation}We now seek measures $\mu^\xi_t$ supported on $S_\xi$ and gel data $g^\xi_t=(M^\xi_t, E^\xi_t,P^\xi_t)\in S_\mathrm{g}$ such that, for all bounded measurable $f$ on $S_\xi$, \begin{equation} \tag{Fl$|^1_\mathrm{\xi}$} \label{eq:rE1}  \langle f, \mu^\xi_t\rangle = \langle f, \mu^\xi_0\rangle +\int_0^t\left\langle f, L^\xi_\mathrm{g}(\mu^\xi_s, g^\xi_s)\right\rangle ds;\end{equation}
\begin{equation} \label{eq: rE2} \tag{Fl$|^2_\mathrm{\xi}$}
g^\xi_t=g^\xi_0+\int_0^tR^\xi_\mathrm{g}(\mu^{\xi}_s,g^\xi_s)ds.
\end{equation} 
We will use the following existence and uniqueness result for the restricted dynamics (\ref{eq:rE1}, \ref{eq: rE2}).
\begin{lem}\label{lemma: E and U of Restricted}[Existence and Uniqueness of Restricted Dynamics]\label{lemma: restricted dynamics} Suppose $\mu^\xi_0$ is a finite measure on $S_\xi$ which satisfies ({A1}.), and $g^\xi_0 \in S_\mathrm{g}$ satisfies $\pi_i(g^\xi_0)=0$ for all $i>n$. Then there exists a unique map $(\mu^\xi_t, g^\xi_t)$ on $[0, \infty)$, which solves the restricted dynamics (\ref{eq:rE1}, \ref{eq: rE2}). Moreover, for all $t\ge 0$, $\mu^\xi_t$ is a positive, finite measure on $S_\xi$, $P^\xi_t=0$ for all times $t\ge 0$, and $g^\xi_t \in S_\mathrm{g}$. 
\end{lem}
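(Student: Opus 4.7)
The plan is to recast the restricted dynamics (\ref{eq:rE1}, \ref{eq: rE2}) as an autonomous ordinary differential equation in the Banach space $X$ consisting of finite signed Radon measures on $S_\xi$ (under the total variation norm) paired with $\mathbb{R}^{n+m+1}$. The crucial observation is that on the truncated space every relevant rate is uniformly bounded: since $\pi_i(x) \le \varphi(x) \le \xi$ for $0 \le i \le n$ and $\pi_i(x)^2 \le C\varphi(x)^2 \le C\xi^2$ for $i > n$ by Definition \ref{def: BCS}(iv), the bilinear form $\overline{K}$ is bounded by a constant depending only on $A,C,\xi$ on $S_\xi \times S_\xi$, and $\overline{J}$ is bounded on $S_\xi$ by Definition \ref{def: BCS}(v). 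Combining this with the continuity properties of $Kf$ and $Jf$ in Definition \ref{def: BCS}(vi) and the compactness of $S_\xi$, the maps $(\mu^\xi,g^\xi) \mapsto L^\xi_\mathrm{g}(\mu^\xi,g^\xi)$ and $(\mu^\xi,g^\xi) \mapsto R^\xi_\mathrm{g}(\mu^\xi,g^\xi)$ extend from their defining action on $C_c(S)$ to bounded bilinear maps $X \times X \to X$, hence are locally Lipschitz. A standard Picard--Lindel\"of fixed point argument on $C([0,T];X)$ for sufficiently small $T$ then yields a unique local solution.

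To extend the solution globally, I would establish a priori bounds. Testing (\ref{eq:rE1}) with $f \equiv 1$ shows that $\langle 1,\mu^\xi_t\rangle$ is non-increasing, since each coagulation removes at least one particle from $S_\xi$ (whether the product remains in $S_\xi$ or is promoted to the gel), the internal evolution by $J$ conserves particle number, and the gel absorption only depletes mass. Positivity of $\mu^\xi_t$ itself follows from a Duhamel representation: decomposing $L^\xi_\mathrm{g}(\mu^\xi,g^\xi) = G(\mu^\xi,g^\xi) - c(x;\mu^\xi,g^\xi)\,\mu^\xi(dx)$, with non-negative gain measure $G$ and non-negative loss rate $c(x;\mu^\xi,g^\xi) = \int \overline{K}(x,y)\mu^\xi(dy) + \overline{K}(x,g^\xi) + \overline{J}(x)$, and then integrating against the exponential factor $\exp(-\int_0^t c(x;\mu^\xi_s,g^\xi_s)\,ds)$, one sees that $\mu^\xi_t \ge 0$ whenever $\mu^\xi_0 \ge 0$. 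Inspection of (\ref{eq: truncated drift 2}) shows that $(R^\xi_\mathrm{g})_i$ is manifestly non-negative for $0 \le i \le n$, so $M^\xi_t, E^\xi_t \ge 0$ componentwise and $g^\xi_t \in S_\mathrm{g}$. Finally, the finite-time growth of $|g^\xi_t|$ is linearly controlled by $|g^\xi_t|$ itself and the bounded total mass of $\mu^\xi_t$ via Gr\"onwall, which rules out blow-up and so extends the solution to all of $[0,\infty)$.

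For $P^\xi_t = 0$, the key inputs are (A1.), $R_\# \mu^\xi_0 = \mu^\xi_0$, together with the $R$-equivariance of $K$ and $J$ in Definition \ref{def: BCS}(iii) and the block-diagonal structure of $A$ in (v). Writing $\sigma$ for the involution on $S_\mathrm{g}$ that negates the last $m$ coordinates, a direct check using these symmetries shows that the pair $(R_\# \mu^\xi_t,\sigma g^\xi_t)$ satisfies the same equations (\ref{eq:rE1}, \ref{eq: rE2}) with the same initial datum as $(\mu^\xi_t, g^\xi_t)$; uniqueness then gives $R_\# \mu^\xi_t = \mu^\xi_t$ and $\sigma g^\xi_t = g^\xi_t$, and the second identity forces $P^\xi_t = 0$. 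The step I expect to require the most care is verifying that $L^\xi_\mathrm{g}$, defined weakly in (\ref{eq: truncated drift 1}) against $C_c(S)$, genuinely produces a bounded signed measure on $S_\xi$ with the correct Lipschitz dependence in total variation norm, so that the Picard--Lindel\"of argument goes through in this concrete Banach-space formulation rather than only in a weaker sense.
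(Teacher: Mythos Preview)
Your proposal is correct and follows essentially the same route as the paper's sketch: Picard iteration in the total-variation Banach space, an integrating-factor/Duhamel argument for positivity, and the $R$-symmetry plus uniqueness for $P^\xi_t=0$. The one point of divergence is the mechanism for global extension: the paper observes that $\langle \varphi,\mu^\xi_t\rangle+\varphi(g^\xi_t)$ is exactly conserved along solutions, so the local existence time $T=T(\xi,C)$ can be iterated on intervals of fixed length $[kT,(k+1)T]$, whereas you bound $|g^\xi_t|$ via Gr\"onwall after controlling $\langle 1,\mu^\xi_t\rangle$. Both arguments are valid, but the conservation law is a cleaner invariant (it yields a uniform rather than exponential bound, and sidesteps any need to first establish positivity before reading off $\|\mu^\xi_t\|_{\mathrm{TV}}=\langle 1,\mu^\xi_t\rangle$), so it is worth recording explicitly.
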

\begin{proof}[Sketch Proof of Lemma \ref{lemma: restricted dynamics}] This may be proved by a trivial modification of the arguments in \cite[Proposition 2.2]{N99}. We define Picard iterates $((\mu^{(\xi,n)}_t, g^{(\xi,n)}_t): n\ge 0, t\ge 0)$ by \begin{align} (\mu^{(\xi,0)}_t, g^{(\xi,0)}_t)&=(\mu^\xi_0, g^\xi_0);\\ \left(\mu^{(\xi,n+1)}_t, g^{(\xi,n+1)}_t\right)&=(\mu^\xi_0,g^\xi_0)+\int_0^t (L^\xi_\mathrm{g}, R^\xi_\mathrm{g})\left(\mu^{(n,\xi)}_s, g^{(n,\xi)}_s\right) ds. \end{align} One then uses bilinear continuity arguments in total variation norm $\|\cdot\|$ to show that, given a bound $\langle \varphi, \mu^\xi_0\rangle +\varphi(g^\xi_0)\le C$, there is a positive time $T=T(\xi,C)>0$ such that the Picard iterates $(\mu^{(\xi,n)}_t)_{t\le T}$ converge uniformly in total variation on $[0,T]$, and that the limit $\mu^\xi_t$ solves (\ref{eq:rE1}, \ref{eq: rE2}), possibly allowing $\mu^\xi_t$ to be a signed measure. This argument also implies that the solution is unique on this interval. Now, we note that the quantity $\langle \varphi, \mu^\xi_t\rangle +\varphi(g^\xi_t)$ is constant in time, and therefore this construction can be repeated on $[T, 2T]$, $[2T, 3T]$, etc, which proves global existence and uniqueness. Finally, an integrating factor is introduced to argue that $\mu_t$ is a positive measure.
In our case, it is also straightforward to see that the gel data $M^\xi_t, E^\xi_t\ge 0$, and that $P^\xi_t=0$, thanks to the symmetry ({A1}.).
\end{proof}

\begin{proof}[Proof of Lemma \ref{lemma: E and U}] 
We first show existence. For all $\xi<\infty$, we let $(\mu^\xi_t, g^\xi_t)$ be the solution to the dynamics (\ref{eq:rE1}, \ref{eq: rE2}) restricted to $S_\xi$, with initial data \begin{equation} \mu^\xi_0(dx)=1[x\in S_\xi]\hspace{0.1cm}\mu_0(dx);\hspace{1cm} g^\xi_0 = \int_{x\not\in S_\xi} \pi(x)\mu_0(dx). \end{equation}  Observe that, if $\xi<\xi'$, then $\widetilde{\mu}^\xi_t, \widetilde{g}^\xi_t$ given by \begin{equation} \widetilde{\mu}^\xi_t(dx)=1_{x\in S_\xi}\hspace{0.1cm}\mu^{\xi'}_t(dx);\hspace{1cm} \widetilde{g}^\xi_t=g^{\xi'}_t+\int_{x\in S_{\xi'}\setminus S_\xi} \pi(x) \mu^{\xi'}_t(dx)\end{equation} solve the dymanics (\ref{eq:rE1},\ref{eq: rE2}) with the same initial data $\mu^\xi_0, g^\xi_0$. From uniqueness in Lemma \ref{lemma: restricted dynamics}, it follows that $\widetilde{\mu}^\xi_t=\mu^\xi_t; \widetilde{g}^\xi_t=g^\xi_t$. This shows that the measures $\mu^\xi_t$ are increasing in $\xi$, while the gel data $M^\xi_t, E^\xi_t$ are decreasing, and $P^\xi_t$ is identically $0$, by symmetry ({A1}.). Therefore, the limits \begin{equation} \mu_t=\lim_{\xi\uparrow \infty} \mu^\xi_t; \hspace{1cm} M_t=\lim_{\xi\rightarrow \infty} M^\xi_t; \hspace{1cm} E_t=\lim_{\xi\rightarrow \infty} E^\xi_t \end{equation} exist in the sense of monotone limits; one can then check that $\mu_t$ and $g_t=(M_t,E_t,0)$ satisfy the full equation (\ref{eq: E+G}), with initial values $\mu_0$ and $g_0=0.$

To see uniqueness, let $\mu_t$ be the solution constructed above and write $g_t=(M_t,E_t, P_t)$ for the data of the gel. Let $\widetilde{\mu}_t$ be any solution to (\ref{eq: E+G}) starting at $\mu_0$, and let $\widetilde{g}_t=(\tilde{M}_t,\tilde{E}_t,\tilde{P}_t)$ be the associated data of the gel. For all $\xi<\infty$, it is simple to verify that \begin{equation} \widetilde{\mu}^\xi_t(dx)=1_{x\in S_\xi}\hspace{0.1cm} \widetilde{\mu}_t(dx);\hspace{1cm} \widetilde{g}^\xi_t=\widetilde{g}_t+\int_{S^\mathrm{c}_\xi} \pi(x) \widetilde{\mu}_t(dx) \end{equation} is a solution to the dynamics (\ref{eq:rE1}, \ref{eq: rE2}) on $S_\xi$. By uniqueness in Lemma \ref{lemma: restricted dynamics}, it follows that $\widetilde{\mu}^\xi_t=\mu^\xi_t$, and taking monotone limits, we see that $\widetilde{\mu}_t=\lim_{\xi\rightarrow \infty} \widetilde{\mu}^\xi_t=\lim_{\xi\rightarrow \infty} {\mu}^\xi_t=\mu_t$. The argument for $\widetilde{g}$ is identical.
\end{proof}

\section{\textbf{Convergence of the Stochastic Coagulant}}
\label{sec: csc}

We now turn to a preliminary version of Theorem \ref{thrm: convergence of stochastic coagulent}.
In this section, we will outline the proof of the convergence of the stochastic coagulant $\mu^N_t$ to a solution $\mu_t$ of (\ref{eq: E+G}), locally uniformly in time. Most of the arguments are well-known for the Smoluchowski equation \cite{N99,N00}, and for brevity, we will sketch the proof with an indication of the nontrivial technical details. Throughout, we fix $\mu_0$ satisfying Assumption \ref{hyp: A}, and $\mu^N_t$ with initial data $\mu^N_0$ satisfying Assumption \ref{hyp: B}.  Our result is as follows.
\begin{lem}\label{lemma: local uniform convergence of stochastic coagulent}
Suppose $\mu_0$ satisfies Assumption \ref{hyp: A}, and let $(\mu_t)_{t\ge 0}$ be the solution to (\ref{eq: E+G}) starting at $\mu_0$. Let $(\mu^N_t)_{t\ge 0}$ be stochastic coalescents with initial data $\mu^N_0$ satisfying Assumption \ref{hyp: B}. Then we have the local uniform convergence
\begin{equation}
\forall t_\mathrm{f}\ge 0 \hspace{0.5cm} \sup_{t\le t_\mathrm{f}} \hspace{0.1cm} d(\mu^N_t, \mu_t)\rightarrow 0 \text{ in probability} 
\end{equation} where recall that $d$ is a complete metric inducing the vague topology.
\end{lem}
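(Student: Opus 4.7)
The plan is to use the standard martingale/tightness-and-identification strategy, combined with the truncation framework already developed for the limiting equation. Fix $t_\mathrm{f}>0$. First, I would show tightness of $(\mu^N_t)_{0\le t\le t_\mathrm{f}}$ in $\mathbb{D}([0,t_\mathrm{f}], (\mathcal{M},d))$. Since $(\mathcal{M},d)$ is compact, compact containment is automatic; the key estimate is that the total jump rate of the Marcus--Lushnikov process is bounded in terms of $\langle \varphi,\mu_0^N\rangle^2$, which by (B2.) is $\mathcal{O}_\mathrm{p}(1)$. Combined with the uniform bound on the jump size of $\langle f,\mu^N\rangle$ ($\lesssim \|f\|_\infty/N$ for $f\in C_c(S)$), a partition-of-the-interval argument together with Aldous-type criteria (or \cite[Ch.~3, Thm.~8.6c]{EK86} through the separating family $\{\langle f,\cdot\rangle:f\in C_c(S)\}$) gives tightness.

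Next I would identify every subsequential limit. Let $\mu^{N_r}\Rightarrow \overline{\mu}$ along a subsequence; by Skorohod we may take this convergence almost sure in the Skorohod topology, together with $\langle \varphi,\mu_0^{N_r}\rangle\to\langle\varphi,\mu_0\rangle$. For each cutoff level $\xi\in[0,\infty)$, introduce the truncated empirical measure $\mu^{N,\xi}_t=\mu^N_t|_{S_\xi}$ and the truncated stochastic gel $g^{N,\xi}_t=\langle\pi,\mu^N_0-\mu^{N,\xi}_t\rangle$, together with the analogous objects $\overline{\mu}^\xi,\overline{g}^\xi$ for the limit. Writing out the compensator in the Dynkin formula for $\langle f,\mu^{N,\xi}\rangle$ and for $g^{N,\xi}$ produces two martingales whose quadratic variations are of order $1/N$, hence negligible in the limit, together with drift terms matching exactly $L^\xi_\mathrm{g}$ and $R^\xi_\mathrm{g}$ from \eqref{eq: truncated drift 1}, \eqref{eq: truncated drift 2}. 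The delicate point here is that the truncation kernel $1[\varphi(\cdot)\le\xi]$ is discontinuous, so vague convergence of $\mu^{N_r}_t$ does not directly pass to the truncated drift integrands. To handle this, I would note that the random measure $\xi\mapsto \overline{\mu}_0\{\varphi=\xi\}+\overline{\mu}_0\otimes\overline{\mu}_0\{\varphi(x+y)=\xi\}+\int_0^{t_\mathrm{f}}(\text{analogous in }t)\,dt$ is a.s.~finite, so by Fubini the ``bad'' set of $\xi$ has zero Lebesgue measure. For every good $\xi$, vague convergence plus the regularity conditions and the bound $\overline{K}(x,y)\le \Delta \varphi(x)\varphi(y)$ on $S_\xi\times S_\xi$ let us pass to the limit in the drifts by dominated convergence.

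Applying the uniqueness statement of Lemma \ref{lemma: restricted dynamics}, I conclude that for every good $\xi$, $(\overline{\mu}^\xi_t,\overline{g}^\xi_t)_{t\le t_\mathrm{f}}$ coincides almost surely with the unique solution $(\mu^\xi_t,g^\xi_t)$ of \eqref{eq:rE1}--\eqref{eq: rE2} starting from $\mu_0|_{S_\xi}$. Taking a sequence of good $\xi_k\uparrow\infty$ and using the monotone construction of the global solution in the proof of Lemma \ref{lemma: E and U}, we obtain $\overline{\mu}_t=\lim_{k}\overline{\mu}_t 1_{S_{\xi_k}}=\lim_k \mu^{\xi_k}_t=\mu_t$ almost surely for every $t\le t_\mathrm{f}$. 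Thus the only possible subsequential limit is the deterministic path $(\mu_t)_{t\le t_\mathrm{f}}$, and tightness upgrades this to convergence in probability in the Skorohod topology.

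Finally, to promote this to uniform convergence in $d$, I would argue that $t\mapsto \mu_t$ is continuous in $(\mathcal{M},d)$. Indeed, for any $f\in C_c(S)$ there is $\xi$ with $\mathrm{supp}(f)\subset S_\xi$, whence $\langle f,\mu_t\rangle=\langle f,\mu^\xi_t\rangle$, and $\mu^\xi_t$ is continuous in total variation by the construction of the restricted solution. Since $d$ is induced by a countable family in $C_c(S)$, $t\mapsto\mu_t$ is $d$-continuous; as Skorohod convergence to a continuous path implies uniform convergence, we conclude $\sup_{t\le t_\mathrm{f}} d(\mu^N_t,\mu_t)\to 0$ in probability. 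The main obstacle is the passage to the vague limit through the discontinuous cutoff, which is why the regularity conditions on $\xi$ and the Fubini argument for their genericity are essential.
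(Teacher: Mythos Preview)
Your proposal is correct and follows essentially the same route as the paper: tightness via jump-rate bounds, truncation at level $\xi$ with martingale/compensator decomposition, the regularity conditions (C1--C2) on $\xi$ obtained by a Fubini argument, identification of the truncated limit via uniqueness in Lemma~\ref{lemma: restricted dynamics}, the monotone limit $\xi\uparrow\infty$ from Lemma~\ref{lemma: E and U}, and finally the Skorohod-to-uniform upgrade via continuity of $t\mapsto\mu_t$. The only minor discrepancy is that the paper bounds the jump rates by $\langle\varphi^2,\mu_0^N\rangle$ rather than $\langle\varphi,\mu_0^N\rangle^2$ (and there is also a contribution from the $J$-kernel, of order $\langle\varphi,\mu_0^N\rangle$), but either bound is $\mathcal{O}_\mathrm{p}(1)$ under (B2.) and the argument goes through unchanged.
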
 \begin{rmk}
We will later upgrade the \emph{local} uniform convergence to \emph{full} uniform convergence in Lemma \ref{lemma: uniform convergence of coagulant}. We also remark that this does not immediately imply the convergence of the gel terms in Theorem \ref{thrm: convergence of stochastic coagulent}, as the test functions involved are  neither compactly supported nor even bounded. This will be dealt with in Sections \ref{sec: ECT}, \ref{sec: COG}, where the proofs build on this result.
\end{rmk}
\begin{proof} The proof follows the well known method of proving tightness and identifying possible limit paths: 
Firstly, the jump rates can bounded, uniformly in time, in terms of the initial second moment $\langle \varphi^2, \mu^N_0\rangle$ and, thanks to (B2.), these are stochastically bounded: $\langle \varphi^2, \mu^N_0\rangle \in \mathcal{O}_\mathrm{p}(1)$ as $N\rightarrow \infty$. As a result, it follows that for all  $t_\mathrm{f}\ge 0$, the processes $(\mu^N_t)_{0\le t\le t_\mathrm{f}}$ are tight in the Skorohod topology of $\mathbb{D}([0,t_\mathrm{f}],(\mathcal{M},d))$.  \medskip \\ Next, we wish to argue that if $\overline{\mu}$ is any subsequential limit point, then $\overline{\mu}$ coincides with the solution $\mu_t$ to (\ref{eq: E+G}). For this stage, we show that for certain well-chosen $\xi>0$, the pair \begin{equation} \mu^{N,\xi}_t=\mu^N_t1_{S_\xi}; \hspace{1cm} g^{N,\xi}_t=\langle \pi, \mu^N_t-\mu^{N,\xi}_t\rangle  \end{equation} converge to a pair $\overline{\mu}^\xi_t=\overline{\mu}_t1_{S_\xi}, \overline{g}^\xi_t$ which solve the restricted evolution equations (\ref{eq:rE1},  \ref{eq: rE2}), started at \begin{equation} \overline{\mu}^\xi_0=\mu_01_{S_\xi}; \hspace{1cm} \overline{g}^\xi_0=\int_{x\not \in S_\xi} x\mu_0(dx). \end{equation} In order to prove this convergence, we will need a pair of regularity conditions (C1-C2.) which will be displayed below. These allow us to obtain vague convergence of $\mu^{N,\xi}_t$, despite the discontinuity of the cutoff $S_\xi$.  Moreover, one can show that these conditions are satisfied for almost all $\xi>0$.  \begin{enumerate}[label=\roman{*}).]\item Almost surely, for  almost all $t\le t_\mathrm{f}$, \begin{equation} \label{eq: reg} \tag{{C1.}}
   \overline{\mu}_t\left(\left\{x\colon \varphi(x) = \xi\right\}\right) + 
    \overline{\mu}_t \otimes \overline{\mu}_t \left(\left\{(x,y) \colon \varphi(x+y) = \xi\right\}\right) = 0;
\end{equation} \item This also holds for $t=0$. That is, almost surely, \begin{equation} \label{eq: reg2} \tag{{C2.}}
   \overline{\mu}_0\left(\left\{x\colon \varphi(x) = \xi\right\}\right) + 
    \overline{\mu}_0 \otimes \overline{\mu}_0 \left(\left\{(x,y) \colon \varphi(x+y) = \xi\right\}\right) = 0. 
\end{equation} \end{enumerate} Thanks to the construction of solutions to the global equation (\ref{eq: E+G}) in Lemma \ref{lemma: E and U}, we know that for all such $\xi$, $\overline{\mu}_t1_{S_\xi}$ coincides with $\mu_t1_{S_\xi}$. Finally, we take a limit of such $\xi \uparrow \infty$, to conclude the equality $\overline{\mu}_t=\mu_t, t\le t_\mathrm{f}$.  Since the limit process $(\mu_t)_{0\le t\le t_\mathrm{f}}$ is continuous in the vague topology $(\mathcal{M},d)$, it follows that we may upgrade from Skorohod to uniform convergence: \begin{equation} \sup_{0\le t\le t_\mathrm{f}}\hspace{0.1cm}d\left(\mu^N_t, \mu_t\right) \rightarrow 0 \hspace{0.5cm} \text{in probability}  \end{equation} as claimed.  
\end{proof}

\section{\textbf{Coupling of the Stochastic Coagulant to Random Graphs}} \label{sec: coupling_to_random_graph}
In this section, we will show that the stochastic coagulant defined in the introduction may be coupled to a \emph{dynamic} version of the random graphs $\mathcal{G}^\mathcal{V}(N,tk)$ considered in \cite{BJR07}. This allows us to apply some results of that paper, which we summarise in Appendix \ref{sec: IRG}, to analyse the stochastic coagulant process and the limit equation.
\begin{defn}\label{def: GNT}[Dynamic Inhomogenous Random Graphs] Fix a measure $\mu_0$ satisfying Assumption \ref{hyp: A}. Let $\mathbf{x}_N=(x_i, i=1,2,...,l^N)$ be a collection random points in $S$ of potentially random length $l^N$, and sample $\tau_e \sim \text{Exponential}(1)$, independently of each other, for $e=(ij), 1\le i,j\le l^N$, and independently of $\mathbf{x}_N$. We define the kernel \begin{equation} k(v,w)=2\overline{K}\left(x,y\right) \end{equation} where the right-hand side is the total mass of the interaction kernel $\overline{K}(x,y)=K(x,y,S)$. We form the random graphs $(G^N_t)_{t \ge 0}$ on $\{1,2,...,l^N\}$ by including the edge $e=(ij)$ if \begin{equation}
    t\ge \frac{N \tau_e}{k(x_i,x_j)}.
\end{equation} We write $G^N_t \sim \mathcal{G}(\mathbf{x}_N, \frac{tK}{N})$ for the distribution of $G^N_t$, for a single fixed $t\ge 0$. We say that $G^N_0$ satisfy Assumption \ref{hyp: B} for $\mu_0$ if the same is true of the empirical measures $\mu^N_0=N^{-1}\sum_{i\le l^N}\delta_{x_i}$. We emphasise that the $x_i$ do \emph{not} change during the dynamics. \end{defn}

This has the following immediate consequences. Firstly, the conditions in Assumption \ref{hyp: B} guarantee that $ \mathcal{V}=(S, \mu_0, (\mathbf{x}_N)_{N\ge 1})$, is a generalised vertex space in the sense of \cite{BJR07}, which is recalled in Definition \ref{def: Generalised vertex space}, and $k$ is an irreducible kernel as described in Definition \ref{def: kernel}, thanks to (A4.). Using both parts of (B2.), one can also show that the kernel $k$ is \emph{graphical} in the sense of Definition \ref{def: graphical kernel}. \medskip \\ 
For all times $t$, $G^N_t$ is an instance of the inhomogeneous random graph from Definition \ref{definition of GN}. Moreover, the process $(G^N_t)_{t\ge 0}$ is increasing, and is a Markov process, by the memoryless property of the exponential variables $\tau_e$.  We write $T$ for the convolution operator \begin{equation} \label{eq: T} (Tf)(x)=\int_{S}f(y)k(x,y)\mu_0(dy) \end{equation} and $\|T\|$ for the associated operator norm in $L^2(\mu_0)$. \medskip \\ We write also $t_\mathrm{c}=\|T\|^{-1}$. The following is the basic statement of a phase transition for $G^N_t$, which follows from Theorem \ref{thrm: RG1}. \begin{lem}\label{lemma: basic phase transition} Let $\mu_0$ satisfy Assumption \ref{hyp: A}, and let $G^N_t$ be the random graphs constructed above, such that $G^N_0$ satify Assumption \ref{hyp: B}. Write $C_1(G^N_t)$ for the size of the largest component of $G^N_t$. Then we have the following phase transition: \begin{enumerate}[label=\roman*).] \item If $t\le t_\mathrm{c}$, then $N^{-1}C_1(G^N_t)\rightarrow 0$ in probability. \item If $t>t_\mathrm{c}$, then there exists $c=c(t)$ such that, with high probability, $C_1(G^N_t)\ge cN.$ \end{enumerate}  \end{lem}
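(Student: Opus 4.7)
The plan is to recognise that, for each fixed $t\ge 0$, the random graph $G^N_t$ is exactly an instance of the inhomogeneous random graph $\mathcal{G}(\mathbf{x}_N, tK/N)$ of Bollobás–Janson–Riordan, with kernel $t\cdot k$. The phase transition therefore follows directly from the general theory of such graphs (Theorem \ref{thrm: RG1} in Appendix \ref{sec: IRG}), once the appropriate hypotheses are verified.

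First I would identify the distribution of $G^N_t$ at a single time. By the construction of the $\tau_e$, conditionally on $\mathbf{x}_N$, the edges $e=(ij)$ are included independently with probability
\begin{equation}
\mathbb{P}\bigl(e\in G^N_t\mid\mathbf{x}_N\bigr)=1-\exp\!\bigl(-t\,k(x_i,x_j)/N\bigr),
\end{equation}
which matches the definition in Definition \ref{definition of GN} with kernel $t\cdot k$. Next I would check the three structural hypotheses required by Theorem \ref{thrm: RG1}: (a) $\mathcal{V}=(S,\mu_0,(\mathbf{x}_N)_N)$ is a generalised vertex space, which follows from Assumption \ref{hyp: B} (specifically from the vague convergence $\mu^N_0\to\mu_0$ and $\mu^N_0$ being empirical measures of $l^N/N\to\langle 1,\mu_0\rangle$ vertices); (b) $t\cdot k$ is irreducible in the sense of Definition \ref{def: kernel}, which is the content of (A4.) rephrased in terms of $\overline{K}$; (c) $t\cdot k$ is graphical, which uses both parts of (B2.) — the bilinear bound $\overline{K}(x,y)\le \Delta\,\varphi(x)\varphi(y)$ reduces the graphicality condition to uniform integrability of $\varphi^2$ under $\mu^N_0$, and this is exactly what (B2.) provides. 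These three verifications are noted as immediate in the paragraph preceding the lemma, so I would only sketch them explicitly.

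Once the framework applies, Theorem \ref{thrm: RG1} gives that the size of the largest component undergoes a phase transition at the unique time $t_\mathrm{c}$ at which the associated integral operator on $L^2(\mu_0)$ has norm $1$. Since the operator at time $t$ is simply $tT$, this threshold occurs when $t\|T\|=1$, i.e.\ at $t_\mathrm{c}=\|T\|^{-1}$, matching our definition. For $t\le t_\mathrm{c}$, the subcritical statement of Theorem \ref{thrm: RG1} gives $N^{-1}C_1(G^N_t)\to 0$ in probability. For $t>t_\mathrm{c}$, the supercritical statement provides the survival probability $\rho(t)>0$ of an associated branching process, and guarantees $N^{-1}C_1(G^N_t)\to \rho(t)$ in probability; in particular $C_1(G^N_t)\ge cN$ with high probability for any $0<c<\rho(t)\cdot\langle 1,\mu_0\rangle$.

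The only technical point of any substance is the verification of graphicality of $tk$ uniformly over the vertex sequence $\mathbf{x}_N$, since this is where the uniform integrability in (B2.) is genuinely used; the remaining steps are essentially bookkeeping translating between the coagulation setup and the random graph setup. In particular, irreducibility is simply Assumption (A4.), and the identification of the critical threshold as $\|T\|^{-1}$ is immediate from the scaling of the kernel. I therefore expect the proof to be short, amounting to a citation of Theorem \ref{thrm: RG1} after these verifications.
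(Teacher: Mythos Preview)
Your proposal is correct and matches the paper's approach exactly: the paper states that the lemma ``follows from Theorem \ref{thrm: RG1}'' after noting in the preceding paragraph that Assumption \ref{hyp: B} makes $\mathcal{V}$ a generalised vertex space, that (A4.) gives irreducibility of $k$, and that (B2.) ensures $k$ is graphical. One small notational correction: in the supercritical case the limit of $N^{-1}C_1(G^N_t)$ is $\int_S \rho_t(x)\,\mu_0(dx)$ rather than a scalar ``$\rho(t)$'', so the constant $c$ should be chosen below this integral.
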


We write $\mathcal{C}_1(G),...\mathcal{C}_j(G),...$ for the connected components, which we also call \emph{clusters}, of $G$, in decreasing order of size, allowing $\mathcal{C}_j=\emptyset$ if $G$ has fewer than $j$ components and $C_j(G)$ for the number of vertices in $\mathcal{C}_j(G)$.
For a cluster $\mathcal{C}$ of the graph $G^N_t$, we will write 
\begin{equation}
  M(\mathcal{C})=\sum_{i\in \mathcal{C}}\pi_0(x_i),\quad E(\mathcal{C})= \left(\sum_{i\in \mathcal{C}} \pi_j(x_i)\right)_{j=1}^{n},\quad  P(\mathcal{C})= \left(\sum_{i\in \mathcal{C}} \pi_j(x_i)\right)_{j=n+1}^{n+m}\end{equation}
for the unnormalised data, and
\begin{equation}\label{eq: cluster quantities}
\pi(\mathcal{C})=\sum_{i\in \mathcal{C}} \pi(x_i)=\left(M(\mathcal{C}),E(\mathcal{C}),P(\mathcal{C})\right),\quad
    \varphi(\mathcal{C})=\sum_{i\in \mathcal{C}} \sum_{j=0}^n \pi_j(x_i). \end{equation}
We write $\delta(\mathcal{C})$ for the point mass $\delta(\mathcal{C})=\delta_{\pi(\mathcal{C})}$, and $\pi_\star(G^N_t)$ for the normalised empirical measure \begin{equation} \pi_\star(G^N_t)=\frac{1}{N}\sum_\text{Clusters} \delta(\mathcal{C})\end{equation} where the sum is over all clusters $\mathcal{C}$ of $G^N_t$. This is connected to the stochastic coagulants as follows:

\begin{lem}[Coupling of Random Graphs and Stochastic Coagulants]\label{lemma: coupling} Fix points $\mathbf{x}_N=(x_1,...,x_{l^N(0)})$ in $S$, and let $(G^N_t)_{t\ge 0}$ be the random graph process described in Definition \ref{def: GNT} for this choice of vertex data. Consider also a stochastic coagulant $(\mu^N_t)_{t\ge 0}$ started from $\mu^N_0=\frac{1}{N}\sum_{i\le l^N(0)} \delta_{x_i}$. Then the processes $\pi_\star(G^N_t)$ and $\pi_\#\mu^N_t$ are equal in law.
\end{lem}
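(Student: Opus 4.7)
The plan is to recognise both $\pi_\star(G^N_t)$ and $\pi_\#\mu^N_t$ as pure-jump Markov processes on the countable space of finite atomic measures on $S^\Pi$ of the form $\tfrac{1}{N}\sum_\alpha\delta_{p_\alpha}$, verify that the two initial distributions coincide and that the two $Q$-matrices coincide, and then conclude by the standard uniqueness in law for a continuous-time Markov chain with bounded generator and prescribed initial distribution. At $t=0$ the graph $G^N_0$ has no edges, so each vertex $i$ is a singleton cluster with $\pi(\mathcal{C})=\pi(x_i)$, giving $\pi_\star(G^N_0)=\frac{1}{N}\sum_{i\le l^N(0)}\delta_{\pi(x_i)}=\pi_\#\mu^N_0$ pointwise.

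For the dynamics on the coagulant side, an unordered pair $\{x,y\}$ merges at total rate $2\overline{K}(x,y)/N$, and by item (i) of Definition \ref{def: BCS} the new particle $z$ satisfies $\pi(z)=\pi(x)+\pi(y)$ almost surely under $K(x,y,\cdot)/\overline{K}(x,y)$. Hence $\pi_\#\mu^N_t$ undergoes the transition $N^{-1}(\delta_{\pi(x)}+\delta_{\pi(y)})\to N^{-1}\delta_{\pi(x)+\pi(y)}$ at rate $2\overline{K}(x,y)/N$. On the graph side, edges internal to an existing component do not affect $\pi_\star(G^N_t)$, so nontrivial transitions occur only when an edge $(ij)$ with $i\in\mathcal{C}$ and $j\in\mathcal{C}'$ (distinct clusters) is added, at rate $k(x_i,x_j)/N=2\overline{K}(x_i,x_j)/N$, which merges the two clusters into one with $\pi$-data $\pi(\mathcal{C})+\pi(\mathcal{C}')$. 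The crucial computation is to apply the bilinear representation (\ref{eq: overline K}) and sum over all edges between $\mathcal{C}$ and $\mathcal{C}'$:
\begin{equation}
\sum_{i\in\mathcal{C},\,j\in\mathcal{C}'}\frac{2\overline{K}(x_i,x_j)}{N}=\frac{2}{N}\sum_{a,b}a_{ab}\Bigl(\sum_{i\in\mathcal{C}}\pi_a(x_i)\Bigr)\Bigl(\sum_{j\in\mathcal{C}'}\pi_b(x_j)\Bigr)=\frac{2}{N}\,\pi(\mathcal{C})\cdot A\,\pi(\mathcal{C}'),
\end{equation}
which matches the rate the coagulant would assign to a merger between two particles with $\pi$-data $\pi(\mathcal{C})$ and $\pi(\mathcal{C}')$.

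The main obstacle, and the step on which the bilinear hypothesis is indispensable, is to show that $\pi_\star(G^N_t)$ is itself Markov in its own filtration: a priori it is only a deterministic function of the underlying graph process $(G^N_t)_{t\ge 0}$, which is Markov by the memoryless property of the independent exponential clocks $\tau_e$. The identity displayed above resolves this: it shows that, given the current atomic decomposition $\pi_\star(G^N_t)$, the rate of any specific jump depends \emph{only} on the multiset of cluster-data and not on the finer graph information, because summing over ordered pairs of clusters with prescribed data $(p,q)$ recovers a rate expressible as $2\,p\cdot A\,q/N$. Without the bilinear form, two different graphs with the same $\pi_\star$-decomposition could have different merger rates, and the Markov property in the coarser filtration would fail. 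With that observation in hand, both processes become pure-jump Markov chains on the same countable state space with matching bounded $Q$-matrices and matching initial laws, and uniqueness in distribution for such chains gives $\pi_\star(G^N_\cdot)\stackrel{d}{=}\pi_\#\mu^N_\cdot$ on path space, whence the claimed equality in law.
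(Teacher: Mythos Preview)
Your proof is correct and follows essentially the same approach as the paper's sketch: verify that both $\pi_\#\mu^N_t$ and $\pi_\star(G^N_t)$ are Markov (the lumpability holding precisely because the total rate $\overline{K}$ depends on particles only through $\pi$, via the bilinear form~(\ref{eq: overline K})), check that the transition rates and initial laws agree, and conclude by uniqueness in law for a bounded-rate jump chain. One minor omission is the $J$-kernel: internal evolutions $x\mapsto y$ via $J(x,\cdot)$ preserve $\pi$ by item~(i) of Definition~\ref{def: BCS}, so they leave $\pi_\#\mu^N_t$ unchanged and contribute nothing to its generator, which is why they can be safely ignored.
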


\begin{rmk}\label{rmk: why random graphs}This is the key result which makes much of our analysis possible. Many of the remaining points of Theorem \ref{thrm: Smoluchowski equation} above concern only the moments $\langle \pi_i, \mu_t\rangle$, $\langle \varphi^2, \mu_t\rangle$ which depend on $\mu_t$ only through the pushforward $\pi_\#\mu_t.$ By applying Lemma \ref{lemma: local uniform convergence of stochastic coagulent} in the space $S^\Pi$, we can use the pushforwards $\pi_\#\mu^N_t$ as stochastic proxies to $\pi_\#\mu_t$, and thanks to Lemma~\ref{lemma: coupling}, the measures $\pi_\#\mu^N_t$ can be realised as $\pi_\star(G^N_t)$ for a random graph process $G^N_t$. In this way, we can apply results from the theory of random graphs \cite{BJR07} to deduce results about solutions $(\mu_t)$ to the Smoluchowski equation (\ref{eq: E+G}). \end{rmk}
\begin{proof}[Sketch of proof of Lemma~\ref{lemma: coupling}]
Let us fix $\mathbf{x}_N$. Firstly, both processes are Markov: for $\pi_\#\mu^N_t$, the follows because the total rate (\ref{eq: overline K}) depends only on $\pi(x), \pi(y)$, and similarly for $\pi_\star(G^N_t)$. One may also verify that the two processes undergo the same transitions at the same rates, again thanks to (\ref{eq: overline K}), and that the total rate is bounded in terms of $\mathbf{x}_N$. The boundedness of the total rate implies the uniquness in law for the corresponding Markov generator, which concludes the proof.    \end{proof}

Combining this with the approximation result Lemma \ref{lemma: local uniform convergence of stochastic coagulent} for the stochastic coagulant, we may connect the random graph process to the limit equation as follows.
\begin{lem}[Convergence of the Random Graphs]\label{lemma: convergence of random graphs} Let $\mu_0$ be a measure on $S$ satisfying Assumption \ref{hyp: A}, and let $(G^N_t)_{t\ge 0}$ be the random graph processes constructed above with initial data $\mathbf{x}_N=(x_1,...x_{l^N})$ which satisfies Assumption \ref{hyp: B}. Let $(\mu_t)_{t\ge 0}$ be the solution to the Smoluchowski Equation (\ref{eq: E+G}) starting at $\mu_0$; then we have the local uniform convergence \begin{equation}\sup_{t\le t_\mathrm{f}} \hspace{0.1cm}d_\Pi(\pi_\star(G^N_t), \pi_\#\mu_t) \rightarrow 0 \end{equation} in probability, for all $t_\mathrm{f}<\infty,$  where $d_\Pi$ is a metric for the vague topology $\mathcal{F}(\mathcal{M}_{\le 1}(S^\Pi), C_c(S^\Pi))$. \end{lem}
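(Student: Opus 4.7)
The plan is to reduce the statement to Lemma \ref{lemma: local uniform convergence of stochastic coagulent} via Lemma \ref{lemma: coupling}, and then check that the pushforward map $\pi_\#:\mathcal{M}_{\le 1}(S)\to \mathcal{M}_{\le 1}(S^\Pi)$ is continuous for the vague topologies in a way that transfers uniform convergence.

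First, by Lemma \ref{lemma: coupling} the laws of the whole processes $(\pi_\star(G^N_t))_{t\ge 0}$ and $(\pi_\#\mu^N_t)_{t\ge 0}$ coincide, so the distribution of $\sup_{t\le t_\mathrm{f}}d_\Pi(\pi_\star(G^N_t),\pi_\#\mu_t)$ equals the distribution of $\sup_{t\le t_\mathrm{f}}d_\Pi(\pi_\#\mu^N_t,\pi_\#\mu_t)$. It therefore suffices to prove the latter tends to $0$ in probability.

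The key observation is that the pushforward map $\Phi:\mu\mapsto \pi_\#\mu$ is continuous from $(\mathcal{M}_{\le 1}(S),d)$ to $(\mathcal{M}_{\le 1}(S^\Pi),d_\Pi)$. For any $g\in C_c(S^\Pi)$, the function $g\circ\pi$ is continuous on $S$ by continuity of $\pi$ (Def.\ \ref{def: BCS}). Moreover, if $\mathrm{supp}(g)\subset\{w\in S^\Pi:|w|\le R\}$, then (A4.) of Def.\ \ref{def: BCS} together with $\pi_0,\pi_1,\dots,\pi_n\ge 0$ gives $|\pi(x)|^2\le \pi_0(x)^2+\sum_{i=1}^n\pi_i(x)^2+\sum_{i=n+1}^{n+m}\pi_i(x)^2\le (2+C)\varphi(x)^2$, so that $\mathrm{supp}(g\circ \pi)\subset S_{R\sqrt{2+C}}$, which is compact. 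Thus $g\circ\pi\in C_c(S)$, and since $\langle g,\pi_\#\mu\rangle=\langle g\circ \pi,\mu\rangle$, the functional $\mu\mapsto \langle g,\pi_\#\mu\rangle$ is continuous in the vague topology on $S$, which proves the continuity of $\Phi$.

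To pass from pointwise continuity of $\Phi$ to preservation of uniform-in-time convergence, we argue by contradiction along subsequences: since $t\mapsto \mu_t$ is continuous in $d$ (as used in the proof of Lemma \ref{lemma: local uniform convergence of stochastic coagulent}), the curve $K=\{\mu_t:t\le t_\mathrm{f}\}$ is a compact subset of $(\mathcal{M}_{\le 1}(S),d)$. Lemma \ref{lemma: local uniform convergence of stochastic coagulent} gives $\sup_{t\le t_\mathrm{f}}d(\mu^N_t,\mu_t)\to 0$ in probability. If $\sup_{t\le t_\mathrm{f}}d_\Pi(\pi_\#\mu^N_t,\pi_\#\mu_t)$ did not converge to $0$ in probability, then along some subsequence and on a positive-probability event we could find times $t_N\le t_\mathrm{f}$ with $d_\Pi(\pi_\#\mu^N_{t_N},\pi_\#\mu_{t_N})\ge\varepsilon$; extracting a further subsequence so that $t_N\to t^\ast$, the uniform convergence $d(\mu^N_{t_N},\mu_{t_N})\to 0$ together with $\mu_{t_N}\to \mu_{t^\ast}$ gives $\mu^N_{t_N}\to \mu_{t^\ast}$, so by continuity of $\Phi$ both $\pi_\#\mu^N_{t_N}$ and $\pi_\#\mu_{t_N}$ converge to $\pi_\#\mu_{t^\ast}$ in $d_\Pi$, contradicting the lower bound. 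The main technical point is the verification in the previous paragraph that $\pi$ is effectively proper in the sense that $g\circ\pi\in C_c(S)$, which is exactly what (A4.) was designed to ensure; once this is in hand the rest is a routine soft-analysis argument.
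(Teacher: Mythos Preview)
Your approach is essentially the paper's implicit argument: the paper presents this lemma as an immediate consequence of combining Lemma \ref{lemma: coupling} with Lemma \ref{lemma: local uniform convergence of stochastic coagulent} (cf.\ Remark \ref{rmk: why random graphs}), and your continuity argument for $\pi_\#$ is a natural way to make this precise.

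There is, however, a genuine error in your verification that $g\circ\pi$ has compact support. Your inequality $|\pi(x)|^2 \le (2+C)\varphi(x)^2$ points the wrong way: it gives a \emph{lower} bound on $\varphi(x)$ in terms of $|\pi(x)|$, not the upper bound you need to conclude $\mathrm{supp}(g\circ\pi)\subset S_\xi$. The correct (and simpler) argument is that if $|\pi(x)|\le R$ then each coordinate satisfies $|\pi_i(x)|\le R$, whence $\varphi(x)=\sum_{i=0}^n \pi_i(x)\le (n+1)R$, so $\mathrm{supp}(g\circ\pi)\subset S_{(n+1)R}$, which is compact by item iv) of Definition \ref{def: BCS}. (A minor labelling point: you cite ``(A4.) of Def.\ \ref{def: BCS}'', but (A4.) is the irreducibility hypothesis in Assumption \ref{hyp: A}; the property you are using is item iv) of Definition \ref{def: BCS}.)

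The subsequence argument in your final paragraph is somewhat loose in how it mixes the probabilistic and deterministic parts, though the idea is sound. A cleaner route: item iv) of Definition \ref{def: BCS} makes $S$ locally compact, so $(\mathcal{M}_{\le 1}(S),d)$ is compact in the vague topology; hence $\Phi=\pi_\#$ is uniformly continuous, and the implication $\sup_{t\le t_\mathrm{f}} d(\mu^N_t,\mu_t)\to 0 \Rightarrow \sup_{t\le t_\mathrm{f}} d_\Pi(\pi_\#\mu^N_t,\pi_\#\mu_t)\to 0$ is immediate.
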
 

We can also compute the critical time associated to $G^N_t$ explicitly:
\begin{lem}[Computation of critical time]\label{lemma: computation of tcrit} Let $\mu_0$ be a measure satisfying Assumption \ref{hyp: A}, and let $G^N_t$ be random graphs satisfying Assumption \ref{hyp: B}. Then the convolution operator $T$ constructed above is a bounded linear map on $L^2(\mu_0)$ and the inverse of the critical time for the graph phase transition, $t_\mathrm{c}^{-1}$, is the largest eigenvalue of the $n\times n$ matrix $\Lambda(\mu_0)$ given by $\Lambda(\mu_0)_{ij}=\langle (A\pi)_i\pi_j, \mu_0\rangle$. In particular, $t_\mathrm{c}\in (0,\infty).$
\end{lem}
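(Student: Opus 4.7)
The plan is to apply Theorem \ref{thrm: RG1} to the random graph process $(G^N_t)_{t\ge 0}$, which identifies $t_\mathrm{c}^{-1}$ with the operator norm $\|T\|$ on $L^2(\mu_0)$, and then to reduce the computation of $\|T\|$ to a finite-dimensional spectral problem via the bilinear structure of the kernel and the involution symmetry (A1).

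First I would check boundedness of $T$. The image of $T$ is contained in the finite-dimensional subspace $V=\mathrm{span}\{\pi_1,\ldots,\pi_{n+m}\}$, so boundedness amounts to checking $\pi_i\in L^2(\mu_0)$ for each $i$: for $i\le n$ this follows from (A2) together with $\mu_0$ being a sub-probability measure (via $\pi_i^2\le 1+\pi_i^3$), while for $i>n$ it follows from the bound $\pi_i^2\le C\varphi^2$ in item iv of Definition \ref{def: BCS} combined with $\varphi\in L^2(\mu_0)$. Symmetry of $A$ makes $T$ self-adjoint.

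Next I would use the involution $R$ to split the norm computation. The identities $\pi_i\circ R=\pm\pi_i$ combined with the block-diagonal form of $A$ give $k(Rx,Ry)=k(x,y)$, and since $R_{\#}\mu_0=\mu_0$ by (A1) the operator $T$ commutes with the isometry $f\mapsto f\circ R$ of $L^2(\mu_0)$. The decomposition $L^2(\mu_0)=L^2_+\oplus L^2_-$ into $R$-even and $R$-odd functions splits $T=T^+\oplus T^-$, with the image of $T^+$ contained in $V^+:=\mathrm{span}\{\pi_1,\ldots,\pi_n\}$ and of $T^-$ in $V^-:=\mathrm{span}\{\pi_{n+1},\ldots,\pi_{n+m}\}$ (cross terms vanishing by the block structure of $A$). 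I would then prove $\|T^-\|\le \|T^+\|$ by noting that for $f\in L^2_-$ the function $|f|$ is $R$-invariant (so $|f|\in L^2_+$), while $k\ge 0$ gives the pointwise bound $|Tf|\le T|f|$, and hence $\|Tf\|_{L^2}/\|f\|_{L^2}\le \|T|f|\|_{L^2}/\||f|\|_{L^2}\le \|T^+\|$. Therefore $\|T\|=\|T^+\|$.

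Finally I would diagonalise $T^+$ on the $n$-dimensional subspace $V^+$. Using the linear independence of $\{\pi_i\}_{i\le n}$ in $L^2(\mu_0)$ from (A3), writing $f=\sum_{l=1}^n c_l\pi_l$ converts the eigenvalue equation $T^+f=\lambda f$ into a matrix eigenproblem for the coefficient vector $c$, whose matrix is a scalar multiple of $A^+ G^+=\Lambda(\mu_0)$, where $G^+_{il}=\langle\pi_i\pi_l,\mu_0\rangle$ is the Gram matrix, positive definite by (A3). Hence $\|T^+\|$, and therefore $\|T\|$, coincides (up to the constant coming from the convention $k=2\overline{K}$) with the spectral radius of $\Lambda(\mu_0)$. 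Strict positivity and finiteness of $\mathfrak{r}(\Lambda(\mu_0))$, and hence $t_\mathrm{c}\in(0,\infty)$, follow from the non-vanishing row condition in item v of Definition \ref{def: BCS} together with positive-definiteness of $G^+$.

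The main obstacle is the step $\|T\|=\|T^+\|$. The operator $T^-$ corresponds to the block $A^{\mathrm{par}}$, whose entries need not be nonnegative and so cannot be analysed by Perron--Frobenius arguments applied directly to $A^{\mathrm{par}}$; the bound $\|T^-\|\le \|T^+\|$ relies instead on the specific feature that $k$ itself is pointwise nonnegative, which lets $|f|$ dominate $f$ after reflecting into the $L^2_+$ subspace. Handled this way, the antisymmetric part of the spectrum is invisible to the computation of $t_\mathrm{c}$ and only the upper block $A^+$ enters the final formula.
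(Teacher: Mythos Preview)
Your argument is correct and reaches the same conclusion as the paper, but the key step---showing that the odd sector does not contribute to $\|T\|$---is handled differently. The paper quotes Theorem~\ref{lemma: spectrum of T} from \cite{BJR07}, which gives that the top eigenspace of $T$ is one-dimensional and spanned by a single-signed function; since any $f\in E_{\mathrm{sym}}$ satisfies $f\circ R=-f$ and hence cannot be single-signed unless zero, the top eigenspace must lie in $E_+$. You instead bypass this Perron--Frobenius input entirely: using only $k\ge 0$ and the observation that $|f|\in L^2_+$ whenever $f\in L^2_-$, the pointwise inequality $|Tf|\le T|f|$ yields $\|T^-\|\le\|T^+\|$ directly. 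Your route is more self-contained (it does not need the simplicity or sign of the top eigenfunction) and in fact proves a slightly stronger statement than the paper uses; the paper's route is shorter once the cited lemma is available. Both then finish identically by reading off the matrix of $T|_{E_+}$ in the basis $\pi_1,\dots,\pi_n$.
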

\begin{rmk} This is exactly the form claimed for $t_\mathrm{g}$ in Theorem \ref{thrm: Smoluchowski equation}. However, we have not yet established that $t_\mathrm{c}=t_\mathrm{g}$; this is the content of Lemma \ref{lemma: connect critical times}. \end{rmk}
\begin{proof}[Proof of Lemma~\ref{lemma: computation of tcrit}]
Firstly, by (A2.), it is easy to see that $k\in L^2(S\times S, \mu_0\times\mu_0)$, and so, by Lemma \ref{lemma: spectrum of T}, $\|T\|=t_\mathrm{c}^{-1}$ is the largest eigenvalue of $T$; its eigenspace is one-dimensional and consists of functions that are single signed, $\mu_0$- almost everywhere.
Since $0< \|T\|< \infty$ we have $0 < t_\mathrm{c} < \infty$.

In order to reduce from the operator $T$ to the matrix $\Lambda(\mu_0)$ we construct a basis $\{e_i\}_{i\geq 1}$ of $L^2(\mu_0)$ such that
\begin{equation}
       e_i(x)=\pi_i(x), \hspace{0.2cm}i=1,2,..n+m
\end{equation} and, for $i>n+m$, $e_i$ is orthogonal to $E=\text{Span}(e_1,...,e_{n+m})$.
Note that $\pi_0$ plays no special role in the basis, because it does not appear in the rate $\overline{K}$.
We also write $E_+=\text{Span}(e_1,...e_n)$ and $E_\mathrm{Sym}=\text{Span}(e_{n+1},....,e_{n+m})$.
   By expanding the total rate $\overline{K}(x,y)$, we see that, for all $f\in L^2(m),$ \begin{equation} \label{eq: expansion of Tf}
       (Tf)(x)  =2\sum_{i,j=1}^{n+m} a_{ij}\langle f, \pi_i\rangle_{L^2(\mu_0)} \pi_j(x)
   \end{equation} 
 where $\langle\cdot,\cdot\rangle_{L^2(\mu_0)}$ denotes the $L^2(\mu_0)$ inner product.
Therefore, $T$ maps into the subspace $E$, and is 0 on its orthogonal complement.
We further note that the subspaces $E_+, E_\mathrm{sym}$ are orthogonal, and are invariant under $T$.
Therefore, the eigenspace $E^\lambda$ corresponding to $\lambda=t_\mathrm{c}^{-1}$ is a direct sum $E_+^\lambda \oplus E_\mathrm{sym}^\lambda$ of eigenspaces contained within $E_+, E_\mathrm{sym}$. \medskip \\ Since $E^\lambda$ is one-dimensional, one summand must be trivial, and so either $E^\lambda = E^\lambda_+ \subseteq E_+$, or $E^\lambda\subseteq E_\mathrm{sym}$. To exclude the second possibility, we note that any $f\in E_\mathrm{sym}$ satisfies $f(Rx)=-f(x)$ for all $x$ by Definition~\ref{def: BCS}, while eigenfunctions of $T$ are single-signed $\mu_0$-almost everywhere. It therefore follows that $E^\lambda \subseteq E_+$ and that $t_\mathrm{c}^{-1}$ is the largest eigenvalue of $T|_{E_+}$.

The result is now immediate since (\ref{eq: expansion of Tf}) shows that $\Lambda(\mu_0)$ is the matrix representation of $T|_{E_+}$ respect to the basis introduced above.
\end{proof}

We also define $\rho_t$ as the survival function from Lemma \ref{lemma: survival function}, given by the maximal solution to \begin{equation} \label{eq: nonlinear fixed point equation} \rho_t(x)=1-\exp\left(-t(T\rho_t)(x)\right). \end{equation}  We note, for future use, the following properties where $k$ is the kernel given above.
\begin{lem}\label{lemma: form of rho-t}
    The survival function $\rho_t(v)=\rho(tk,x)$ takes the form \begin{equation}
        \rho_t(x)=1-\exp\left(-\sum_{i=1}^n c^i_t\pi_i(x)\right)
    \end{equation} for some $c^i_t \ge 0$. Moreover, the functions $t\mapsto c^i_t$ are continuous.
\end{lem}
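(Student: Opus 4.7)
The plan is to substitute the bilinear expansion $k(x,y)=2\overline{K}(x,y)=2\sum_{i,j=1}^{n+m}a_{ij}\pi_i(x)\pi_j(y)$ directly into the fixed-point equation (\ref{eq: nonlinear fixed point equation}). This gives
\begin{equation*}
t(T\rho_t)(x)=\sum_{i=1}^{n+m}b^i_t\,\pi_i(x),\qquad b^i_t:=2t\sum_{j=1}^{n+m}a_{ij}\langle\pi_j,\rho_t\rangle_{L^2(\mu_0)},
\end{equation*}
so any solution of (\ref{eq: nonlinear fixed point equation}) automatically takes the exponential-linear form $\rho_t=1-\exp(-\sum_i b^i_t\pi_i)$. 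It then remains to show that $b^i_t=0$ for $i>n$, that $b^i_t\ge 0$ for $i\le n$, and that $t\mapsto b^i_t$ is continuous.

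To kill the $i>n$ coefficients I would use the $R$-symmetry. The bilinear expansion combined with the block-diagonal form of $A$ and $\pi_i\circ R=\pm\pi_i$ shows that $\overline{K}(Rx,Ry)=\overline{K}(x,y)$, so $k$ is $R$-symmetric; together with $R_\#\mu_0=\mu_0$ from (A1.), this implies that $\rho\mapsto\rho\circ R$ preserves the set of solutions of (\ref{eq: nonlinear fixed point equation}), and applying maximality symmetrically to $\rho_t$ and $\rho_t\circ R$ gives $\rho_t\circ R=\rho_t$. Changing variables $x\mapsto Rx$ under $R_\#\mu_0=\mu_0$ and using $\pi_i\circ R=-\pi_i$ for $i>n$ then yields $\langle\pi_i,\rho_t\rangle=0$ for those indices; combined with the block-diagonal vanishing $a_{ij}=0$ whenever $i\le n<j$ or $j\le n<i$ from (A5.), this forces $b^i_t=0$ for all $i>n$. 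Setting $c^i_t:=b^i_t=2t\sum_{j=1}^n a_{ij}\langle\pi_j,\rho_t\rangle$ for $i\le n$, nonnegativity is immediate from (ii) of Definition~\ref{def: BCS}, from nonnegativity of the entries of $A^+$ in (A5.), and from $\rho_t\ge 0$.

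Continuity of $c^i_t$ reduces to continuity of $t\mapsto\langle\pi_j,\rho_t\rangle$ for each $j\le n$. The survival function is monotone in $t$ (more edges survive in the coupled graph process $G^N_t$ at later times), so the one-sided pointwise limits $\rho_{t\pm}(x)$ exist. Dominated convergence in (\ref{eq: nonlinear fixed point equation}), using $\rho_s\le 1$ and $\int k(x,\cdot)\,d\mu_0<\infty$ by (A2.), shows that $\rho_{t+}$ and $\rho_{t-}$ are themselves fixed points at time $t$; maximality then gives right continuity $\rho_{t+}=\rho_t$. For left continuity, if $t\le t_\mathrm{g}$ one has $\rho_s=0$ on $[0,t]$ and there is nothing to prove, while for $t>t_\mathrm{g}$ the uniqueness of the positive fixed point of the concave operator $\rho\mapsto 1-e^{-tT\rho}$ forces $\rho_{t-}=\rho_t$, since $\rho_s>0$ on a left neighbourhood of $t$ rules out $\rho_{t-}\equiv 0$. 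Integrating against $\pi_j\in L^1(\mu_0)$ by dominated convergence then converts pointwise continuity of $\rho_t$ into continuity of $\langle\pi_j,\rho_t\rangle$, and hence of $c^i_t$. The main obstacle is the uniqueness of the nontrivial fixed point, which rests on the strict concavity of $s\mapsto 1-e^{-s}$ and must be applied carefully at the bifurcation; the remaining steps are direct algebraic consequences of the bilinear and $R$-symmetric structure of the coagulation system.
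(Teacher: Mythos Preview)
Your proposal is correct and follows essentially the same approach as the paper: expand $t(T\rho_t)$ using the bilinear form of $k$, use the $R$-symmetry and maximality to force $\rho_t\circ R=\rho_t$ so that the coefficients with $i>n$ vanish, and read off nonnegativity from $A^+\ge 0$ and $\rho_t\ge 0$.

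The only substantive difference is in the continuity step. The paper simply invokes Theorem~\ref{thrm: continuity of rho} (quoted from \cite{BJR07}) to get pointwise continuity of $t\mapsto\rho_t(x)$ and then applies dominated convergence to the explicit formula $c^i_t=2t\sum_j a_{ij}\langle\pi_j\rho_t,\mu_0\rangle$. You instead sketch a self-contained proof of that continuity result via monotone one-sided limits and uniqueness of the positive fixed point of the concave map $\rho\mapsto 1-e^{-tT\rho}$. Your sketch is sound, and you correctly flag the uniqueness of the nontrivial fixed point as the place where real work is needed; this is exactly the content of \cite[Theorem~6.4]{BJR07}, so you are effectively reproving what the paper cites. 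One small notational point: at this stage of the paper only $t_\mathrm{c}=\|T\|^{-1}$ has been identified as the threshold for $\rho_t\equiv 0$, not $t_\mathrm{g}$; the equality $t_\mathrm{c}=t_\mathrm{g}$ is established later in Lemma~\ref{lemma: connect critical times}, so your case split should be phrased in terms of $t_\mathrm{c}$.
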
 This proves the first two assertions of item 4 of Theorem \ref{thrm: Smoluchowski equation}.
\begin{proof} Using the symmetry $k(Rx,Ry)=k(x,y)$ and Assumption (A1.), it is simple to verify that $\tilde{\rho}(x):=\rho_t(Rx)$ also satisfies the fixed point equation (\ref{eq: nonlinear fixed point equation}). By maximality of $\rho_t$, we must have $\rho_t(Rx)\le \rho_t(x)$ for all $x\in S$, which implies that $\rho_t$ is even under $R$. \medskip \\ Using the identification of the range of $T$ as in Lemma \ref{lemma: computation of tcrit}, we see that there exist $c^i_t: 1\le i\le n+m$ such that \begin{equation}
    t(T\rho_t)(x)=\sum_{i=1}^{n+m} c^i_t \pi_i(x)
\end{equation}and expanding $k$ as in (\ref{eq: expansion of Tf}), the coefficients are given explicitly by \begin{equation}\label{eq: exp for ct}
c^i_t=2t\sum_{j=1}^n a_{ij}\langle \pi_j\rho_t, \mu_0\rangle.
\end{equation} Since $\rho_t$ is even, we have $c^i_t=0$ for $i>n$, and since $\rho_t\ge 0$, $c^i_t\ge 0$ for $i=1,...,n$. Using (\ref{eq: nonlinear fixed point equation}) again, we obtain the claimed representation  \begin{equation}
   \rho_t(x)=1-\exp\left(-\sum_{i=1}^n c^i_t\pi_i(x)\right).
\end{equation} The continuity follows by applying dominated convergence to (\ref{eq: exp for ct}), and using the continuity of $\rho_t$ established in Theorem \ref{thrm: continuity of rho}. \end{proof} 

\section{\textbf{Equality of the Critical Times}} \label{sec: ECT}

In this section, we will prove that the critical time $t_\mathrm{c}$ for the graph process, introduced in Section \ref{sec: coupling_to_random_graph}, coincides with the gelation time for the limiting equation, defined in Section \ref{sec:SE} as the time at which mass and energy begin to escape to infinity. 
\begin{lem}\label{lemma: connect critical times} Let $\mu_0$ be a measure on $S$ satisfying Assumption \ref{hyp: A}. Let $(\mu_t)_{t\ge 0}$ be the solution to (\ref{eq: E+G}) starting at $\mu_0$, with associated data $M_t, E_t$ of the gel; recall that $t_\mathrm{g}$ is defined by \begin{equation}
    t_\mathrm{g}:=\inf\{t\ge 0: \left<\varphi, \mu_t\right> < \left<\varphi, \mu_0\right>\} 
    = \inf\{t\ge 0: g_t\neq 0\}.
\end{equation} Let $(G^N_t)$ be the random graph processes constructed above, and suppose that Assumption \ref{hyp: B} holds for $G^N_0, \mu_0$. Then the critical time $t_\mathrm{c}$ for the graph transition process coincides with the gelation time $t_\mathrm{g}$.
\end{lem}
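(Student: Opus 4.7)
The plan is to prove the two inequalities $t_\mathrm{g}\le t_\mathrm{c}$ and $t_\mathrm{g}\ge t_\mathrm{c}$ separately, exploiting the coupling of Lemma \ref{lemma: coupling} together with the phase transition Lemma \ref{lemma: basic phase transition} and the local uniform convergence Lemma \ref{lemma: convergence of random graphs}. The key observation is that the stochastic coagulant conserves total $\varphi$-mass exactly as a sum over clusters, whereas the limit $\mu_t$ loses $\varphi$-mass to the gel at $t_\mathrm{g}$; a macroscopic component in $G^N_t$ must therefore translate into a loss of $\varphi$-mass in $\mu_t$.

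For the supercritical direction $t_\mathrm{g}\le t_\mathrm{c}$, I would fix $t>t_\mathrm{c}$ and apply Lemma \ref{lemma: basic phase transition}(ii) to obtain a constant $c>0$ such that $C_1(G^N_t)\ge cN$ with high probability; since $\pi_0\ge 1$ this forces $\varphi(\mathcal{C}_1)\ge cN$. I then choose a continuous cutoff $\phi_R\in C_c(S^\Pi)$ with $0\le \phi_R\le \varphi$ and $\phi_R=\varphi$ on $\{\varphi\le R\}$. As the giant cluster lies outside the support of $\phi_R$ for large $N$, we have
\begin{equation*}
\langle \phi_R,\pi_\star(G^N_t)\rangle \;\le\; \langle \varphi,\pi_\star(G^N_t)\rangle - \frac{\varphi(\mathcal{C}_1)}{N} \;\le\; \langle \varphi,\mu^N_0\rangle - c
\end{equation*}
with high probability. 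Passing to the limit via Lemma \ref{lemma: convergence of random graphs} and (B2.), then letting $R\to\infty$ by monotone convergence, yields $\langle \varphi,\mu_t\rangle \le \langle \varphi,\mu_0\rangle-c$, so $t>t_\mathrm{g}$.

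For the subcritical direction $t_\mathrm{g}\ge t_\mathrm{c}$, I would fix $t<t_\mathrm{c}$ and aim to show $\langle \varphi,\mu_t\rangle=\langle \varphi,\mu_0\rangle$. Since the stochastic coagulant preserves $\varphi$-mass, $\langle\varphi,\pi_\star(G^N_t)\rangle=\langle\varphi,\mu_0^N\rangle\to \langle\varphi,\mu_0\rangle$ by (B2.). To transfer this conservation to $\mu_t$ I need the tail estimate: for every $\epsilon>0$ there exists $R<\infty$ such that, with high probability,
\begin{equation*}
\frac{1}{N}\sum_{\mathcal{C}:\,\varphi(\mathcal{C})>R}\varphi(\mathcal{C})\;<\;\epsilon.
\end{equation*}
Granted this, testing $\pi_\star(G^N_t)\to\pi_\#\mu_t$ against a $C_c(S^\Pi)$ truncation of $\varphi$ below $R$ gives $\langle \varphi,\mu_t\rangle\ge \langle \varphi,\mu_0\rangle - \epsilon$, which combined with the trivial inequality $\langle \varphi,\mu_t\rangle\le \langle \varphi,\mu_0\rangle$ (from the monotone construction in Lemma \ref{lemma: E and U}) gives $t\le t_\mathrm{g}$.

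The hard step will be the subcritical tail estimate. The natural approach is via \emph{susceptibility}: one expects that for $t<t_\mathrm{c}$ the weighted second moment $N^{-1}\sum_\mathcal{C}\varphi(\mathcal{C})^2$ is bounded in probability, uniformly in $N$, after which the tail bound follows from a Chebyshev/Markov estimate. Such a bound is consistent with the classical Erd\H{o}s--R\'enyi susceptibility result and its inhomogeneous generalisations in \cite{BJR07}, and relies on the second moment control $\sup_N \mathbb{E}\langle\varphi^2,\mu^N_0\rangle<\infty$ provided by (B2.). The technical work is therefore in importing or adapting the appropriate susceptibility statement for the inhomogeneous random graphs of Appendix \ref{sec: IRG}; once available, the rest of the argument is routine truncation.
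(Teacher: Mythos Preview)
Your supercritical direction ($t>t_\mathrm{c}\Rightarrow t>t_\mathrm{g}$) is correct and is in fact cleaner than the paper's route: you go directly from the giant component, exact $\varphi$-conservation of the coagulant, and a $C_c$ truncation to the strict inequality $\langle\varphi,\mu_t\rangle\le\langle\varphi,\mu_0\rangle-c$, bypassing the convergence of $g^N_t$. The only technicality to watch is that $\{\varphi\le R\}$ is not compact in the abstract space $S^\Pi$ because the parity coordinates $p_{n+1},\dots,p_{n+m}$ are unconstrained; you need Definition~\ref{def: BCS}(iv), which forces $|\pi_i(\mathcal{C})|\le C\varphi(\mathcal{C})$ for $i>n$ also at the level of cluster sums, to build a genuine $\phi_R\in C_c(S^\Pi)$ whose support contains all the relevant mass.

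The subcritical direction, however, has a real gap. You need, for fixed $t<t_\mathrm{c}$, that the $\varphi$-mass carried by clusters with $\varphi(\mathcal{C})>R$ is uniformly small, and you propose to derive this from a weighted susceptibility bound $N^{-1}\sum_\mathcal{C}\varphi(\mathcal{C})^2=\mathcal{O}_\mathrm{p}(1)$. No such statement is recorded in Appendix~\ref{sec: IRG}, and the naive route via Cauchy--Schwarz gives only $N^{-1}\sum_\mathcal{C}\varphi(\mathcal{C})^2\le C_1(G^N_t)\langle\varphi^2,\mu^N_0\rangle$, which is $o_\mathrm{p}(N)$, not $\mathcal{O}_\mathrm{p}(1)$. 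Obtaining the sharp susceptibility bound in the inhomogeneous setting requires a genuine branching-process comparison with the third-moment control of (A2.); this is plausible but is exactly the ``technical work'' you defer, so the proof is incomplete as written.

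The paper avoids this altogether by first proving Lemma~\ref{lemma: WCOG}, the pointwise convergence $g^N_t\to g_t$ for every fixed $t$, via a five-term decomposition that uses the \emph{no mesoscopic clusters} result Theorem~\ref{thrm: RG2} rather than any susceptibility estimate. Both inequalities $t_\mathrm{g}\le t_\mathrm{c}$ and $t_\mathrm{g}\ge t_\mathrm{c}$ are then read off from WCOG by comparing with Lemma~\ref{lemma: basic phase transition}. This is more circuitous than your supercritical half, but it is complete, and the investment in WCOG is reused repeatedly later (Corollary~\ref{corr: gel at tgel}, Lemma~\ref{lemma: uniform convergence of coagulant}, Section~\ref{sec: COG}). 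If you want to preserve your elementary supercritical argument, you could pair it with the paper's WCOG-based subcritical half; alternatively, importing a subcritical susceptibility theorem from \cite{BJR07} would close your gap but lies outside what the present paper records.
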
 
The following is a straightforward corollary. \begin{cor}\label{corr: actual expression for tg} Let $\mu_0$ satisfy Assumption \ref{hyp: A}, and let $(\mu_t)_{t\ge 0}$ be the solution to (\ref{eq: E+G}) starting at $\mu_0$, with gelation at $t_\mathrm{g}$. Then $t_\mathrm{g}$ is given explicitly by (\ref{eq: closed form for tg}). \end{cor}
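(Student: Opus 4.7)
The plan is simply to chain the two preceding lemmas. First, to the given $\mu_0$ satisfying Assumption~\ref{hyp: A} I attach any sequence of initial vertex configurations $(\mathbf{x}_N)_{N\ge 1}$ satisfying Assumption~\ref{hyp: B} --- for instance by sampling $\mathbf{x}_N$ as a Poisson random measure of intensity $N\mu_0$, as remarked after Assumption~\ref{hyp: B} --- and then form the associated random graph process $(G^N_t)_{t\ge 0}$ of Definition~\ref{def: GNT}, along with its critical time $t_\mathrm{c} = \|T\|^{-1}$, where $T$ is the convolution operator defined in~(\ref{eq: T}).

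Next, I will invoke Lemma~\ref{lemma: connect critical times} to identify $t_\mathrm{g} = t_\mathrm{c}$, and Lemma~\ref{lemma: computation of tcrit} to identify $t_\mathrm{c}^{-1}$ with the largest eigenvalue of the finite $n \times n$ matrix $\Lambda(\mu_0)$ with entries $\Lambda(\mu_0)_{ij} = \langle (A\pi)_i \pi_j, \mu_0\rangle$. Since $A^+$ has nonnegative entries and each $\pi_i$, $1 \le i \le n$, is nonnegative, $\Lambda(\mu_0)$ is entrywise nonnegative, and Perron--Frobenius then guarantees that its largest eigenvalue coincides with its spectral radius $\mathfrak{r}(\Lambda(\mu_0))$. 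Chaining these identifications will yield
\begin{equation*}
t_\mathrm{g} \;=\; t_\mathrm{c} \;=\; \mathfrak{r}(\Lambda(\mu_0))^{-1},
\end{equation*}
which is exactly the closed form~(\ref{eq: closed form for tg}).

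I do not anticipate any substantive obstacle: all the real work has already been carried out in Lemmas~\ref{lemma: connect critical times} and~\ref{lemma: computation of tcrit}, and the role of this corollary is merely to record the resulting explicit expression. The only minor point worth checking is the existence of some $(\mathbf{x}_N)$ satisfying Assumption~\ref{hyp: B} so that both lemmas apply to a common random graph process, which Poisson sampling handles uniformly in $\mu_0$; thereafter the identification is a one-line concatenation.
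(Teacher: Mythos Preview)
Your proposal is correct and follows essentially the same route as the paper: construct $\mathbf{x}_N$ by Poisson sampling so that Assumption~\ref{hyp: B} holds, then chain Lemma~\ref{lemma: connect critical times} ($t_\mathrm{g}=t_\mathrm{c}$) with Lemma~\ref{lemma: computation of tcrit} ($t_\mathrm{c}^{-1}$ is the largest eigenvalue of $\Lambda(\mu_0)$). Your explicit mention of Perron--Frobenius to pass from ``largest eigenvalue'' to ``spectral radius'' is a small clarification the paper leaves implicit.
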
 \begin{proof}[Proof of Corollary \ref{corr: actual expression for tg}] Let us form $\mathbf{x}_N$ by sampling points as a Poisson random measure with intensity $N\mu_0$. It is immediate that the resulting data $\mathbf{x}_N$ satisfies Assumption \ref{hyp: B} for the measure $\mu_0$, and the critical time $t_\mathrm{c}$ of the associated graphs $G^N_t$ is given by the claimed expression (\ref{eq: closed form for tg}). From the previous lemma, it now follows that the gelation time $t_\mathrm{g}=t_\mathrm{c}$, which proves the claimed result. \end{proof} 
The proof of Lemma \ref{lemma: connect critical times} is based on the following weak version of the convergence of the gel in Theorem \ref{thrm: convergence of stochastic coagulent}, which will be revisited in Section \ref{sec: COG} to establish uniform convergence.
\begin{lem} \label{lemma: WCOG} Let $(\mu_t)_{t\ge 0}, M_t, E_t$ be as in Lemma~\ref{lemma: connect critical times} and $G^N_t$ be as in the proof of Corollary~\ref{corr: actual expression for tg}. Fix $t>0$, and write $g^N_t$ for the scaled mass and energy of the largest particle in $G^N_t$, as in Section \ref{sec: coupling_to_random_graph}: \begin{equation} g^N_t=\frac{1}{N}\hspace{0.1cm}\pi(\mathcal{C}_1(G^N_t))=\left(\frac{1}{N}\sum_{i \in \mathcal{C}_1(G^N_t)} \pi_j(x_i)\right)_{j=0}^{n+m}=(M^N_t, E^N_t, P^N_t). \end{equation} Then $M^N_t\rightarrow M_t$ and $E^N_t\rightarrow E_t$ in probability.
\end{lem}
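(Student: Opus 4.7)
The idea is to sandwich $g^{j,N}_t$ between a matching upper and lower bound. The upper bound will come from the vague convergence $\mu^N_t\to\mu_t$ in Lemma~\ref{lemma: local uniform convergence of stochastic coagulent} together with conservation of $\pi_j$ in the stochastic coagulant. The lower bound will come from the coupling in Lemma~\ref{lemma: coupling} combined with the weighted giant-component theorem for inhomogeneous random graphs recalled in Appendix~\ref{sec: IRG}; the two bounds will then be reconciled with a uniform-integrability argument controlling the $\pi_j$-mass carried by non-giant components.

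\textbf{Upper bound.} Fix a cutoff $\xi>0$ with $\mu_t(\{\varphi=\xi\})=0$, which holds for almost every $\xi$ by a trivial adaptation of Lemma~\ref{lemma: regularity condition}. The function $\pi_j\mathbf{1}_{S_\xi}$ is bounded by $\xi$, supported on the compact set $S_\xi$ of Definition~\ref{def: BCS}(iv), and continuous off a $\mu_t$-null set. Hence Lemma~\ref{lemma: local uniform convergence of stochastic coagulent} gives $\langle \pi_j\mathbf{1}_{S_\xi},\mu^N_t\rangle\to\langle\pi_j\mathbf{1}_{S_\xi},\mu_t\rangle$ in probability. Since $\pi_j$ is conserved along the stochastic coagulant and $\langle\pi_j,\mu^N_0\rangle\to\langle\pi_j,\mu_0\rangle$ by (B2), and since the largest particle either lies in $S_\xi$ (contributing at most $\xi/N$ after normalisation) or sits in the tail $S_\xi^\mathrm{c}$, we have
\begin{equation*}
g^{j,N}_t \;\le\; \frac{\xi}{N} \;+\; \langle \pi_j,\mu^N_0\rangle - \langle \pi_j\mathbf{1}_{S_\xi},\mu^N_t\rangle.
\end{equation*}
Passing $N\to\infty$ and then $\xi\to\infty$ along nice values, with monotone convergence applied to the nonnegative $\pi_j$, yields $\limsup_N g^{j,N}_t \le \langle\pi_j,\mu_0\rangle-\langle\pi_j,\mu_t\rangle = g^j_t$ in probability.

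\textbf{Lower bound and identification.} By the coupling of Lemma~\ref{lemma: coupling} we may work with $N^{-1}\pi_j(\mathcal{C}_1(G^N_t))$. Applying the weighted refinement of the BJR07 giant-component result (Appendix~\ref{sec: IRG}) to the weight $w=\pi_j$, which lies in $L^2(\mu_0)$ by (A2), yields
\begin{equation*}
\frac{1}{N}\pi_j(\mathcal{C}_1(G^N_t)) \;\longrightarrow\; \int_S \pi_j(x)\,\rho_t(x)\,\mu_0(dx) \qquad\text{in probability,}
\end{equation*}
with $\rho_t$ as in Lemma~\ref{lemma: form of rho-t}. Combined with the upper bound this gives $\int \pi_j\rho_t\,d\mu_0 \le g^j_t$. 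For the reverse inequality, one decomposes $\langle\pi_j,\mu^N_0\rangle = g^{j,N}_t + N^{-1}\sum_{\mathcal{C}\neq\mathcal{C}_1}\pi_j(\mathcal{C})$ and upgrades the vague convergence of $\mu^N_t\to\mu_t$ to convergence of $\pi_j$-mass on the non-giant components: using BJR07-style estimates on the subcritical dual graph one bounds $N^{-1}\sum_{\mathcal{C}\neq\mathcal{C}_1}\pi_j(\mathcal{C})^2$ in probability, which supplies the uniform integrability needed to pass from $\langle\pi_j\mathbf{1}_{S_\xi},\mu^N_t\rangle\to\langle\pi_j\mathbf{1}_{S_\xi},\mu_t\rangle$ up to the untruncated limit $N^{-1}\sum_{\mathcal{C}\neq\mathcal{C}_1}\pi_j(\mathcal{C})\to\langle\pi_j,\mu_t\rangle$. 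Conservation then forces $g^{j,N}_t\to g^j_t$.

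\textbf{Main obstacle.} The hard step is closing the gap between the two bounds, i.e.\ identifying the BJR07 limit $\int\pi_j\rho_t\,d\mu_0$ with the Flory gel coordinate $g^j_t$ without circular appeal to item~3 of Theorem~\ref{thrm: Smoluchowski equation}. Vague convergence alone is too weak to integrate the non-compactly-supported test function $\pi_j$; what saves the argument is the quantitative second-moment control of non-giant components from the random graph theory, which provides the uniform integrability needed to promote vague convergence to convergence of $\pi_j$-mass. The symmetry assumption (A1) and the oddness of $\pi_{n+1},\ldots,\pi_{n+m}$ under $R$ then reduce the $P_t$-coordinate to zero trivially, so only the nonnegative coordinates $M_t,E_t$ require the argument above.
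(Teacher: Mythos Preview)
Your upper bound is correct and is essentially the same idea as in the paper: truncate at a level $\xi$, use vague convergence for the compactly supported piece, and use conservation together with (B2.) for the tail.  This corresponds to the terms $\mathcal{T}^1_N,\mathcal{T}^2_N,\mathcal{T}^5_N$ in the paper's five-term decomposition.

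The gap is in your lower bound.  You invoke a ``weighted refinement of the BJR07 giant-component result'' giving $N^{-1}\pi_j(\mathcal{C}_1(G^N_t))\to\int\pi_j\rho_t\,d\mu_0$ directly.  No such statement is among the results imported in Appendix~\ref{sec: IRG}; only the \emph{unweighted} version, Theorem~\ref{thrm: RG1}, is available.  More seriously, even granting that convergence, you are then forced to identify $\int\pi_j\rho_t\,d\mu_0$ with $g^j_t=\langle\pi_j,\mu_0-\mu_t\rangle$.  That identification is precisely Lemma~\ref{lemma: representation of M, E}, which is proven \emph{after} Lemma~\ref{lemma: WCOG} using the duality argument of Section~\ref{sec: finiteness of second moment}, which in turn relies on Lemma~\ref{lemma: WCOG} via Lemma~\ref{lemma: connect critical times}.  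You flag this circularity yourself, but the proposed cure---second-moment bounds on the dual subcritical graph---is not carried out and would in any case require the same chain of dependencies.

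The paper avoids $\rho_t$ entirely at this stage.  The missing tool is Theorem~\ref{thrm: RG2}: for any $\xi_N\to\infty$ with $\xi_N/N\to0$, the total vertex count in non-largest clusters of size $\ge\xi_N$ is $o_{\mathrm p}(N)$.  Combined with Cauchy--Schwarz against $\langle\varphi^2,\mu^N_0\rangle$ (bounded by (B2.)), this shows that the $\pi_j$-mass carried by all mesoscopic clusters is negligible.  That is exactly the uniform-integrability statement you need, and it closes the gap directly: the total $\pi_j$-mass splits into a microscopic part (handled by your upper-bound cutoff) and the giant component, with nothing left over.  No appeal to $\rho_t$, to the dual graph, or to weighted component theorems is required.
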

We first show that Lemma \ref{lemma: WCOG} implies Lemma \ref{lemma: connect critical times}; the remainder of this section is dedicated to the proof of Lemma \ref{lemma: WCOG}.
\begin{proof}[Proof of Lemma \ref{lemma: connect critical times}] Let us assume, for the moment, that Lemma \ref{lemma: WCOG} holds. Throughout, let $(x_i)_{i=1}^{l^N}$ be the vertex data of the random graph process, which we recall are independent of time.

Firstly, suppose for a contradiction that $t_\mathrm{g}< t_\mathrm{c}$. Then $\varphi(g_{t_\mathrm{c}})>0$, but we bound \begin{equation}\label{eq: use of CS 0} \varphi(g^N_{t_\mathrm{c}}) \le \left(\frac{1}{N}C_1(G^N_{t_\mathrm{c}})\right)^\frac{1}{2}\left(\frac{1}{N}\sum_{i=1}^{l^N}\varphi(x_i)^2\right)^\frac{1}{2}.\end{equation} The first term converges to $0$ in probability, by definition of the phase transition in Theorem \ref{thrm: RG1}, and the second term is bounded in $L^2$ by hypothesis (B2.). This implies that $\varphi(g^N_{t_\mathrm{c}})\rightarrow 0$ in probability, which contradicts Lemma \ref{lemma: WCOG}; we must therefore have that $t_\mathrm{g}\ge t_\mathrm{c}.$ \medskip \\ \medskip \\ Conversely, if $t< t_\mathrm{g}$, then $M_t=0$ by definition. Now, the convergence \begin{equation} \frac{1}{N}C_1(G^N_t) = M^N_t\rightarrow 0\end{equation} in probability implies that the largest cluster is of the order $o_\mathrm{p}(N)$, which is only possible if $t\le t_\mathrm{c}$ by Lemma \ref{lemma: basic phase transition}. Since $t<t_\mathrm{g}$ was arbitrary, we must have $t_\mathrm{g}\le t_\mathrm{c}$, and together with the previous argument, we have shown that $t_\mathrm{g}=t_\mathrm{c}$ as claimed. \end{proof} 
 The proof of Lemma \ref{lemma: WCOG} is based on the following argument. We know, from Theorem \ref{thrm: RG2}, that any `mesoscopic' clusters contain negligable mass; thanks to the integrability assumption (A2.), the same is true for the energy. Therefore, almost all mass and energy either belongs to the `microscopic' scale, whose convergence is quantified by Lemma \ref{lemma: local uniform convergence of stochastic coagulent}, or the giant component, whose convergence is the subject of interest here. Therefore, with a suitable approximation argument, the claimed convergence will follow from the quoted results.  \medskip\\ We begin with a preparatory lemma; throughout, we will assume the notation of Lemma \ref{lemma: WCOG}.
For the proof of of Lemma \ref{lemma: connect critical times}, and later Theorem \ref{thrm: convergence of stochastic coagulent}, we will wish to study the convergence of integrals $\langle \varphi f, \mu^N_t\rangle$, for bounded continuous functions $f$ with non-compact support. However, the convergence result Lemma \ref{lemma: local uniform convergence of stochastic coagulent} only gives us information when the support of $f$ is compact. Our second preparatory lemma allows us to approximate the integrals $\langle \varphi f, \mu^N_t\rangle$ for functions $f$ whose support is bounded in the $\pi_0$-direction.

\begin{lem}[A step towards uniform integrability]\label{lemma: STUI}
Let $\mu_0$, $(\mu^N_t)_{t\ge 0}$ be as in the previous lemma. Then, for every $r>0$,
\begin{equation}
    \beta(r,\eta):= \sup_{N\geq 1}
      \mathbb{E}\left[\sup_{t\geq 0}\bigg\langle \varphi 1[\varphi(x)>\eta, \pi_0(x)\leq r], \mu^N_t\bigg\rangle \right]
    \rightarrow 0\hspace{1cm} \text{as }\eta\rightarrow \infty.
\end{equation} 
\end{lem}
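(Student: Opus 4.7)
The plan is to use the random-graph coupling of Lemma~\ref{lemma: coupling} to reinterpret $\mu^N_t$ as the empirical cluster measure of $G^N_t$, whose vertex data $(x_i)_{i\le l^N}$ is frozen in time. By (B1.), each vertex has $\pi_0(x_i)=1$, and $\pi_0$ is additive under coagulation by item~(i) of Definition~\ref{def: BCS}, so every cluster $\mathcal{C}$ of $G^N_t$ satisfies $\pi_0(\mathcal{C})=|\mathcal{C}|$. Hence, as processes in $t$,
\begin{equation*}
\bigl\langle\varphi\,1[\varphi(x)>\eta,\,\pi_0(x)\le r],\mu^N_t\bigr\rangle
\stackrel{d}{=}
\frac{1}{N}\sum_j \varphi(\mathcal{C}_j(G^N_t))\,\mathbf{1}\bigl[|\mathcal{C}_j(G^N_t)|\le r,\ \varphi(\mathcal{C}_j(G^N_t))>\eta\bigr].
\end{equation*}

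The key step is a pigeonhole argument that eliminates the time variable. On the event $\{|\mathcal{C}_j|\le r,\ \varphi(\mathcal{C}_j)>\eta\}$ one has $\max_{i\in\mathcal{C}_j}\varphi(x_i)\ge\varphi(\mathcal{C}_j)/|\mathcal{C}_j|>\eta/r$, so the maximiser lies in the set $\{i\in\mathcal{C}_j:\varphi(x_i)>\eta/r\}$, and
\begin{equation*}
\varphi(\mathcal{C}_j)\,\mathbf{1}[|\mathcal{C}_j|\le r,\,\varphi(\mathcal{C}_j)>\eta]
\;\le\; r\,\max_{i\in\mathcal{C}_j}\varphi(x_i)
\;\le\; r\sum_{i\in\mathcal{C}_j:\,\varphi(x_i)>\eta/r}\varphi(x_i).
\end{equation*}
Since the clusters of $G^N_t$ are disjoint, summing over $j$ and dropping the indicator $|\mathcal{C}_j|\le r$ gives
\begin{equation*}
\frac{1}{N}\sum_j \varphi(\mathcal{C}_j(G^N_t))\,\mathbf{1}[\cdots]
\;\le\; \frac{r}{N}\sum_{i:\,\varphi(x_i)>\eta/r}\varphi(x_i)
\;=\; r\bigl\langle \varphi\,1[\varphi>\eta/r],\,\mu^N_0\bigr\rangle.
\end{equation*}
Crucially the right-hand side does not depend on $t$, so the bound survives taking $\sup_{t\ge 0}$.

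To conclude I would take expectations and apply Chebyshev with assumption (B2.):
\begin{equation*}
\beta(r,\eta)
\;\le\; r\,\sup_{N\ge 1}\mathbb{E}\bigl\langle\varphi\,1[\varphi>\eta/r],\,\mu^N_0\bigr\rangle
\;\le\; \frac{r^2}{\eta}\,\sup_{N\ge 1}\mathbb{E}\bigl\langle\varphi^2,\,\mu^N_0\bigr\rangle
\;\le\; \frac{Cr^2}{\eta}
\;\longrightarrow\; 0
\end{equation*}
as $\eta\to\infty$ for each fixed $r$. The main (potential) obstacle is the time supremum: without the graph coupling, controlling $\sup_{t\ge 0}$ of a jump-process functional would typically require a Doob or martingale estimate. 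The pigeonhole step is what sidesteps this entirely, by reducing the time-dependent question to a static estimate on the initial vertex data, where the uniform second-moment bound (B2.) does all the work.
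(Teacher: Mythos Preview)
Your argument is correct. Both your proof and the paper's proof share the same architecture: couple to the random graph, reduce the time-dependent cluster sum to a static quantity in the initial vertex data $(x_i)_{i\le l^N}$, and close with the uniform second-moment bound (B2.). Where they differ is the inequality that removes the time dependence. The paper applies Cauchy--Schwarz at the vertex level,
\[
\bigl\langle \varphi\,1[\varphi>\eta,\pi_0\le r],\mu^N_t\bigr\rangle
\le \langle \varphi^2,\mu^N_0\rangle^{1/2}\,\bigl\langle \pi_0\,1[\varphi>\eta,\pi_0\le r],\mu^N_t\bigr\rangle^{1/2},
\]
then bounds the second factor by $r\langle 1[\varphi>\eta],\mu^N_t\rangle\le \tfrac{r}{\eta}\langle\varphi,\mu^N_0\rangle$ via Markov and conservation of $\varphi$. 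Your pigeonhole step instead locates, in each offending cluster, a single heavy vertex with $\varphi(x_i)>\eta/r$ and bounds the cluster weight by $r\varphi(x_i)$, yielding directly $r\langle\varphi\,1[\varphi>\eta/r],\mu^N_0\rangle$. Your route is arguably cleaner in that it avoids the square-root and the separate appeal to conservation of $\langle\varphi,\mu^N_t\rangle$, and it gives the explicit rate $O(r^2/\eta)$; the paper's Cauchy--Schwarz route gives the slightly better rate $O(\sqrt{r/\eta})$ but with an extra $\langle\varphi^2,\mu^N_0\rangle^{1/2}$ factor to carry through the expectation.
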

\begin{proof}
 We note that $\langle \varphi 1[\varphi(x)>\eta, \pi_0(x)\le r], \mu^N_t\rangle $ depends on $\mu^N_t$ only through the pushforward $\pi_\# \mu^N_t$, since the integrand only depends on the values of $\pi$ at the different particles.
From Lemma \ref{lemma: coupling}, we can find random graphs $G^N_t$, such that $\mathbf{x}_N$ is an enumeration of the atoms of $\mu^N_0$ and  $\pi_\star(G^N_t)=\pi_\#\mu^N_t$ for all times $t$.
With this coupling, we express the integral as follows:
\begin{equation}\begin{split} 
\langle \varphi 1[\varphi(x)>\eta, \pi_0(x)\le r], \mu^N_t\rangle &= \frac{1}{N}\sum_{\text{Clusters }\mathcal{C}\subset G^N_t}\varphi(\mathcal{C})1[\varphi(\mathcal{C})>\eta, \pi_0(\mathcal{C})\le r] \\[1ex]
& = \frac{1}{N}\sum_{j=1}^{l^N(t)}\sum_{i \in \mathcal{C}_j(G_t^N)} \varphi(x_i)1\left[\varphi(\mathcal{C}_j(G^N_t))>\eta, \pi_0(\mathcal{C}_j(G^N_t))\le r\right] .   \end{split}\end{equation}
Using Cauchy-Schwarz, we bound
\begin{equation} \begin{split}
 &\sup_{t\geq 0}\hspace{0.1cm}\bigg\langle \varphi 1[\varphi(x)>\eta, \pi_0(x)\leq r], \mu^N_t\bigg\rangle
  \\[1ex] & \hspace{0.5cm}
\leq
  \left(\frac1N\sum_{j=1}^{l^N(0)}\varphi(x_j)^2 \right)^{\frac12}
  \left(\sup_{t\geq 0}\frac1N \sum_{j=1}^{l^N(t)}\sum_{i \in \mathcal{C}_j(G_t^N)}    1\left[\varphi(\mathcal{C}_j(G_t^N)) > \eta, C_j(G_t^N)\leq r\right]\right)^{\frac12} \\[1ex] & \hspace{0.5cm}
=
  \left(\frac1N\sum_{i=1}^{l^N(0)} \varphi(x_i)^2 \right)^{\frac12}
  \left(\sup_{t\geq 0}\hspace{0.1cm}\bigg\langle \pi_0 1[\varphi(x)>\eta, \pi_0(x)\leq r], \mu^N_t\bigg\rangle\right)^{\frac12}.
\end{split}\end{equation}
As remarked in Definition \ref{def: GNT}, the data $x_i$ associated with the graph nodes are constant in time, so the first factor is independent of $t\ge 0$, and is bounded in $L^2$ by the second assertion of (B2.).
Therefore, it is sufficient to prove the claim with $\varphi$ replaced by $\pi_0$.

Now we note that with probability one
\begin{equation*}
  \sup_{t\geq 0}\,\left\langle \pi_0 1[\varphi(x)>\eta, \pi_0(x)\leq r], \mu^N_t\right\rangle \leq
  r \sup_{t\geq 0}\,\left\langle 1[\varphi(x)>\eta], \mu^N_t\right\rangle \leq
  \frac{r}{\eta} \sup_{t\geq 0}\,\left\langle \varphi, \mu^N_t\right\rangle = \frac{r}{\eta}\left\langle \varphi, \mu^N_0\right\rangle
\end{equation*}
and the result follows from (B2.). 
\end{proof}
Using the preparatory lemma developed above, we now prove Lemma \ref{lemma: WCOG}. 
\begin{proof}[Proof of Lemma \ref{lemma: WCOG}]   Throughout, we let $(\mu^N_t)_{t\geq 0}$ be a stochastic coagulant coupled to a random graph process $(G^N_t)_{t\geq 0}$, as described in Section \ref{sec: coupling_to_random_graph} with vertex data $\mathbf{x}_N=(x_i)_{i=1}^{l^N(0)}$; thanks to this construction, $M^N_t$ is exactly the size of the largest cluster in $G^N_t$, and $E^N_t$ are the sums \begin{equation} E^N_t=\left(N^{-1}\sum_{j\in \mathcal{C}_1(G^N_t)} \pi_i(x_j)\right)_{i=1}^n. \end{equation}   The case $t=0$ is trivial, and can be omitted. We deal first with the $0^\text{th}$ coordinate $M^N_t$; the cases for the $1^\text{st},...,n^\text{th}$ coordinates $E^N_t$ are entirely analagous. \medskip \\ Fix $t> 0$, and let $\xi_N$ be a sequence, to be constructed later, such that \begin{equation}\label{eq: choice of xiN for WCOG}
       \xi_N\rightarrow \infty; \hspace{1cm} \frac{\xi_N}{N}\rightarrow 0.
   \end{equation}  We now construct `bump functions' as follows.  Let $\eta_r\rightarrow \infty$ be a sequence growing sufficiently fast that, in the notation of Lemma \ref{lemma: STUI}, $\beta(r, \eta_r)\rightarrow 0$, and let
 \begin{equation}
       S_{(r)} := \{x \in S: \pi_0(x)< r,  \varphi(x)\leq \eta_r\}.
 \end{equation}
 Let $\widetilde{h}_r$ be the indicator $\widetilde{h}_r=1[\pi_0(x)< r]$, and construct a continuous, compactly supported function $\widetilde{f}_r$ such that
 \begin{equation}
      0\leq \widetilde{f}_r\leq 1;\hspace{1cm} \widetilde{f}_r=1 \hspace{0.1cm} \text{ on } S_{(r)};\hspace{1cm} \widetilde{f}_r(x)=0 \hspace{0.1cm} \text{ if } \pi_0(x)\ge r.
 \end{equation}
 The final condition is compatible with continuity because $\pi_0:S\rightarrow \mathbb{N}$ is continuous and integer valued. We define $f_N=\widetilde{f}_{\xi_N}$ and $h_N=\widetilde{h}_{\xi_N}$.  We now decompose the difference $M^N_t-M_t:$ \begin{equation}\label{eq: decomposition of erorr in WCOG}\begin{split} M^N_t-M_t &= \underbrace{(\langle \pi_0, \mu_t\rangle -\langle \pi_0 f_N, \mu_t\rangle)}_{:=\mathcal{T}^1_N} + \underbrace{\langle \pi_0 f_N, \mu_t-\mu^N_t\rangle}_{:=\mathcal{T}^2_N} \\[1ex]&\hspace{2cm}+ \underbrace{\langle \pi_0 (f_N-h_N), \mu^N_t\rangle}_{:=\mathcal{T}^3_N} +\underbrace{
   \langle \pi_0 h_N, \mu^N_t\rangle - (\langle \pi_0, \mu^N_0\rangle-M^N_t)}_{:=\mathcal{T}^4_N}
   \\[1ex]&\hspace{3cm}+ \underbrace{\langle \pi_0, \mu^N_0-\mu_0\rangle}_{:=\mathcal{T}^5_N} .\end{split} \end{equation} where we recall that $M_t=\langle \pi_0, \mu_0-\mu_t\rangle$. We now estimate the errors $\mathcal{T}^i_N$, $i=1,3,4,5;$ the remaining term $\mathcal{T}^2_N$ will be dealt with separately, and requires careful construction of the sequence $\xi_N$. \paragraph{1. Estimate on $\mathcal{T}^1_N$.} Let $z_N$ be the lower bound $z_N=1_{S_{(\xi_N)}}$, so that $z_N \le f_N \le 1$. As $N\rightarrow \infty$, $\pi_0 z_N \uparrow \pi_0$, and so by monotone convergence, $
       \langle \pi_0 z_N, \mu_t\rangle \uparrow \langle \pi_0, \mu_t\rangle$. This implies that the (nonrandom) error $\mathcal{T}^1_N \rightarrow 0$.
\paragraph{2. Estimate on $\mathcal{T}^3_N$.} From the definitions of $f_N, h_N$, we observe that \begin{equation}
       |\mathcal{T}^3_N(t)|=\langle \pi_0(h_N-f_N), \mu^N_t\rangle \le  \langle \pi_0 1[\pi_0(x)<\xi_N, \varphi(x)>\eta_{\xi_N}], \mu^N_t\rangle.
   \end{equation} Therefore, in the notation of Lemma \ref{lemma: STUI}, $\mathbb{E}\left[\sup_{t\ge 0}|\mathcal{T}^3_N(t)|\right] \leq \beta(\xi_N, \eta_{\xi_N})$.
   By construction of $\eta_r$, and since $\xi_N \rightarrow \infty$, it follows that $\mathbb{E}[ \sup_{t\ge 0}|\mathcal{T}^3_N(t)|] \rightarrow 0,$ which implies convergence to $0$ in probability.
\paragraph{3. Estimate on $\mathcal{T}^4_N$.} Recalling that $h_N(x)=1[\pi_0(x)<\xi_N]$ and using the coupling to random graphs, we have the equality \begin{equation}
           \begin{split}
               \langle \pi_0 h_N, \mu^N_t\rangle & = \langle \pi_0, \mu^N_0\rangle-M^N_t1\left[M^N_t\ge \frac{\xi_N}{N}\right]-\frac{1}{N}\sum_{j\ge 2:C_j(G^N_t)\ge \xi_N}\hspace{0.1cm}\sum_{i\in C_j(G^N_t)}\pi_0(x_i) 
           \end{split} 
       \end{equation} which gives the equality \begin{equation}\label{eq: form of T4n} \mathcal{T}^4_N=-M^N_t1\left(M^N_t\le \frac{\xi_N}{N}\right)-\frac{1}{N}\sum_{j\ge 2: C_j(G^N_t)\ge \xi_N} \pi_0(\mathcal{C}_j(G^N_t)). \end{equation}  Using Cauchy-Schwarz, we bound \begin{equation}\begin{split}\label{eq: bound on T4} \abs{\mathcal{T}^4_N(t)} & \le \left(\frac{1}{N}\sum_{j\ge 2: C_j(G^N_t)\ge \xi_N} C_j(G^N_t)\right)^\frac{1}{2}\left(\frac{1}{N}\sum_{i=1}^{l^N(0)} \varphi(x_i)^2\right)^\frac{1}{2} +\frac{\xi_N}{N}. \end{split}\end{equation} The first term converges to $0$ in probability by Theorem \ref{thrm: RG2} and (B2.), and the second converges to $0$ since $\xi_N\ll N$. Together, these imply that $\mathcal{T}^4_N(t)\rightarrow 0$ in probability.

 \paragraph{4. Estimate on $\mathcal{T}^5_N$.} Using the first part of (B2.), we have the convergence in distribution \begin{equation} \langle \pi_0, \mu^N_0\rangle \rightarrow \langle \pi_0, \mu_0\rangle \end{equation} which implies that $\mathcal{T}^5_N\rightarrow 0$ in probability as desired. 
\paragraph{5. Construction of $\xi_N$, and convergence of $\mathcal{T}^2_N$.} It remains to show how a sequence $\xi_N$ can be constructed such that $\mathcal{T}^2_N \rightarrow 0$ in probability and such that (\ref{eq: choice of xiN for WCOG}) holds. Recalling the definition of $\tilde{f}_r$ above, let $A^1_{r,N}$ be the events $    A^1_{r,N}=\{ |\langle \varphi \widetilde{f}_r, \mu^N_t-\mu_t\rangle|<\frac{1}{r}\}$;
 as $N\rightarrow \infty$ with $r$ fixed, both $\mathbb{P}(A^1_{r,N})\rightarrow 1$, by Lemma \ref{lemma: local uniform convergence of stochastic coagulent}. We now define $N_r$ inductively for $r\geq 1$ by setting $N_1=1$, and letting $N_{r+1}$ be the minimal $N>\max(N_r, (r+1)^2)$ such that, for all $N'\ge N$, $\mathbb{P}(A^1_{r+1,N'})>\frac{r}{r+1}. $
 Now, we set $\xi_N=r$ for $N\in [N_r, N_{r+1}).$ It follows that $\xi_N \rightarrow \infty$ and $\xi_N\leq \sqrt{N}\ll N$, and
 \begin{equation}
       \mathbb{P}\left(C_1(G^N_t))\geq \xi_N\right)\ge 1-\frac{1}{\xi_N} \rightarrow 1. 
 \end{equation} Therefore, $\xi_N$ satisfies the requirements (\ref{eq: choice of xiN for WCOG}) above. Moreover, \begin{equation}
       \mathbb{P}\left(|\mathcal{T}^2_N| <\frac{1}{\xi_N}\right) \ge \mathbb{P}\left(A^1_{\xi_N,N}\right) > 1-\frac{1}{\xi_N}\rightarrow 1
 \end{equation}
 and so, with this choice of $\xi_N$, $\mathcal{T}^2_N \rightarrow 0$ in probability. Since we have now dealt with every term appearing in the decomposition (\ref{eq: decomposition of erorr in WCOG}), it follows that $M^N_t\rightarrow M_t$ in probability, as claimed. \medskip \\ The arguments for the $1^\text{st}-n^\text{th}$ components $E^N_t$ are identical to those above, using the same bound (\ref{eq: bound on T4}) on $\mathcal{T}^4_N$. \end{proof} 
We also note, for future use, an important corollary of this argument.
\begin{cor}\label{corr: gel at tgel} At the instant of gelation, the gel is negligible: $g_{t_\mathrm{g}}=0$.  \end{cor}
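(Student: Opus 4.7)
The plan is to reuse the estimate from the first half of the proof of Lemma~\ref{lemma: connect critical times}, now that we know $t_\mathrm{g}=t_\mathrm{c}$. Take $\mathbf{x}_N$ to be a Poisson random measure of intensity $N\mu_0$ and couple the corresponding graph process $(G^N_t)_{t\ge 0}$ from Definition~\ref{def: GNT} to a stochastic coagulant via Lemma~\ref{lemma: coupling}, so that $g^N_{t}=N^{-1}\pi(\mathcal{C}_1(G^N_t))$ is the normalised data of the largest cluster.

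At $t=t_\mathrm{c}$, evaluated at the critical time of the graph, the phase transition statement in Theorem~\ref{thrm: RG1} (equivalently Lemma~\ref{lemma: basic phase transition}) gives $N^{-1}C_1(G^N_{t_\mathrm{c}})\to 0$ in probability. Exactly as in (\ref{eq: use of CS 0}), Cauchy--Schwarz yields
\begin{equation}
\varphi(g^N_{t_\mathrm{c}}) \le \left(\frac{1}{N}C_1(G^N_{t_\mathrm{c}})\right)^{1/2}\left(\frac{1}{N}\sum_{i=1}^{l^N(0)}\varphi(x_i)^2\right)^{1/2},
\end{equation}
where the second factor is bounded in $L^2$ by (B2.). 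Hence $\varphi(g^N_{t_\mathrm{c}})\to 0$ in probability.

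On the other hand, Lemma~\ref{lemma: WCOG} gives $M^N_{t_\mathrm{c}}\to M_{t_\mathrm{c}}$ and $E^N_{t_\mathrm{c}}\to E_{t_\mathrm{c}}$ in probability, and Lemma~\ref{lemma: E and U} already supplies $P_{t_\mathrm{c}}=0$. Since both $M_{t_\mathrm{c}}$ and each coordinate of $E_{t_\mathrm{c}}$ are nonnegative, the convergence above forces $\varphi(g_{t_\mathrm{c}}) = M_{t_\mathrm{c}}+\sum_{i=1}^n (E_{t_\mathrm{c}})_i = 0$, so that $M_{t_\mathrm{c}}=0$ and $E_{t_\mathrm{c}}=0$ componentwise. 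Combined with $P_{t_\mathrm{c}}=0$ and Lemma~\ref{lemma: connect critical times}, this gives $g_{t_\mathrm{g}}=g_{t_\mathrm{c}}=0$, as desired. The only non-mechanical point is ensuring that the Poisson choice of $\mathbf{x}_N$ satisfies Assumption~\ref{hyp: B}, which is standard and already used in the proof of Corollary~\ref{corr: actual expression for tg}.
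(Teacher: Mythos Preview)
Your proof is correct and follows essentially the same approach as the paper: use the critical case of Theorem~\ref{thrm: RG1} together with the Cauchy--Schwarz bound (\ref{eq: use of CS 0}) to show $\varphi(g^N_{t_\mathrm{c}})\to 0$, combine with Lemma~\ref{lemma: WCOG} to identify the limit, and handle the last $m$ coordinates via the symmetry from Lemma~\ref{lemma: E and U}. The paper's proof is terser but the argument is identical.
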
 \begin{proof} For the $0^\text{th}-n^\text{th}$ components, this follows from the critical case of Theorem~\ref{thrm: RG1} exactly as in (\ref{eq: use of CS 0}). The remaining $m$ components $g^i_t, i>n$ are identically $0$ by the symmetry (A1.), as in Lemma \ref{lemma: E and U}. \end{proof}
\section{\textbf{Behaviour of the Second Moments}}
\label{sec: finiteness of second moment} In this section, we consider part 2 of Theorem \ref{thrm: Smoluchowski equation}, concerning the behaviour of the second moments $\mathcal{Q}(t)_{ij}=\langle \pi_i\pi_j, \mu_t\rangle, 0\le i, j\le n$ and $\mathcal{E}(t)=\langle \varphi^2, \mu_t\rangle$. Following \cite{L78,N00}, one might expect that the gelation time $t_\mathrm{g}$ corresponds to a divergence of $\mathcal{E}(t)$ as $t\uparrow t_\mathrm{g}$; by an approximation argument, we will show that this is indeed the case. We also introduce a \emph{duality argument}, corresponding to Theorem \ref{thrm: coupling supercritical and subcritical}, which allows us to prove that $\mathcal{E}$ is finite on $(t_\mathrm{g}, \infty)$. The final assertion follows from the fact that $g_{t_\mathrm{g}}=0$, which is the content of Corollary \ref{corr: gel at tgel}.
\subsection{\textbf{Subcritical Regime}} We first deal with the subcritical regime $[0, t_\mathrm{g})$, to show that the second moments $\mathcal{Q}_{ij}(t), \mathcal{E}(t)$ are finite and increasing on this interval, and that $t_\mathrm{g}$ is exactly the first time at which $\mathcal{E}$ diverges.
\begin{lem}\label{lemma: second moment before tgel} Suppose $\mu_0$ satisfies Assumption \ref{hyp: A}, and let $(\mu_t)_{t\ge 0}$ be the corresponding solution to (\ref{eq: E+G}). The second moments $\mathcal{Q}(t)_{ij}=\langle \pi_i\pi_j, \mu_t\rangle, 0\le i, j\le n,\hspace{0.1cm} \mathcal{E}(t)=\langle \varphi^2, \mu_t\rangle$ are finite, continuous and increasing on $[0, t_\mathrm{g})$, and $\mathcal{E}(t)=\langle \varphi^2, \mu_t\rangle$ increases to infinity as $t\uparrow t_\mathrm{g}$, where $t_\mathrm{g}$ is the associated gelation time. \end{lem}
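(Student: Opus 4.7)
The strategy is to derive a closed matrix Riccati ODE for the truncated Gram matrix $\mathcal{Q}(t) = (\langle \pi_i\pi_j,\mu_t\rangle)_{i,j=1}^n$ on $[0,t_\mathrm{g})$ and identify its blow-up time with $t_\mathrm{g} = 1/\mathfrak{r}(\Lambda(\mu_0))$ from Corollary \ref{corr: actual expression for tg}. Plugging $f = \pi_i\pi_j$ formally into (\ref{eq: E+G}), using the $K$-almost everywhere identity $\pi_k(z) = \pi_k(x)+\pi_k(y)$ and cancelling, one obtains
\begin{equation*}
\tfrac{d}{dt}\mathcal{Q}_{ij}(t) \;=\; \tfrac{1}{2}\!\int\bigl[\pi_i(x)\pi_j(y)+\pi_i(y)\pi_j(x)\bigr]\overline{K}(x,y)\,\mu_t(dx)\mu_t(dy) \;+\;(\text{gel correction}),
\end{equation*}
where the gel correction vanishes on $[0,t_\mathrm{g})$ because $\mu_t$ is conservative there. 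Expanding $\overline{K}(x,y)=\sum a_{kl}\pi_k(x)\pi_l(y)$ and using Assumption (A1.) to kill all cross-terms involving a single $\pi_k$ with $k>n$ (since $\pi_i\pi_j\pi_k$ is then odd under $R$), together with the block-diagonal form of $A$ to kill the remaining cross-terms between the two blocks, the right-hand side collapses to the matrix Riccati form
\begin{equation*}
\mathcal{Q}'(t) \;=\; \mathcal{Q}(t)\,A^+\,\mathcal{Q}(t), \qquad t\in[0,t_\mathrm{g}).
\end{equation*}

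To make this rigorous I would replace $\pi_i\pi_j$ by $f^\xi = \pi_i\pi_j\,\chi_\xi$ with $\chi_\xi\in C_c(S)$, $0\le\chi_\xi\le 1$, equal to $1$ on $S_\xi$ and vanishing off $S_{2\xi}$, plug $f^\xi$ into (\ref{eq: E+G}) to get an honest integral identity, and take $\xi\uparrow\infty$. Assumption (A2.) gives $\langle\pi_i^3,\mu_0\rangle<\infty$ for $i\le n$, hence by Holder and the sub-probability property $\mathcal{Q}(0)$ is finite. The left-hand side converges to $\mathcal{Q}_{ij}(t)$ by monotone convergence. For the drift term, the integrand in the $K$-bracket is, for each fixed $(x,y)$ and $\xi$ large enough, a nonnegative monotone quantity bounded uniformly by $[\pi_i(x)\pi_j(y)+\pi_i(y)\pi_j(x)]\overline{K}(x,y)$; this dominating function is integrable against $\mu_s\otimes\mu_s$ whenever $\mathcal{Q}(s)$ is finite (using the bilinear factorisation of $\overline{K}$ and the Cauchy--Schwarz bound $|\mathcal{Q}_{kl}|\le\sqrt{\mathcal{Q}_{kk}\mathcal{Q}_{ll}}$). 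A bootstrap / continuity argument then shows $\mathcal{Q}$ is finite on a maximal open interval $[0,T^\ast)$ and satisfies the Riccati ODE there, whose unique solution is
\begin{equation*}
\mathcal{Q}(t) \;=\; \mathcal{Q}(0)\bigl(I-tA^+\mathcal{Q}(0)\bigr)^{-1},
\end{equation*}
finite up to time $1/\mathfrak{r}(A^+\mathcal{Q}(0))=1/\mathfrak{r}(\Lambda(\mu_0))=t_\mathrm{g}$, giving $T^\ast = t_\mathrm{g}$.

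Finiteness and continuity of $\mathcal{Q}$ and $\mathcal{E}$ on $[0,t_\mathrm{g})$ are then immediate from the explicit formula. Monotonicity of each $\mathcal{Q}_{ij}$ (and hence of $\mathcal{E} = \sum_{i,j\le n}\mathcal{Q}_{ij}$) follows from $\mathcal{Q}' = \mathcal{Q}A^+\mathcal{Q}$ together with the nonnegativity of all entries of $\mathcal{Q}$ and $A^+$. For the blow-up statement, positive semi-definiteness of $\mathcal{Q}(t)$ as a Gram matrix (since $\sum\alpha_i\alpha_j\mathcal{Q}_{ij}(t)=\langle(\sum\alpha_i\pi_i)^2,\mu_t\rangle\ge 0$) combined with Cauchy--Schwarz $|\mathcal{Q}_{ij}|\le\sqrt{\mathcal{Q}_{ii}\mathcal{Q}_{jj}}$ forces at least one diagonal entry to diverge as $t\uparrow t_\mathrm{g}$, whence $\mathcal{E}(t)\ge\mathrm{tr}\,\mathcal{Q}(t)\to\infty$. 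The main technical obstacle I anticipate is justifying the truncation limit in the drift: although the integrand grows like $\pi^4$, the third-moment hypothesis (A2.) provides just enough initial integrability to start the bootstrap, and the bilinear factorisation of $\overline{K}$ closes the identity in terms of $\mathcal{Q}$ alone, so that the Riccati equation controls its own a priori bounds.
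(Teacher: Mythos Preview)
Your approach is correct and is genuinely more direct than the paper's. Both routes first establish (via truncation, which is essentially Norris's Theorem~2.1 on strong solutions) that on a maximal interval $[0,t_\mathrm{e})$ the second moments are finite and satisfy the matrix Riccati system. The paper then identifies $t_\mathrm{e}=t_\mathrm{g}$ \emph{indirectly}: it perturbs the initial data to $\nu^\epsilon_0$ supported on a truncated cone $S^\Pi_\epsilon$ where the kernel is approximately multiplicative, invokes Norris's Theorem~2.2 to get $t_\mathrm{e}(\nu^\epsilon_0)=t_\mathrm{g}(\nu^\epsilon_0)$ for each $\epsilon$, and then proves separately that both $t_\mathrm{e}$ and $t_\mathrm{g}$ are continuous in the initial data (Lemmas~\ref{lemma: convergence of gelation times}--\ref{lemma: ODE considerations}). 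You instead solve the Riccati equation in closed form as $\mathcal{Q}(t)=\mathcal{Q}(0)(I-tA^+\mathcal{Q}(0))^{-1}$ and read off the blow-up time as $1/\mathfrak{r}(A^+\mathcal{Q}(0))=t_\mathrm{g}$ directly, bypassing the whole approximation-and-continuity machinery. Your route is shorter and exploits the algebraic structure more fully; the paper's route is more robust in that it would survive if the Riccati system were not explicitly solvable.

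A few points you should make explicit. First, killing the $A^{\mathrm{par}}$-block contributions requires $R_\#\mu_t=\mu_t$ for all $t$, not just $t=0$; this follows from (A1.) together with uniqueness (Lemma~\ref{lemma: E and U}), but must be stated. Second, your closed-form Riccati solution governs only the $n\times n$ block $1\le i,j\le n$; the lemma also asserts finiteness and continuity of $\mathcal{Q}_{0j}$ and $\mathcal{E}$, which involve $\pi_0$. These satisfy a \emph{linear} ODE driven by the $n\times n$ block (the paper's (\ref{eq: ODEQ2})), so they remain finite on $[0,t_\mathrm{g})$ and do not affect the blow-up time, but you should say so. Third, the identification of the first singularity of $(I-t\Lambda)^{-1}$ with $1/\mathfrak{r}(\Lambda)$ uses that $\Lambda=A^+\mathcal{Q}(0)$ has nonnegative entries (so Perron--Frobenius guarantees $\mathfrak{r}(\Lambda)$ is a real eigenvalue), and that $\mathcal{Q}(0)$ is invertible by (A3.). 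Finally, your ``bootstrap'' is exactly Norris's Theorem~2.1, which the paper also invokes (Lemma~\ref{lemma: explosition}); a citation there would be appropriate rather than re-deriving it.
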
 

The ideas of this argument follow \cite{N00}, where there is a similar result for \emph{approximately multiplicative} kernels, for which the total rate $\overline{K}(x,y)$ is bounded above \emph{and below} by nonzero multiples of $\widetilde{\varphi}(x)\widetilde{\varphi}(y)$, where $\widetilde{\varphi}$ is a mass function playing the same r\^ole as our $\varphi$. Unfortunately, this cannot be applied directly, for two reasons. \begin{enumerate}[label=\roman{*}).]
    \item Firstly, the total rate in (\ref{eq: overline K}) contains the terms $a_{ij}\pi_i(x)\pi_j(y), n\le i,j\le n+m$ of indefinite sign.
    \item Secondly, the remaining combination of $\pi_i, 1\le i\le n$ is not \emph{a priori} of approximately multiplicative form: particles where some $\pi_i$ are small, and others large, will in general prevent such a bound from holding.
\end{enumerate} Our strategy will be as follows. \begin{enumerate}
    \item Firstly, we will show that if $(\mu_t)_{t\ge 0}$ solves (\ref{eq: E+G}), then the pushforward measures $(\pi_\#\mu_t)_{t<t_\mathrm{g}(\mu_0)}$ solve a modified equation (\ref{eq: mE+G}) on the simpler space $S^\Pi=\NN\times [0,\infty)^n\times \mathbb{R}^m$, with a reduced kernel $K^{\Pi, \mathrm{m}}$. This allows us to eliminate the terms of indefinite sign mentioned above. This new equation has unique solutions, and so $\nu_t=\pi_\#\mu_t$ is the unique solution starting at $\nu_0=\pi_\#\mu_0$; in particular, the second moments $\langle \varphi^2, \nu_t\rangle$, $\langle \varphi^2, \mu_t\rangle$ coincide, and gelation takes place at the same time $t_\mathrm{g}(\mu_0)=t_\mathrm{g}(\nu_0)$. Therefore, we can prove the desired result working solely at the level of (\ref{eq: mE+G}).
    \item Thanks to results of Norris \cite[Theorem 2.1]{N00}, if $(\nu_t)_{t\ge 0}$ is a solution to (\ref{eq: mE+G}) with $\langle \varphi^2, \nu_0\rangle <\infty$, then there exists $t_\mathrm{e}=t_\mathrm{e}(\nu_0)>0$ such that $\langle \varphi^2, \nu_t\rangle$ is locally integrable on $[0,t_\mathrm{e})$ and such that $\langle \varphi^2, \nu_t\rangle \uparrow \infty$ as $t\uparrow t_\mathrm{e}$. 
    \item We introduce a truncated state space $S^\Pi_\epsilon$, which excludes particles where any $\pi_i/\pi_0, 1\le i\le n$ is either very large or very small, and construct new initial data $\nu^\epsilon_0$ which are supported in this space. In this context, the kernel $K^{\Pi, \mathrm{m}}$ is approximately multiplicative, and so \cite[Theorem 2.2]{N00} guarantees that the solutions $(\nu^\epsilon_t)_{t\ge 0}$ undergo gelation at exactly the blow-up time $t_\mathrm{e}(\nu^\epsilon_0)$.
    \item We argue, from the characterisation of the gelation time in Section \ref{sec: coupling_to_random_graph}, that our construction gives an approximation of the gelation times: $t_\mathrm{g}(\nu^\epsilon_0)\rightarrow t_\mathrm{g}(\nu_0)$. We will argue, based on a system of ordinary differential equations for the moments $\langle \pi_i\pi_j, \nu_t\rangle = \langle \pi_i\pi_j, \mu_t\rangle$, that the blowup time is also continuous: $t_\mathrm{e}(\nu^\epsilon_0)\rightarrow t_\mathrm{e}(\nu_0)$. Together with the previous points, this proves the claimed result.
\end{enumerate} We begin by introducing the modified equation. \begin{lem}\label{lemma: modified equation} Let $K^{\Pi, \mathrm{m}}$ be the kernel on $S^\Pi=\mathbb{N}\times[0,\infty)^{n}\times \mathbb{R}^m$ given by \begin{equation} K^{\Pi, \mathrm{m}}(p,q,dr)=\left(\sum_{i,j=1}^na_{ij}p_iq_j\right)\delta_{p+q}(dr). \end{equation} Consider the corresponding equation incorporating gel, for measures on $S^\Pi$, which we write as \begin{equation} \tag{m$\Pi$Fl}\label{eq: mE+G}
    \nu_t=\nu_0+\int_0^t L^\mathrm{m}_\mathrm{g}(\nu_s)ds.
\end{equation} Let $\mu_0$ be a measure on $S$ satisfying Assumption \ref{hyp: A}, and let $(\mu_t)_{t\ge 0}$ be the corresponding solution to (\ref{eq: E+G}). Then the pushforward measures $\nu_t=\pi_\#\mu_t$ are the unique solution to (\ref{eq: mE+G}) starting at $\nu_0=\pi_\#\mu_0$.\end{lem}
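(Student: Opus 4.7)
The plan is to pushforward (\ref{eq: E+G}) through $\pi$ and then use the $R$-symmetry enforced by (A1.) to drop the paramagnetic contribution from the pushforward kernel.

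First, I would verify that $\nu_t = \pi_\# \mu_t$ satisfies a pushforward equation. Taking $g \in C_c(S^\Pi)$ and substituting $f = g \circ \pi$ into the weak form of (\ref{eq: E+G}), condition (i). of Definition~\ref{def: BCS} states that $K(x, y, \cdot)$ is concentrated on $\{z : \pi(z) = \pi(x) + \pi(y)\}$ and $J(x, \cdot)$ on $\{z : \pi(z) = \pi(x)\}$. The latter makes the internal-transition term vanish identically at the level of $\nu_t$, while the former gives $\int f(z) K(x, y, dz) = g(\pi(x) + \pi(y)) \overline{K}(x, y)$. The outcome is an equation for $\nu_t$ on $S^\Pi$ with pushforward kernel $(p, q) \mapsto \bigl(\sum_{i, j = 1}^{n+m} a_{ij} p_i q_j\bigr) \delta_{p+q}$ plus the corresponding gel correction.

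Next, I would reduce the kernel to $K^{\Pi, \mathrm{m}}$ using symmetry. Since $K, J$ commute with $R$ (condition (iii).) and $\mu_0$ is $R$-invariant by (A1.), the measure $R_\# \mu_t$ also solves (\ref{eq: E+G}), so uniqueness from Lemma~\ref{lemma: E and U} gives $R_\# \mu_t = \mu_t$ for all $t$; hence $\nu_t$ is invariant under the paramagnetic sign-flip $\tilde{R}$ on $S^\Pi$. By block-diagonality of $A$, the pushforward kernel differs from $K^{\Pi, \mathrm{m}}$ only by the paramagnetic piece $K^{\mathrm{par}}(p, q) = \sum_{n < i, j \le n+m} a_{ij} p_i q_j$, which is linear in $p_{\mathrm{par}}$ and $q_{\mathrm{par}}$ separately. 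For test functions $g$ depending only on $(p_0, p_+)$, the change of variables $q \mapsto \tilde{R} q$ preserves both $\nu_t$ and the factor $G(p, q) = g(p+q) - g(p) - g(q)$, while flipping the sign of $K^{\mathrm{par}}$; this forces $\int G \cdot K^{\mathrm{par}} \, \nu_t(dp)\nu_t(dq) = 0$. The same mechanism kills the paramagnetic piece of the gel term, using that $\langle \pi_j, \mu_0 - \mu_t \rangle = 0$ for $j > n$ by $R$-symmetry. This leaves exactly (\ref{eq: mE+G}).

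For uniqueness, I would adapt the truncation argument of Lemma~\ref{lemma: E and U}. By condition (v). the entries of $A^+$ are nonnegative, and $p_i \ge 0$ for $1 \le i \le n$ on $S^\Pi$, so $K^{\Pi, \mathrm{m}}(p,q)(S^\Pi) = \sum_{i,j=1}^n a_{ij} p_i q_j \le \Delta \varphi(p) \varphi(q)$ for some constant $\Delta$. This is the same multiplicative structure that powered the proof of Lemma~\ref{lemma: E and U}, so the Picard iteration on the truncated state spaces $S^\Pi_\xi = \{p \in S^\Pi : \varphi(p) \le \xi\}$ and the monotone limit $\xi \uparrow \infty$ carry over verbatim, yielding a unique global solution to (\ref{eq: mE+G}) starting at any initial datum.

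The main obstacle is the symmetry step: one has to ensure that the class of test functions for which the paramagnetic cancellation works is rich enough to characterize $\nu_t$ in (\ref{eq: mE+G}). The point is that $\tilde R$-odd test functions give $0 = 0$ trivially since $\nu_t$ is $\tilde R$-symmetric, and among $\tilde R$-even ones it suffices to test against projection-independent functions to pin down $\nu_t$ on the coordinates that matter for the later analysis of $\mathcal{Q}, \mathcal{E}$; the nonnegativity of $p_+$ and the block-diagonal structure of $A$ are both essential, respectively, for the symmetry cancellation and the multiplicative bound driving uniqueness.
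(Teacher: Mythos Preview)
Your approach is essentially the same as the paper's: both establish $R_\#\mu_t=\mu_t$ via uniqueness in Lemma~\ref{lemma: E and U}, use this symmetry to eliminate the $A^{\mathrm{par}}$ contribution, and push forward through $\pi$; the only difference is that the paper symmetrises the kernel on $S$ first (replacing $K$ by $K^{\mathrm{Sym}}(x,y,\cdot)=\tfrac14 K(Rx,y,\cdot)+\tfrac12 K(x,y,\cdot)+\tfrac14 K(x,Ry,\cdot)$, whose total rate depends only on $\pi_1,\dots,\pi_n$) before pushing forward, whereas you push forward first and then cancel on $S^\Pi$. Your honest flag about the test-function class is well placed --- the cancellation of $K^{\mathrm{par}}$ via $q\mapsto\tilde R q$ genuinely only works for $g$ independent of the paramagnetic coordinates, and the paper's sketch carries the same implicit restriction; since the lemma is used only to track the moments $\langle\pi_i\pi_j,\mu_t\rangle$ for $i,j\le n$, this is harmless. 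One small correction: the nonnegativity of $p_+$ plays no role in the symmetry cancellation (that uses only $\tilde R$-invariance of $\nu_t$ and oddness of $K^{\mathrm{par}}$); it is needed for the nonnegativity of the reduced rate $\overline{K^{\Pi,\mathrm{m}}}$ and hence for the truncation/uniqueness argument.
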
 \begin{rmk} Under the new kernel $K^{\Pi,\mathrm{m}}$, the quantities $\pi_i$ are still conserved for $0\le i\le n$, but not for $n+1\le i\le n+m$. However, since we seek to analyse $\langle \varphi^2, \mu_t\rangle, \varphi=\sum_{0\le i\le n} \pi_i$, we will not need any conservation properties of $\pi_i$ for $i>n$ in this section.   \end{rmk}  \begin{proof}[Sketch Proof of Lemma \ref{lemma: modified equation}]  Much of the proof consists of algebraic manipulations, using the definitions and hypotheses in Definition \ref{def: BCS}. In the interest of brevity, such manipulations will omitted. \medskip \\ Let us first consider the reflected measures $R_\# \mu_t =\mu_t\circ R^{-1}$ on $S$. By (A1.), $R_\#\mu_0=\mu_0$, and using part iii) of Definition \ref{def: BCS}, one can show that for all $t\ge 0$, all finite measures $\mu$ on $S$ and all bounded, measurable functions $f$ on $S$, $\langle f\circ R, L(\mu)\rangle =\langle f, L(R_\#\mu)\rangle$. From this, and performing a similar manipulation for the gel term, it follows that $(R_\#\mu_t)_{t\ge 0}$ also solves the equation (\ref{eq: E+G}) with the same initial data which implies, by uniqueness, we must have $\mu_t=R_\#\mu_t$ for all $t\ge 0$. Using this, one can now similarly prove that, for all $t$ and $f$ as above, \begin{equation} \begin{split} \label{eq: symmetry under R}
 \langle f, L(\mu_t)\rangle &=  \int_{S^3} (f(z)-f(x)-f(y))K(Rx,y,dz)\mu_t(dx)\mu_t(dy) \\
 &=  \int_{S^3} (f(z)-f(x)-f(y))K(x,Ry,dz)\mu_t(dx)\mu_t(dy).
\end{split} \end{equation} Taking a linear combination, and again performing a similar manipulation for the gel term, it follows that $\mu_t$ solves the equation analagous to (\ref{eq: E+G}) for the symmetrised kernel
 \begin{equation}\label{eq: modified K} 
 \begin{split}
 K^\mathrm{Sym}(x,y,dz)
 =\frac{1}{4}K(Rx, y, dz)+\frac{1}{2}K(x,y,dz)+\frac{1}{4}K(x,Ry, dz). \end{split} 
\end{equation} Since the coagulation rate in $K^\mathrm{Sym}$ only depends on $\pi(x), \pi(y)$, one can verify that the pushforward measures  $\pi_\#\mu_t$ on $S^\Pi$ solve the projected equation (\ref{eq: mE+G}) as claimed. \end{proof} We now turn to the second point, which concerns the moment behaviour of the solutions to (\ref{eq: mE+G}). The following result follows from ideas of \cite{N00}, which we will briefly sketch. \begin{lem}\label{lemma: explosition} Let $\nu_0$ be a measure on $S^\Pi$ satisfying Assumption \ref{hyp: A}, and let $(\nu_t)_{t\ge 0}$ be the corresponding solution to (\ref{eq: mE+G}). Then there exists $t_\mathrm{e}=t_\mathrm{e}(\nu_0)>0$ such that $t\mapsto \langle \varphi^2, \nu_t\rangle$ is finite and increasing on $[0,t_\mathrm{e})$, and $\langle \varphi^2,\nu_t\rangle \uparrow \infty$ as $t\uparrow t_\mathrm{e}$. Moreover, $(\nu_t)_{t<t_\mathrm{e}}$ is conservative, and so $t_\mathrm{e}(\nu_0)\le t_\mathrm{g}(\nu_0)$. \end{lem}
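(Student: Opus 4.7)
My plan is to deduce all four conclusions from a single Riccati-type differential inequality for the second moment, following the template of Norris \cite{N00}.

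\textbf{Step 1: The Riccati inequality.} For the solution $(\nu_t)$ to (\ref{eq: mE+G}), set $Q(t) = \langle \varphi^2, \nu_t\rangle$. I will show that, wherever $Q$ is finite,
\begin{equation*}
Q'(t) \le \Delta\, Q(t)^2,
\end{equation*}
where $\Delta$ is the constant from $\overline{K}(p,q)\le \Delta\, \varphi(p)\varphi(q)$ on $S^\Pi$. Formally, testing the weak form of (\ref{eq: mE+G}) against $\varphi^2$ and using the identity $\varphi(p+q)^2 - \varphi(p)^2 - \varphi(q)^2 = 2\varphi(p)\varphi(q)$ gives a coagulation contribution $\int \varphi(p)\varphi(q)\overline{K}(p,q)\,\nu_t(dp)\nu_t(dq)$, while the gel term contributes a non-positive quantity of the form $-\int \varphi(p)^2\overline{K}(p,g_t)\,\nu_t(dp)$. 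Since $\varphi^2\notin C_c$, the argument must be made rigorous by testing against $\varphi^2 \wedge M$, integrating in time, and passing to the limit $M\to\infty$ via monotone convergence on the nonnegative coagulation term. The same computation simultaneously gives $Q'(t)\ge 0$ because the kernel $\overline{K}$ is nonnegative on the support of $\nu_t$ (all entries of $A^+$ are nonnegative, and $\pi_i\ge 0$ for $1\le i\le n$), establishing monotonicity of $Q$.

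\textbf{Step 2: Existence of $t_e$ and the blow-up.} Hypothesis (A2.) and the subprobability bound give $Q(0)\le (n+1)\sum_{i=0}^n \langle \pi_i^2, \nu_0\rangle <\infty$, and Gronwall applied to the Riccati inequality yields $Q(t)\le Q(0)/(1-\Delta Q(0)t)$ for $t<(\Delta Q(0))^{-1}$. Thus
\begin{equation*}
t_e := \sup\bigl\{t\ge 0 : Q \text{ is locally bounded on }[0,t]\bigr\} > 0,
\end{equation*}
and $Q$ is finite, continuous, and increasing on $[0,t_e)$. Monotonicity forces $Q(t)\uparrow L\in (Q(0),\infty]$ as $t\uparrow t_e$. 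To rule out $L<\infty$, I will argue by continuation: if $L<\infty$, then $\nu_{t_e}$ is a well-defined measure with finite second moment, and restarting the truncated Picard scheme of Lemma~\ref{lemma: E and U} from $\nu_{t_e}$ produces a solution on which $Q$ remains finite slightly past $t_e$. Uniqueness of solutions to (\ref{eq: mE+G}) then identifies this with the given $\nu_t$, contradicting the definition of $t_e$. Hence $L=\infty$.

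\textbf{Step 3: Conservativity and $t_e\le t_g$.} For each $0\le i\le n$, test the weak form of (\ref{eq: mE+G}) against $\pi_i\wedge M$; since $\pi_i(p+q)=\pi_i(p)+\pi_i(q)$ everywhere on the support of $K^{\Pi,\mathrm{m}}$, the coagulation term vanishes in the limit, and the uniform integrability supplied by local boundedness of $Q$ on $[0,t_e-\varepsilon]$ allows passage to $M\to\infty$. This gives
\begin{equation*}
\langle \pi_i,\nu_t\rangle = \langle \pi_i,\nu_0\rangle - \int_0^t \langle \pi_i \,\overline{K}(\cdot,g_s),\nu_s\rangle \, ds.
\end{equation*}
Differentiating $(g_t)_i = \langle \pi_i,\nu_0-\nu_t\rangle$ then yields a closed linear ODE $\dot{g}_t = \Phi_t g_t$, where the entries of $\Phi_t$ are finite bilinear combinations of the moments $\langle \pi_i\pi_j, \nu_t\rangle \le Q(t)$ and hence bounded on $[0,t_e-\varepsilon]$; together with $g_0=0$, Gronwall forces $g_t\equiv 0$ on $[0,t_e)$. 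The solution is therefore conservative on $[0,t_e)$, which by definition of $t_g$ gives $t_e\le t_g$.

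\textbf{Main obstacle.} The technical heart is Step 1: justifying the truncation $\varphi^2\wedge M$ requires uniform control of the error terms generated by mass with $\varphi>M$, exploiting the linear bound on $\overline{K}$ and the finiteness of $Q$ on $[0,t_e)$. The continuation argument in Step 2 is also delicate, requiring a reformulation of the local existence argument of Lemma~\ref{lemma: E and U} at an arbitrary finite-second-moment starting point, but this is essentially the Picard-type scheme already in place.
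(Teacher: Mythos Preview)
Your approach is essentially the same as the paper's, which also appeals to a Riccati-type bound and a continuation/time-of-existence argument drawn from \cite{N00}. The paper cites \cite[Theorem 2.1]{N00} to obtain a maximal strong solution that is automatically conservative, and then \cite[Proposition 2.7]{N00} for the moment relations~(\ref{eq: ODE1}), whereas you re-derive these by hand; the blow-up argument is identical in spirit (the paper writes it as the time-of-existence bound $t_\mathrm{e}-t\ge C\langle\varphi^2,\nu_t\rangle^{-1}$, which is exactly your continuation argument in contrapositive form).

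However, your ordering has a genuine circularity. In Step~1 you claim $Q'(t)\ge 0$ ``because the kernel $\overline{K}$ is nonnegative'', but the Flory equation (\ref{eq: mE+G}) carries the gel-absorption term $-\int \varphi(p)^2\,\overline{K}(p,g_t)\,\nu_t(dp)\le 0$, which competes with the nonnegative coagulation term. Thus $Q'(t)\ge 0$ is only guaranteed once you know $g_t=0$, i.e.\ \emph{after} your Step~3. Without monotonicity, Step~2 delivers only $\limsup_{t\uparrow t_e}Q(t)=\infty$, not $Q(t)\uparrow\infty$, which is the actual assertion of the lemma. The fix is simply to reorder: establish the Riccati \emph{upper} bound $Q'\le\Delta Q^2$ first (the gel term only helps here), use it to define $t_e>0$, then run your Step~3 Gr\"onwall argument to obtain conservativity on $[0,t_e)$ (this needs only local boundedness of $Q$, which you have by definition of $t_e$), and only then conclude monotonicity and the blow-up. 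The paper sidesteps this issue by importing conservativity from \cite{N00} before touching the second-moment dynamics.
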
  The subscript $_\mathrm{e}$ here denotes `explosion': $t_\mathrm{e}$ is the first time the second moment diverges to $\infty$. \begin{proof}[Sketch Proof of Lemma \ref{lemma: explosition}] This argument applies different results from \cite{N00} to our case. We say that a local solution $(\nu_t)_{t<T}$ to (\ref{eq: mE+G}) is \emph{strong} if the map $t\mapsto \langle \varphi^2, \nu_t\rangle $ is integrable on compact subsets of $[0,T)$. Applying the results of \cite[Theorem 2.1]{N00}, there exists a unique maximal strong solution $(\nu'_t)_{t<t_\mathrm{e}(\nu_0)}$ to (\ref{eq: mE+G}), which is conservative and that $t_\mathrm{e}(\nu_0)\ge C\langle \varphi^2, \nu_0\rangle^{-1}$ for some constant $C$ depending on $A$. \medskip \\ We next apply Corollary \ref{cor: maximal conservative solutions} to see that this solution must be an initial segment of $(\nu_t)_{t<t_\mathrm{g}(\nu_0)}$: that is, $t_\mathrm{e}(\nu_0)\le t_\mathrm{g}(\nu_0)$, and $\nu'_t=\nu_t$ for all $t\le t_\mathrm{e}(\nu_0)$. Therefore, the results of \cite{N00} will apply to our process $(\nu_t)_{t<t_\mathrm{e}(\nu_0)}$. \medskip \\   Since $(\nu_t)_{t<t_\mathrm{e}(\nu_0)}$ is conservative, we follow the ideas of \cite[Proposition 2.7]{N00}, to obtain the integral relations,
  for all $t<t_\mathrm{e}$ and $0\le i, j\le n$, \begin{equation} \label{eq: ODE1} \langle  \pi_i\pi_j, \nu_t\rangle =\langle  \pi_i\pi_j, \nu_0\rangle +2\int_0^t \sum_{k,l=1}^n \langle \pi_i\pi_k,\nu_s\rangle a_{kl}\langle \pi_l\pi_j,\nu_s\rangle ds.\end{equation} These immediately imply that $\langle \varphi^2, \nu_t\rangle $ is bounded on compact subsets of $[0, t_\mathrm{e})$, and in particular does not diverge before $t_\mathrm{e}$. Moreover, since all terms on the right-hand side are nonnegative, these relations imply that all moments $\langle \pi_i\pi_j, \nu_t\rangle $ and $\langle \varphi^2, \nu_t\rangle$ are increasing on $[0,t_\mathrm{e}).$ \medskip \\ Finally, we show that $\langle \varphi^2, \nu_t\rangle$ diverges near $t_\mathrm{e}(\nu_0)$. This follows from the time-of-existence estimate quoted above: for $t<t_\mathrm{e}$, the unique maximal strong solution starting at $\nu_t$ is precisely $(\nu_{s+t})_{s<t_\mathrm{e}-t}$, and so for some $C=C(A)<\infty$, \begin{equation} t_\mathrm{e}-t\ge C\langle \varphi^2, \nu_t\rangle^{-1}. \end{equation} This rearranges to show that $\langle \varphi^2, \nu_t\rangle \ge C(t_\mathrm{e}-t)^{-1}$ which diverges as $t\uparrow t_\mathrm{e}$, as claimed.   \end{proof} In order to obtain the full connection of the explosion and gelation times, we modify the setting to exclude the problematic particles identified above. Let \begin{equation}
    S^\Pi_\epsilon= \{p\in S^\Pi: \epsilon \pi_0(p) \le \pi_i(p) \le (\epsilon^{-1}+\epsilon)\pi_0(p)\text{ for all } 1\le i\le n\}.
\end{equation} Note that this state space is preserved under the kernel $K^{\Pi,\mathrm{m}}$. Moreover, on the reduced state space $S^\Pi_\epsilon$, the modified kernel $K^{\Pi, \mathrm{m}}$ is \emph{approximately multiplicative} \cite{N00} in the sense that, for some $\delta_\epsilon>0$ and $\Delta_\epsilon<\infty$, we have \begin{equation}
    \delta_\epsilon\hspace{0.1cm}\varphi(p)\varphi(q) \leq \overline{K^{\Pi, \mathrm{m}}}(p,q) \leq  \Delta_\epsilon\hspace{0.1cm}\varphi(p)\varphi(q)
\end{equation}for all $p,q \in S^\Pi_\epsilon$. \medskip \\ We now construct approximations $\nu^\epsilon_0$ to $\nu_0$ which are supported on $S^\Pi_\epsilon$. Let us fix $\mu_0$ satisfying Assumption \ref{hyp: A} and $\nu_0=\pi_\#\mu_0$; for any $\epsilon>0$, let $\nu_0^\epsilon$ be given by specifying, for all bounded measurable functions $h$ on $S^\Pi$, \begin{equation}\begin{split} \label{eq: shifted in data} \int_{S^\Pi}h(p)\nu^\epsilon_0(dp)&\\& \hspace{-1cm}=\int_{S^\Pi} h(p_0, p_1+\epsilon,....p_n+\epsilon,p_{n+1},....p_{n+m})1[p_i\le \epsilon^{-1} \text{ for all }1\le i\le n]\nu_0(dp). \end{split}\end{equation} In this way, we shift $\nu_0$ slightly away from the axes, while also truncating when any $\pi_i$ becomes large. It follows, from existence and uniqueness, that the solution $(\nu^\epsilon_t)_{t\ge 0}$ to (\ref{eq: mE+G}) starting at $\nu^\epsilon_0$ is supported on $S^\Pi_\epsilon$ for all $t\ge 0.$ We can now apply \cite[Theorem 2.2]{N00} to obtain the connection between gelation and explosion for these solutions: \begin{lem}\label{lemma: blowup and gelation} Let $(\nu^\epsilon_t)_{t\ge 0}$ be the solution to (\ref{eq: mE+G}) starting at the measure $\nu^\epsilon_0$ constructed above. Let $t_\mathrm{e}(\nu^\epsilon_0)$ be the explosion time of the second moment, as above, and $t_\mathrm{g}(\nu^\epsilon_0)$ the first time that $\nu^\epsilon_t$ fails to be conservative. Then $t_\mathrm{e}(\nu^\epsilon_0)=t_\mathrm{g}(\nu^\epsilon_0)$.\end{lem}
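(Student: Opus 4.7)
The plan is to apply \cite[Theorem 2.2]{N00} in the restricted setting $S^\Pi_\epsilon$, which was engineered precisely to make its hypotheses hold. First, one checks that $S^\Pi_\epsilon$ is stable under the coagulation rule $(p,q)\mapsto p+q$: if $\epsilon p_0\le p_i\le(\epsilon^{-1}+\epsilon)p_0$ and the analogous bounds hold for $q$, then the same inequalities hold for $p+q$. Since $K^{\Pi,\mathrm{m}}(p,q,\cdot)$ is concentrated on $p+q$, this shows $(\nu^\epsilon_t)_{t\ge 0}$ remains supported on $S^\Pi_\epsilon$ for all $t\ge 0$, where the kernel enjoys the two-sided bound $\delta_\epsilon\varphi(p)\varphi(q)\le \overline{K^{\Pi,\mathrm{m}}}(p,q)\le\Delta_\epsilon\varphi(p)\varphi(q)$. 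Note also that $\langle\varphi^2,\nu^\epsilon_0\rangle<\infty$: the shift by $\epsilon$ in (\ref{eq: shifted in data}) is bounded, and the truncation $p_i\le\epsilon^{-1}$ forces $\varphi\le (n\epsilon^{-1}+1)\pi_0$ on $\mathrm{supp}(\nu^\epsilon_0)$, so $\langle\varphi^2,\nu^\epsilon_0\rangle\le C_\epsilon\langle\pi_0^2,\nu_0\rangle<\infty$ by (A2.).

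Second, by Lemma \ref{lemma: explosition} we already have $t_\mathrm{e}(\nu^\epsilon_0)\le t_\mathrm{g}(\nu^\epsilon_0)$, so only the reverse inequality is at stake. For this I would mimic Norris's argument: on the conservative interval $[0,t_\mathrm{e})$ the moment identity (\ref{eq: ODE1}) holds, and contracting its right-hand side against the lower bound $\overline{K^{\Pi,\mathrm{m}}}(p,q)\ge\delta_\epsilon\varphi(p)\varphi(q)$ yields
\begin{equation*}
\frac{d}{dt}\langle\varphi^2,\nu^\epsilon_t\rangle \;\ge\; \delta_\epsilon\langle\varphi^2,\nu^\epsilon_t\rangle^2,
\end{equation*}
which forces $\langle\varphi^2,\nu^\epsilon_t\rangle$ to explode no later than $(\delta_\epsilon\langle\varphi^2,\nu^\epsilon_0\rangle)^{-1}$. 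To obtain $t_\mathrm{g}\le t_\mathrm{e}$ I would run the contrapositive: suppose $t_\mathrm{g}>t_\mathrm{e}$, so that the solution is conservative on $[t_\mathrm{e},t_\mathrm{g})$. Using the \emph{upper} bound $\overline{K^{\Pi,\mathrm{m}}}(p,q)\le\Delta_\epsilon\varphi(p)\varphi(q)$ together with Corollary \ref{cor: maximal conservative solutions}, one shows that the conservative solution admits a local strong extension from any $t_\ast<t_\mathrm{e}$ of duration at least $C(\Delta_\epsilon)\langle\varphi^2,\nu^\epsilon_{t_\ast}\rangle^{-1}$; iterating this extension on $[0,t_\mathrm{g})$ and matching with the unique strong solution constructed in Lemma \ref{lemma: explosition} produces a strong, conservative solution up to a time strictly larger than $t_\mathrm{e}$, contradicting the definition of $t_\mathrm{e}$.

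The main obstacle I expect is the rigorous justification of the differential inequality above: $\varphi^2$ is neither compactly supported nor bounded, so the weak form (\ref{eq: E+G}) cannot be tested against it directly. I would handle this by the same truncation strategy that underlies Lemma \ref{lemma: E and U}, replacing $\varphi^2$ by $\varphi^2\wedge R$ (or by $\varphi1_{\{\varphi\le R\}}\cdot\varphi$), applying the weak equation to a continuous compactly supported majorant, and then letting $R\to\infty$ via monotone convergence on the left and on the lower-bound term, while using the finiteness of $\langle\varphi^2,\nu^\epsilon_t\rangle$ on $[0,t_\mathrm{e})$ to control the dominated-convergence passage on the right. The only essential input beyond the general truncation machinery is the approximately multiplicative structure on $S^\Pi_\epsilon$, which is supplied by construction; once that is in hand, the proof is a direct transcription of \cite[Theorem 2.2]{N00}.
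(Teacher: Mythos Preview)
Your overall plan matches the paper exactly: the paper's proof of this lemma is nothing more than the invocation of \cite[Theorem~2.2]{N00}, the preceding paragraphs having established that $S^\Pi_\epsilon$ is preserved under coagulation, that $K^{\Pi,\mathrm{m}}$ is approximately multiplicative there, and that $\nu^\epsilon_0$ has finite $\varphi$-second moment. Your verification of these hypotheses is correct, and at the level of the paper this suffices.

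Where your sketch goes beyond the paper --- in trying to reconstruct the argument of \cite[Theorem~2.2]{N00} itself --- the direction $t_\mathrm{g}\le t_\mathrm{e}$ has a gap. The iteration you describe cannot cross $t_\mathrm{e}$: starting from $t_\ast<t_\mathrm{e}$, the upper bound gives a strong extension of length $\sim\langle\varphi^2,\nu^\epsilon_{t_\ast}\rangle^{-1}$, but this length tends to $0$ as $t_\ast\uparrow t_\mathrm{e}$, so the union of the extension intervals is exactly $[0,t_\mathrm{e})$, not anything larger. To restart from some $t_\ast\in(t_\mathrm{e},t_\mathrm{g})$ you would first need $\langle\varphi^2,\nu^\epsilon_{t_\ast}\rangle<\infty$, which is precisely what is in question and which you have not established. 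Likewise, the differential inequality $\frac{d}{dt}\langle\varphi^2,\nu^\epsilon_t\rangle\ge\delta_\epsilon\langle\varphi^2,\nu^\epsilon_t\rangle^2$ is only available on the strong interval $[0,t_\mathrm{e})$ and merely re-confirms finite-time blow-up; it does not by itself force loss of conserved quantities at $t_\mathrm{e}$. In Norris's actual argument the lower bound $\overline{K}\ge\delta_\epsilon\,\varphi\otimes\varphi$ is used for a different purpose --- to show directly that any conservative local solution is strong --- so that $(\nu^\epsilon_t)_{t<t_\mathrm{g}}$ being conservative forces $t_\mathrm{e}\ge t_\mathrm{g}$.
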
 This then connects the gelation phenomenon to the blowup of the second moment, as desired, but only for the special case of the truncated and shifted initial distribution. We now seek to remove this restriction to obtain the result for the original measures $\mu_0, \nu_0.$ To do this, we will show that $t_\mathrm{g}(\nu^\epsilon_0)\rightarrow t_\mathrm{g}(\nu_0)$ and $t_\mathrm{e}(\nu^\epsilon_0)\rightarrow t_\mathrm{e}(\nu_0)$ as we take $\epsilon\downarrow 0.$

\begin{lem}[Convergence of Gelation Times]\label{lemma: convergence of gelation times} Let $\nu_0, \nu^\epsilon_0$ be the measures constructed above, and $t_\mathrm{g}(\nu_0), t_\mathrm{g}(\nu^\epsilon_0)$ the corresponding gelation times. Then, as $\epsilon\downarrow 0$, $t_\mathrm{g}(\nu^\epsilon_0)\rightarrow t_\mathrm{g}(\nu_0)$.\end{lem}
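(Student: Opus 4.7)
The plan is to reduce convergence of gelation times to convergence of a finite matrix, using the explicit formula $t_{\mathrm{g}}(\cdot)=\mathfrak{r}(\Lambda(\cdot))^{-1}$ already established in Corollary~\ref{corr: actual expression for tg}, together with entrywise convergence of $\Lambda(\nu^{\epsilon}_{0})$ to $\Lambda(\nu_{0})$ and continuity of the spectral radius.

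First I would verify that $\nu^{\epsilon}_{0}$, viewed as a measure on the bilinear coagulation system $S^{\Pi}=\NN\times[0,\infty)^{n}\times\mathbb{R}^{m}$ with kernel $K^{\Pi,\mathrm{m}}$, satisfies Assumption~\ref{hyp: A}. The modifications in~(\ref{eq: shifted in data}) only affect the coordinates $\pi_{1},\ldots,\pi_{n}$, which are fixed by the involution $R$, so (A1.) is immediate. Bounding $(\pi_{i}(p)+\epsilon)^{3}\le 8(\pi_{i}(p)^{3}+\epsilon^{3})$ gives (A2.), and (A5.) is preserved because $\pi_{0}$ is untouched. For (A3.) and (A4.) one argues that, for all sufficiently small $\epsilon$, a small shift and truncation can neither create new invariant sets relative to $\overline{K}$ nor collapse the linear independence of $\pi_{1},\ldots,\pi_{n}$ in $L^{2}(\nu^{\epsilon}_{0})$, since these are open conditions on the initial measure.

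With the formula available, it remains to show $\Lambda(\nu^{\epsilon}_{0})\to\Lambda(\nu_{0})$ entrywise. Expanding the defining integral,
\[
  \langle\pi_{k}\pi_{j},\nu^{\epsilon}_{0}\rangle
  =\int_{S^{\Pi}}\bigl(\pi_{k}(p)+\epsilon\bigr)\bigl(\pi_{j}(p)+\epsilon\bigr)\,\mathbf{1}\bigl[\pi_{l}(p)\le\epsilon^{-1}\;\forall\,1\le l\le n\bigr]\,\nu_{0}(dp)
\]
for $1\le k,j\le n$. The three error terms of order $\epsilon$ and $\epsilon^{2}$ vanish as $\epsilon\downarrow 0$ because $\langle\pi_{i},\nu_{0}\rangle<\infty$ by (A2.) and $\nu_{0}(S^{\Pi})\le 1$, while the main term $\int\pi_{k}\pi_{j}\mathbf{1}[\cdots]\,d\nu_{0}$ converges to $\langle\pi_{k}\pi_{j},\nu_{0}\rangle$ by monotone convergence, noting that $\langle\pi_{k}\pi_{j},\nu_{0}\rangle<\infty$ by Cauchy--Schwarz and (A2.). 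Continuity of the spectral radius on $n\times n$ matrices and strict positivity of both limits (Lemma~\ref{lemma: computation of tcrit}) then yield $t_{\mathrm{g}}(\nu^{\epsilon}_{0})=\mathfrak{r}(\Lambda(\nu^{\epsilon}_{0}))^{-1}\to\mathfrak{r}(\Lambda(\nu_{0}))^{-1}=t_{\mathrm{g}}(\nu_{0})$.

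The main obstacle is the verification that the irreducibility hypothesis (A4.) transfers to $\nu^{\epsilon}_{0}$, since a priori the truncation $\{\pi_{l}\le\epsilon^{-1}\}$ could remove vertices whose presence is needed to connect the support of $\nu_{0}$ under $\overline{K}$. I would expect that for $\epsilon$ small this cannot happen, either by a direct argument that any invariant decomposition for $\nu^{\epsilon}_{0}$ lifts to one for $\nu_{0}$, or, as a fallback, by repeating the operator-theoretic argument in Lemma~\ref{lemma: computation of tcrit} with the weaker condition that $\mathfrak{r}(\Lambda(\nu^{\epsilon}_{0}))$ is still given as the top eigenvalue of the $n\times n$ matrix $\Lambda(\nu^{\epsilon}_{0})$, which depends continuously on the moments $\langle\pi_{k}\pi_{j},\nu^{\epsilon}_{0}\rangle$ regardless of irreducibility.
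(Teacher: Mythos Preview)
Your approach is essentially the same as the paper's: both reduce to the formula $t_{\mathrm{g}}=\mathfrak{r}(\Lambda(\cdot))^{-1}$, establish entrywise convergence of the moment matrices $\Lambda(\nu^{\epsilon}_{0})\to\Lambda(\nu_{0})$, and invoke continuity of eigenvalues. The paper only explicitly checks (A3.) and then cites Lemma~\ref{lemma: computation of tcrit}; you are more careful in flagging the other hypotheses.

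Your stated ``main obstacle'' concerning (A4.) is not a genuine difficulty. The shift in~(\ref{eq: shifted in data}) forces $\pi_{i}\ge\epsilon$ for every $1\le i\le n$ on the support of $\nu^{\epsilon}_{0}$. Since $A^{+}$ has nonnegative entries and no row vanishes, there is some $a_{ij}>0$, and hence $\overline{K^{\Pi,\mathrm{m}}}(p,q)\ge a_{ij}p_{i}q_{j}\ge a_{ij}\epsilon^{2}>0$ for all $p,q$ in the support. A strictly positive kernel on the support is trivially irreducible, so (A4.) holds for $\nu^{\epsilon}_{0}$ without any appeal to how irreducibility of $\nu_{0}$ might be inherited. (This is essentially the same observation the paper uses when it notes that $K^{\Pi,\mathrm{m}}$ is approximately multiplicative on $S^{\Pi}_{\epsilon}$, bounded below by $\delta_{\epsilon}\varphi(p)\varphi(q)$.)
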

\begin{proof} First, we recall that $\pi_1,...\pi_n$ are linearly independent in $L^2(\mu_0)$, and hence in $L^2(\nu_0)$, by hypothesis. Using the convergence $\langle \pi_i \pi_j, \nu^\epsilon_0\rangle \rightarrow \langle \pi_i\pi_j, \nu_0\rangle$, it follows that for $\epsilon>0$ small enough, and any $a_i$ with $\sum_i |a_i|=1$, we have $\langle (\sum_i a_i \pi_i)^2, \nu^\epsilon_0\rangle >0$. This, in turn, guarantees that $\pi_1,...\pi_n$ are linearly independent in $L^2(\nu^\epsilon_0)$, for all $\epsilon>0$ small enough.

We can now apply the explicit characterisation of $t_\mathrm{g}$ obtained in Lemma \ref{lemma: computation of tcrit} for the measures $\nu^\epsilon_0$: \begin{equation} t_\mathrm{g}(\nu^\epsilon_0)=\lambda_1(\Lambda(\nu^\epsilon_0))^{-1} \end{equation}  where $\Lambda(\nu^\epsilon_0)$ is the matrix $\Lambda(\nu^\epsilon_0)_{ij}=\sum_{k=1}^{n+m}\langle \pi_i\pi_k, \nu^\epsilon_0\rangle a_{kj}$ and $\lambda_1(\cdot)$ denotes the largest eigenvalue of a matrix. Moreover, as $\epsilon\downarrow 0$, the coefficients of the matrices $\Lambda(\nu^\epsilon_0)$ converge to the analagous matrix $\Lambda(\nu_0)$ for the measure $\nu_0$. \medskip \\ It is well-known, following for instance from \cite{zedek}, that as the coefficients of a matrix vary continuously, so to do the associated eigenvalues, meaning that \begin{equation} \lambda_1(\Lambda(\nu^\epsilon_0)) \rightarrow \lambda_1(\Lambda(\nu_0)) \end{equation} as $\epsilon\downarrow 0$. Combining this with the characterisation of $t_\mathrm{g}$ above, it follows that \begin{equation}\begin{split} t_\mathrm{g}(\nu^\epsilon_0)=\lambda_1(\Lambda(\nu^\epsilon_0))^{-1}& \rightarrow \lambda_1(\Lambda(\nu_0))^{-1}\\ & =t_\mathrm{g}(\nu_0) \end{split}\end{equation} as desired.
\end{proof}

Finally, we show the same result for the explosion times. Thanks to Lemma \ref{lemma: explosition} and (\ref{eq: ODE1}), the matrix of second moments $q_{ij}(t)=\langle \pi_i\pi_j, \nu_t\rangle, 1\le i, j\le n$ satisfies a closed differential equation, with locally Lipschtiz coefficients, on $[0,t_\mathrm{e})$. We will now show that $t_\mathrm{e}$ is exactly the time of existence of a solution started at $q_0$. \begin{lem}\label{lem: link explosion time and existence time} Consider the ordinary differential equations \begin{equation}\label{eq: ODEQ1} \tag{Q1} \dot{q}_t=b(q_t); \hspace{1cm} b(q)=2qA^+q, \hspace{1cm} q\in \mathrm{Mat}_n(\mathbb{R});\end{equation} \begin{equation}\label{eq: ODEQ2} \tag{Q2} \dot{z}_t=w(q_t)z_t, \hspace{1cm} w: \mathrm{Mat}_n(\mathbb{R})\rightarrow \mathrm{Mat}_{n+1}(\mathbb{R})\text{ linear}; \hspace{1cm} z\in \mathbb{R}^{n+1}.\end{equation}   Then, for all $(z_0,q_0)\in \mathbb{R}^{n+1}\times \mathrm{Mat}_n(\mathbb{R})$, there exists a unique maximal solution $\chi(t,z_0, q_0),\psi(t, q_0)$ starting at $(z_0,q_0)$, defined until the time $\zeta(q_0)$ where (\ref{eq: ODEQ1}) blows up. \medskip \\ Then, for any measure $\nu_0$ on $S^\Pi$, the time of existence is exactly the explosion time: \begin{equation} t_\mathrm{e}(\nu_0)=\zeta(q_0), \qquad (q_0)_{ij}=\langle \pi_i \pi_j, \nu_0\rangle, 1\le i, j\le n. \end{equation}  \end{lem}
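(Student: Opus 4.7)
The plan is to identify the restricted second-moment matrix $q^+(t) := (\langle \pi_i\pi_j, \nu_t\rangle)_{i,j=1}^n$ with the unique solution $\psi(t, q_0)$ of the autonomous Riccati-type ODE (\ref{eq: ODEQ1}), and then to exploit the triangular (Riccati-then-linear) structure of the full moment system to argue that the remaining second moments cannot blow up any earlier than $\psi$ does. Combined with Lemma \ref{lemma: explosition}, which relates $t_\mathrm{e}(\nu_0)$ to the divergence of $\mathcal{E}(t) = \langle \varphi^2, \nu_t\rangle$, this will yield both inequalities $\zeta(q_0) \ge t_\mathrm{e}(\nu_0)$ and $\zeta(q_0) \le t_\mathrm{e}(\nu_0)$.

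For the first direction I would restrict the moment identity (\ref{eq: ODE1}) established in Lemma \ref{lemma: explosition} to indices $1 \le i, j \le n$, obtaining in matrix form
\begin{equation}
q^+(t) = q_0 + 2\int_0^t q^+(s) A^+ q^+(s)\, ds, \qquad 0 \le t < t_\mathrm{e}(\nu_0),
\end{equation}
i.e.\ $\dot q^+ = b(q^+)$ with initial datum $q_0$. Since $b$ is polynomial, hence locally Lipschitz on $\mathrm{Mat}_n(\mathbb{R})$, Picard--Lindel\"of produces a unique maximal solution $\psi(\cdot, q_0)$ with life time $\zeta(q_0)$, and by uniqueness $q^+(t) = \psi(t, q_0)$ on $[0, t_\mathrm{e}(\nu_0))$. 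In particular $\psi$ exists throughout $[0, t_\mathrm{e}(\nu_0))$, so $\zeta(q_0) \ge t_\mathrm{e}(\nu_0)$. The companion solution $\chi(\cdot, z_0, q_0)$ then exists and is unique on the same interval, because with $q_t = \psi(t, q_0)$ given, (\ref{eq: ODEQ2}) is a linear ODE in $z$ whose coefficient matrix $w(q_t)$ is continuous in $t$.

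For the reverse inequality I would argue by contradiction: assume $\zeta(q_0) > t_\mathrm{e}(\nu_0)$. Then $\psi(\cdot, q_0)$ extends continuously to the compact interval $[0, t_\mathrm{e}(\nu_0)]$, so $q^+$ is bounded there. Now track the remaining entries $u_j(t) := \langle \pi_0\pi_j, \nu_t\rangle$ for $j = 0, 1, \ldots, n$; specialising (\ref{eq: ODE1}) to $i = 0$ yields, for $j \ge 1$,
\begin{equation}
\dot u_j(t) = 2\sum_{k,l=1}^n u_k(t)\, a_{kl}\, q^+_{lj}(t),
\end{equation}
which is a linear system for $(u_1, \ldots, u_n)$ driven by the bounded matrix $q^+$---precisely the structure captured by (\ref{eq: ODEQ2})---so Gronwall keeps $u_1, \ldots, u_n$ bounded on $[0, t_\mathrm{e}(\nu_0)]$. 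The last entry $u_0 = \langle \pi_0^2, \nu_t\rangle$ satisfies $\dot u_0 = 2\sum_{k,l=1}^n u_k a_{kl} u_l$, whose right-hand side is now bounded, so $u_0$ stays bounded by integration. Summing,
\begin{equation}
\mathcal{E}(t) = \sum_{i,j=0}^n \langle \pi_i\pi_j, \nu_t\rangle = u_0(t) + 2\sum_{j=1}^n u_j(t) + \sum_{i,j=1}^n q^+_{ij}(t)
\end{equation}
would remain bounded on $[0, t_\mathrm{e}(\nu_0)]$, contradicting the divergence $\mathcal{E}(t) \uparrow \infty$ supplied by Lemma \ref{lemma: explosition}. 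Hence $\zeta(q_0) \le t_\mathrm{e}(\nu_0)$, and equality follows.

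The main subtlety, and where I expect the argument is most delicate, is the final step: one cannot attempt to Gronwall $\mathcal{E}(t)$ directly, because its own ODE is genuinely quadratic and such a bound would be circular. What rescues the argument is the triangular structure of the full moment system---the autonomous Riccati block (\ref{eq: ODEQ1}) drives the linear system (\ref{eq: ODEQ2}), which in turn controls $\langle \pi_0^2, \nu_t\rangle$---so any blowup of the total second moment must already be present in the inner block $q^+$.
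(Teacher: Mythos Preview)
Your proof is correct and follows essentially the same route as the paper's: identify the $n\times n$ moment block with the solution of (\ref{eq: ODEQ1}) to get $\zeta(q_0)\ge t_\mathrm{e}(\nu_0)$, then argue by contradiction that boundedness of $\psi$ on $[0,t_\mathrm{e}]$ would force $\mathcal{E}$ to remain bounded, contradicting Lemma~\ref{lemma: explosition}. The paper packages the mixed moments $\langle \pi_0\pi_i,\nu_t\rangle_{i=0}^n$ into the vector $z_t$ solving (\ref{eq: ODEQ2}) and appeals directly to the fact that $\chi$ exists up to $\zeta(q_0)$; your version unpacks this by treating $(u_1,\dots,u_n)$ as a linear system driven by $q^+$ and then handling $u_0$ separately by integration. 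Your explicit cascade actually clarifies a point the paper leaves implicit: the equation for $u_0=\langle\pi_0^2,\nu_t\rangle$ is quadratic in $(u_1,\dots,u_n)$ rather than literally linear in the full vector $z$, so the statement that (\ref{eq: ODEQ2}) is of the form $\dot z=w(q)z$ requires exactly the two-step argument you give.
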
 \begin{proof} Firstly, it is straightforward to verify that $q_t$ does not depend on the initial data $z_0$, since (\ref{eq: ODEQ1}) only depends on $q$; in particular, the blowup time $\zeta$ is a function only of $q_0$. It is also straightforward to verify that (\ref{eq: ODEQ2}) cannot blow up before $\zeta(q_0)$, since on compact subsets $[0,t]\subset [0, \zeta(q_0))$, the coefficients of (\ref{eq: ODEQ2}) are Lipschitz, uniformly in time. As a result, the time of existence for the pair (\ref{eq: ODEQ1}, \ref{eq: ODEQ2}) is exactly the time of existence $\zeta(q_0)$, as claimed.  \medskip \\ To link the explosion times $t_\mathrm{e}$ and the time of existence $\zeta(q_0)$, the equations (\ref{eq: ODE1}) show that the matrix $q_{ij}(t)=\langle \pi_i \pi_j, \nu_t\rangle, 1\le i,j\le n$ and the vector $z_t=\langle \pi_0\pi_i, \nu_t\rangle, 0\le i\le n$ solve the system (\ref{eq: ODEQ1}, \ref{eq: ODEQ2}) on $0\le t< t_\mathrm{e}(\nu_0)$, which implies that $t_\mathrm{e}(\nu_0)\le \zeta(q_0)$.  For the converse, for $t<t_\mathrm{e}$, we have the equality \begin{equation}  \chi(z_0, q_0)_0+\sum_{i=1}^n \psi(q_0, t)_{ii} = \langle \pi_0^2, \nu_t\rangle +\sum_{i=1}^n \langle \pi_i^2, \nu_t\rangle\end{equation} where the initial data are \begin{equation} q_0=(\langle \pi_i\pi_j, \nu_0\rangle)_{i,j=1}^n \hspace{1cm} z_0=(\langle \pi_i\pi_0, \nu_0\rangle)_{0\le i\le n}. \end{equation} The left hand side is bounded on compact subsets of $[0, \zeta(q_0))$ and the right-hand side dominates $\langle \varphi^2, \nu_t\rangle$ up to a constant $C$, which leads to a contradiction if we assume that $t_\mathrm{e}<\zeta(q_0)$, since $\langle \varphi^2, \nu_t\rangle\uparrow \infty$ as $t\uparrow t_\mathrm{e}(\nu_0)$.  We therefore have $\zeta(q_0)\le t_\mathrm{e}(\nu_0)$ which proves the equality desired.    \end{proof} We will now analyse the pair of equations presented above. This will prove the desired continuity of $t_\mathrm{e}$, and some points which will be helpful for later reference. \begin{lem}\label{lemma: ODE considerations} Consider the differential equations (\ref{eq: ODEQ1}, \ref{eq: ODEQ2}) in the previous lemma, and the sets \begin{equation} E=\mathrm{Mat}_n([0,\infty)); \hspace{1cm} E_\delta=\{q\in E: \forall i, q_{ii}> \delta\}; \hspace{1cm}E^\circ=\cup_{\delta>0} E_\delta;\end{equation} \begin{equation} E_\mathrm{cs}=\{q\in E: \text{ for all }i, j\le n \text{ and } t< \zeta(q), \text{ } \psi(q,t)_{ij}^2\le \psi(q,t)_{ii}\psi(q,t)_{jj}\}. \end{equation}  Then, if $q_0 \in E_\delta$, $(\psi(q_0,t))_{t<\zeta(q_0)} \subset E_\delta$, and similarly if $q_0\in E_\mathrm{cs}$, then $(\psi(q_0,t))_{t<\zeta(q_0)}\subset E_\mathrm{cs}$. We have the following properties: \begin{enumerate}[label=\roman{*}).] \item Let $J_\epsilon$ be the set \begin{equation} J_\epsilon  =\{q \in E: \hspace{0.2cm} \zeta(q)\ge\epsilon\}.\end{equation} Then for all $\epsilon, \delta>0$, the set $J_\epsilon\cap E_\delta\cap E_{\mathrm{cs}}$ is bounded. \item Suppose $q^\epsilon_0 \in E_\mathrm{cs}, \epsilon>0$ and $q^\epsilon_0 \rightarrow q_0 \in E_\mathrm{cs}\cap E^\circ$. Then $\zeta(q^\epsilon_0)\rightarrow \zeta(q_0).$ \item Suppose $I\subset \mathbb{R}_+$ is an open interval, and the map $(z_0,q_0): I\rightarrow \mathbb{R}^{n+1}\times (E_\mathrm{cs}\cap E^\circ)$ is continuous, and such that $t<\zeta(q_0(t))$ for all $t\ge 0.$ Then the maps $t\mapsto \psi(q_0(t), t)$ and $t\mapsto \chi(z_0(t), q_0(t), t)$  are continuous on $I$. 
 \end{enumerate} \end{lem}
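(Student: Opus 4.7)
The proof splits into four tasks: verifying the invariance statements, establishing the boundedness property (i), and then deducing the continuity statements (ii) and (iii) from standard continuous dependence of ODEs on initial data. The main technical step is (i); the remainder is either nearly tautological or routine.

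For invariance, $E$ itself is preserved by the tangency criterion: on the boundary face $\{q_{ij}=0\}$ the vector field satisfies $\dot q_{ij} = 2(qA^+q)_{ij} \ge 0$, thanks to the nonnegativity of $A^+$ from Definition \ref{def: BCS}(v). On $E$, the diagonal derivative $\dot q_{ii} = 2\sum_{k,l} a^+_{kl} q_{ik} q_{il}\ge 0$ is likewise nonnegative, so the diagonal is nondecreasing and $E_\delta$ is invariant. Invariance of $E_\mathrm{cs}$ is then essentially tautological: for $s < \zeta(q_0)$, the semigroup property $\psi(\psi(q_0, s), u) = \psi(q_0, s+u)$ together with $s+u < \zeta(q_0)$ transports the Cauchy--Schwarz bound along the trajectory.

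For (i), it suffices to bound $M := \max_i q_{ii}$ on $J_\epsilon \cap E_\delta \cap E_\mathrm{cs}$, since the inequality $q_{ij}^2 \le q_{ii} q_{jj}$ then controls the full matrix. Fix $i_0$ realising $M$; by the no-vanishing-row hypothesis on $A^+$ in Definition \ref{def: BCS}(v), choose $k$ with $a^+_{i_0 k} > 0$. If $k=i_0$, the bound $\dot q_{i_0 i_0} \ge 2 a^+_{i_0 i_0} q_{i_0 i_0}^2$ gives $\zeta(q_0) \le (2a^+_{i_0 i_0} M)^{-1}$ immediately. Otherwise, using $q_{kk} \ge \delta$, one derives $\dot q_{i_0 k} \ge 2a^+_{i_0 k} \delta\, q_{i_0 i_0}$, then substitutes back into the $(i_0,k)$-term of $\dot q_{i_0 i_0}$; writing $f(t) = q_{i_0 i_0}(t)$ and $H(t) = \int_0^t\!\!\int_0^s f(r)\,dr\,ds$, this gives $\ddot H \ge M e^{cH}$ with $c = 4\delta(a^+_{i_0 k})^2$, whose integration forces $H$, and hence $f$, to blow up by time $O(M^{-1/2})$. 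Either way, $\zeta(q_0) \ge \epsilon$ produces an explicit bound on $M$ in terms of $(\epsilon, \delta, A^+)$.

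For (ii), I would prove upper and lower semi-continuity at $q_0 \in E_\mathrm{cs} \cap E^\circ$ separately. Upper semi-continuity is standard: if $\limsup_\epsilon \zeta(q_0^\epsilon) > \zeta(q_0)$, then solutions started at $q_0^\epsilon$ would remain bounded past $\zeta(q_0)$, contradicting blowup of $\psi(q_0, \cdot)$ at $\zeta(q_0)$. For the lower bound, choose $\delta>0$ with $q_0 \in E_\delta$, so that $q_0^\epsilon \in E_{\delta/2}$ for $\epsilon$ small; for any fixed $t < \zeta(q_0)$ the set $J_t \cap E_{\delta/2} \cap E_\mathrm{cs}$ is bounded by (i), so $\psi(q_0^\epsilon, s)$ is uniformly bounded on $[0,t]$ by continuous dependence, precluding earlier blowup. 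Statement (iii) then follows from continuous dependence applied to the joint system (\ref{eq: ODEQ1})--(\ref{eq: ODEQ2}), whose vector field is polynomial and hence locally Lipschitz on bounded sets; combining this with (ii) to control the interval of existence along $t \mapsto q_0(t)$ yields the required joint continuity. The principal obstacle is (i): the Riccati equation admits the explicit solution $\psi(q_0, t) = (q_0^{-1} - 2tA^+)^{-1}$ when $q_0$ is invertible, giving the clean formula $\zeta(q_0)^{-1} = 2\lambda^+(q_0 A^+)$, but elements of $E_\delta \cap E_\mathrm{cs}$ need not be invertible, forcing the direct ODE estimate above.
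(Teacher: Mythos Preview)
Your approach aligns with the paper's, and the invariance statements and (iii) are handled essentially as the paper does. Two points of divergence deserve comment.

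For (i), the paper's argument is shorter. It first observes that whenever $a_{ij}>0$, the scalar Riccati inequality $\dot\psi_{ij}\ge 2a_\star\psi_{ij}^2$ gives $\zeta(q)\le (2a_\star q_{ij})^{-1}$. Then, since every entry of $\psi$ is nondecreasing, each $\psi_{ij}$ is convex, so $\psi(\tfrac{\epsilon}{2},q)_{ij}\ge\tfrac{\epsilon}{2}\,b(q)_{ij}\ge \epsilon\,q_{ii}a_{ij}q_{jj}\ge \epsilon\delta a_\star M$; applying the scalar bound at the shifted point $\psi(\tfrac{\epsilon}{2},q)$ then yields $M\le (a_\star^2\epsilon^2\delta)^{-1}$ in two lines. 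Your differential inequality $\ddot H\ge Me^{cH}$ does integrate to give blowup in time $O((cM)^{-1/2})$ and hence the same bound up to constants, but the convexity shortcut avoids the double integral entirely.

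For (ii), you have exchanged the two halves. It is \emph{lower} semicontinuity, $\liminf_\epsilon\zeta(q_0^\epsilon)\ge\zeta(q_0)$, that is standard: for any $t<\zeta(q_0)$, continuous dependence on compact subsets of $[0,\zeta(q_0))$ keeps $\psi(q_0^\epsilon,\cdot)$ close to $\psi(q_0,\cdot)$ on $[0,t]$, hence bounded, hence $\zeta(q_0^\epsilon)>t$. No appeal to (i) is needed there. Conversely, \emph{upper} semicontinuity is exactly where (i) is required. Your sentence ``solutions started at $q_0^\epsilon$ would remain bounded past $\zeta(q_0)$, contradicting blowup of $\psi(q_0,\cdot)$'' does not work as written: each $\psi(q_0^\epsilon,\cdot)$ being bounded on its own interval of existence gives no contradiction unless the bound is \emph{uniform} in $\epsilon$. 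The paper obtains this uniformity from (i): if $\zeta(q_0^\epsilon)>\zeta(q_0)+\eta$ along a subsequence, then $\psi(q_0^\epsilon,t)\in J_\eta\cap E_{\delta/2}\cap E_{\mathrm{cs}}$ for all $t\le\zeta(q_0)$, which by (i) is a bounded set; picking $t<\zeta(q_0)$ with $\psi(q_0,t)$ outside this set and invoking $\psi(q_0^\epsilon,t)\to\psi(q_0,t)$ then gives the contradiction. The repair is simply to swap which direction you call standard and which invokes (i).
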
 

\begin{proof} 
\begin{enumerate}[label=\roman{*}).]
    \item Let us first fix $q\in E$. First of all write $a_\star=\min\{a_{ij}: a_{ij}>0\}$ and let $i,j$ be such that $a_{ij}>0$. We now estimate \begin{equation} \frac{d}{dt}\psi(t,q)_{ij}\ge 2 a_{ij}\psi(t,q)_{ij}^2\ge 2a_\star q_{ij}^2. \end{equation} This differential inequality may be integrated to obtain \begin{equation} \label{eq: bound to apply} \psi(t,q)_{ij}\ge \frac{q_{ij}}{1-2ta_\star q_{ij}}. \end{equation} In particular, this gives the upper bound $\zeta(q)\le (2a_\star q_{ij})^{-1}$, which implies the claimed boundedness of $J_\epsilon$ in the $(i,j)^\mathrm{th}$ coordinate whenever $a_{ij}>0$. \medskip \\ We will now extend this boundedness to all $n^2$ coordinates when we restrict to  $q\in E_\delta \cap J_{\epsilon}\cap E_\mathrm{cs}$. Let $M$ be the maximum diagonal entry of $q$: \begin{equation} M=\max_{1\le i\le n} q_{ii} \end{equation} and fix $i$ where this maximum is attained; by hypothesis on $A$, there exists $j\le n$ such that $a_{ij}\ge a_\star>0$. It is straightforward to see that the derivative $\frac{d}{dt}\psi(q,t)_{ij}$ is increasing along the solution, which implies the estimate \begin{equation} \psi\left(\frac{\epsilon}{2},q\right)_{ij}\ge \frac{\epsilon}{2} b(q)_{ij}=\epsilon \sum_{k,l\le n}q_{ik}a_{kl}q_{lj} \ge \epsilon  q_{ii}a_{ij}q_{jj} \ge \epsilon \delta a_\star M.\end{equation}By hypothesis, $\zeta(q)\ge \epsilon$, so $\zeta(\psi(\frac{\epsilon}{2},q))\ge \frac{\epsilon}{2}$. Applying the bound on $\zeta$ above, we find that \begin{equation} \frac{\epsilon}{2} \le \frac{1}{2a_\star^2\epsilon \delta M}.\end{equation} Finally, since we chose $q\in E_\mathrm{cs}$, we have the uniform bound  \begin{equation} \max_{ij} q_{ij} \le M\le (a_\star^2 \epsilon^2 \delta)^{-1}. \end{equation} 
    \item The lower semicontinuity of explosion times is standard, and follows from the continuous dependence on the initial data. Therefore, it is sufficient to prove that $\limsup_{\epsilon\rightarrow 0} \zeta(q^\epsilon)\le \zeta(q).$  \medskip \\ Suppose, for a contradiction, that for some $\eta>0$, we have $\limsup_{\epsilon\rightarrow 0} \zeta(q^\epsilon)>\zeta(q)+\eta$; by passing to a subsequence, we may assume that $\zeta(q^\epsilon)>\tau+\eta$ for all $\epsilon$, where we write $\tau=\zeta(q)$. Moreover, since $q_0^\epsilon \in E_\mathrm{cs}$ and $q^\epsilon\rightarrow q \in E^\circ$, we may assume that $q^\epsilon, q \in E_\delta \cap E_\mathrm{cs}$ for all $\epsilon$, for some $\delta>0$, which implies that $\psi(q^\epsilon,t)\in E_\delta\cap E_\mathrm{cs}$ for all $t<\zeta(q^\epsilon)$ and all $\epsilon>0$.\medskip\\  Now, if $t\le \tau$, we have $\zeta(\psi(t,q^\epsilon))=\zeta(q^\epsilon)-t \ge \eta$, which implies the containment \begin{equation} \{\psi(t,q^\epsilon): t\le \tau, \epsilon>0\} \subset E_\delta\cap J_\eta \cap E_\mathrm{cs} \end{equation} which we know, from item i)., to be bounded: for some $C<\infty$, \begin{equation}
        \{\psi(t,q^\epsilon): t\le \tau, \epsilon>0\} \subset \mathrm{Mat}_n([0,C]).
    \end{equation} By the lemma of leaving compact sets, there exists $s<\tau$ such that, for all $t\in (s,\tau)$, $\psi_t(q)\not \in \mathrm{Mat}_n([0,C]).$ However, if we pick $t\in (s,\tau)$, we have $\psi_t(q^\epsilon) \rightarrow \psi_t(q)$, by the continuity of the dependence in the initial conditions, which is a contradiction. Therefore, $\limsup_{\epsilon\rightarrow 0} \zeta(q^\epsilon)\le \zeta(q)$, which proves the claimed convergence.      
    \item  Let us first establish the claim for $\psi$. Firstly, we note that by ii)., the map $t\mapsto \zeta(q_0(t))$ is continuous on $I$. Therefore, fixing $t\in I$, we may choose choose  $\epsilon, \delta > 0$ such that, if $\abs{t-s} \le \delta$, then $s\in I$ and $s < \min \left(\zeta\left(q_0(s)\right), \zeta\left(q_0(t)\right)\right)-\epsilon$. Now, we observe that, for $s\in [t-\delta, t+\delta],$\begin{equation}
    |\psi(t,q_0(t))-\psi(s,q_0(s))|\le|\psi(t,q_0(t))-\psi(t,q_0(s) )|+|\psi(t,q_0(s))-\psi(s,q_0(s))|.
\end{equation} As $s\rightarrow t$, the first term converges to $0$ by continuity of the solution $\psi(q,t)$ in the initial data $q_0$; it is therefore sufficient to control the second term. By the choice of $\delta$, for all $s\in[t-\delta, t+\delta],$ we have \begin{equation} \zeta(\psi(s,q_0(s)))=\zeta(q_0(s))-s>\epsilon \end{equation} so that $\psi(s,q_0(s))\in J_\epsilon.$ Moreover, by compactness of $[t-\delta,t+\delta]$, there exists some $\eta>0$ such that $q_0(s) \in E_\eta$ for all $s\in [t-\delta, t+\delta]$, and since $q_0(s)\in E_\mathrm{cs}$ and these sets are preserved under the flow, we have $\psi(q_0(s),u)\in E_\eta\cap E_\mathrm{cs}$ for all $0\le u\le \zeta(q_0(s)).$ However, we showed in point i). above that that the intersection of these three regions is compact and so there exists a constant $M=M(\epsilon)$: for all $s\in[t-\delta, t+\delta]$, and for all $u \le t+\delta$, \begin{equation}\label{eq: bbded} u< \zeta(q_0(s));\hspace{1cm} |b(\psi(u,q_0(s))| \le M. \end{equation}
This implies the bound, for all $s\in[t-\delta,t+\delta]$, \begin{equation} |\psi(t,q_0(s))-\psi(s,q_0(s))| \le M|t-s|\end{equation} which implies the claimed continuity. \medskip \\ The case for $\chi(z_0(t), q_0(t), t)$ is similar. Let us fix $t\in I$; following the same argument leading to (\ref{eq: bbded}), there exists $\delta>0, M<\infty$ such that, if $s\in [t-\delta, t+\delta]$ then $s\in I$ and for all $u\le s$, $\psi(u, q_0(u)) \in \mathrm{Mat}_n([0,M])$. The equation (\ref{eq: ODEQ2}) can now be integrated directly to obtain, for $s\in [t-\delta, t+\delta]$, \begin{equation} \chi(s, z_0(s), q_0(s))=\exp\left(\int_0^s w\left(\psi(u, q_0(u))\right)du\right)z_0(s).\end{equation} In particular, it follows that $\chi(s, z_0(s), q_0(s))$ is bounded as $s$ varies in $[t-\delta, t+\delta]$. With this, the argument for $\psi$ can be modified to prove the same result 
\end{enumerate}  \end{proof} We can finally combine the previous lemmas to prove Lemma \ref{lemma: second moment before tgel}. \begin{proof}[Proof of Lemma \ref{lemma: second moment before tgel}] Let us fix $\mu_0$ satisfying Assumption \ref{hyp: A}, and let $\nu_0$ be its pushforward $\nu_0=\pi_\#\mu_0$; let $(\mu_t)_{t\ge 0}$ and $(\nu_t)_{t\ge 0}$ be the solutions to (\ref{eq: E+G}, \ref{eq: mE+G}) with these starting points, respectively. By Lemma \ref{lemma: modified equation}, $\nu_t$ is given by $\nu_t=\pi_\#\mu_t$ and in particular, $\mathcal{E}(t)=\langle \varphi^2, \mu_t\rangle=\langle \varphi^2, \nu_t\rangle$, $\mathcal{Q}_{ij}(t)=\langle \pi_i\pi_j, \mu_t\rangle =\langle \pi_i\pi_j, \nu_t\rangle$ and $t_\mathrm{g}(\nu_0)=t_\mathrm{g}(\mu_0)$. \medskip \\ From Lemma \ref{lemma: explosition}, we know that there exists $t_\mathrm{e}=t_\mathrm{e}(\nu_0)>0$ such that $ \mathcal{E}(t)=\langle\varphi^2,\nu_t\rangle$ is finite, continuous and increasing on $[0,t_\mathrm{e})$, and diverges to infinity as $t\uparrow t_\mathrm{e}.$ Moreover, thanks to the differential equations (\ref{eq: ODE1}), all components of $\mathcal{Q}(t)$ are continuous and increasing on $[0,t_\mathrm{e})$. \medskip \\ Consider next the shifted initial data $\nu^\epsilon_0$ given by (\ref{eq: shifted in data}); thanks to Lemma \ref{lemma: blowup and gelation}, we know that $t_\mathrm{g}(\nu^\epsilon_0)=t_\mathrm{e}(\nu^\epsilon_0).$ By Lemma \ref{lemma: convergence of gelation times}, $t_\mathrm{g}(\nu^\epsilon_0)\rightarrow t_\mathrm{g}(\nu_0)$. For the explosion times, we know from Lemma \ref{lem: link explosion time and existence time} that $t_\mathrm{e}(\nu^\epsilon_0)=\zeta(q^\epsilon_0)$ and $t_\mathrm{e}(\nu_0)=\zeta(q_0)$, where $q^\epsilon_0, q_0\in E$ are the matrixes \begin{equation} (q_0)_{ij}=\langle \pi_i\pi_j, \nu_0\rangle;\hspace{1cm} (q^\epsilon_0)_{ij}=\langle \pi_i\pi_j, \nu^\epsilon_0\rangle. \end{equation} By dominated convergence, $q^\epsilon_0\rightarrow q_0$; by hypothesis (A3.), each $(q_0)_{ii}=\langle \pi_i^2, \nu_0\rangle=\langle \pi_i^2, \mu_0\rangle>0$, so $q_0\in E_\delta$ for some $\delta>0$. Finally, for all $t<\zeta(q_0)=t_\mathrm{e}(\nu_0)$, $\psi(t,q_0)_{ij}=\langle \pi_i\pi_j, \nu_t\rangle $ which certainly satisfies the desired Cauchy-Schwarz inequality $\psi(t,q_0)_{ij}^2\le \psi(t,q_0)_{ii}\psi(t,q_0)_{jj}$, so $q_0\in E_\mathrm{cs}$. A similar argument shows that $q^\epsilon_0 \in E_\mathrm{cs}$ for all $\epsilon>0$, so Lemma \ref{lemma: ODE considerations} shows that $t_\mathrm{e}(\nu^\epsilon_0)=\zeta(q^\epsilon_0)\rightarrow \zeta(q_0)=t_\mathrm{e}(\nu_0)$. Comparing these two limits, $t_\mathrm{g}(\nu_0)=t_\mathrm{e}(\nu_0)$, concluding the proof. \end{proof}

\subsection{\textbf{The Critical Point}} Using the concepts introduced above, we next consider the behaviour at and near the critical time $t_\mathrm{g}$. \begin{lem} \label{lemma: divergence at critical point} In the notation of Lemma \ref{lemma: second moment before tgel}, we have \begin{equation} \mathcal{E}(t_\mathrm{g})=\infty=\lim_{t\rightarrow t_\mathrm{g}} \mathcal{E}(t). \end{equation} \end{lem}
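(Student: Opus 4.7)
The plan is to reduce the claim to the single assertion $\mathcal{E}(t_\mathrm{g})=\infty$. The left-sided divergence $\lim_{t\uparrow t_\mathrm{g}}\mathcal{E}(t)=\infty$ is already contained in Lemma~\ref{lemma: second moment before tgel}. Once $\mathcal{E}(t_\mathrm{g})=\infty$ is known, the right-sided limit $\lim_{t\downarrow t_\mathrm{g}}\mathcal{E}(t)=\infty$ will drop out of the lower semicontinuity of $\nu\mapsto\langle \varphi^2,\nu\rangle$ under vague convergence: by exhausting the nonnegative continuous function $\varphi^2$ from below by elements of $C_c(S)$ and using that $t\mapsto\mu_t$ is continuous in the vague topology (established in the proof of Lemma~\ref{lemma: local uniform convergence of stochastic coagulent}), one obtains $\liminf_{t\downarrow t_\mathrm{g}}\mathcal{E}(t)\geq\mathcal{E}(t_\mathrm{g})=\infty$.

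The heart of the argument is therefore a proof by contradiction that $\mathcal{E}(t_\mathrm{g})=\infty$. Suppose, for contradiction, that $\mathcal{E}(t_\mathrm{g})<\infty$. Passing to the pushforward $\nu_t:=\pi_\#\mu_t$, which solves~(\ref{eq: mE+G}) by Lemma~\ref{lemma: modified equation}, I have $\langle\varphi^2,\nu_{t_\mathrm{g}}\rangle<\infty$. I then restart the modified equation at time $t_\mathrm{g}$ from the datum $\nu_{t_\mathrm{g}}$ and apply the strong-existence machinery already used in the proof of Lemma~\ref{lemma: explosition} via \cite[Theorem~2.1]{N00}. This produces a unique maximal strong (i.e.\ with locally integrable second moment) solution $(\tilde\nu_s)_{0\leq s<\tau}$ with $\tau\geq C\,\mathcal{E}(t_\mathrm{g})^{-1}>0$, and this solution is conservative on its interval of existence. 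By the uniqueness statement in Lemma~\ref{lemma: E and U}, applied through the pushforward, this strong solution must coincide with the shift $(\nu_{t_\mathrm{g}+s})_{0\leq s<\tau}$; hence $(\nu_t)$ is conservative on $[t_\mathrm{g},t_\mathrm{g}+\tau)$. Combining with Corollary~\ref{corr: gel at tgel}, which gives $\langle\pi_i,\nu_{t_\mathrm{g}}\rangle=\langle\pi_i,\nu_0\rangle$ for each $i$, I conclude that $\langle\pi_i,\nu_t\rangle=\langle\pi_i,\nu_0\rangle$ for every $t\in[0,t_\mathrm{g}+\tau)$, directly contradicting the definition of $t_\mathrm{g}$.

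The main technical obstacle is justifying that the Norris strong-existence machinery applies at the restarted datum $\nu_{t_\mathrm{g}}$, which need not satisfy all of Assumption~\ref{hyp: A}: in particular, condition (A5.) (support on $\{\pi_0=1\}$) is destroyed by coagulation. However, only the symmetry (A1.), which is preserved by the flow via the argument in Lemma~\ref{lemma: modified equation}, and finiteness of $\langle\varphi^2,\nu_{t_\mathrm{g}}\rangle$, which is precisely the contradiction hypothesis, are actually used in the strong-existence, time-of-existence, and conservation statements invoked in the proof of Lemma~\ref{lemma: explosition}. Both hold at the restart, so the argument goes through and delivers the required contradiction.
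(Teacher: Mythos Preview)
Your proposal is correct and follows essentially the same approach as the paper: assume $\mathcal{E}(t_\mathrm{g})<\infty$, invoke Norris's strong-existence result to obtain a conservative solution for a positive time interval past $t_\mathrm{g}$, identify it with the shifted solution via uniqueness, and then combine Corollary~\ref{corr: gel at tgel} with the definition of $t_\mathrm{g}$ to derive a contradiction; the limit assertion then follows from lower semicontinuity of $\mu\mapsto\langle\varphi^2,\mu\rangle$ and vague continuity of $t\mapsto\mu_t$. The only cosmetic difference is that you route the restart through the pushforward $\nu_t=\pi_\#\mu_t$ and the modified equation~(\ref{eq: mE+G}), whereas the paper works directly with $\mu_t$ and~(\ref{eq: E}) on $S$; your extra remark about which parts of Assumption~\ref{hyp: A} are actually needed at the restart is a nice clarification that the paper leaves implicit.
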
 
\begin{proof}We first show that $\mathcal{E}(t_\mathrm{g})=\infty$. Suppose, for a contradiction, that $\mathcal{E}(t_\mathrm{g})<\infty.$ Then, applying \cite[Proposition 2.7]{N00} as in Lemma \ref{lemma: explosition}, we see that, for some positive $\delta>0$, there exists a strong solution $(\nu_t)_{t<\delta}$ to (\ref{eq: E}), starting at $\mu_{t_\mathrm{g}}.$ This solution is conservative, so is an initial segment of the solution $(\nu_t)_{t\ge 0}$ to (\ref{eq: E+G}) starting at $\mu_{ t_\mathrm{g}}$. By uniqueness in Lemma \ref{lemma: E and U}, \begin{equation}
    \nu_t=\mu_{t_\mathrm{g}+t} \hspace{1cm} \text{ for all }t\ge 0.
\end{equation}
By Corollary \ref{corr: gel at tgel}, $\langle \varphi, \mu_{t_\mathrm{g}}\rangle = \langle \varphi, \mu_0\rangle$, and by definition of $t_\mathrm{g}$, \begin{equation} \langle \varphi, \mu_{t_\mathrm{g}+t}\rangle < \langle \varphi, \mu_{0}\rangle = \langle \varphi, \mu_{t_\mathrm{g}}\rangle \text{ for all }t>0. \end{equation}This contradicts the fact that $(\nu_t)_{t<\delta}$ is strong, which therefore shows that $\mathcal{E}(t)=\infty$.

The second point follows, because $t\mapsto \mu_t$ is continuous, and $\mu \mapsto \langle \varphi^2, \mu\rangle$ is lower semicontinuous, when $\mathcal{M}$
is equipped with the vague topology. \end{proof}

\subsection{\textbf{The Supercritical Regime}} We finally turn to the supercritical case; our result is as follows. 
\begin{lem}\label{lemma: second moment finite after tgel} In the notation of Lemma \ref{lemma: second moment before tgel}, the map $t\mapsto \mathcal{E}(t)$ is finite and continuous, and therefore locally bounded, on  $(t_\mathrm{g},\infty)$. \end{lem}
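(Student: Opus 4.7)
The strategy is to exploit the coupling to inhomogeneous random graphs (Lemma \ref{lemma: coupling}) combined with the supercritical--subcritical duality from \cite{BJR07}, recalled in the paper as Theorem \ref{thrm: coupling supercritical and subcritical}. For $t>t_\mathrm{g}=t_\mathrm{c}$, after removing the giant component, the residual graph is distributed as an inhomogeneous random graph with a dual kernel of operator norm strictly less than $1$, whose component sizes therefore have bounded second moments on the subcritical side. Transferring this bound through the coupling will give $\mathcal{E}(t)<\infty$, and a locally uniform version will give continuity.

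Fix $t_0>t_\mathrm{g}$ and sample $\mathbf{x}_N$ as a Poisson random measure of intensity $N\mu_0$, so that $\pi_\#\mu^N_s=\pi_\star(G^N_s)$ for all $s\ge 0$ by Lemma \ref{lemma: coupling}. By the duality theorem, conditionally on the giant $\mathcal{C}_1(G^N_{t_0})$, the remaining graph is a subcritical inhomogeneous random graph associated to a dual kernel $\widetilde{k}_{t_0}$. Standard subcritical second-moment estimates for such graphs then yield
\begin{equation}
\sup_{N\ge 1}\EE\left[\frac{1}{N}\sum_{j\ge 2}\varphi\!\left(\mathcal{C}_j(G^N_{t_0})\right)^2\right]\le C(t_0)<\infty.
\end{equation}
Pick a continuous compactly supported cutoff $h_M$ on $S^\Pi$ with $1[\varphi\le M]\le h_M\le 1[\varphi\le 2M]$. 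Since $\varphi(\mathcal{C}_1(G^N_{t_0}))\sim N\varphi(g_{t_0})\to\infty$ by Lemma \ref{lemma: WCOG}, the giant atom lies outside the support of $h_M$ for $N$ large, so the vague convergence $\pi_\star(G^N_{t_0})\to\pi_\#\mu_{t_0}$ from Lemma \ref{lemma: local uniform convergence of stochastic coagulent} combined with Fatou's lemma gives $\int\varphi^2 h_M\, d\mu_{t_0}\le C(t_0)$. Monotone convergence as $M\to\infty$ then yields $\mathcal{E}(t_0)\le C(t_0)<\infty$.

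For continuity, I would show that $C(t)$ can be chosen locally uniformly in $t\in(t_\mathrm{g},\infty)$: the continuity of $t\mapsto g_t$ (item 3 of Theorem \ref{thrm: Smoluchowski equation}) ensures that the dual kernel varies continuously, with operator norm bounded away from $1$ on compact subsets. A parallel duality argument for the third moment, bootstrapping from the integrability hypothesis (A2.), yields a locally uniform bound on $\langle\varphi^3,\mu_t\rangle$, which provides uniform integrability of $\varphi^2$ against $\mu_t$. Combined with the vague continuity of $t\mapsto\mu_t$ (inherited from the construction in Section \ref{sec:SE}), this promotes pointwise finiteness to continuity of $\mathcal{E}$.

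The main obstacle is the clean application of the duality principle: one must verify that $\widetilde{k}_{t_0}$ fits into the framework of Appendix \ref{sec: IRG} and carefully track how the subcritical second- and third-moment estimates transfer through the conditional giant-deletion construction, uniformly in $N$. Once these estimates are in hand, the remaining steps -- Fatou, monotone and dominated convergence, and the continuity of the gel -- are largely routine.
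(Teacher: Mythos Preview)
Your instinct to use the supercritical--subcritical duality is exactly right, and this is also the engine of the paper's proof. However, the two executions differ in a way that matters.

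The paper does \emph{not} attempt to prove any uniform-in-$N$ moment estimate of the form $\sup_{N}\EE\bigl[\tfrac{1}{N}\sum_{j\ge 2}\varphi(\mathcal{C}_j(G^N_{t_0}))^2\bigr]\le C(t_0)$; there are no such ``standard subcritical second-moment estimates'' for the BJR model with $\varphi$-weights available in the paper or its appendix, and producing them would be nontrivial side work. Instead, the paper uses the random graphs only to \emph{identify} limits. Writing $\widehat{\mu}^t_0(dx)=(1-\rho_t(x))\mu_0(dx)$ and $(\widehat{\mu}^t_s)_{s\ge 0}$ for the Flory solution started there, one shows via Lemma~\ref{lemma: convergence of random graphs} that $\pi_\star(G^N_t)\to\pi_\#\mu_t$ and $\pi_\star(\widehat{G}^N_t)\to\pi_\#\widehat{\mu}^t_t$ vaguely, while the giant-atom difference $\pi_\star(G^N_t)-\pi_\star(\widetilde{G}^N_t)$ vanishes vaguely; the coupling $\PP(\widehat{G}^N_t=\widetilde{G}^N_t)\to 1$ then forces $\pi_\#\mu_t=\pi_\#\widehat{\mu}^t_t$. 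Now finiteness of $\mathcal{E}(t)=\langle\varphi^2,\widehat{\mu}^t_t\rangle$ follows immediately from Lemma~\ref{lemma: second moment before tgel}, since Theorem~\ref{thrm: coupling supercritical and subcritical} guarantees $\widehat{G}^N_t$ is subcritical and hence (via Lemma~\ref{lemma: connect critical times}) $t<\widehat{t_\mathrm{g}}(t)$. No Fatou step, no random-graph susceptibility bound: the subcritical finite-second-moment result has already been proved at the level of the limiting equation, and duality simply places $\mu_t$ inside the pre-gelation regime of a different Flory evolution.

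For continuity, your proposed bootstrap through third moments and uniform integrability could in principle be made to work, but it is again more laborious than what the paper does. The paper observes that $t\mapsto q_0^t:=\bigl(\langle\pi_i\pi_j,\widehat{\mu}^t_0\rangle\bigr)_{i,j}$ is continuous by Theorem~\ref{thrm: continuity of rho} and dominated convergence, and that $\bigl(\langle\pi_i\pi_j,\widehat{\mu}^t_t\rangle\bigr)_{i,j}=\psi(t,q_0^t)$ for the ODE flow of (\ref{eq: ODEQ1}). Continuity of $\mathcal{E}(t)$ then comes for free from the general ODE stability result, item~iii) of Lemma~\ref{lemma: ODE considerations}, avoiding any appeal to third-moment control.

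In short: the overall architecture of your argument matches the paper, but the step you label ``standard'' is precisely where the paper invests its cleverness, replacing an unproved random-graph estimate by the deterministic identification $\pi_\#\mu_t=\pi_\#\widehat{\mu}^t_t$ and recycling Lemma~\ref{lemma: second moment before tgel}.
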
 
The proof is based on a \emph{duality argument} following Theorem \ref{thrm: coupling supercritical and subcritical}, which connects the measures in the supercritical regime to an auxiliary process in the subcritical case.  Let $(G^N_t)_{t\geq 0}$ be the random graph processes described in Section  \ref{sec: coupling_to_random_graph} with points $\mathbf{x}_N$ sampled as a Poisson random measure of intensity $N\mu_0$; it is straightforward to see that Assumption \ref{hyp: B} holds. Fix $t>t_\mathrm{g}$, and let $\widetilde{G}^N_{t}$ be the graph $G^N_{t}$ with the giant component deleted. \medskip \\ 
Let $\rho_{t}(x)=\rho(t, x)$ be the survival function defined in Lemmas \ref{lemma: form of rho-t}, \ref{lemma: survival function}, and let $\widehat{\mu}_0^t(dx)=(1-\rho_{t}(x))\mu_0(dx)$. By Lemma \ref{lemma: E and U}, there exists a unique solution $(\widehat{\mu}^t_s)_{s\geq 0}$ to the equation (\ref{eq: E+G}) starting at $\widehat{\mu}^t_0$; write $\widehat{t_\mathrm{g}}(t)$ for its gelation time. \medskip \\ Let $\mathbf{y}_N=(y_i: i\le \widehat{l}^N)$ be an enumeration of the vertexes $x_i$ not belonging to the giant component in $G^N_t$. By Theorem \ref{thrm: coupling supercritical and subcritical}, we can construct a random graph $\widehat{G}^N_t$ on $\{1,...,\widehat{l}^N\}$, 
\medskip \\
In order to appeal to Lemmas \ref{lemma: convergence of random graphs}, \ref{lemma: connect critical times}, we will now verify that the desired Assumptions \ref{hyp: A}, \ref{hyp: B} hold for the vertex space $\widehat{\mathcal{V}}$.
\begin{lem}\label{lemma: conditions B1-3 for duality} Fix $t>0$, and let $\mu_0, G^N_t, \widehat{\mu}^t_0$ and $\widehat{\mathcal{V}}$ be as described above. Then Assumption \ref{hyp: B} hold for $\mathbf{y}_N$ and $\widehat{\mu}^t_0.$ \end{lem}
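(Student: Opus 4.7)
The plan is to read off both parts of Assumption \ref{hyp: B} for $\widehat{\mathcal{V}}$ from the hypothesis that they hold for the original data $\mathbf{x}_N, \mu_0$, combined with the characterisation of the giant component provided by Theorem \ref{thrm: coupling supercritical and subcritical}. The support condition $\pi_0(y_i)=1$ is inherited from the corresponding property of the $x_i$ guaranteed by (A5). For the vague convergence in (B1), fix $f\in C_c(S)$ and write
\begin{equation}
\langle f,\widehat{\mu}^N_0\rangle \;=\; \langle f,\mu^N_0\rangle \;-\; \frac{1}{N}\sum_{i\in\mathcal{C}_1(G^N_t)} f(x_i).
\end{equation}
The first term converges to $\langle f,\mu_0\rangle$ by Assumption \ref{hyp: B} for $\mu^N_0$. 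For the second, the results from \cite{BJR07} summarised in Appendix \ref{sec: IRG} assert that the contribution of any fixed bounded continuous test function on the giant component concentrates around $\langle f\rho_t,\mu_0\rangle$, where $\rho_t$ is the survival function of Lemma \ref{lemma: form of rho-t}. Subtracting gives
\begin{equation}
\langle f,\widehat{\mu}^N_0\rangle \longrightarrow \langle f(1-\rho_t),\mu_0\rangle = \langle f,\widehat{\mu}^t_0\rangle
\end{equation}
in probability. Because this holds for every $f$ in a countable dense subset of $C_c(S)$, this gives $d(\widehat{\mu}^N_0,\widehat{\mu}^t_0)\to 0$ in probability, which is (B1).

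For the convergence $\langle \pi_i,\widehat{\mu}^N_0\rangle\to\langle \pi_i,\widehat{\mu}^t_0\rangle$ in (B2), the functions $\pi_i$ are not in $C_c(S)$, and the direct argument above does not apply. I would introduce a continuous cutoff $\chi_M:S\to[0,1]$ equal to $1$ on $S_M=\{\varphi\le M\}$ and vanishing outside $S_{2M}$. The previous argument, applied to $f=\pi_i\chi_M\in C_c(S)$, yields convergence in probability of $\langle \pi_i\chi_M,\widehat{\mu}^N_0\rangle$ to $\langle \pi_i\chi_M,\widehat{\mu}^t_0\rangle$. The tail errors are controlled by the dominations $\widehat{\mu}^N_0\le \mu^N_0$ and $\widehat{\mu}^t_0\le\mu_0$, giving
\begin{equation}
\langle \pi_i(1-\chi_M),\widehat{\mu}^N_0\rangle \le \langle \pi_i(1-\chi_M),\mu^N_0\rangle,\qquad \langle \pi_i(1-\chi_M),\widehat{\mu}^t_0\rangle \le \langle \pi_i(1-\chi_M),\mu_0\rangle.
\end{equation}
Markov's inequality applied to the third moment bound (A2) shows that $\langle\pi_i\mathbbm{1}[\varphi\ge M],\mu_0\rangle\to 0$ as $M\to\infty$, and (B2) for $\mu^N_0$ implies that $\sup_N\EE\langle \pi_i(1-\chi_M),\mu^N_0\rangle$ is controlled by the same quantity up to a vanishing error. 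Sending $N\to\infty$ first and then $M\to\infty$ delivers the required convergence.

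For the uniform integrability in (B2), observe that $\varphi$ is unchanged under restriction and $\widehat{\mu}^N_0\le\mu^N_0$, so pointwise
\begin{equation}
\langle\varphi^2,\widehat{\mu}^N_0\rangle\le\langle\varphi^2,\mu^N_0\rangle.
\end{equation}
Both the uniform $L^1$ bound and the uniform integrability condition therefore pass directly from $\mu^N_0$ to $\widehat{\mu}^N_0$. The main technical obstacle in the argument is justifying the quoted convergence $N^{-1}\sum_{i\in\mathcal{C}_1(G^N_t)}f(x_i)\to\langle f\rho_t,\mu_0\rangle$ for $f\in C_c(S)$ in the present generalised vertex space setting; this is exactly what is provided by the coupling result of Theorem \ref{thrm: coupling supercritical and subcritical} and the convergence of the largest cluster's empirical measure in Appendix \ref{sec: IRG}, so the proof reduces to an application of those results together with the elementary estimates above.
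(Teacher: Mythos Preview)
Your argument is essentially the same as the paper's, with one unnecessary detour and one small imprecision.

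For (B1), you decompose $\langle f,\widehat{\mu}^N_0\rangle$ and then argue that the giant-component contribution concentrates, ultimately citing Theorem~\ref{thrm: coupling supercritical and subcritical}. The paper observes more directly that Theorem~\ref{thrm: coupling supercritical and subcritical} already asserts $\widehat{\mathcal{V}}$ is a generalised vertex space, and by Definition~\ref{def: Generalised vertex space} this \emph{is} the statement that $\widehat{\mu}^N_0\to\widehat{\mu}^t_0$ weakly in probability; vague convergence follows immediately. Your decomposition is not wrong, but it routes through a statement about the giant component that is only available via the same theorem you end up citing, so nothing is gained.

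For the first part of (B2) your cutoff argument matches the paper's. One point needs care: your domination $\langle \pi_i(1-\chi_M),\widehat{\mu}^N_0\rangle \le \langle \pi_i(1-\chi_M),\mu^N_0\rangle$ is only valid for $\pi_i\ge 0$, i.e.\ for $0\le i\le n$. For $n<i\le n+m$ the functions $\pi_i$ are signed, and you must bound $|\pi_i|$ instead; the paper does this using item iv) of Definition~\ref{def: BCS}, which gives $|\pi_i|\le C\varphi$ and hence $\langle |\pi_i|1_{S_\xi^c},\mu^N_0\rangle \le C\xi^{-1}\langle \varphi^2,\mu^N_0\rangle$. With this correction the rest of your argument goes through. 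The uniform integrability part is identical to the paper's.
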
 \begin{proof} To ease notation, we write $\widehat{\mu}_0$ for $\widehat{\mu}^t_0$, $\mu^N_0$ for the initial empirical measure of the unmodified process corresponding to $\mathbf{x}_N$, and $\widehat{\mu}^N_0$ for the reduced empirical measure corresponding to $\mathbf{y}_N$: \begin{equation} \widehat{\mu}^N_0 =\frac{1}{N}\sum_{i=1}^{\widehat{l}^N} \delta_{y_i}.\end{equation} It is straightforward to see that $\widehat{\mu}^t_0$ inherits the properties in Assumption \ref{hyp: A} from $\mu_0$, and so it is sufficient to establish Assumption \ref{hyp: B}.\medskip \\ For (B1.), we note that part of the content of Theorem \ref{thrm: coupling supercritical and subcritical} is the weak convergence
\begin{equation}
    \widehat{\mu}^N_0=\frac{1}{N}\sum_{i=1}^{\widehat{l}^N} \delta_{y_i} \rightarrow \widehat{\mu} \hspace{1cm}\text{weakly, in probability}.
\end{equation} Since the vague topology is weaker than the weak topology, we immediately have the vague convergence required. Moreover, by construction, $\text{Supp}(\widehat{\mu}^N_0)\subset\text{Supp}(\mu^N_0)$, so it follows from (B1.) that $\widehat{\mu}^N_0$ is supported on $\{\pi_0=1\}$ as required.

We will now show that (B2.) follows from the previous point, together with the moment estimates for the original initial measure $\mu^N_0$. 

Fix $\xi<\infty$, and let $\chi \in C_c(S)$ be such that $1_{S_\xi} \leq \chi \leq1_{S_{\xi+1}}$. We observe that
\begin{equation} \begin{split}
\abs{\langle \pi, \widehat{\mu}_0^N\rangle - \langle \pi, \widehat{\mu}_0\rangle}  &\leq
\abs{\langle \pi\chi, \widehat{\mu}_0^N-\widehat{\mu}_0\rangle } 
 +\langle \abs{\pi} 1_{S_\xi^\mathrm{c}},\widehat{\mu}_0^N\rangle
 +\langle \abs{\pi} 1_{S_\xi^\mathrm{c}},\widehat{\mu}_0\rangle \\[1ex]
 &\leq
 \abs{\langle \pi\chi, \widehat{\mu}_0^N-\widehat{\mu}_0\rangle} 
 +\frac{C}{\xi}\langle \varphi^2,{\mu}_0^N\rangle
 +\frac{C}{\xi}\langle \varphi^2,{\mu}_0\rangle
\end{split} \end{equation} for some constant $C$, thanks to the bound in part iv) of the definition (\ref{def: BCS}). 
We now fix $\epsilon, \delta>0$. Thanks to (A2., B2.), $\langle \varphi^2, \mu^N_0\rangle$ is bounded in $L^1$ and $\langle \varphi^2, \mu_0\rangle<\infty$, and so we may choose $\xi<\infty$ such that the second and third terms are at most $\epsilon/3$ with probability exceeding $1-\delta/2$, for all $N$. For this choice of $\xi$, the first term vanishes as $N\rightarrow\infty$ by vague convergence in probability, and so is at most $\frac{\epsilon}{3}$ with probability exceeding $1-\delta/2$ for all $N$ large enough. Therefore, for all such $N$, we have \begin{equation} \PP\left(|\langle \pi, \widehat{\mu}^N_0-\widehat{\mu}_0\rangle|>\epsilon\right)\le \delta \end{equation} which proves the desired convergence in probability.  

For the second assertion of (B2.), we note that $\langle \varphi^2, \widehat{\mu}^N_0\rangle \le \langle \varphi^2, \mu^N_0\rangle$ by the construction of $\mathbf{y}_N$, and $\langle \varphi^2, \mu^N_0\rangle$ is uniformly integrable by the hypothesis (B2.).
\end{proof}

We now use this preparatory result to prove Lemma \ref{lemma: second moment finite after tgel}. \begin{proof}[Proof of Lemma \ref{lemma: second moment finite after tgel}] Let $G^N_t, \widetilde{G}^N_t, \widehat{G}^N_t$ be as above. Recalling that we consider equality of graphs to include equality of the vertex data, it follows from Theorem \ref{thrm: coupling supercritical and subcritical} that \begin{equation} \mathbb{P}(\pi_\star(\widehat{G}^N_t)=\pi_\star(\widetilde{G}^N_t))\rightarrow 1. \end{equation}  From Lemmas \ref{lemma: convergence of random graphs}, \ref{lemma: conditions B1-3 for duality}, we obtain the following convergences in probability:
\begin{equation}
    \pi_\star(G^N_{t})\rightarrow \pi_\#{\mu}_{t};\hspace{1cm}
    \pi_\star(\widehat{G}^N_{t})\rightarrow \pi_\#\widehat{\mu}^t_{t}
\end{equation} in the vague topology, in probability.  Moreover, the difference \begin{equation}
    \pi_\star(G^N_{t})-\pi_\star(\widetilde{G}^N_{t})=\frac{1}{N}\delta(\mathcal{C}_1(G^N_{t}))
\end{equation} converges to $0$ in the vague topology in probability, since the support is eventually disjoint from any compact set, with high probability. It follows that \begin{equation}
    \pi_\star(\widetilde{G}^N_{t})\rightarrow \pi_\#\mu_{t}
\end{equation} in the vague topology, in probability, and by uniqueness of limits, we have $\pi_\#\widehat{\mu}^t_{t}=\pi_\#\mu_{t}$. In particular, it follows that \begin{equation}\langle \varphi^2, \mu_t\rangle =\langle \varphi^2, \pi_\# \mu_t\rangle = \langle \varphi^2, \pi_\#\widehat{\mu}^t_t\rangle = \langle \varphi^2, \widehat{\mu}^t_t\rangle.\end{equation} Using assumption ({A2.}), we can see that $t k\in L^2(S\times S, \mu_0\times \mu_0)$, and so it follows from Theorem \ref{thrm: coupling supercritical and subcritical} that the graphs $\widehat{G}^N_{t}$ are subcritical. By Lemma \ref{lemma: connect critical times}, it follows that that  $t<\widehat{t_\mathrm{g}}(t)$, and so by Lemma \ref{lemma: second moment before tgel}, we have \begin{equation}
   \langle \varphi^2, \mu_t\rangle = \langle \varphi^2, \widehat{\mu}^t_{t}\rangle <\infty.
\end{equation} Using Theorem \ref{thrm: continuity of rho} and dominated convergence, the map \begin{equation}\begin{split}
   & t\mapsto q^t_0=\left(\left\langle \pi_i\pi_j, \widehat{\mu}^t_0\right\rangle\right)_{i,j=1}^n=\left(\left\langle (1-\rho_t)\pi_i\pi_j,\mu_0\right\rangle\right)_{i,j=1}^n; \\ & t\mapsto z^t_0=\left(\left\langle \pi_i\pi_0, \widehat{\mu}^t_0\right\rangle\right)_{i=0}^n=\left(\left\langle (1-\rho_t)\pi_i\pi_0,\mu_0\right\rangle\right)_{i=0}^n\end{split}
\end{equation} are continuous, and $q^t_0$ takes values in $E^\circ$. Therefore, by the general ODE considerations in Lemma \ref{lemma: ODE considerations} point iii)., it follows that the maps \begin{equation}
    t\mapsto q^t(t)= \psi(t, q_0^t) =\left(\langle \pi_i\pi_j, \widehat{\mu}^t_t\rangle\right)_{i,j=1}^n; \hspace{1cm} t\mapsto z^t_t=\chi(t, z^t_0, q^t_0)=(\langle \pi_i\pi_0, \widehat{\mu}^t_t\rangle)_{i=0}^n
\end{equation} are finite and continuous on $(t_\mathrm{g}, \infty).$  Since $\pi_\#\widehat{\mu}^t_t=\pi_\#\mu_t$, item iii) of Lemma \ref{lemma: ODE considerations} shows that the maps $t\mapsto \mathcal{Q}(t)_{ij}, 0\le i, j\le n$ are finite and continuous on $(t_\mathrm{g}, \infty)$, which implies that they are bounded on compact subsets.  \end{proof}

\begin{rmk} The same argument also shows that $t\mapsto \widehat{t}_\mathrm{g}(t)$ is continuous. This fact will be used later in the proof of Lemma \ref{lemma: anomalous clusters 2}. \end{rmk}

\section{\textbf{Representation and Dynamics of the Gel}} \label{sec: gel dynamics}
\subsection{\textbf{Representation Formula}}

The duality construction used in the proof of Lemma \ref{lemma: second moment finite after tgel} gives us a natural way to relate the gel data $g_t$ to the survival function $\rho_t$. This is the content of the following lemma. \begin{lem}\label{lemma: representation of M, E} Let $\mu_0$ be an initial data satisfying Assumption \ref{hyp: A}, and let $g_t=(M_t, E_t, 0)$ be the gel data for the corresponding solution to (\ref{eq: E+G}). Let $\rho_t(\cdot)$ be the corresponding survival function defined in Section \ref{sec: coupling_to_random_graph} and Appendix \ref{sec: IRG}. Then we have the equality \begin{equation}\label{eq: formula for M, E}
    g_t=\langle \rho_t \pi, \mu_0\rangle.
\end{equation} In particular, $t\mapsto g_t$ is continuous and if $t>t_\mathrm{g}$ then $M_t>0$, and $E_t>0$ componentwise. \end{lem}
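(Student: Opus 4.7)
The approach is to recycle the duality construction from the proof of Lemma \ref{lemma: second moment finite after tgel}. The key point there was that the auxiliary measure $\widehat{\mu}^t_0(dx) = (1-\rho_t(x))\mu_0(dx)$ gives rise to a process whose terminal value at time $t$ has the same $\pi$-pushforward as $\mu_t$, while remaining conservative on $[0,t]$; this is exactly the ingredient needed.

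First I would dispense with the subcritical regime $t \le t_g$: by Lemma \ref{lemma: form of rho-t} combined with the identification $t_c = t_g$ from Lemma \ref{lemma: connect critical times}, the coefficients $c^i_t$ all vanish for $t \le t_g$, so $\rho_t \equiv 0$. Meanwhile $g_t = 0$ by definition of $t_g$ together with Corollary \ref{corr: gel at tgel}, so both sides of (\ref{eq: formula for M, E}) vanish. For $t > t_g$, I would invoke the duality setup: from the proof of Lemma \ref{lemma: second moment finite after tgel} we have $\pi_\# \widehat{\mu}^t_t = \pi_\# \mu_t$ and $t < \widehat{t_g}(t)$, so $(\widehat{\mu}^t_s)_{0 \le s \le t}$ is conservative. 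Hence
\begin{equation*}
\langle \pi, \mu_t\rangle = \langle \pi, \widehat{\mu}^t_t\rangle = \langle \pi, \widehat{\mu}^t_0\rangle = \langle (1-\rho_t)\pi, \mu_0\rangle,
\end{equation*}
which rearranges to $g_t = \langle \rho_t \pi, \mu_0\rangle$. For the final $m$ components of $\pi$, the $R$-evenness of $\rho_t$ (Lemma \ref{lemma: form of rho-t}) together with (A1.) and the $R$-antisymmetry of $\pi_i$ for $i>n$ makes both sides vanish, consistent with $P_t=0$.

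Continuity of $t \mapsto g_t$ then follows by dominated convergence: $t \mapsto \rho_t(x)$ is continuous by Theorem \ref{thrm: continuity of rho}, and the integrand is bounded uniformly in $t$ by $|\pi_i| \le C\varphi \in L^1(\mu_0)$, using part (iv) of Definition \ref{def: BCS} and (A2.). Strict positivity of $M_t$ for $t > t_g$ is then straightforward: by Lemma \ref{lemma: form of rho-t} at least one $c^i_t > 0$, so $\rho_t > 0$ on $\{\pi_i > 0\}$, a set of positive $\mu_0$-measure by (A3.); together with $\pi_0 \equiv 1$ on $\mathrm{supp}(\mu_0)$ from (A5.) this gives $M_t = \langle \rho_t, \mu_0\rangle > 0$.

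The main obstacle is the componentwise positivity $E^j_t > 0$ for every $j = 1,\dots,n$, not merely for one coordinate. This requires a Perron--Frobenius-style argument exploiting the irreducibility (A4.) together with the no-vanishing-row condition on $A$ (part (v) of Definition \ref{def: BCS}) applied to the fixed-point relation (\ref{eq: exp for ct}): one must show that if any $c^j_t = 0$ in the supercritical regime, then the set $\{x : \sum_i c^i_t \pi_i(x) > 0\}$ would decouple from its complement under $\overline{K}$, contradicting $\mu_0$-irreducibility. Once each $c^j_t > 0$ is established, $\rho_t > 0$ on $\{\varphi > 0\}$ and (A3.) gives $\langle \rho_t \pi_j, \mu_0\rangle > 0$ for every $j$.
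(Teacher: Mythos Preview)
Your argument for the representation formula itself, for continuity, and for the subcritical/critical case is essentially the same as the paper's, and is correct.

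The only point where you diverge is the componentwise positivity of $E_t$, and here you are making life harder than necessary. You flag this as ``the main obstacle'' and propose a Perron--Frobenius-type argument to show that \emph{every} $c^j_t>0$ for $t>t_\mathrm{g}$. But this is not needed: the dichotomy in Theorem~\ref{lemma: survival function} (item~ii) already gives that $\rho_t(x)>0$ for $\mu_0$-almost every $x$ whenever $t>t_\mathrm{c}=t_\mathrm{g}$, because the kernel $k$ is irreducible by (A4.). Combined with (A3.), which guarantees that each $\pi_j$ (for $1\le j\le n$) is strictly positive on a set of positive $\mu_0$-measure, one obtains $E^j_t=\langle \rho_t\pi_j,\mu_0\rangle>0$ directly. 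This is exactly what the paper does, in one line. Your sketched argument (deducing a decoupling from $c^j_t=0$ and contradicting irreducibility) would in effect be reproving, in this special setting, what the cited BJR07 result already provides.
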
  Together with the identification of $\rho_t$ in Lemma \ref{lemma: form of rho-t}, this proves part 3 of Theorem \ref{thrm: Smoluchowski equation}. \begin{proof} We deal with the supercritical and subcritical/critical cases, $t>t_\mathrm{g}, t\le t_\mathrm{g}$ separately. 
\paragraph{1. Supercritical Case $t>t_\mathrm{g}$.}  Let $(\widehat{\mu}^t_s)_{s\geq 0}$ and $\widehat{t_\mathrm{g}}(t)$ be as in the proof of Theorem \ref{lemma: second moment finite after tgel}. Then, since $(\widehat{\mu}^t_s)_{s\geq 0}$ is conservative on $[0, \widehat{t_\mathrm{g}})$, and $t<\widehat{t_\mathrm{g}}(t)$, we have, for all $0\le i \le n+m$, \begin{equation}
    \langle \pi_i, \widehat{\mu}^t_t\rangle =\langle \pi_i, \widehat{\mu}^t_0\rangle = \int_{S} \pi_i(x)(1-\rho(t,x))\mu_0(dx).
\end{equation} As shown in Lemma \ref{lemma: second moment finite after tgel},  $\pi_\#\mu_t=\pi_\#\widehat{\mu}^t_t$, so we have \begin{equation} \begin{split}
    g^i_t:&=\langle \pi_i, \mu_0\rangle -\langle \pi_i, \mu_t\rangle =\langle \pi_i, \mu_0\rangle - \langle \pi_i, \widehat{\mu}^t_t\rangle \\ & =\langle \pi_i\rho_t, \mu_0\rangle \end{split}
\end{equation} as claimed. 

\paragraph{2. Subcritical and Critical Cases $t\le t_\mathrm{g}$.} For $t<t_\mathrm{g}$, the result is immediate: we have $g_t$ by definition of $t_\mathrm{g}$, and $\rho_t=0$ by Theorem \ref{lemma: survival function}. The critical case is identical, recalling from Corollary \ref{corr: gel at tgel} that $g_{t_\mathrm{g}}=0.$ \medskip \\ Continuity follows from Theorem \ref{thrm: continuity of rho} by using dominated convergence. For the final claim, if $t>t_\mathrm{g}$ then $\rho_t(x)>0$ $\mu_0$ - almost everywhere, by Lemma \ref{lemma: survival function}. By hypothesis (A3.), for all $i=1,...,n$, $\pi_i>0$ on a set of positive $\mu_0$ measure. Together, these imply that $\langle \rho_t\pi_i, \mu_0\rangle>0$, as claimed. \end{proof}

\subsection{\textbf{Gel Dynamics Beyond the Critical Time}} We now obtain point 4 of Theorem \ref{thrm: Smoluchowski equation}  as a consequence of the previous results. We have already proven the continuity of $g_t$ on the whole time interval $[0,\infty)$ and the finiteness of the second moments $q_t=(\langle  \pi_i\pi_j, \mu_t\rangle)_{i,j=1}^n$ in the supercritical regime. Therefore, it is sufficient to prove the following result.
\begin{lem}\label{lemma: dynamics after tgel} In the notation of Theorem \ref{thrm: Smoluchowski equation}, let $g_t$ be the data of the gel associated to $(\mu_t)_{t\ge 0}$. Then, for  $t\ge t_\mathrm{g}$, we have
\begin{equation} \label{eq: claimed limit}
    g^i_t=\int_{t_\mathrm{g}}^t \hspace{0.1cm}
    \sum_{j,k=1}^n \langle \pi_i\pi_j, \mu_t\rangle a_{jk}g^k_s  ds.
\end{equation} Thanks to the continuity of the second moments above $t_\mathrm{g}$, this has the differential form, holding in the classical sense, \begin{equation}  \frac{d}{dt} g^i_t=\sum_{j,k=1}^n \langle \pi_i\pi_j, \mu_t\rangle a_{jk}g^k_t.\end{equation} 
\end{lem}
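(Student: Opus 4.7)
The natural strategy is to take $f=\pi_i$ as a test function in the weak form (\ref{eq: E+G}), which would formally yield
\begin{equation*}
  \frac{d}{dt}\langle \pi_i, \mu_t\rangle = \langle \pi_i, L_\mathrm{g}(\mu_t)\rangle,
\end{equation*}
and on the right-hand side the pure coagulation term $\langle \pi_i, L(\mu_t)\rangle$ vanishes by the conservation property (i) of Definition \ref{def: BCS}, while the gel-correction term unwraps, via the bilinear form (\ref{eq: overline K}), to $\sum_{j,k}a_{jk}\langle \pi_i\pi_j,\mu_t\rangle g^k_t$. Negating and using $g^i_t=\langle\pi_i,\mu_0\rangle-\langle\pi_i,\mu_t\rangle$ yields the claimed identity. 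The nontrivial task is to justify this formal calculation, since $\pi_i$ is not in $C_c(S)$.

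The plan is to introduce a cutoff $\chi_R(x)=\chi(\varphi(x)/R)$ for some fixed $\chi\in C_c([0,\infty))$ with $\chi\equiv 1$ on $[0,1]$ and $\chi\equiv 0$ outside $[0,2]$. By compactness of the sublevel sets $S_\xi$ (item (iv) of Definition \ref{def: BCS}) and continuity of $\pi_i$ and $\varphi$, the truncated test function $\pi_i\chi_R\in C_c(S)$. Substituting into (\ref{eq: E+G}) I obtain, for any $t_\mathrm{g}<s_0\le t$,
\begin{equation*}
  \langle \pi_i\chi_R,\mu_t-\mu_{s_0}\rangle
  = \int_{s_0}^t\!\left(\langle \pi_i\chi_R, L(\mu_s)\rangle
     - \sum_{j,k}a_{jk}\langle \pi_i\pi_j\chi_R,\mu_s\rangle g^k_s\right) ds.
\end{equation*}

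The crux is then to pass $R\to\infty$. Using the identity
\begin{equation*}
  \pi_i\chi_R(z)-\pi_i\chi_R(x)-\pi_i\chi_R(y)
  = \pi_i(x)(\chi_R(z)-\chi_R(x))+\pi_i(y)(\chi_R(z)-\chi_R(y))
\end{equation*}
valid $K(x,y,\cdot)$-almost everywhere by conservation of $\pi_i$, one sees that the integrand defining $\langle \pi_i\chi_R, L(\mu_s)\rangle$ is supported on $\{\varphi(x+y)>R\}$ and bounded in absolute value by $C(\pi_i(x)+\pi_i(y))\varphi(x)\varphi(y)\mathbf{1}_{\{\varphi(x)+\varphi(y)>R\}}$. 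By Lemma \ref{lemma: second moment finite after tgel} the second moment $\langle\varphi^2,\mu_s\rangle$ is locally bounded on $(t_\mathrm{g},\infty)$, so dominated convergence on the interval $[s_0,t]$ forces $\langle \pi_i\chi_R, L(\mu_s)\rangle\to 0$ and the gel term to $\sum_{j,k}a_{jk}\langle \pi_i\pi_j,\mu_s\rangle g^k_s$. The LHS converges to $\langle \pi_i,\mu_t-\mu_{s_0}\rangle=-(g^i_t-g^i_{s_0})$ by monotone convergence (since $\pi_i\ge 0$ for $i\le n$, and for $i>n$ everything vanishes by symmetry (A1.)). Finally, restricting the sum to $1\le j,k\le n$ is justified by the block-diagonal form of $A$ together with $g^k_s=0$ for $k>n$.

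Sending $s_0\downarrow t_\mathrm{g}$ and using continuity of $g_s$ at $t_\mathrm{g}$ (Lemma \ref{lemma: representation of M, E}, with $g_{t_\mathrm{g}}=0$ by Corollary \ref{corr: gel at tgel}) produces (\ref{eq: claimed limit}) as a convergent improper integral. Since the integrand is continuous on $(t_\mathrm{g},\infty)$ by the continuity of $g$ and of $\mathcal{Q}$ (Lemma \ref{lemma: second moment finite after tgel}), the fundamental theorem of calculus delivers the differential form. The main obstacle is the dominated-convergence step for the $L$-term: without the finiteness and local boundedness of $\langle\varphi^2,\mu_s\rangle$ on $(t_\mathrm{g},\infty)$ there would be no integrable majorant, so this argument relies crucially on the results of Section \ref{sec: finiteness of second moment}.
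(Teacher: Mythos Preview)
Your proof is correct and follows the same strategy as the paper: truncate, use the supercritical second-moment bound (Lemma~\ref{lemma: second moment finite after tgel}) to show via dominated convergence that the coagulation contribution vanishes while the gel-absorption term survives, then send $s_0\downarrow t_\mathrm{g}$ using continuity of $g$ (Lemma~\ref{lemma: representation of M, E} and Corollary~\ref{corr: gel at tgel}). The only difference is in implementation: the paper carries out the truncation via the restricted dynamics $(\mu^\xi_t,g^\xi_t)$ of Section~\ref{sec:SE}, working directly with the gel equation (\ref{eq: rE2}) and splitting its right-hand side into $\mathcal{T}_1(\xi)+\mathcal{T}_2(\xi)$, whereas you cut off with a smooth test function $\pi_i\chi_R$ in (\ref{eq: E+G}) and use the conservation identity to isolate the same two contributions; the limiting arguments are otherwise term-for-term parallel.
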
 \begin{rmk}\label{rmk: continuity of tgelt} In proving Lemma \ref{lemma: dynamics after tgel}, we will split the growth of the gel into two terms $\mathcal{T}_1+\mathcal{T}_2$, where $\mathcal{T}_1$ represents the absorption of particles into the gel, and $\mathcal{T}_2$ represents the coagulation of smaller particles. We will show that $\mathcal{T}_2=0$, giving the claimed result; this may be expected following the relationship between gelation and blowup of the second moment $\mathcal{E}(t)$ in Lemma \ref{lemma: second moment before tgel}, and the finiteness of $\mathcal{E}$ in the supercritical regime. \end{rmk}
\begin{proof} We return to the truncated dynamics (\ref{eq:rE1}, \ref{eq: rE2}) used in the proof of Lemma \ref{lemma: E and U}. We recall that, starting at \begin{equation} \mu^\xi_0 = 1_{S_\xi}\mu_0; \hspace{1cm} g^\xi_0=\int_{x\not\in S_\xi} x\mu_0(dx)\end{equation} the solution $(\mu^\xi_t, g^\xi_t)$ to (\ref{eq:rE1}, \ref{eq: rE2}) exists and is unique, and we have \begin{equation} \label{eq: convergence to E+G}
    \mu^\xi_t= \mu_t1_{S_\xi}; \hspace{1cm} (M^\xi_t, E^\xi_t)\downarrow (M_t, E_t)\text{ as }\xi\uparrow \infty.
\end{equation} where $(\mu_t)_{t\ge 0}$ is the solution to (\ref{eq: E+G}) starting at $\mu_0$, and $(M_t, E_t)$ are the nonzero components of the associated gel data. \medskip \\ Fix $s, t$ such that $t_\mathrm{g}<s<t$. Rewriting  (\ref{eq: rE2}) and using that $P^\xi_t=0$, we have that \begin{equation}\label{eq: truncated gel dynamics}\begin{split}
    g^{\xi,i}_t-g^{\xi,i}_s&=\int_{s}^t 
    \hspace{0.1cm} \sum_{j,k=1}^n \langle \pi_i\pi_j,\mu^\xi_u\rangle a_{jk}g^{\xi,k}_u du\\&\hspace{1cm}+\frac{1}{2}\int_s^t\int_{S_\xi^2}\pi_i(x+y)1[\varphi(x+y)>\xi]\overline{K}(x,y)\mu_u(dx)\mu_u(dy)du.
\end{split}\end{equation} Let us write $\mathcal{T}_1(\xi),\mathcal{T}_2(\xi)$ for the two terms appearing in (\ref{eq: truncated gel dynamics}) for ease of notation. \medskip \\ We first show that $\mathcal{T}_1(\xi)$ converges to the expression analagous to the claimed limit in (\ref{eq: claimed limit}).
By the monotonicity $\mu^\xi_u \le \mu_u$, and local boundedness in Lemma \ref{lemma: second moment finite after tgel}, each $\langle \pi_i\pi_j, \mu^\xi_u\rangle $ is bounded, uniformly in $\xi<\infty$ and $u\in [s,t]$. It is also  straightforward to see that the truncated gel data are bounded by $g^{\xi,i}_u \le \langle \pi_i,\mu_0\rangle$, so the integrand appearing in $\mathcal{T}_1(\xi)$ is bounded. Using (\ref{eq: convergence to E+G}) and bounded convergence, we take the limit $\xi \rightarrow \infty$ to obtain  \begin{equation}
   \mathcal{T}_1(\xi)\rightarrow \int_s^t\sum_{j,k=1}^n\langle \pi_i\pi_j, \mu_u\rangle a_{jk}g^k_u du.
\end{equation} We now deal with the second term $\mathcal{T}_2(\xi)$, which we claim converges to $0$. Expanding the total rate $\overline{K}$, we have \begin{equation}\mathcal{T}_2(\xi)=\int_{s}^t \sum_{j,k=1}^n \int_{S^2}\pi_i(x)\pi_j(x)\pi_k(y)1\left[\varphi(x+y)>\xi\right]\mu_u(dx)\mu_u(dy).
\end{equation} The integrand converges to $0$ pointwise as $\xi\rightarrow\infty$, and is dominated by $\pi_i(x)\pi_j(x)\pi_k(y)$. By Lemma \ref{lemma: second moment finite after tgel}, \begin{equation}
    \sup_{u\in [s,t]}\hspace{0.1cm}\int_{S^2}\pi_i(x)\pi_j(x)\pi_k(y) \mu_u(dx)\mu_u(dy)du<\infty.
\end{equation} Therefore, by dominated convergence, $\mathcal{T}_2(\xi)\rightarrow 0$ as $\xi\rightarrow \infty$, as claimed. Combining this with the analysis of the first term, we have shown that \begin{equation} g^i_t-g^i_s=\int_s^t \hspace{0.1cm}\sum_{j,k=1}^n \langle \pi_i\pi_j, \mu_u\rangle a_{jk}g^k_u du.\end{equation} Taking $s\downarrow t_\mathrm{g}$, and using the continuity $g_s\downarrow 0$ established in Lemma \ref{lemma: representation of M, E}, we obtain the claimed result. \end{proof}
\section{\textbf{Uniform Convergence of the Stochastic Coagulant}} \label{sec: uniform convergence} We now show how previous results, describing the dynamics of $g_t$, imply convergence to their maximum values $\langle \pi_0, \mu_0\rangle$ as $t\rightarrow \infty$. Using this, we will be able to upgrade the previous result, Lemma \ref{lemma: local uniform convergence of stochastic coagulent}, on the convergence of the stochastic coagulant to \emph{uniform} convergence. \begin{lem}\label{lemma: M and E at infinity} Let $\mu_0$ be an initial measure satisfying Assumption \ref{hyp: A},  and let $g_t$ be the gel data for the associated solution $(\mu_t)_{t\ge 0})$ to (\ref{eq: E+G}). As $t\uparrow \infty$, we have \begin{equation}
   g^i_t\rightarrow g^i_\infty=\langle \pi_i, \mu_0\rangle
\end{equation} for $i=0,...,n.$ \end{lem}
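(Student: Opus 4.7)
The plan is to invoke the representation $g^i_t = \langle \rho_t \pi_i, \mu_0\rangle$ from Lemma \ref{lemma: representation of M, E}, together with the explicit form
\begin{equation*}
\rho_t(x) = 1 - \exp\left(-\sum_{i=1}^n c^i_t \pi_i(x)\right), \qquad c^i_t = 2t\sum_{j=1}^n a_{ij} g^j_t,
\end{equation*}
from Lemma \ref{lemma: form of rho-t} and equation \eqref{eq: exp for ct}, and to argue that $\rho_t(x) \to 1$ for $\mu_0$-almost every $x$, after which dominated convergence delivers the result.

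First I would observe that $g^i_t$ is non-decreasing in $t$: for $t \le t_\mathrm{g}$ it vanishes, and on $(t_\mathrm{g}, \infty)$ Lemma \ref{lemma: dynamics after tgel} combined with the nonnegativity of $A^+$ and of $g^k_t$ shows $\frac{d}{dt} g^i_t \ge 0$. Together with the upper bound $g^i_t \le \langle \pi_i, \mu_0\rangle$, this yields existence of the limit $g^i_\infty$. For $i = 1, \ldots, n$, Lemma \ref{lemma: representation of M, E} gives $g^i_{t_\mathrm{g} + 1} > 0$, so $g^i_\infty > 0$; combined with the structural hypothesis that every row of $A^+$ contains a positive entry (Definition \ref{def: BCS}.v), this forces $c^i_t \to \infty$ as $t\to\infty$ for every $i = 1,\ldots,n$.

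The main obstacle is to show that the set $A_0 := \{x \in S : \pi_i(x) = 0 \text{ for all } 1 \le i \le n\}$ satisfies $\mu_0(A_0) = 0$; without this, $\rho_t \equiv 0$ on $A_0$ would prevent the conclusion in the case $i = 0$ (where $\pi_0 \equiv 1$ on $\mathrm{supp}(\mu_0)$). For $x \in A_0$, the block-diagonal structure of $A$ reduces the rate to
\begin{equation*}
\overline{K}(x, y) = \sum_{i, j = n+1}^{n+m} a_{ij} \pi_i(x) \pi_j(y),
\end{equation*}
and substituting $Ry$ for $y$ together with $\pi_j \circ R = -\pi_j$ for $j > n$ (Definition \ref{def: BCS}.iii) yields $\overline{K}(x, Ry) = -\overline{K}(x, y)$; nonnegativity of the rate then forces both quantities to vanish. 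Hence $A_0$ is kernel-isolated from all of $S$, and the irreducibility hypothesis (A4) gives either $\mu_0(A_0) = 0$ or $\mu_0(A_0^c) = 0$; the latter contradicts the linear independence required in (A3).

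With $\mu_0(A_0) = 0$, for $\mu_0$-almost every $x$ there exists $i \in \{1, \ldots, n\}$ with $\pi_i(x) > 0$, and so $\rho_t(x) \ge 1 - \exp(-c^i_t \pi_i(x)) \to 1$. Applying dominated convergence, with dominating function $\pi_i \in L^1(\mu_0)$ for $i \ge 1$ by (A2) and $\pi_0 \equiv 1$ on $\mathrm{supp}(\mu_0)$ by (A5), I conclude $g^i_t = \langle \rho_t \pi_i, \mu_0\rangle \to \langle \pi_i, \mu_0\rangle$ for each $i = 0, \ldots, n$, as claimed.
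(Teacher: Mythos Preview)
Your proof is correct, and it takes a genuinely different route from the paper's.

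The paper splits into two cases. For $i=1,\dots,n$ it uses the gel ODE from Lemma~\ref{lemma: dynamics after tgel}: picking $j$ with $a_{ij}>0$ and $t_0>t_\mathrm{g}$, it bounds
\[
\lim_{t\to\infty}\bigl(g^i_t-g^i_{t_0}\bigr)\;\ge\;a_{ij}g^j_{t_0}\int_{t_0}^\infty \langle \pi_i,\mu_s\rangle^2\,ds\;\ge\;\epsilon\int_{t_0}^\infty(g^i_\infty-g^i_s)^2\,ds,
\]
and since the left side is finite and the integrand is monotone, the integrand must vanish. Only then, for $i=0$, does the paper pass to $\rho_\infty$: having already shown $g^i_\infty=\langle\pi_i,\mu_0\rangle$ for $i\ge 1$, it deduces $\{\rho_\infty<1\}\subset\bigcap_i\{\pi_i=0\}$ up to null sets, invokes irreducibility to kill that set, and concludes by dominated convergence.

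You instead treat all coordinates at once by showing $\rho_t\to 1$ directly. The decisive observation is that in the identity $c^i_t=2t\sum_j a_{ij}g^j_t$ the factor $t$ already drives $c^i_t\to\infty$, provided only that some $g^j_\infty>0$ with $a_{ij}>0$; you never need the full limit $g^j_\infty=\langle\pi_j,\mu_0\rangle$ for $j\ge 1$ as an intermediate step. Combined with the same irreducibility argument that $\mu_0\{\pi_1=\cdots=\pi_n=0\}=0$ (which you spell out more explicitly than the paper does), this gives $\rho_t\to 1$ almost everywhere and the result follows by dominated convergence for every $i$ simultaneously. Your argument is shorter and avoids the integral-convergence trick; the paper's argument for $i\ge 1$ has the minor advantage of not needing the explicit exponential form of $\rho_t$, but it still has to invoke that form for $i=0$.
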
 \begin{proof} Let us fix $1\le i\le n$, and write $g^i_\infty$ for the claimed limit $\langle \pi_i, \mu_0\rangle$; it is immediate that $g^i_t\le g^i_\infty$ for all $t\ge 0$. Choose $t_0>t_\mathrm{g}$ and $1\le j\le n$ such that $a_{ij}>0$. Thanks to Lemma \ref{lemma: representation of M, E}, $\epsilon=a_{ij}g^j_{t_0}>0$, and note also that $g^j_t$ is increasing, so that this bound holds uniformly in $t\ge t_0$. Applying Lemma \ref{lemma: dynamics after tgel} and taking $t\rightarrow \infty$, we obtain the integral inequality \begin{equation}\begin{split} \lim_{t\rightarrow \infty}\left(g^i_t-g^i_{t_0}\right)&\ge \int_{t_0}^\infty \langle \pi_i^2, \mu_s\rangle a_{ij}g^j_s ds  \ge \epsilon \int_{t_0}^\infty \langle \pi_i, \mu_s\rangle^2 ds \\ & \ge \epsilon\int_{t_0}^\infty \left(g^i_\infty-g^i_s\right)^2ds \end{split}\end{equation} where the limit on the left hand side exists since $g^i_t$ is increasing. Recalling that $g^i_t$ is bounded, the integral appearing on the right-hand side must converge, and since the integrand is decreasing in $s$, this is only possible if $(g^i_\infty-g^i_s)^2\rightarrow 0$ as $s\rightarrow \infty$, as desired. \medskip \\ We must deal separately with $\pi_0$, since $\pi_0$ does not appear in the dynamics explicitly and the argument above does not apply. For this case, we note that the monotonicity $\rho_s\le \rho_t$ whenever $s\le t$ implies that $\rho_t$ converges pointwise to a limit $\rho_\infty\le 1$. Using Lemma \ref{lemma: representation of M, E} and dominated convergence, we have, for all $i=1,...,n$ \begin{equation}\langle \pi_i \rho_\infty, \mu_0\rangle=\lim_{t\rightarrow \infty}\langle \pi_i\rho_t, \mu_0\rangle=\lim_{t\rightarrow \infty} g^i_t=\langle \pi_i, \mu_0\rangle. \end{equation} This implies the containment \begin{equation} \{\rho_\infty<1\}\subset \{\pi_i=0\}\cup \mathcal{N}_i \end{equation} for a $\mu_0$-null set $\mathcal{N}_i$, for each $i=1,...n.$ Taking an intersection, and since $\mu_0(\pi_i=0 \text{ for all }i=1,..n)=0$ by irreducibility (A4.), we see that $\rho_\infty=1,$ $\mu_0$-almost everywhere. By Lemma \ref{lemma: representation of M, E} and dominated convergence again, \begin{equation} M_t=g^0_t=\langle \pi_0\rho_t,\mu_0\rangle \rightarrow \langle \pi_0, \mu_0\rangle \end{equation} which is the claimed limit. \end{proof} 
\begin{lem} \label{lemma: uniform convergence of coagulant} Fix a measure $\mu_0$ satisfying Assumption \ref{hyp: A}, and let $(\mu_t)_{t\ge 0}$ be the associated solution to (\ref{eq: E+G}). Let $\mu^N_t$ be the stochastic coagulants, with initial data $\mu^N_0$ satisfying Assumption \ref{hyp: B}. Then we have the \emph{uniform} convergence \begin{equation} \sup_{t\ge 0} \hspace{0.1cm} d(\mu^N_t, \mu_t) \rightarrow 0\end{equation} in probability.  \end{lem}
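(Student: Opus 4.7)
The plan is to leverage Lemma~\ref{lemma: M and E at infinity} to show that $\mu_t$ itself vanishes in the vague topology as $t\to\infty$, and to pair this with a monotonicity property of the empirical measure in order to reduce uniform convergence on $[0,\infty)$ to local uniform convergence on $[0,T]$ together with a tail estimate on $[T,\infty)$.

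The first step is to observe that $d(\mu_t,0)\to 0$ as $t\to\infty$. Since Lemma~\ref{lemma: M and E at infinity} gives $g^i_t\to \langle \pi_i,\mu_0\rangle$ for $0\le i\le n$, we have
\begin{equation*}
\langle \varphi,\mu_t\rangle = \langle \varphi,\mu_0\rangle - \sum_{i=0}^n g^i_t \longrightarrow 0.
\end{equation*}
Because $\mu_t$ is supported in $\{\pi_0\ge 1\}$ (inherited from (A5.) and preserved by coagulation and $J$ dynamics via part (i) of Definition~\ref{def: BCS}), we have $\varphi\ge 1$ on $\mathrm{supp}(\mu_t)$; hence for any $f\in C_c(S)$ with support in $S_\xi$, $|\langle f,\mu_t\rangle|\le \|f\|_\infty\langle \varphi,\mu_t\rangle \to 0$, so $\mu_t\to 0$ vaguely and therefore $d(\mu_t,0)\to 0$. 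The complementary stochastic fact is that $t\mapsto \mu^N_t(S_\xi)$ is non-increasing along the stochastic dynamics for each fixed $\xi$: the $J$ transitions preserve every $\pi_i$ and hence preserve $S_\xi$; a coagulation $\{x,y\}\mapsto x+y$ with $x,y\in S_\xi$ removes two particles from $S_\xi$ and creates at most one; and a coagulation involving a particle $y\notin S_\xi$ produces offspring with $\varphi(x+y)=\varphi(x)+\varphi(y)>\xi$, so removes one particle from $S_\xi$ and adds none.

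Fix $\epsilon>0$. Write $d(\mu,\nu)=\sum_k 2^{-k}(1\wedge |\langle f_k,\mu-\nu\rangle|)$ for a countable family $\{f_k\}\subset C_c(S)$ with $\|f_k\|_\infty\le 1$; truncating gives $d(\mu,0)\le \mu(S_\xi)+2^{-K}$ whenever $\mathrm{supp}(f_k)\subset S_\xi$ for all $k\le K$. Choose $K,\xi$ so that $2^{-K}<\epsilon/8$, then pick $T$ large enough that $\langle \varphi,\mu_T\rangle<\epsilon/8$; since $\langle \varphi,\mu_t\rangle$ is non-increasing, this bound persists for all $t\ge T$, giving $\mu_t(S_\xi)\le \langle \varphi,\mu_t\rangle<\epsilon/8$ throughout $[T,\infty)$. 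Fix a continuous cutoff $\chi\in C_c(S)$ with $1_{S_\xi}\le \chi\le 1_{S_{\xi+1}}$, for instance $\chi(x):=(1-(\varphi(x)-\xi)^+)^+$, which is compactly supported thanks to part (iv) of Definition~\ref{def: BCS}. Applying Lemma~\ref{lemma: local uniform convergence of stochastic coagulent} on $[0,T]$ yields
\begin{equation*}
\sup_{t\le T}d(\mu^N_t,\mu_t) + \left|\langle \chi,\mu^N_T\rangle-\langle \chi,\mu_T\rangle\right|\longrightarrow 0 \quad\text{in probability}.
\end{equation*}
On the resulting high-probability event, the monotonicity yields $\sup_{t\ge T}\mu^N_t(S_\xi)\le \mu^N_T(S_\xi)\le \langle \chi,\mu^N_T\rangle<\epsilon/4$; hence for every $t\ge T$,
\begin{equation*}
d(\mu^N_t,\mu_t)\le d(\mu^N_t,0)+d(0,\mu_t)\le \mu^N_t(S_\xi)+\mu_t(S_\xi)+2\cdot 2^{-K}<\epsilon,
\end{equation*}
and combining with the local uniform convergence on $[0,T]$ proves the claim. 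The main technical subtlety is the discontinuity of $1_{S_\xi}$, which prevents directly converting vague convergence of $\mu^N_T$ into control of $\mu^N_T(S_\xi)$; this is resolved by sandwiching between continuous cutoffs and exploiting the one-sided monotonicity available in the stochastic system.
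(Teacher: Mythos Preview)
Your proof is correct and follows the same overall architecture as the paper's: split $[0,\infty)$ into a compact piece $[0,T]$ handled by Lemma~\ref{lemma: local uniform convergence of stochastic coagulent}, and a tail $[T,\infty)$ on which both $\mu_t$ and $\mu^N_t$ are shown to be vaguely small, using Lemma~\ref{lemma: M and E at infinity} for the deterministic part. The difference lies in how the \emph{stochastic} tail is controlled. The paper invokes Lemma~\ref{lemma: WCOG} at a single late time $t_+$ to show that $M^N_{t_+}$ is close to $\langle\pi_0,\mu_0\rangle$, so almost all particles are absorbed into the giant component and the residual empirical measure is small. You instead observe directly that $t\mapsto\mu^N_t(S_\xi)$ is pathwise non-increasing (a purely combinatorial fact about the coagulation and $J$ dynamics), and then control $\mu^N_T(S_\xi)$ via the continuous cutoff $\chi$ and local uniform convergence. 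This is a genuine simplification: it bypasses Lemma~\ref{lemma: WCOG} and hence the random-graph input (Theorem~\ref{thrm: RG2}) at this point in the argument, relying only on Lemmas~\ref{lemma: local uniform convergence of stochastic coagulent} and~\ref{lemma: M and E at infinity}. The paper's route, on the other hand, reuses machinery already established for other purposes and works test-function-by-test-function rather than committing to a specific metric.

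One small point worth making explicit: you write $d$ in the specific form $\sum_k 2^{-k}(1\wedge|\langle f_k,\cdot\rangle|)$, whereas the paper merely fixes \emph{some} complete metric for the vague topology. This is harmless because $\mathcal{M}_{\le 1}(S)$ is compact in the vague topology ($S$ being locally compact by part~(iv) of Definition~\ref{def: BCS}), so any two compatible metrics are uniformly equivalent and the notion of uniform-in-$t$ convergence is metric-independent. It would strengthen the write-up to say this.
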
 
\begin{proof} 

From the definition of the vague topology, it is sufficient to prove that, for any $f\in C_c(S)$ with $0\leq f \leq 1$, we have the uniform convergence
$\sup_{t\geq 0}\hspace{0.1cm} \langle f, \mu^N_t-\mu_t \rangle \rightarrow 0$ in probability.

Fix $\epsilon>0$. By Lemma~\ref{lemma: M and E at infinity}, we can find $t_+\in (t_\text{g}, \infty)$ such that $M_{t_+}>\langle \pi_0, \mu_0\rangle-\frac{\epsilon}{3}.$
Let $A^1_N$ be the event \begin{equation} A^1_N=\left\{M^N_{t_+}>\langle \pi_0, \mu_0\rangle-\frac{\epsilon}{3}; \hspace{0.5cm} \langle \pi_0, \mu^N_0\rangle \le \langle \pi_0, \mu_0\rangle + \frac{\epsilon}{3} \right\}.\end{equation} By Lemma~\ref{lemma: WCOG} and condition (B2.), it follows that $\mathbb{P}(A^1_N)\rightarrow 1$.
On this event, we have
\begin{equation} \begin{split}
\sup_{t\geq 0}\hspace{0.1cm} \langle f, \mu^N_t-\mu_t \rangle
&\leq
\sup_{0 \leq t \leq t_+}\hspace{0.1cm} \langle f, \mu^N_t-\mu_t \rangle + \sup_{t >t_+}\hspace{0.1cm} \langle f, \mu^N_t-\mu_t \rangle \\ &
\leq
\sup_{0 \leq t \leq t_+}\hspace{0.1cm} \langle f, \mu^N_t-\mu_t \rangle + \left(\langle \pi_0, \mu^N_0\rangle - M_{t_+}^N\right) + \left(\langle \pi_0, \mu_0\rangle - M_{t_+}\right)
\\ & \leq
\sup_{0 \leq t \leq t_+}\hspace{0.1cm} \langle f, \mu^N_t-\mu_t \rangle + \epsilon
\end{split} \end{equation}
and the first term converges to $0$ in probability by Lemma~\ref{lemma: local uniform convergence of stochastic coagulent}.\end{proof}

\section{\textbf{Behaviour Near the Critical Point}}\label{sec: BNCP} We now prove item 5 of Theorem \ref{thrm: Smoluchowski equation}, concerning the phase transition: we will show that the gel data $g_t=(g^i_t)$ have nonnegative right-derivatives at the gelation time $t_\mathrm{gel}$. We start from the nonlinear fixed point equation (\ref{eq: NLFP 1}), which we rewrite as \begin{equation}\label{eq: NLPF again}c_t=tF(c_t); \hspace{1cm} F\left(c\right)_i=2\int_{S}\left(1-\exp\left(-\sum_{k=1}^n c_k\pi_k(x)\right)\right)\sum_{j=1}^n a_{ij}\pi_j(x)\mu_0(dx). \end{equation} The following proof is a modification of the arguments in \cite[Theorem 3.17]{BJR07}, which itself generalises an analagous, well-known result for the phase transition of Erd\H{o}s-R\'eyni graphs.
\begin{lem}\label{lemma: BNCP} Suppose that $\mu_0$ satisfies Assumption \ref{hyp: A}, and let $c_t$ be as in Lemma \ref{lemma: form of rho-t}.  Then $c_t$ is right-differentiable at $t_\mathrm{g}$, and the right-derivative $c'_{t_\mathrm{g}^+}>0$ is componentwise positive.  \end{lem}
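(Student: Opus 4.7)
The plan is to perform a local bifurcation analysis of the fixed-point equation $c_t = tF(c_t)$ from (\ref{eq: NLPF again}) in a neighbourhood of the critical pair $(t_\mathrm{g}, 0)$. Writing $B = F'(0)$ for the linearisation, a Taylor expansion around $c = 0$ gives
\[
F(c) = B c - Q(c, c) + R(c), \qquad R(c) = O(|c|^3),
\]
where $Q$ is the symmetric $\mathbb{R}^n$-valued bilinear form with components $Q_i(c,c) = \sum_{j,k,l=1}^n a_{ij}\, c_k c_l\, \langle \pi_j \pi_k \pi_l, \mu_0\rangle$ coming from the second-order term in $1 - e^{-x}$. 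The expansion is justified by Assumption (A2.), which guarantees that the third moments $\langle \pi_i^3, \mu_0\rangle$ are finite. By Lemma \ref{lemma: computation of tcrit}, $B$ is a positive scalar multiple of $\Lambda(\mu_0)$, and $t_\mathrm{g}$ is precisely the value at which $t_\mathrm{g} B$ has $1$ as its largest eigenvalue. Because $\Lambda(\mu_0)$ has non-negative entries (using the non-negativity of $A^+$ and of $\pi_1, \ldots, \pi_n$) and is irreducible under (A4.), the Perron–Frobenius theorem supplies strictly positive right and left eigenvectors $\psi, \psi^\ast$ at this top eigenvalue, each spanning a one-dimensional eigenspace.

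I next exploit what is already proved about $c_t$: from Lemma \ref{lemma: form of rho-t} and Lemma \ref{lemma: representation of M, E}, $t \mapsto c_t$ is continuous, componentwise non-negative, vanishes at $t_\mathrm{g}$, and is non-zero for $t > t_\mathrm{g}$. For $t$ slightly above $t_\mathrm{g}$, set $\eta_t = |c_t| > 0$ and $u_t = c_t/\eta_t$, so that $u_t$ is a unit vector with non-negative components. Dividing the fixed-point equation by $\eta_t$ gives
\[
u_t = tB u_t - \eta_t\, t\, Q(u_t, u_t) + O(\eta_t^2).
\]
Continuity forces $\eta_t \to 0$ as $t \downarrow t_\mathrm{g}$, so any accumulation point $u_0$ of $u_t$ is a non-negative unit vector satisfying $u_0 = t_\mathrm{g} B u_0$. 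By the one-dimensionality of the Perron eigenspace and the sign constraint, $u_0$ must be the (normalised) Perron vector; in particular $u_t \to \psi/|\psi|$.

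To extract the rate at which $\eta_t \to 0$, I pair the displayed equation with $\psi^\ast$, using the identity $\psi^\ast B = t_\mathrm{g}^{-1} \psi^\ast$ to cancel the dominant linear contribution. This yields
\[
\frac{t - t_\mathrm{g}}{t_\mathrm{g}}\, \langle \psi^\ast, u_t\rangle \;=\; \eta_t\, t\, \langle \psi^\ast, Q(u_t, u_t)\rangle + O(\eta_t^2),
\]
and sending $t \downarrow t_\mathrm{g}$ while using $u_t \to \psi/|\psi|$ identifies
\[
\lim_{t \downarrow t_\mathrm{g}} \frac{\eta_t}{t - t_\mathrm{g}} \;=\; \frac{|\psi|\, \langle \psi^\ast, \psi\rangle}{t_\mathrm{g}^2\, \langle \psi^\ast, Q(\psi, \psi)\rangle}.
\]
The denominator is strictly positive: both $\psi$ and $\psi^\ast$ are componentwise positive, and in each summand $a_{ij}\, \psi_k \psi_l \langle \pi_j \pi_k \pi_l, \mu_0\rangle$ of $\psi^\ast_i Q(\psi, \psi)_i$ every factor is non-negative, with the triple-product integral failing to vanish thanks to (A3.) and (A4.). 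Hence $c_t/(t - t_\mathrm{g}) = (\eta_t/(t-t_\mathrm{g}))\, u_t$ converges to a strictly positive multiple of $\psi$, proving that the right-derivative $c'_{t_\mathrm{g}^+}$ exists and is componentwise strictly positive.

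The main obstacle is turning this scaling ansatz into a rigorous argument, in particular ruling out $\eta_t = o(t - t_\mathrm{g})$ (or conversely $\eta_t \gg t - t_\mathrm{g}$) so that the $O(\eta_t^2)$ terms are genuinely negligible. This is essentially a Lyapunov–Schmidt reduction: decompose $u_t = \alpha_t \psi + w_t$ with $w_t$ in an $\psi^\ast$-chosen complement, note that the reduced equation in $w_t$ is invertible near $0$ so that $w_t = O(\eta_t)$ (giving $u_t - \psi/|\psi| = O(\eta_t)$), and the displayed projection onto $\psi^\ast$ then pins down $\eta_t$ as exactly linear in $t - t_\mathrm{g}$ via the positivity of $\langle \psi^\ast, Q(\psi,\psi)\rangle$ established above.
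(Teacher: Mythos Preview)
Your approach is essentially the same as the paper's: both carry out a Lyapunov--Schmidt reduction on the fixed-point equation $c_t=tF(c_t)$, expanding $F$ to second order and projecting onto and off the critical eigenspace. Two small differences: the paper equips $\mathbb{R}^n$ with the pullback inner product $(c,c')_{\mu_0}=\sum_{i,j}c_ic'_j\langle\pi_i\pi_j,\mu_0\rangle$, under which $\Lambda$ is self-adjoint, so left and right eigenvectors coincide and the complementary projection is orthogonal; and for the componentwise positivity of $\psi$, the paper argues via the $L^2(\mu_0)$ eigenfunction of $T$ (Lemma~\ref{lemma: spectrum of T}) rather than matrix Perron--Frobenius---this matters because (A4.) is irreducibility of the kernel $\overline{K}$ on $S$, not obviously irreducibility of the $n\times n$ matrix $\Lambda(\mu_0)$, so your invocation of Perron--Frobenius needs that extra step filled in.
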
 \begin{proof} Let us equip $\mathbb{R}^n$ with the inner product \begin{equation} \left(c,c'\right)_{\mu_0}= \sum_{i,j=1}^n c_ic'_j \langle \pi_i\pi_j, \mu_0\rangle   \end{equation} which is the pullback of the $L^2(\mu_0)$ inner product under $c\mapsto \sum_i c_i\pi_i$, and write $|\cdot|_{\mu_0}$ for the associated norm.  Differentiating under the integral sign twice, and using (A2.), we write \begin{equation} F\left(\begin{matrix} c \end{matrix}\right) = \Lambda c-\Sigma(c) + R\left(c\right) \end{equation} where $\Lambda$ is the $n\times n$ matrix found in Lemma \ref{lemma: computation of tcrit}, $\Sigma(\cdot)$ is a quadratic term, and $R$ is a remainder:   \begin{gather} \label{eq: defn of L}
    \Lambda_{ij}=2\sum_{k=1}^n a_{ik}\langle \pi_k\pi_j, \mu_0\rangle;
\\[2ex]
    \label{eq: defn of B}\Sigma\left(c\right)_i= \sum_{j,k,l=1}^n a_{ij}\langle \pi_j\pi_k\pi_l, \mu_0\rangle c_k c_l  \\[2ex]
 \left|R(c)\right|_{\mu_0} = o\left(|c|^2_{\mu_0}\right) \text{as  }|c|\rightarrow 0.\end{gather} The signs here are chosen to guarantee that, if $c>0$, then $\Lambda c, \Sigma(c)>0$, and $\Lambda$ is self-adjoint with respect to $(\cdot, \cdot)_{\mu_0}$. We also recall from Lemma \ref{lemma: computation of tcrit} that the largest eigenvalue of $\Lambda$ is precisely $t_\mathrm{g}^{-1}$, and the corresponding eigenspace is 1-dimensional. Let $\psi$ be an associated eigenvector, scaled so that $|\psi|_{\mu_0}=1$. We note that $\sum_i \psi_i \pi_i$ is an eigenfunction of $T$, and in particular, the sign of $\psi$ can be chosen so that $\sum_i \psi_i\pi_i>0$ is strictly positive $\mu_0$-almost everywhere; using (\ref{eq: expansion of Tf}) it follows that $\psi_i>0$ for all $i=1,...,n.$  From Lemma \ref{lemma: form of rho-t}, Theorem \ref{thrm: RG1} and Theorem \ref{thrm: continuity of rho}, we know that $c_{t_\mathrm{g}}=0$, that $c_{t_\mathrm{g}+\epsilon}\in [0,\infty)^n\setminus 0$ for all $\epsilon>0$, and that $t\mapsto c_t$ is continuous at $t_\mathrm{g}.$ \bigskip \\ Let us write $\psi^\bot$ for the orthogonal compliment of $\text{Span}(\psi)$ with respect to $(\cdot, \cdot)_{\mu_0}$. Since $\text{Span}(\psi)$ is exactly the eigenspace $\text{Ker}(\Lambda-t_\mathrm{g}^{-1}1)$, it follows from the self-adjointness of $\Lambda$ that $\Lambda$ maps $\psi^\bot$ into itself, and that, for $t>t_\mathrm{g}$ small enough, $(t\Lambda-1)|_{\psi^\bot}$ is invertible, and that the operator norm $\|(t\Lambda-1)|_{\psi^\bot}^{-1}\|_{\mu_0\rightarrow \mu_0}$ is bounded as $t\downarrow t_\mathrm{g}$. \bigskip \\ Let $P:\mathbb{R}^n\rightarrow\mathbb{R}^n$ be the orthogonal projection onto $\psi^\bot$ with respect to $(\cdot,\cdot)_{\mu_0}$, and write $c^*_t=Pc_t$ so that we have the orthogonal decomposition \begin{equation}\label{eq: orth dec} c_t=\alpha_t \psi + c^*_t \end{equation} for some $\alpha_t \in \mathbb{R}$. Noting that $\Lambda P=P\Lambda$, it follows from (\ref{eq: NLPF again}, \ref{eq: orth dec}) that \begin{equation} c^\star_t=P(tF(c_t))=t\Lambda c^\star_t + tP\left(-\Sigma(c_t)+R(c_t)\right). \end{equation} The function $-\Sigma(c)+R(c)$ is of quadratic growth as $|c|_{\mu_0}\rightarrow 0$, and using the invertibility of $(t\Lambda-I)|_{\psi^\bot}$ described above, it follows that there exists $\beta>0$ such that $
    |c^*_t|_{\mu_0} \le \beta |c_t|_{\mu_0}^2$
  whenever $|c_t|_{\mu_0}\le 1$. In turn, it follows that $|c_t|_{\mu_0}\sim \alpha_t$ as $t\downarrow t_\mathrm{g}.$ Now, using (\ref{eq: NLPF again}) and the self-adjointness of $\Lambda$, we obtain \begin{equation}
      \begin{split}
          \alpha_t&=(\psi,c_t)_{\mu_0}=(t_\mathrm{g}\Lambda\psi,c_t)_{\mu_0}=t_\mathrm{g}(\psi, \Lambda c_t)_{\mu_0} \\[1ex] &=\frac{t_\mathrm{g}}{t}(\psi, c_t)_{\mu_0}-t_\mathrm{g}\left(\psi,-\Sigma(c_t)+R(c_t)\right)_{\mu_0} \\[2ex]&=\frac{t_\mathrm{g}}{t}\alpha_t-t_\mathrm{g}\left(\psi,-\Sigma(c_t)+R(c_t)\right)_{\mu_0}.
      \end{split}
  \end{equation} We now expand to second order in $\alpha_t$; for clarity, we will number the error terms $\mathcal{T}^i_t.$ Since $|c_t|_{\mu_0}\sim \alpha_t$, it follows that that $|c^*_t|_{\mu_0}=\mathcal{O}(\alpha_t^2)$ and that $R(c_t)=o(\alpha_t^2)$. Expanding $\Sigma(c_t)$ using (\ref{eq: orth dec}),\begin{equation} -\Sigma(c_t)+R(c_t)=-\alpha_t^2\Sigma(\psi)+\mathcal{T}^1_t; \hspace{1cm} |\mathcal{T}^1_t|_{\mu_0}=o(\alpha_t^2). \end{equation} It therefore follows that \begin{equation}
      \begin{split}
          \alpha_t=t_\mathrm{g}\left(\frac{\alpha_t}{t}+\alpha_t^2(\psi, \Sigma(\psi))_{\mu_0}\right)+ \mathcal{T}^2_t; \hspace{1cm} \mathcal{T}^2_t=o(\alpha_t^2).
      \end{split} 
  \end{equation} For $t>t_\mathrm{g}$ small enough, $\alpha_t>0$, and we may rearrange to find \begin{equation} t-t_\mathrm{g}=t\hspace{0.1cm}t_\mathrm{g}\hspace{0.1cm}\alpha_t(\psi,\Sigma(\psi))_{\mu_0}+\mathcal{T}^3_t;\hspace{1cm} \mathcal{T}^3_t=o(\alpha_t) \end{equation}  and in particular $\alpha_t=\Theta(t-t_\mathrm{g})$ as $t\downarrow t_\mathrm{g}$, since \begin{equation} (\psi, \Sigma(\psi))_{\mu_0}=\sum_{i,j,k,l=1}^n a_{ij} \psi_i\psi_k\psi_l \langle \pi_j\pi_k\pi_l, \mu_0\rangle>0.\end{equation}  Finally, we obtain \begin{equation} \frac{\alpha_t}{t-t_\mathrm{g}}\rightarrow\frac{1}{t_\mathrm{g}^2(\psi,\Sigma(\psi))_{\mu_0}}\hspace{1cm} \text{as }t\downarrow t_\mathrm{g}.  \end{equation} The calculations above show that $|c_t-\alpha_t\psi|=\mathcal{O}((t-t_\mathrm{g})^2)$, and the claimed right-differentiability now follows. Finally, since $\psi_i>0$ is strictly positive componentwise and $\alpha'_{t_\mathrm{g}+}>0$, it follows that $c'_{t_\mathrm{g}+}>0$ componentwise. \end{proof} We now show how this implies item 5 of Theorem \ref{thrm: Smoluchowski equation}. From  Lemmas \ref{lemma: form of rho-t}, \ref{lemma: representation of M, E}, we have, for all $i=0,1,...,n$ \begin{equation} g^i_t=\int_{S}\left(1-\exp\left(-\sum_{j=1}^n c^j_t \pi_j(x)\right)\right)\pi_i(x)\mu_0(dx) \end{equation} Differentiating under the integral sign using hypothesis (A2.), we obtain \begin{equation}
   g^i_t=\sum_{j=1}^n c^j_t\langle \pi_i \pi_j, \mu_0\rangle + o(c_t).
\end{equation} In the notation of the previous lemma, we see that for $t>t_\mathrm{g}$, \begin{equation} \begin{split} g^i_t=(t-t_\mathrm{g})\sum_{j=1}^n (c'_{t_\mathrm{g}+})_j \langle \pi_i \pi_j, \mu_0\rangle + o(t-t_\mathrm{g}) \\ =\alpha'_{t_\mathrm{g}+}(t-t_\mathrm{g})\left\langle \sum_{j=1}^n \psi_j \pi_j \pi_i, \mu_0 \right\rangle + o(t-t_\mathrm{g}). \end{split} \end{equation} which proves the desired right-differentiability. For the positivity, since all components of $c'_{t_\mathrm{g}+}$ are strictly positive, we have the lower bound for $i=1,...,n$ \begin{equation} (g'_{t_\mathrm{g}+})_i\ge (c'_{t_\mathrm{g}+})_i \langle \pi_i^2, \mu_0\rangle >0.\end{equation} A similar argument holds for the $0^\text{th}$ component. \medskip \\   Finally, we address the size-biasing effect. We wish to choose a convex combination $\theta_i: i=1,...,n$ such that \begin{equation} \label{eq: SB} \frac{\sum_{i=1}^n \theta_i(g'_{t_\mathrm{g}+})_i}{(g'_{t_\mathrm{g}+})_0}  \ge \frac{\sum_{i=1}^n \theta_i \langle \pi_i, \mu_0\rangle}{\langle \pi_0, \mu_0\rangle}.\end{equation} Thanks to the calculation above, this is equivalent to proving that \begin{equation} \frac{\sum_{i,j=1}^n \theta_i\psi_j \langle \pi_i\pi_j,\mu_0\rangle}{\sum_{k=1}^n \psi_k\langle \pi_k, \mu_0\rangle} \ge \frac{\sum_{i=1}^n \theta_i \langle \pi_i, \mu_0\rangle }{\langle \pi_0, \mu_0\rangle}. \end{equation} If we choose $\theta_i = \psi_i/\sum_j \psi_j$, then these follow from the Cauchy-Schwarz inequality \begin{equation} \begin{split} \label{eq: CS for SB} \left\langle \sum_i \psi_i \pi_i, \mu_0\right\rangle^2 & \le \left\langle(\sum_i \psi_i \pi_i)^2, \mu_0  \right\rangle\langle 1, \mu_0\rangle \\ & =\left\langle(\sum_i \psi_i \pi_i)^2, \mu_0  \right\rangle\langle \pi_0, \mu_0\rangle.\end{split}\end{equation} We recall that the linear combination $f=\sum_i \psi_i\pi_i$ is an eigenfunction of $T$, and so can only be constant $\mu_0$-almost everywhere if $s(x)=(T1)(x)$ is constant. In particular, if $s$ is not constant $\mu_0$-almost everywhere, the inequality (\ref{eq: CS for SB}) is strict, and hence so is (\ref{eq: SB}), as desired.

\section{\textbf{Convergence of the Gel}} \label{sec: COG}

We now prove the remaining part of Theorem \ref{thrm: convergence of stochastic coagulent}, concerning the \emph{uniform} convergence of the stochastic gel, drawing on other results we have proven. We recall that $g^N_t$ are the data of the largest particle in the stochastic coagulant $\mu^N_t$; to conclude the proof of Theorem \ref{thrm: convergence of stochastic coagulent}, we must extend Lemma \ref{lemma: WCOG}, to show uniform convergence in time, in probability.  \medskip \\ Throughout this section, let $\mu_0$ be an initial measure satisfying Assumption \ref{hyp: A}, and $\mu^N_t$ be stochastic coagulants satisfying Assumption \ref{hyp: B} for this choice of $\mu_0$. We will also let $G^N_t$ be random graphs coupled to $\mu^N_t$ as described in Section \ref{sec: coupling_to_random_graph}, so that $g^N_t$ is the data of the largest component in $G^N_t$. \medskip \\ This subsection is structured as follows. We recall that, in the proof of Lemma \ref{lemma: WCOG}, we used the result on mesoscopic clusters from \cite{BJR07}: if $\xi_N\rightarrow \infty$ and $\frac{\xi_N}{N}\rightarrow 0$, then for all $t\ge 0$,  \begin{equation} \frac{1}{N}\sum_{j\ge 2: C_j(G^N_t)\ge \xi_N} C_j(G^N_t)\rightarrow 0 \end{equation} in probability. We will first state a lemma which extends this convergence to be uniform in time. Equipped with this lemma, and previous results, we will show how the proof of Lemma \ref{lemma: WCOG} can be modified to establish uniform convergence, and prove the analagous result when we sum over all clusters exceeding a deterministic size $\xi_N\ll N.$ Finally, we return to prove the preliminary lemma. \bigskip \\ The key lemma which we will require is the following, which generalises the result of Bollob\'as et al. recalled in Lemma \ref{thrm: RG2}.
\begin{lem}\label{lemma: anomalous clusters} Let $G^N_t$ be as above, and let $\xi_N$ be any sequence such that $\xi_N\rightarrow \infty$, $\frac{\xi_N}{N}\rightarrow 0.$ Then we have the uniform estimate \begin{equation} \sup_{t\ge 0}\left[\frac{1}{N}\sum_{j\ge 2: C_j(G^N_t)\ge \xi_N} C_j(G^N_t)\right] \rightarrow 0 \hspace{1cm} \text{in probability}. \end{equation}  \end{lem}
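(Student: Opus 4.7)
The plan is to exploit two easy monotonicity properties of the graph process and reduce the claim to a Dini-type sandwich argument. First, since edges are only added with time, $M^N_t := C_1(G^N_t)/N$ is non-decreasing in $t$. Second, the aggregate ``large-component mass''
\[
T_t \;:=\; \frac{1}{N}\sum_{j:\,C_j(G^N_t)\ge\xi_N} C_j(G^N_t)
\]
is also non-decreasing in $t$, because once a vertex lies in a component of size $\ge \xi_N$ it remains in such a component forever. A simple case split on whether $M^N_t \ge \xi_N/N$ shows that writing $\widetilde{\eta}_t$ for the quantity in the lemma, one has $\widetilde{\eta}_t = 0$ when $M^N_t < \xi_N/N$ (no component reaches the threshold at all), and $\widetilde{\eta}_t = T_t - M^N_t$ otherwise, so in all cases
\[
\widetilde{\eta}_t \;\le\; |T_t - M^N_t| \;\le\; |T_t - M_t| + |M^N_t - M_t|,
\]
where $M_t$ is the continuous limit furnished by Lemma~\ref{lemma: representation of M, E}. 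It therefore suffices to prove that $T_t \to M_t$ and $M^N_t \to M_t$ in probability, uniformly on a compact interval $[0, t_+]$, and to handle $t \ge t_+$ separately by a crude tail bound.

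For the compact interval $[0, t_+]$, both $T_t$ and $M^N_t$ are monotone in $t$ and $M_t$ is continuous and non-decreasing, so a standard Dini-type argument applies: given $\delta > 0$, pick a partition $0 = s_0 < \cdots < s_K = t_+$ with $M_{s_k}-M_{s_{k-1}} < \delta$ and sandwich the random functions between their values at adjacent partition points, so that the uniform error is bounded by $\delta$ plus the (finite) maximum of the pointwise errors at the $s_k$. The pointwise convergence $M^N_t \to M_t$ is Lemma~\ref{lemma: WCOG}; for $T_t \to M_t$, I would decompose $T_t = \widetilde{\eta}_t + M^N_t\,1[M^N_t \ge \xi_N/N]$ and apply the pointwise result Theorem~\ref{thrm: RG2} to the first summand, together with Lemma~\ref{lemma: WCOG} and $\xi_N/N \to 0$ to the second. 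This last step works cleanly in all three regimes $t < t_\mathrm{g}$ (where $M_t = 0$), $t = t_\mathrm{g}$ (where $M_{t_\mathrm{g}} = 0$ by Corollary~\ref{corr: gel at tgel}), and $t > t_\mathrm{g}$ (where $M^N_t \to M_t > 0$ forces the indicator to $1$ with high probability).

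To handle the tail $t \ge t_+$, use Lemma~\ref{lemma: M and E at infinity} to choose $t_+ > t_\mathrm{g}$ with $M_{t_+} > \langle \pi_0, \mu_0\rangle - \epsilon/4$, and estimate crudely
\[
\widetilde{\eta}_t \;\le\; \langle \pi_0, \mu^N_0\rangle - M^N_t \;\le\; \langle \pi_0, \mu^N_0\rangle - M^N_{t_+},
\]
using monotonicity of $M^N_t$ and the trivial bound $T_t \le \langle \pi_0, \mu^N_0\rangle$; the right-hand side is then below $\epsilon/2$ with high probability by the pointwise convergence of $M^N_{t_+}$ and Assumption (B2.). The only real subtlety in the argument is the threshold indicator $1[M^N_t \ge \xi_N/N]$ hidden inside $T_t$, but this is absorbed by the observation that $\widetilde{\eta}_t$ itself vanishes whenever the indicator does, so no delicate behaviour near the threshold needs to be analysed; the rest is a purely monotone-sandwich argument and does not require the duality construction of Theorem~\ref{thrm: coupling supercritical and subcritical}.
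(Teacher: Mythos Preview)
Your argument is correct and takes a genuinely different route from the paper's. The paper proves the supercritical case (Lemma~\ref{lemma: anomalous clusters 2}) by a fresh application of the duality coupling of Theorem~\ref{thrm: coupling supercritical and subcritical}: it removes the giant component at a time $t_-$, couples what remains to a subcritical graph, evolves the edges to $t_+$, and then controls the mesoscopic mass by subcriticality plus a monotonicity bound inside $[t_-,t_+]$. Your approach is more economical: you observe that both $T_t$ and $M^N_t$ are pathwise non-decreasing and converge pointwise in probability to the same continuous, non-decreasing limit $M_t$, so a Dini-type sandwich over a finite partition upgrades pointwise convergence to uniform convergence on $[0,t_+]$ directly; the tail $t\ge t_+$ is handled exactly as in the paper. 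This is cleaner and avoids re-running the coupling. One minor correction to your closing remark: although your proof does not \emph{redo} the duality construction, it does rely on the continuity of $M_t$ through Lemma~\ref{lemma: representation of M, E}, whose supercritical case is itself established via duality; so the dependence on Theorem~\ref{thrm: coupling supercritical and subcritical} is hidden rather than eliminated.
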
 The proof of this lemma will be deferred until Subsection \ref{subsec: meso} \subsection{Proof of Theorem \ref{thrm: convergence of stochastic coagulent}} It remains to prove that the convergence of the stochastic approximations $g^N_t$, $\widetilde{g}^N_t$ to the gel, given by the gel data of the largest cluster, and of all clusters exceeding a certain scale $\xi_N$ respectively.
This is the content of the following two lemmas.
\begin{lem}\label{lemma: COG1} In the notation above, we have the uniform convergence \begin{equation} \sup_{t\ge 0}\abs{g^N_t-g_t}\rightarrow 0\hspace{1cm} \text{in probability}.\end{equation}  \end{lem}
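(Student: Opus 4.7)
The plan is to introduce an intermediate quantity $\widetilde g^N_t$ which is \emph{monotone} in $t$ and close to $g^N_t$ uniformly, and then invoke a Dini-type argument. By the coupling of Lemma \ref{lemma: coupling}, we may realise $g^N_t=N^{-1}\pi(\mathcal{C}_1(G^N_t))$ for the graph process $G^N_t$ of Section \ref{sec: coupling_to_random_graph}; choose any deterministic $\xi_N\to\infty$ with $\xi_N/N\to 0$ and set
\begin{equation*}
    \widetilde g^N_t=\frac{1}{N}\sum_{j:C_j(G^N_t)\ge\xi_N}\pi(\mathcal{C}_j(G^N_t))=\frac{1}{N}\sum_{v\in V^{\ge\xi_N}(t)}\pi(x_v),
\end{equation*}
where $V^{\ge\xi_N}(t)$ is the set of vertices lying in a cluster of size at least $\xi_N$. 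Since $G^N_t$ only gains edges over time, a cluster at time $s<t$ is contained in a cluster at time $t$, so $V^{\ge\xi_N}(s)\subseteq V^{\ge\xi_N}(t)$; as $\pi_i\ge 0$ for $0\le i\le n$, the components $\widetilde g^{N,i}_t$ with $0\le i\le n$ are monotone nondecreasing in $t$. The components with $n<i\le n+m$ of both $g^N_t$ and $g_t$ are handled by (A1.) exactly as in Lemma \ref{lemma: E and U}, so we restrict attention to $0\le i\le n$.

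I would then show $\sup_{t\ge 0}|g^N_t-\widetilde g^N_t|\to 0$ in probability. Split at $T^N=\inf\{t:C_1(G^N_t)\ge\xi_N\}$: if $t<T^N$ then $\widetilde g^N_t=0$, $g^{N,0}_t\le\xi_N/N\to 0$, and a within-cluster Cauchy--Schwarz estimate combined with (A2.), (B2.) gives $g^{N,i}_t\le\sqrt{\xi_N/N}\cdot O_\mathrm{p}(1)=o_\mathrm{p}(1)$ for $1\le i\le n$; if $t\ge T^N$ then $\widetilde g^N_t-g^N_t=N^{-1}\sum_{j\ge 2:C_j\ge\xi_N}\pi(\mathcal C_j)$ and two applications of Cauchy--Schwarz yield
\begin{equation*}
    \left|\widetilde g^{N,i}_t-g^{N,i}_t\right|\le \left(\frac{1}{N}\sum_{j\ge 2:C_j\ge\xi_N}C_j\right)^{1/2}\left(\frac{1}{N}\sum_{v=1}^{l^N(0)}\pi_i(x_v)^2\right)^{1/2}
\end{equation*}
for $1\le i\le n$, the $i=0$ case being immediate. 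The first factor tends to $0$ uniformly in $t$ by Lemma \ref{lemma: anomalous clusters} while the second is bounded in probability by (A2.), (B2.), proving the uniform bound.

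Finally, I would obtain $\sup_t|\widetilde g^{N,i}_t-g^i_t|\to 0$ in probability for each $0\le i\le n$ via a Dini-type argument. Pointwise convergence $\widetilde g^{N,i}_t\to g^i_t$ in probability at each fixed $t$ follows from Lemma \ref{lemma: WCOG} combined with the previous paragraph; the limit $t\mapsto g^i_t$ is continuous and nondecreasing by Theorem \ref{thrm: Smoluchowski equation}, and $g^i_t\to\langle\pi_i,\mu_0\rangle$ as $t\to\infty$ by Lemma \ref{lemma: M and E at infinity}. Given $\epsilon>0$, choose $0=t_0<t_1<\cdots<t_K$ with $g^i_{t_{k+1}}-g^i_{t_k}<\epsilon$ and $\langle\pi_i,\mu_0\rangle-g^i_{t_K}<\epsilon$; with probability tending to $1$, $|\widetilde g^{N,i}_{t_k}-g^i_{t_k}|<\epsilon$ for all $k$ and $|\langle\pi_i,\mu_0-\mu^N_0\rangle|<\epsilon$. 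For $t\in[t_k,t_{k+1}]$ the monotonicities of $\widetilde g^{N,i}$ and $g^i$ sandwich $|\widetilde g^{N,i}_t-g^i_t|$ by a constant multiple of $\epsilon$, and for $t\ge t_K$ the upper bound $\widetilde g^{N,i}_t\le\langle\pi_i,\mu^N_0\rangle$ together with $g^i_t\ge g^i_{t_K}$ gives the same estimate. The main obstacle is the uniform mesoscopic cluster bound Lemma \ref{lemma: anomalous clusters}, which strengthens the fixed-time estimate of Theorem \ref{thrm: RG2} and whose own proof is postponed to Subsection \ref{subsec: meso}; granted this, the monotonicity of $\widetilde g^{N,i}_t$ and the Dini-type argument form the conceptual core.
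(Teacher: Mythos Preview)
Your argument for the coordinates $0\le i\le n$ is correct and takes a genuinely different route from the paper. The paper reuses the five-term decomposition $M^N_t-M_t=\sum_i \mathcal{T}^i_N(t)$ from Lemma~\ref{lemma: WCOG} and upgrades each term to uniform-in-time control, in particular invoking the uniform convergence of the coagulant (Lemma~\ref{lemma: uniform convergence of coagulant}) to handle $\mathcal{T}^2_N$, and Dini only for the deterministic term $\mathcal{T}^1_N$. Your approach is more direct: by passing to the threshold quantity $\widetilde g^{N,i}_t$, you manufacture a process which is \emph{monotone in $t$} and then run a single Dini-type sandwich on a finite grid, needing only the pointwise result Lemma~\ref{lemma: WCOG} and the mesoscopic estimate Lemma~\ref{lemma: anomalous clusters}. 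This bypasses Lemma~\ref{lemma: uniform convergence of coagulant} entirely, which is a nice economy; the paper's decomposition, on the other hand, is more modular and recycles machinery already built.

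There is, however, a genuine gap in your treatment of the parity coordinates $n<i\le n+m$. Lemma~\ref{lemma: E and U} only shows that the \emph{deterministic} limit satisfies $P_t=0$; it says nothing about the stochastic quantity $P^N_t=N^{-1}\pi_i(\mathcal{C}_1(G^N_t))$, which is not forced to vanish by symmetry for any finite $N$. Your monotonicity argument also fails here since $\pi_i$ changes sign. The paper handles these coordinates by symmetrising the cutoff, replacing $f_N$ by $\tfrac12(f_N+f_N\circ R)$, so that $\langle \pi_i f_N,\mu_t\rangle=0$ identically by the $R$-invariance of $\mu_t$, and then appealing to Lemma~\ref{lemma: uniform convergence of coagulant} together with the bound $\pi_i^2\le C\varphi^2$ from Definition~\ref{def: BCS}(iv). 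Your framework can be repaired along similar lines---for instance by splitting $\pi_i=\pi_i^+-\pi_i^-$ at the vertex level (where each part is nonnegative and the threshold sums $\sum_{v\in V^{\ge\xi_N}(t)}\pi_i^\pm(x_v)$ are again monotone), and identifying the two limits via the $R$-symmetry of $\rho_t$ and $\mu_0$---but this requires an argument, not a reference to Lemma~\ref{lemma: E and U}.
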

\begin{lem}\label{lemma: COG2} Fix a sequence $\xi_N$ such that $\xi_N\rightarrow \infty$ and $\frac{\xi_N}{N}\rightarrow 0$, and let $\widetilde{g}^N_t$ be given by
\begin{equation} \widetilde{g}^N_t=\frac{1}{N}\sum_{j\ge 1: C_j(G^N_t)\ge \xi_N} \pi(\mathcal{C}_j(G^N_t)) = \left(\langle \pi_i 1[\pi_0\ge \xi_N], \mu^N_t\rangle\right)_{i=0}^{n+m}.
\end{equation}
Then \begin{equation} \sup_{t\ge 0}\abs{\widetilde{g}^N_t-g^N_t}\rightarrow 0\hspace{1cm}\text{ in probability}. \end{equation}   \end{lem}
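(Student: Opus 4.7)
The plan is to reduce the statement to the uniform mesoscopic control provided by Lemma \ref{lemma: anomalous clusters}. The starting observation is that
\begin{equation*}
\widetilde{g}^N_t - g^N_t = \frac{1}{N}\sum_{j\ge 2: C_j(G^N_t)\ge \xi_N} \pi(\mathcal{C}_j(G^N_t)),
\end{equation*}
since $\widetilde{g}^N_t$ sums the gel data over all clusters of size $\ge \xi_N$ (including the largest) while $g^N_t$ is precisely the contribution of $\mathcal{C}_1(G^N_t)$. Thus the difference is exactly the non-giant mesoscopic contribution, which is the quantity controlled by Lemma \ref{lemma: anomalous clusters}.

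First, I would use $|\pi_i|\le \varphi$ for $0\le i\le n$ together with property iv). of Definition \ref{def: BCS} for $n+1\le i\le n+m$ to produce a single constant $c$ with $|\pi_i(x)|\le c\varphi(x)$ for all $x\in S$ and $0\le i\le n+m$. This reduces the coordinate-wise convergence to the scalar bound
\begin{equation*}
\sup_{t\ge 0}\frac{1}{N}\sum_{j\ge 2: C_j(G^N_t)\ge \xi_N}\varphi(\mathcal{C}_j(G^N_t)) \rightarrow 0 \qquad \text{in probability}.
\end{equation*}

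Second, I would apply Cauchy--Schwarz twice: once inside each cluster, writing $\varphi(\mathcal{C}_j) = \sum_{k\in \mathcal{C}_j}\varphi(x_k) \le C_j(G^N_t)^{1/2}\bigl(\sum_{k\in\mathcal{C}_j}\varphi(x_k)^2\bigr)^{1/2}$, and once over clusters, using that the clusters are disjoint subsets of the vertex set $\{1,\dots,l^N(0)\}$, to obtain
\begin{equation*}
\frac{1}{N}\sum_{j\ge 2: C_j(G^N_t)\ge \xi_N}\varphi(\mathcal{C}_j(G^N_t)) \le \left(\frac{1}{N}\sum_{j\ge 2: C_j(G^N_t)\ge \xi_N} C_j(G^N_t)\right)^{1/2} \langle \varphi^2, \mu^N_0\rangle^{1/2}.
\end{equation*}

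Taking the supremum over $t\ge 0$ on the right-hand side, the first factor vanishes in probability by Lemma \ref{lemma: anomalous clusters}, while the second is $\mathcal{O}_\mathrm{p}(1)$ by assumption (B2.); the product therefore vanishes in probability, and applying this to each of the finitely many components yields the claim. The only substantial work is really contained in Lemma \ref{lemma: anomalous clusters}, which is the deferred ingredient; the present argument is routine Cauchy--Schwarz book-keeping once the initial second moment and the anomalous-cluster bound are in hand.
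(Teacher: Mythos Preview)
Your approach is essentially the same as the paper's: both identify $\widetilde{g}^N_t-g^N_t$ with the mesoscopic contribution (the paper calls this the $\mathcal{T}^4_N$ term from the decomposition in Lemma~\ref{lemma: WCOG}) and then apply the Cauchy--Schwarz bound against $\langle\varphi^2,\mu^N_0\rangle^{1/2}$ together with Lemma~\ref{lemma: anomalous clusters}. There is, however, one small gap in your decomposition: the identity
\[
\widetilde{g}^N_t-g^N_t=\frac{1}{N}\sum_{j\ge 2:\,C_j(G^N_t)\ge\xi_N}\pi(\mathcal{C}_j(G^N_t))
\]
only holds when $C_1(G^N_t)\ge\xi_N$. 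If $C_1(G^N_t)<\xi_N$ then \emph{no} cluster reaches the threshold, so $\widetilde{g}^N_t=0$ and the difference equals $-g^N_t$, not zero. The paper accounts for this with the extra term $\xi_N/N$ in the bound (\ref{eq: uniform T4N}); your argument needs the analogous correction, e.g.\ $|g^N_t|\,1[C_1(G^N_t)<\xi_N]\le c\,(\xi_N/N)^{1/2}\langle\varphi^2,\mu^N_0\rangle^{1/2}$ by the same Cauchy--Schwarz estimate, which is harmless since $\xi_N/N\to 0$.
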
 We now prove these two lemmas, looking primarily at the $0^\text{th}$ coordinate. The other coordinates follow, with minor modifications which will be discussed later.
\begin{proof}[Proof of Lemma \ref{lemma: COG1}] This is an extension of the proof of Lemma \ref{lemma: WCOG}, from where much of the notation is taken.
We deal first with the $0^\text{th}$ coordinate $M^N_t-M_t$. Let $\eta_r$ be a fast-growing sequence such that $\beta(r, \eta_r)\rightarrow 0$ in the notation of Lemma \ref{lemma: STUI}, and let $S_{(r)}, \widetilde{f}_r, \widetilde{h}_r$ be as in Lemma \ref{lemma:  WCOG}. Let also $\xi_N$ be a sequence, to be constructed later, such that \begin{equation} \xi_N\rightarrow \infty; \hspace{1cm} \frac{\xi_N}{N}\rightarrow 0 \end{equation} and write $f_N=\widetilde{f}_{\xi_N}, h_N=\widetilde{h}_{\xi_N}$. We recall also the decomposition (\ref{eq: decomposition of erorr in WCOG}) \begin{equation}\label{eq: decomposition of error in NCOG} M^N_t-M_t=\sum_{i=1}^5 \mathcal{T}^i_N(t) \end{equation}
where the definitions of the error terms are given in (\ref{eq: decomposition of erorr in WCOG}).
The bounds obtained on $\mathcal{T}^3_N(t), \mathcal{T}^5_N(t)$ in the proof of Lemma \ref{lemma: WCOG} are already uniform in time; we will now show how the previous proof can be modified to estimate the other terms uniformly in time.

\paragraph{1. Estimate on $T^1_N(t)$} $\mathcal{T}^1_N(t)$ is the nonrandom error $\langle \pi_0, \mu_t\rangle -\langle \pi_0 f_N, \mu_t\rangle$. The estimate in Lemma \ref{lemma: WCOG} shows that $\langle \pi_0 f_N, \mu_t\rangle \uparrow \langle \pi_0, \mu_t\rangle$ for each fixed $t\ge 0$.
The maps $t\mapsto \langle \pi_0 f_N, \mu_t\rangle $, $t\mapsto \langle \pi_0, \mu_t\rangle $ are both continuous on $[0, \infty)$, by the definition of the Flory dynamics (\ref{eq: E+G}) and Lemma \ref{lemma: representation of M, E} respectively. Let us extend both of these maps to $[0, \infty]$ by defining both to be $0$ at $t=\infty$; the extensions are continuous, by Lemma \ref{lemma: M and E at infinity}. Therefore, by Dini's theorem, it follows that $\langle \pi_0 f_N, \mu_t\rangle \rightarrow \langle \pi_0, \mu_t\rangle$ uniformly, which implies that $\sup_{t\ge 0}\abs{T^1_N(t)}\rightarrow 0$ as desired.
\paragraph{2. Estimate on $\mathcal{T}^4_N$.}As in (\ref{eq: form of T4n}), we have the equality, for all $t\ge 0$  \begin{equation}
           \begin{split}
                \mathcal{T}^4_N(t)=-M^N_t1\left(M^N_t\le \frac{\xi_N}{N}\right)-\frac{1}{N}\sum_{j\ge 2: C_j(G^N_t)\ge \xi_N}\pi_0(\mathcal{C}_j(G^N_t))
          \end{split} 
       \end{equation} where we have used the coupling of the random graphs $(G^N_t)_{t\geq 0}$ to the stochastic coagulant. Therefore, we have the uniform bound \begin{equation}\label{eq: uniform T4N} \sup_{t\ge 0}\abs{\mathcal{T}^4_N(t)} = \langle \varphi^2, \mu^N_0\rangle^\frac{1}{2}\left(\sup_{t\ge 0}\frac{1}{N}\sum_{j\ge 2:C_j(G^N_t)\ge \xi_N} C_j(G^N_t)\right)^\frac{1}{2}+\frac{\xi_N}{N} \end{equation} which converges to $0$, by Lemma \ref{lemma: anomalous clusters}, (B2.), and because $\xi_N\ll N.$
\paragraph{3. Construction of $\xi_N$, and convergence of $\mathcal{T}^2_N$.} To conclude the proof of the supercritical case, it remains to show how a sequence $\xi_N$ can be constructed such that $\mathcal{T}^2_N \rightarrow 0$ uniformly, in probability. Recalling the definitions of $\tilde{f}_r$ above, let $A_{r,N}$ be the events \begin{equation} \label{eq: definition of A1rn}
       A_{r,N}=\left\{\sup_{t\geq 0} |\langle \pi_0 \widetilde{f}_r, \mu^N_t-\mu_t\rangle|<\frac{1}{r}\right\}.
   \end{equation} Then, as $N\rightarrow \infty$ with $r$ fixed, $\mathbb{P}(A^1_{r,N}) \rightarrow 1$ by Lemma \ref{lemma: uniform convergence of coagulant}. We now define $N_r$ inductively for $r\geq 1$ inductively, as in Lemma \ref{lemma: WCOG}, by setting $N_1=1$ and letting $N_{r+1}$  be the minimal $N>N_r$ such that, for all $N'\ge N$, \begin{equation}
       \label{eq: recursive definition of Nr} N\geq (r+1)^2;\hspace{1cm}  \mathbb{P}(A_{r+1,N'})>\frac{r}{r+1}.
   \end{equation} Now, we set $\xi_N=r$ for $N\in [N_r, N_{r+1}).$ It follows that $\xi_N$ satisfies the requirements above, and \begin{equation}
       \mathbb{P}\left(\sup_{t\geq 0} |\mathcal{T}^2_N| <\frac{1}{\xi_N}\right) \ge \mathbb{P}\left(A^1_{\xi_N,N}\right) > 1-\frac{1}{\xi_N}\rightarrow 1
   \end{equation} Therefore, with this choice of $\xi_N$, $\mathcal{T}^2_N \rightarrow 0$ uniformly in probability on $t\ge 0.$ \bigskip \\ This concludes the proof for the $0^\text{th}$ coordinate $M^N_t$; the $1^\text{st}$ - $n^\text{th}$ coordinates are identical. For the remaining $m$ coordinates, we replace $f_N$ by $\frac{1}{2}(f_N(x)+f_N(Rx))$, which makes $\mathcal{T}^1_N$ identically $0$ by symmetry, and use the bound $\pi_i(x)^2\le c\varphi(x)^2$ in estimating $\mathcal{T}^4_N$. \end{proof} \begin{proof}[Proof of Lemma \ref{lemma: COG2}] We now turn to the case where, instead of considering the largest cluster, we sum over the (possibly empty) set of clusters of size at least $\xi_N$, for a deterministic sequence $\xi_N.$ In this way, we have \begin{equation}\widetilde{g}^N_t=\langle \pi 1[\pi_0 \ge \xi_N], \mu^N_t\rangle. \end{equation} Let us write $h_N(x)=1[\pi_0(x)<\xi_N]$, so that $\widetilde{g}^N_t=\langle \pi, \mu^N_0\rangle -\langle \pi h_N, \mu^N_t\rangle.$ With this notation, \begin{equation} g^N_t-\widetilde{g}^N_t=\langle \pi h_N, \mu^N_t\rangle -\left(\langle \pi,\mu^N_0\rangle -g^N_t\right)  \end{equation} is exactly the term $\mathcal{T}^4_N$ estimated in the proofs of Lemma \ref{lemma: WCOG}, \ref{lemma: COG1}, for the new choice of $\xi_N$. The estimate (\ref{eq: uniform T4N}) therefore applies to bound $\sup_{t\ge 0}\abs{g^N_t-\widetilde{g}^N_t}$, and the hypotheses on $\xi_N$ are sufficient to guarantee that the right-hand side converges to $0$ in probability. \end{proof}

\subsection{\textbf{Proof of Lemma \ref{lemma: anomalous clusters}}}\label{subsec: meso} We now turn to the proof of Lemma \ref{lemma: anomalous clusters}; our strategy is as follows. First, we prove uniform convergence  on compact subsets $I\subset (t_\mathrm{g}, \infty)$ in Lemma \ref{lemma: anomalous clusters 2}. We will then show how this may be extended to the whole interval $[0, \infty)$, by arguing separately for an initial interval $[0, t_-]$ and for large times $[t_+, \infty).$
\begin{lem}\label{lemma: anomalous clusters 2}
       Let $G^N_t$ and $\xi_N$ be as above. Fix a compact subset $I\subset (t_\mathrm{g}, \infty)$. Then we have the convergence \begin{equation} \sup_{t \in I}\left[\frac{1}{N}\sum_{j\geq 2: C_j(G^N_t)\geq \xi_N} C_j(G^N_t)\right] \rightarrow 0 \hspace{1cm}\text{in probability.}\end{equation}
\end{lem}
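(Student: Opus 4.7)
The plan is to leverage two natural monotonicities of the random graph process $(G^N_t)_{t\ge 0}$ in order to convert the pointwise-in-time statement of Theorem \ref{thrm: RG2} into a uniform estimate over the compact interval $I$. I introduce the quantity
\begin{equation*}
F^N(t) = \sum_{j\ge 1:\, C_j(G^N_t)\ge \xi_N} C_j(G^N_t),
\end{equation*}
the total number of vertices sitting in clusters of size at least $\xi_N$ at time $t$. Since edges are only added as time progresses, once a vertex is absorbed into a cluster of size $\ge \xi_N$ it remains so for all later times, so $F^N(t)$ is monotone non-decreasing in $t$; the same argument shows that $C_1(G^N_t)$, and hence $M^N_t$, is non-decreasing in $t$.

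At any fixed time $t\in I$, Lemma \ref{lemma: WCOG} gives $M^N_t\to M_t$ in probability, while the duality argument used in the proof of Lemma \ref{lemma: second moment finite after tgel} realises, with high probability, the complement of the giant of $G^N_t$ as an inhomogeneous random graph satisfying Assumption \ref{hyp: B} which is subcritical, since $t<\widehat{t_\mathrm{g}}(t)$. Combining Theorem \ref{thrm: coupling supercritical and subcritical}, Lemma \ref{lemma: conditions B1-3 for duality} and Theorem \ref{thrm: RG2} then shows that the total mass of its $\ge \xi_N$-clusters is $o_\mathrm{p}(N)$, so $F^N(t)/N = M^N_t+o_\mathrm{p}(1)\to M_t$ in probability, pointwise in $t\in I$.

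The key step is then a Dini-type upgrade from pointwise to uniform convergence, exploiting the monotonicity of $M^N_\cdot$ and $F^N(\cdot)/N$ together with the continuity of $t\mapsto M_t$ on $(t_\mathrm{g},\infty)$ established in Lemma \ref{lemma: representation of M, E}. Given $\epsilon>0$, I choose a finite grid $t_0<\cdots<t_K$ of $I$ fine enough that $M$ oscillates by less than $\epsilon$ on each sub-interval, and union-bound over $k$ so that with high probability $|M^N_{t_k}-M_{t_k}|<\epsilon$ and $|F^N(t_k)/N-M_{t_k}|<\epsilon$ for every $k$; monotonicity then sandwiches $M^N_t$ and $F^N(t)/N$ between consecutive grid values, so both are within $2\epsilon$ of $M_t$ uniformly on $I$ with high probability.

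Finally, since $M_t$ is strictly positive and continuous on $(t_\mathrm{g},\infty)$, compactness of $I$ gives $\delta:=\inf_{t\in I}M_t>0$, so that the uniform convergence forces $C_1(G^N_t)\ge N\delta/2\ge \xi_N$ with high probability uniformly in $t\in I$; on this event the giant component is itself one of the summands in $F^N(t)$, and
\begin{equation*}
\frac{1}{N}\sum_{j\ge 2:\, C_j(G^N_t)\ge \xi_N} C_j(G^N_t) = \frac{F^N(t)}{N} - M^N_t
\end{equation*}
therefore converges to $0$ uniformly on $I$ in probability. The main subtlety I anticipate is ensuring that the Dini-style sandwich is fully rigorous for convergence in probability (rather than almost surely) applied simultaneously to the two monotone sequences; this is the precise reason that the argument must be restricted to compact $I\subset(t_\mathrm{g},\infty)$, since continuity of $M_t$ is unavailable at the endpoint $t_\mathrm{g}$ and the regions $[0,t_-]$ and $[t_+,\infty)$ require separate treatment.
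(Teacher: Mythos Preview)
Your argument is correct and takes a genuinely different route from the paper's. Both proofs hinge on monotonicity of the growing graph process, but they exploit it differently.

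The paper works at a fixed pair of times $t_-<t_+$ with $t_+<\widehat{t_\mathrm{g}}(t_-)$, deletes the giant of $G^N_{t_-}$, and then couples the resulting graph \emph{run to time $t_+$} to a single subcritical inhomogeneous random graph $\widehat{G}^N_{t_-,t_+}$. Monotonicity is used at the graph level: the quantity $\sum_{j}C_j1[C_j\ge\xi_N,\ \mathcal{C}_j\neq\mathcal{C}'_1]$ (where $\mathcal{C}'_1$ is the cluster containing the time-$t_-$ giant) is nondecreasing in $s\in[t_-,t_+]$, so the supremum over $I$ is bounded by its value at $t_+$, which is controlled by the subcritical dual. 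This avoids any grid and uses the continuity of $t\mapsto\widehat{t_\mathrm{g}}(t)$ to guarantee the window $[t_-,t_+]$ exists.

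Your approach instead isolates two \emph{scalar} monotone processes, $M^N_t$ and $F^N(t)/N$, proves pointwise convergence of each to the continuous limit $M_t$, and upgrades to uniform convergence by a Dini-type grid argument. This is conceptually cleaner and does not require the continuity of $\widehat{t_\mathrm{g}}$; it only needs continuity of $M_t$ on $(t_\mathrm{g},\infty)$, which is the more elementary Lemma~\ref{lemma: representation of M, E}. Two minor remarks: (i) your detour through duality for the pointwise statement $F^N(t)/N\to M_t$ is unnecessary, since Theorem~\ref{thrm: RG2} already gives $\tfrac1N\sum_{j\ge2:C_j\ge\xi_N}C_j\to0$ directly for the original graph $G^N_t$; (ii) the identification $M^N_t=C_1(G^N_t)/N$ uses that $\pi_0(x_i)=1$ for all initial particles, which holds by hypothesis (B1.). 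What the paper's approach buys is a one-shot bound valid on the whole interval without discretisation, at the cost of the heavier two-time duality construction; what yours buys is a shorter argument that recycles only the pointwise inputs already established.
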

\begin{proof}[Proof of Lemma \ref{lemma: anomalous clusters 2}]  It is sufficient to show that for every $t>t_\mathrm{g}$ the claim holds for some $I$ of the form $I=(t_-, t_+) \subset (t_\mathrm{g}, \infty)$ containing $t$.
As in Theorem \ref{lemma: second moment finite after tgel}, let $\widehat{\mu}_0^t$ be the measure on $S$ given by $\widehat{\mu}_0^t(dx)=(1-\rho_t(x))\mu_0(dx)$. We also write $\widehat{t_\mathrm{g}}(t)$ for the gelation time of the solution $(\widehat{\mu}^t_s)_{s\ge 0}$ to (\ref{eq: E+G}) starting at $\widehat{\mu}^t_0$. We showed in the proof of Theorem \ref{lemma: second moment finite after tgel} that, for all $t>t_\mathrm{g}$,  $\widehat{t_\mathrm{g}}(t)>t$, and the map $t\mapsto \widehat{t_\mathrm{g}}(t)$ is continuous. Therefore, for any $t>t_\mathrm{g}$, we can choose $t_\pm$ such that
\begin{equation}
    t_\mathrm{g}<t_-<t<t_+<\widehat{t_\mathrm{g}}(t_-).
\end{equation} We form $\widetilde{G}^N_{t_-}$ from $G^N_{t-}$ by deleting all vertexes of the giant component of $C_1(G^N_{t_-})$. We now form a new graph, $\widetilde{G}^N_{t_-,t_+}$ by including all edges between vertexes of $\widetilde{G}^N_{t_-}$ which are present in the graph $G^N_{t_+}$. \medskip \\ From Theorem \ref{thrm: coupling supercritical and subcritical} and Lemma \ref{lemma: conditions B1-3 for duality}, we can construct a sequence $\mathbf{y}_N, N\ge 1$ satisfying Assumption \ref{hyp: B} for $\widehat{\mu}^{t_-}_0$ and random graphs $\widehat{G}^N_{t_-}\sim \mathcal{G}(\mathbf{y}_N, t_-K/N)$, such that \begin{equation}
    \mathbb{P}\left(\widehat{G}^N_{t_-}=\widetilde{G}^N_{t_-}\right)\rightarrow 1.
\end{equation} We now form $\widehat{G}^N_{t_-,t_+}$ from $\widehat{G}^N_{t_-}$ by adding those edges present in $G^N_{t_+}$. By the Markov property of the graph process $(G^N_s)_{t\geq 0}$, these edges are independent of the construction of $\widehat{G}^N_{t_-}$, and so $\widehat{G}^N_{t_-,t_+}\sim \mathcal{G}(\mathbf{y}_N, t_+K/N)$. \medskip \\ Since Assumption \ref{hyp: B} applies to $\mathbf{y}_N$ and $\widehat{\mu}^{t_-}_0$, Lemma \ref{lemma: connect critical times} shows that the critical time for $\mathcal{G}(\mathbf{y}_N, tK/N)$ is exactly the gelation time of $(\widehat{\mu}^{t_-}_s)_{s\ge 0}$, which we have written as $\widehat{t}_\mathrm{g}(t_-)$.  By the choices of $t_\pm$, $t_+<\widehat{t}_\mathrm{g}(t_-)$, and in particular, $\widehat{G}^N_{t_-,t_+}$ is still subcritical. By construction, \begin{equation}\label{eq: coupling compactness}
    \mathbb{P}\left(\widehat{G}^N_{t_-, t_+}=\widetilde{G}^N_{t_-, t_+}\right)\rightarrow 1.
\end{equation} For $s\in [t_-, t_+]$, let $\mathcal{C}_1'(G^N_s)$ be the connected component of $G^N_s$ which contains $\mathcal{C}_1(G^N_{t_-})$, and let $C_1'(G^N_s)$ be its size. By definition, $C'_1(G^N_s)\le C_1(G^N_s)$ and so \begin{equation} \label{eq: C1 vs Cprime1}\sum_{j\ge 2} C_j(G^N_s)1\left[C_j(G^N_s)\ge \xi_N\right]\hspace{0.3cm} \le \hspace{0.3cm}\sum_{j\ge 1} C_j(G^N_s)1\left[C_j(G^N_s)\ge \xi_N, \mathcal{C}_j(G^N_s)\neq \mathcal{C}'_1(G^N_s)\right].  \end{equation} Moreover, the right-hand side is increasing as $s$ runs over $[t_-, t_+]$, since it can be rewritten as \begin{equation} .... = \sum_{i=1}^{l^N} 1\left[\exists j: i \in \mathcal{C}_j(G^N_s),\hspace{0.1cm} C_j(G^N_s)\ge \xi_N,\hspace{0.1cm} i\not \in \mathcal{C}_1(G^N_{t_-})\right] \end{equation} and each summand can only increase in $s$ as the clusters grow. Evaluating at the endpoint $t_+$, the construction of $\widetilde{G}^N_{t_-,t_+}$ gives \begin{equation}\label{eq: evaluate at endpoint} \sum_{j\ge 1} C_j(G^N_{t_+})1\left[C_j(G^N_{t_+}\ge \xi_N, \mathcal{C}_j(G^N_{t_+})\neq \mathcal{C}'_1(G^N_{t_+})\right] = \sum_{j\ge 1} C_j(\widetilde{G}^N_{t_-,t_+})1\left[C_j(\widetilde{G}^N_{t_-,t_+})\ge \xi_N\right]. \end{equation} 
Combining (\ref{eq: coupling compactness}, \ref{eq: C1 vs Cprime1}, \ref{eq: evaluate at endpoint}) we see that, with high probability,
\begin{equation}
    \sup_{s\in [t_-, t_+]} \left[\frac{1}{N}\sum_{j\geq 2: C_j(G^N_s)\geq \xi_N} C_j(G^N_s)\right] \leq \frac{1}{N}C_1(\widehat{G}^N_{t_-, t_+})+\frac{1}{N}\sum_{j\geq 2: C_j(\widehat{G}^N_{t_-,t_+})\geq \xi_N} C_j(\widehat{G}^N_{t_-,t_+}).
\end{equation}
The first term of the right-hand side converges to $0$ in probability  because $\widehat{G}^N_{t_-,t_+}$ is subcritical, and the second term converges to $0$ in probability by Theorem \ref{thrm: RG2}. \end{proof} 
\begin{proof}[Proof of Lemma \ref{lemma: anomalous clusters}] For $\mu_0$ as in the hypothesis, let $M_t$ be the mass of the gel associated to the solution $(\mu_t)_{t\ge 0}$ to (\ref{eq: E+G}). Fix $\epsilon >0$; without loss of generality, assume that $\epsilon<1.$ By continuity from Lemma \ref{lemma: representation of M, E} and Lemma \ref{lemma: M and E at infinity}, we can choose $t_\pm \in (t_\mathrm{g}, \infty)$ such that \begin{equation}
    M_{t_-}<\frac{\epsilon}{3};\hspace{1cm}M_{t_+}>\mu_0(S)-\frac{\epsilon}{3}.
\end{equation}  Consider now the events \begin{equation}
    A^1_N=\left\{\frac{1}{N}C_1(G^N_{t_-})<\frac{2\epsilon}{3}; \hspace{0.6cm}\frac{1}{N} C_1(G^N_{t_+})>\mu_0(S)-\frac{\epsilon}{2}; \hspace{0.6cm} \langle 1, \mu^N_0\rangle <\mu_0(S)+\frac{\epsilon}{2} \right\};
\end{equation} \begin{equation}
    A^2_N=\left\{\frac{1}{N}\sum_{j\geq 2: C_j(G^N_{t_-})\geq \xi_N} C_j(G^N_{t_-})<\frac{\epsilon}{3} \right\}.
\end{equation} Thanks to the coupling described in Section \ref{sec: coupling_to_random_graph}, Lemma \ref{lemma: WCOG} implies that $\mathbb{P}(A^1_N)\rightarrow 1 $, and  $\mathbb{P}(A^2_N)\rightarrow 1$ from Theorem \ref{thrm: RG2}. On the event $A^1_N \cap A^2_N$, we bound as follows. 
\begin{enumerate}[label=\roman{*}).]
    \item For the initial interval $[0, t_-]$, an argument similar to that of Lemma \ref{lemma: anomalous clusters 2} shows that, on this event,
 
    \begin{equation} 
        \sup_{t\in [0, t_-]} \frac{1}{N} \sum_{\substack{j\geq 2: \\ C_j(G^N_t)\geq \xi_N}} C_j(G^N_t) \leq 
        \frac{1}{N}\sum_{\substack{j\geq 1: \\ C_j(G^N_{t_-}) \geq \xi_N}} C_j(G^N_{t_-}) = 
        \frac{1}{N}C_1(G^N_{t_-})+\frac{1}{N}\sum_{\substack{j\geq 2: \\ C_j(G^N_{t_-}) \geq \xi_N}} C_j(G^N_{t_-}) <\epsilon.
   \end{equation}
    \item For late times $t\in [t_+, \infty)$, the largest cluster $\mathcal{C}_1(G^N_t)$ is at least the size of the cluster containing $\mathcal{C}_1(G^N_{t_+})$. Therefore, \begin{equation}
        \inf_{t\geq t_+} \frac{1}{N}C_1(G^N_t)\geq \frac{1}{N}C_1(G^N_{t_+})>\mu_0(S)-\frac{\epsilon}{2}
    \end{equation} and so 
    \begin{equation}
        \sup_{t\geq t_+} \left[\frac{1}{N} \sum_{j\geq 2: C_j(G^N_t)\geq \xi_N} C_j(G^N_t)\right] \leq \sup_{t\geq t_+} \left[ \frac{1}{N} \sum_{j\geq 2} C_j(G^N_t)\right] \le \langle 1, \mu^N_0\rangle -\frac{1}{N}C_1(G^N_{t_+})<\epsilon.
    \end{equation} 
\end{enumerate}
Now, consider the events
\begin{equation}
    A^3_N=\left\{\sup_{t\in [t_-, t_+]}\left[\frac{1}{N}\sum_{j\geq 2: C_j(G^N_{t})\geq \xi_N} C_j(G^N_{t})\right]<\epsilon \right\};\end{equation}
    \begin{equation}
    A_N=A^1_N\cap A^2_N\cap A^3_N.\end{equation} By Lemma \ref{lemma: anomalous clusters 2}, $\mathbb{P}(A^3_N)\rightarrow 1$, and so $\mathbb{P}(A_N) \rightarrow 1$. On the event $A_N$, we have \begin{equation}
        \sup_{t\geq 0}\left[\frac{1}{N}\hspace{0.1cm} \sum_{j\geq 2: C_j(G^N_t)\geq \xi_N} C_j(G^N_t)\right] <\epsilon
    \end{equation} which proves the claimed convergence in probability. 
\end{proof} 
\begin{appendices}

\renewcommand\thesection{\alph{section}}
\section{Weak Formulation of Smoluchowski and Flory Equations}\label{sec: requirements} Throughout, we work with the weak formulation of the Smoluchowski and Flory equations described in the introduction. In order to make sense of every term for a putative solution $(\mu_t)_{t<T}$, we ask for the following conditions to hold. 
\begin{enumerate}[label=\roman{*}).] \item For all Borel sets $A\subset S$, the map $t\mapsto \mu_t(A)$ is measurable; \item For all bounded, measurable functions $f:S\rightarrow \mathbb{R}_+$ of compact support, $\langle f, \mu_0\rangle<\infty$; \item For all compact subsets $S'\subset S$ and all $t<T$, \begin{equation} \int_0^t ds \int_{S'\times S}\overline{K}(x,y)\mu_s(dx)\mu_s(dy)<\infty; \end{equation} \end{enumerate} If these hold, then we say can make sense of the following weak form of the Smoluchowski equation (\ref{eq: E}).\begin{enumerate}[label=\roman*).]\setcounter{enumi}{3} \item For all $f\in C_c(S)$ and $t<T$, \begin{equation}
    \langle f, \mu_t \rangle =  \langle f,\mu_0\rangle +\int_0^t \langle f, L(\mu_s)\rangle ds.
\end{equation} \end{enumerate} 
\section{\textbf{Introduction to Inhomogenous Random Graphs}}\label{sec: IRG}
As discussed in the introduction, the connection between gelation and random graphs is well-understood, and the multiplicative kernel corresponds to the well-known Erd\H{o}s-R\'eyni random graphs \cite{Flo41,ER60,A99}. However, for our purposes, not all particles are equal: particles with large values of $\pi_i(x)$ will undergo more collisions and exhibit quantitatively different behaviour, and so we will need a more sophisticated model of random graphs to accommodate this inhomogeneity. In this section, we will review the theory of \emph{inhomogenous random graphs} developed in \cite{BJR07}, which will play the same r\^ole for our model that the Erd\H{o}s-R\'eyni model does for the multiplicative kernel. We now summarise the key definitions and results from \cite{BJR07} which we use in our work.
\begin{definition} \label{def: Generalised vertex space} A \emph{generalised vertex space} is a triple $\mathcal{V}=(\mathcal{S}, m, (\mathbf{x}_N)_{N\geq 1})$, consisting of \begin{itemize}
    \item A separable metric space $\mathcal{S}$, equipped with its Borel $\sigma$-algebra;
    \item A measure $m$ on $\mathcal{S}$, with $m(\mathcal{S}) \in (0, \infty)$; 
    \item A family of random variables $\mathbf{x}_N=(x^{(N)}_1,...,x^{(N)}_{l^N})$ taking values in $\mathcal{S}$, and of potentially random length $l^N$,  such that the empirical measures \begin{equation}
        m_N=\frac{1}{N}\sum_{k=1}^{l^N} \delta_{x^{(N)}_k} \end{equation} converge to $m$ in the weak topology $\mathcal{F}(C_b(\mathcal{S}))$, in probability.

\end{itemize} In the special case where $m(\mathcal{S})=1$ and $l^N=N$, we say that $(\mathcal{S}, m, (\mathbf{x}_N)_{N\geq 1})$ is a \emph{vertex space}. \end{definition}  
\begin{definition}\label{def: kernel}
    A \emph{kernel} is a symmetric, measurable map $k: \mathcal{S}\times \mathcal{S} \rightarrow [0, \infty).$ We say that $k$ is \emph{irreducible} if, whenever $A\subset S$ is such that $k(x,y)=0$ for all $x\in A$ and $y\in A^\mathrm{c}$, then either $m(A)=0$ or $m(A^\mathrm{c})=0$.
    \end{definition}  \begin{definition}[Inhomogenous random graphs]\label{definition of GN} Given a kernel $k$ and a generalised vertex space $\mathcal{V}$, we let
    $G^N$ be a random graph on $\{1, 2,..,l^N\}$ given as follows. Conditional on the values of $\mathbf{x}_N$, the edge $e=(ij)$ is included with probability \begin{equation}
        p_{ij}=1-\exp\left(-\frac{k(x^{(N)}_i,x^{(N)}_j)}{N} \right)
    \end{equation} and such that the presence of different edges is (conditionally) independent. We write $G^N\sim\mathcal{G}^\mathcal{V}(N,k)$. We also consider the \emph{vertex data}  $\mathbf{x}_N=(x^{(N)}_i)_{i=1}^{l^N}$ to be part of the data of $G^N_t$, so that an equality of random graphs $G=G'$ includes the equality of the vertex data.
\end{definition}\begin{remark} This differs slightly from the main definition in \cite{BJR07}, but is rather one of the alternatives considered in \cite{BJR07}[Remark 2.4] \end{remark}  To treat a general class of kernels $k$, additional regularity is required, to prevent pathologies. This is the content of the following defintion: \begin{definition}[Graphical Kernel] \label{def: graphical kernel}
    We say that a kernel $k$ on a vertex space $\mathcal{V}=(\mathcal{S}, m, (\mathbf{v}_N)_{N\geq 1})$ is \emph{graphical} if the following hold. 
    \begin{enumerate}[label=\roman{*}).]
        \item $k$ is almost everywhere continuous on $\mathcal{S}\times\mathcal{S};$
        \item $k \in L^1(\mathcal{S}\times \mathcal{S}, m \otimes m)$;
        \item If $G^N \sim \mathcal{G}^\mathcal{V}(N,k)$, then
        \begin{equation}
            \frac{1}{N}\mathbb{E}\left[e\left(G^N\right)\right]\rightarrow \frac{1}{2}\int_{\mathcal{S}\times \mathcal{S}} k(v,w)m(dv)m(dw)
        \end{equation} where $e(\cdot)$ denotes the number of edges of the graph.
    \end{enumerate}
\end{definition}  \begin{definition}
 Given a graph $G$, we write $\mathcal{C}_j(G): j=1, 2...$ for the connected components of $G$, in decreasing order of their sizes $\#\mathcal{C}_j(G)=C_j(G)$. If there are fewer than $j$ connected components, then $\mathcal{C}_j(G)=\emptyset$ and $C_j(G)=0$.
\end{definition}The phase transition is given in terms of the convolution operator
\begin{equation}
       (T f)(v)=\int_{\mathbb{R}^d} k(v,w)f(w)m(dw) 
   \end{equation} for functions $f$ such that the right-hand side is defined (i.e., finite or $+\infty$) for $m$-almost all $v$; for instance, if $f\ge 0$ then $Tf$ is well-defined, possibly taking the value $\infty$. We define \begin{equation} \|T\|=\sup\{\|Tf\|_{L^2(m)}: \|f\|_{L^2(m)}\le 1, f\ge 0\}. \end{equation} If $T$ defines a bounded linear map from $L^2(m)$ to itself, then $\|T\|$ is precisely its operator norm in this setting; otherwise, $\|T\|=\infty.$ It is straightforward to show that if $k\in L^2(S\times S, m\otimes m)$ then $T: L^2(m)\rightarrow L^2(m)$ is a Hilbert-Schmidt operator, and that $\|T\|_\text{HS}=\|k\|_{L^2(m)}<\infty$. In this case, $\|T\|$ is certainly finite, and is the operator norm of $T: L^2(m)\rightarrow L^2(m)$. The example of interest to us will fall into this case.
   
   The analysis of the random graphs uses a branching process, similar to that used in the standard analysis of Erd\H{o}s-R\'enyi graphs. Many quantities of the graph can be expressed in terms of the `survival probability' $\rho(k, v)$ when the data $v$ of the first vertex is given. To avoid the unnecessary complication of making this into a precise definition, we use the following characterisation, which is equivalent by \cite[Theorem 6.2]{BJR07}.
   \begin{theorem}
       \label{lemma: survival function}
       Let $k$ be an irreducible kernel on a generalised vertex space $\mathcal{V}$, such that $k \in L^1(\mathcal{S}\times \mathcal{S}, m \times m)$, and such that, for all $x,$ \begin{equation} \label{eq: BJR 51}
           \int_S k(x,y)m(dy)<\infty.
       \end{equation} Consider the nonlinear fixed-point equation 
      \begin{equation} \label{eq: nonlinear fixed point equation2} 
        \forall x \in S,\hspace{1cm}  {\rho}(x)=1-e^{-(T{\rho})(x)}
      \end{equation} where $T$ is the convolution operator (\ref{eq: T}). Then (\ref{eq: nonlinear fixed point equation2}) has a maximal solution $\rho_k(x)=\rho(k;x)$; that is, for any other solution $\tilde{\rho}$, \begin{equation}
          \forall x \in S, \hspace{1cm} \tilde{\rho}(x)\leq \rho(k,x).
      \end{equation} It therefore follows that $0\leq \rho_k(x)\leq 1$ for all $x$. The maximal solution is necessarily unique, and so this uniquely defines $\rho_k.$ Moreover, we have the following dichotomy:
      \begin{enumerate}[label=\roman{*}).]
          \item If $\|T\|\leq 1$, then $\rho(k, x)=0$ for all $x$;
          \item If $\|T\|> 1$, then $\rho(k, x)>0$ for all $m$-almost all $x$.
      \end{enumerate} This can be stated dynamically as follows. Consider the survival function `at time $t$', given by $\rho(tk,x)$, which we will write throughout as $\rho_t(x)$. Then 
      \begin{itemize}
          \item If $t\leq \|T\|^{-1}$, then $\rho_t(x)=0$ for all $x$;
          \item If $t>\|T\|^{-1}$, then $\rho_t(x)>0$ for all $x$.
      \end{itemize}
      
   \end{theorem}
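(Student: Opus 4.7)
The plan is to follow the classical monotone-iteration approach for fixed-point problems associated with positive operators, proving the three assertions (existence/uniqueness of a maximal solution, and the two cases of the dichotomy) in sequence, and then reading off the dynamical statement.

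\textbf{Existence of the maximal solution.} Define $\Phi(\rho)(x):=1-e^{-(T\rho)(x)}$ on nonnegative measurable functions $\rho:\mathcal{S}\to[0,1]$. Since $k\ge 0$, both $T$ and $\Phi$ are monotone. Starting from $\rho^{(0)}\equiv 1$, we have $\Phi(\rho^{(0)})\le 1=\rho^{(0)}$, so the iterates $\rho^{(n+1)}=\Phi(\rho^{(n)})$ decrease pointwise. The hypothesis $\int k(x,\cdot)\,dm<\infty$ and dominated convergence show that the pointwise limit $\rho_\star$ satisfies $\rho_\star=\Phi(\rho_\star)$. Any other nonnegative solution $\tilde\rho\le 1$ satisfies $\tilde\rho=\Phi^n(\tilde\rho)\le\Phi^n(\rho^{(0)})=\rho^{(n)}$ for every $n$, hence $\tilde\rho\le \rho_\star$; this gives both existence and maximality, from which the uniqueness of the maximal solution is immediate.

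\textbf{Subcritical case $\|T\|\le 1$.} Any nonnegative solution $\rho$ satisfies $\rho\le T\rho$, by $1-e^{-u}\le u$. Since $0\le \rho\le 1$ and $m(\mathcal{S})<\infty$, $\rho\in L^2(m)$ and
\begin{equation}
\|\rho\|_{L^2(m)}\le \|T\rho\|_{L^2(m)}\le \|T\|\,\|\rho\|_{L^2(m)}\le \|\rho\|_{L^2(m)}.
\end{equation}
Equality throughout forces $T\rho=\rho$ in $L^2(m)$; substituting into the fixed-point equation yields $\rho=1-e^{-\rho}$ pointwise, whose only nonnegative root is $\rho=0$. Thus $\rho_\star\equiv 0$.

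\textbf{Supercritical case $\|T\|>1$.} The strategy is to construct a nontrivial subsolution $\psi$ with $\psi\le \Phi(\psi)$, iterate $\Phi$ upward from $\psi$ to produce a positive fixed point $\hat\rho$ with $\hat\rho\le \rho_\star$, and then use irreducibility to extend positivity everywhere. By definition of $\|T\|$, there is a bounded nonnegative $f\in L^2(m)$ with $\|Tf\|_{L^2(m)}>\|f\|_{L^2(m)}$; using $1-e^{-u}\ge u-u^2/2$ and truncating $f$ if necessary, one verifies that for $\epsilon>0$ sufficiently small, $\Phi(\epsilon f)\ge \epsilon f$ $m$-almost everywhere on a set of positive measure, the slack $\|Tf\|_{L^2}-\|f\|_{L^2}>0$ absorbing the quadratic error. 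Monotone iteration then produces a fixed point $\hat\rho\not\equiv 0$ with $\hat\rho\le\rho_\star$, so $A=\{\rho_\star>0\}$ has $m(A)>0$. For $x\in A^\mathrm{c}$, $(T\rho_\star)(x)=-\log(1-\rho_\star(x))=0$, which forces $k(x,\cdot)=0$ on $A$ up to $m$-null sets; irreducibility of $k$ then gives $m(A^\mathrm{c})=0$, i.e.\ $\rho_\star>0$ $m$-almost everywhere. The dynamical statement is immediate: the operator associated to $tk$ is $tT$ with norm $t\|T\|$, so the threshold is $t_\mathrm{c}=\|T\|^{-1}$.

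The main obstacle will be the supercritical case. The construction of a strictly positive subsolution from only the inequality $\|T\|>1$ is delicate because the hypothesis permits $\|T\|=+\infty$ and does not furnish a principal eigenfunction (since $T$ need not be compact; we only have $k\in L^1$, not necessarily $L^2$). The truncated $L^2$ argument sketched above is the technical heart of the proof in \cite{BJR07}, and requires a careful balance between the gain coming from $\|Tf\|_{L^2}>\|f\|_{L^2}$ and the quadratic correction $u^2/2$ in the Taylor expansion of $1-e^{-u}$.
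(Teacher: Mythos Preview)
The paper does not prove this theorem; it is stated in Appendix~B as a quoted result from Bollob\'as--Janson--Riordan \cite{BJR07}, with the attribution ``which is equivalent by \cite[Theorem 6.2]{BJR07}''. So there is no paper proof to compare against, only the original source.

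Your existence argument by monotone iteration downward from $\rho^{(0)}\equiv 1$, and your subcritical argument via $\rho\le T\rho$ together with $\|T\|\le 1$, are correct and standard. The supercritical step, however, has a genuine gap. From $\|Tf\|_{L^2}>\|f\|_{L^2}$ you cannot conclude a pointwise inequality $\Phi(\epsilon f)\ge \epsilon f$: the norm inequality is an averaged statement and is perfectly compatible with $Tf<f$ on a set of positive measure, in which case $\epsilon f$ is not a subsolution and monotone iteration from it need not increase. Your phrase ``$\Phi(\epsilon f)\ge \epsilon f$ $m$-almost everywhere on a set of positive measure'' is both ill-formed and, even charitably interpreted, insufficient: restricting $f$ to such a set changes $T$ of the restriction, so you do not get a subsolution that way either. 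The actual proof in \cite{BJR07} does not proceed by constructing an analytic subsolution from the $L^2$ inequality; it interprets $\rho(k,x)$ as the survival probability of a multitype Poisson Galton--Watson process rooted at $x$ with offspring kernel $k\,dm$, and shows this process is supercritical precisely when $\|T\|>1$, using an approximation by finite-type (matrix) branching processes where Perron--Frobenius supplies the needed positive eigenvector. If you want a purely analytic route, you would need either compactness of $T$ (to extract a genuine nonnegative eigenfunction for the leading eigenvalue, which then serves as the germ of a subsolution) or a careful finite-dimensional truncation argument mimicking the probabilistic one; the bare hypothesis $k\in L^1$ does not give you compactness.
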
 We can now state the main results on the phase transition, given by \cite[Theorem 3.1 and Corollary 3.2]{BJR07}.
   \begin{theorem}[Phase Transition]\label{thrm: RG1} Let $k$ be a graphical and irreducible kernel for a vertex space $\mathcal{V}$, with $0<\|T\|< \infty.$ Let $G^N\sim \mathcal{G}^\mathcal{V}(N, k)$ be random graphs on a common probability space. Then we have the convergence \begin{equation}
       \frac{1}{N}C_1(G^N_t)\rightarrow \int_{\mathcal{S}} \rho(tk, v) m(dv) \hspace{1cm} \text{in probability.}
   \end{equation}
   Therefore, if $(G^N_t)_{t\geq 0}$ is a dynamic family of random graphs $
       G^N_t \sim \mathcal{G}^\mathcal{V}(N, tk)$, then we have the following dichotomy:  \begin{enumerate}[label=\roman{*}).]
       \item If $t\leq t_\mathrm{c}=\|T\|^{-1}$, then there is no giant component, in particular \begin{equation}
           \frac{C_1(G^N_t)}{N} \rightarrow 0
       \end{equation} in probability.
       \item If $t>t_\mathrm{c}=\|T\|^{-1}$, then there is a giant component: there exists $c=c(t)>0$ such that
       \begin{equation}
           \mathbb{P}(C_1(G^N_t)>cN)\rightarrow 1.
       \end{equation}
   \end{enumerate}\end{theorem}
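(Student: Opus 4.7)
The plan is to follow the standard exploration-and-branching-process method that is by now classical for inhomogeneous random graphs, namely the strategy developed in \cite{BJR07}. The approximation $\tfrac{1}{N}C_1(G^N_t)\to \int_S \rho(tk,v)m(dv)$ is obtained by comparing the breadth-first exploration of a connected component in $G^N$ started from a uniformly chosen vertex to a multi-type Galton--Watson branching process whose type space is $(S,m)$ and whose offspring distribution, given a parent of type $v$, is Poisson with intensity $k(v,\cdot)m(\cdot)$. The fixed-point equation \eqref{eq: nonlinear fixed point equation2} characterising $\rho$ is precisely the extinction-probability equation for this branching process, and the dichotomy in Theorem \ref{lemma: survival function} becomes the Perron--Frobenius type dichotomy between the subcritical ($\|T\|\le 1$) and supercritical ($\|T\|>1$) cases.

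First I would prove the lower bound on $C_1(G^N_t)$ in the supercritical regime. For each vertex $i$ run a breadth-first exploration up to some depth $h(N)\to\infty$, and using the bound on edge probabilities $p_{ij}\le k(x_i,x_j)/N$ together with the graphical assumption (Definition \ref{def: graphical kernel}) and the convergence of the empirical measure $m_N\to m$ (Definition \ref{def: Generalised vertex space}), couple this exploration to the branching process described above. Standard continuity of branching-process survival probabilities in the kernel yields that the fraction of vertices whose exploration reaches depth $h(N)$ converges in probability to $\int_S \rho(tk,v)m(dv)$. Since any two such ``large'' vertices can be joined by an extra sprinkled layer of edges with probability bounded below by a positive constant (using irreducibility of $k$), a sprinkling / second-moment argument shows that all but $o_{\mathbb P}(N)$ of these vertices lie in a single connected component, giving the lower bound on $C_1(G^N_t)$.

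Next I would establish the matching upper bound, again via the branching-process comparison: the expected number of vertices whose component has size at least $h(N)$ is asymptotically $N\cdot \mathbb P(\text{branching process has }\ge h(N)\text{ descendants})$, which converges to $N\int_S \rho(tk,v)m(dv)$. Combined with uniqueness of the giant component proved above, this forces $\tfrac{1}{N}C_1(G^N_t)\to \int_S \rho(tk,v)m(dv)$ in probability. In the subcritical case $t\le t_\mathrm{c}$, one has $\rho(tk,\cdot)\equiv 0$ by Theorem \ref{lemma: survival function}, so the branching process dies almost surely, exploration arguments bound individual component sizes by $o_{\mathbb P}(N)$, and the claimed convergence $C_1(G^N_t)/N\to 0$ follows.

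The dichotomy in the dynamic formulation is then immediate, since $\rho(tk,\cdot)=0$ for $t\le\|T\|^{-1}$ and is strictly positive $m$-almost everywhere for $t>\|T\|^{-1}$, so $\int_S \rho(tk,v)m(dv)>0$ in the supercritical phase. The main technical obstacle is the sprinkling step proving uniqueness of the giant component: one must combine irreducibility of $k$ with a careful use of the fact that the residual edge probabilities, after conditioning on the exploration, remain roughly independent and of order $1/N$, so that any two ``would-be'' giants get merged with high probability. All of these ingredients are carried out in detail in \cite[Sections 6--9]{BJR07} and we import the result as stated.
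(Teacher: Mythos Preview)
Your proposal is correct and aligned with the paper's treatment: this theorem appears in the appendix as a quoted result from \cite[Theorem 3.1 and Corollary 3.2]{BJR07}, and the paper does not supply its own proof but simply imports it. Your outline of the exploration/branching-process/sprinkling strategy is an accurate summary of the argument in \cite{BJR07}, and your concluding sentence deferring to that reference is precisely what the paper does.
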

   \begin{remark} Following \cite{BJR07}, based on this dichotomy, we say that \begin{enumerate}[label=\roman{*}).]
       \item $G^N$ is \emph{subcritical} if $\|T\|<1;$
       \item $G^N$ is \emph{critical} if $\|T\|=1;$
       \item $G^N$ is \emph{supercritical} if $\|T\|>1.$
   \end{enumerate} \end{remark} The next result characterises $t_\mathrm{g}$ in terms of the point spectrum $\sigma_\mathrm{p}(T)$ as an operator on $L^2(m)$, and appears as \cite[Lemma 5.15]{BJR07} \begin{theorem}[Spectrum of $T$]\label{lemma: spectrum of T} Let $\mathcal{V}$ be a generalised vertex space and $k$ be a graphical, irreducible kernel on $\mathcal{V}$ such that $k\in L^2(S\times S,m\times m)$. Then the operator $T$ defined in (\ref{eq: T}) has an eigenvalue $t_\mathrm{c}^{-1}=\|T\|$ in $L^2(m)$, and the corresponding eigenspace is 1-dimensional. Moreover, there exists an eigenfunction $f$ such that $f>0$ $m$-almost everywhere. \end{theorem}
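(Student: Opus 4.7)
The plan is to exhibit $T$ as a compact, self-adjoint, positivity-preserving operator on $L^2(m)$ and invoke a Krein--Rutman / Perron--Frobenius style argument. Since $k$ is symmetric and $k\in L^2(S\times S, m\otimes m)$, the integral operator $T$ is Hilbert--Schmidt, hence compact and self-adjoint on $L^2(m)$. The spectral theorem for compact self-adjoint operators then gives that the nonzero part of the spectrum consists of eigenvalues accumulating only at $0$, and that the operator norm is attained in modulus, so either $\|T\|$ or $-\|T\|$ is an eigenvalue.

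Next I would produce a nonnegative eigenfunction for $\|T\|$ itself. The pointwise inequality $|Tf|(x) \le T|f|(x)$ holds for every $f\in L^2(m)$ since $k\ge 0$. Applied to a real eigenfunction $f$ associated with an eigenvalue $\lambda$ of maximal modulus $|\lambda|=\|T\|$, this yields $T|f| \ge \|T\||f|$ pointwise a.e.; combined with the operator-norm bound $\|T|f|\|_2 \le \|T\|\,\||f|\|_2$, integrating the pointwise inequality $(T|f|)^2 \ge \|T\|^2 |f|^2$ forces equality $T|f| = \|T\||f|$ $m$-almost everywhere. In particular $\|T\|$ is an eigenvalue with nonnegative eigenfunction $|f|$. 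To upgrade nonnegativity to strict positivity $m$-a.e., set $A = \{|f|=0\}$ and suppose for contradiction that $m(A)>0$: for $x\in A$ the identity $0 = \|T\||f|(x) = \int k(x,y)|f|(y)\,m(dy)$ together with $k,|f|\ge 0$ forces $k(x,y)=0$ for $m$-a.e.\ $y\in A^{\mathrm{c}}$, and since $f\not\equiv 0$ also $m(A^{\mathrm{c}})>0$, contradicting irreducibility of $k$.

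For one-dimensionality of the eigenspace, fix the strictly positive eigenfunction $f$ just constructed and let $h$ be any other real eigenfunction for $\|T\|$. Applying the chain of inequalities of the previous paragraph to $h$ gives $T|h|=\|T\||h|$, and then decomposing $h^{\pm} = (|h|\pm h)/2$ yields $Th^+=\|T\|h^+$ and $Th^-=\|T\|h^-$, so each of $h^+, h^-$ is a nonnegative eigenfunction for $\|T\|$, hence either identically zero or strictly positive $m$-a.e.\ by the argument of the previous paragraph; the identity $h^+ h^-=0$ forces one of them to vanish identically, so $h$ is single-signed. Writing $h = \alpha f + h^{\perp}$ with $h^{\perp}\perp f$ in $L^2(m)$, the component $h^{\perp}$ is again an eigenfunction of $T$ for $\|T\|$, hence single-signed, which together with $\langle h^{\perp}, f\rangle = 0$ and $f>0$ a.e.\ forces $h^{\perp}=0$. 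Hence $h = \alpha f$ and the $\|T\|$-eigenspace is one-dimensional.

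The main obstacle is squeezing the pointwise equality $T|f|=\|T\||f|$ out of the weak inequality $T|f|\ge \|T\||f|$, via the interplay between the pointwise inequality $|Tf|\le T|f|$ and the operator-norm bound $\|T|f|\|_2 \le \|T\|\,\||f|\|_2$, and then deploying the irreducibility hypothesis at exactly the right moment to upgrade nonnegativity to $m$-a.e.\ strict positivity. Once this Perron--Frobenius-type step is in place, the remainder of the argument is a routine combination of the spectral theorem with an orthogonal decomposition against the positive principal eigenfunction.
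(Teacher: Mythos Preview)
Your argument is correct and is the standard Perron--Frobenius/Krein--Rutman argument for compact positive integral operators. Note, however, that the paper does not supply its own proof of this statement: it is quoted from \cite[Lemma 5.15]{BJR07} as a known result in the appendix reviewing inhomogeneous random graphs, so there is no in-paper proof to compare against. Your approach is essentially the argument that underlies the cited result.

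One small point worth tightening: in the strict-positivity step you deduce that for $m$-a.e.\ $x\in A=\{|f|=0\}$ one has $k(x,y)=0$ for $m$-a.e.\ $y\in A^{\mathrm c}$, i.e.\ $k=0$ $m\otimes m$-a.e.\ on $A\times A^{\mathrm c}$, whereas the paper's Definition of irreducibility is phrased with ``for all $x\in A$ and $y\in A^{\mathrm c}$''. In the measure-theoretic setting of \cite{BJR07} the intended meaning is the a.e.\ version, so this is only a cosmetic mismatch; if one insists on the literal statement, one can pass to a suitable $m$-equivalent modification of $A$.
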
  The third result we will recall is \cite[Theorem 3.6]{BJR07}, which considers clusters of a scale $\xi_N\ll N$, excluding the largest cluster. We term these \emph{mesoscopic} clusters.
   \begin{theorem}\label{thrm: RG2} Let $G^N\sim \mathcal{G}^\mathcal{V}(N, k)$, for a (generalised) vertex space $\mathcal{V}$ and an irreducible graphical kernel $k$. Let $\xi_N$ be a sequence with  \begin{equation}
       \xi_N\rightarrow \infty; \hspace{1cm} \frac{\xi_N}{N}\rightarrow 0.
   \end{equation} Then \begin{equation}
       \frac{1}{N}\sum_{j\geq 2: C_j(G^N)\geq \xi_N}C_j(G^N) \rightarrow 0
   \end{equation} in probability. \end{theorem}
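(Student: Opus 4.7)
The plan is to prove this via the standard branching-process exploration of clusters in inhomogeneous random graphs, combined with a duality argument in the supercritical regime to reduce to the subcritical case. I will handle the statement as a single claim regardless of whether $\|T\| \le 1$ or $\|T\| > 1$, but the treatment of the two regimes will differ.

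First, I would reinterpret the sum as a counting of vertices: $\sum_{j \ge 2:\, C_j(G^N) \ge \xi_N} C_j(G^N) = \#\{v \in G^N : v \notin \mathcal{C}_1(G^N),\; |C(v,G^N)| \ge \xi_N\}$, where $C(v,G^N)$ is the component of $v$. For the \emph{subcritical / critical} case $\|T\| \le 1$, the largest component is $o_\mathrm{p}(N)$ by Theorem~\ref{thrm: RG1}, so it suffices to show $N^{-1}\#\{v : |C(v, G^N)| \ge \xi_N\} \to 0$ in probability. The standard exploration argument couples the cluster of a typical vertex $v$ with vertex type $x_v$ to a two-stage multi-type branching process whose offspring distribution at type $y$ is $\mathrm{Poisson}(k(x_v,y)m(dy)/N)$, summed over the $N$ vertices; in the $N \to \infty$ limit this is dominated by a branching process with mean operator $T$. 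The condition $\|T\|\le 1$ makes this process (a.s.) finite, and hence $\mathbb{P}_\text{BP}(|T_{x_v}| \ge \xi_N) \to 0$ pointwise and, by a dominated convergence argument using $k \in L^1(m \otimes m)$ (graphicality), in expectation against $m_N$. Markov's inequality then gives the claim in probability.

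For the \emph{supercritical} case $\|T\| > 1$, I would invoke a duality principle (this is essentially the content of Theorem~\ref{thrm: coupling supercritical and subcritical}, as used in Section~\ref{sec: ECT}): after deleting the giant component $\mathcal{C}_1(G^N)$, the residual graph is, with high probability, distributed as an inhomogeneous random graph $\mathcal{G}^{\widehat{\mathcal{V}}}(N,k)$ on the modified generalised vertex space $\widehat{\mathcal{V}}$ with intensity measure $\widehat m(dx) = (1-\rho(k,x))m(dx)$, whose associated convolution operator $\widehat T$ has norm $\|\widehat T\| \le 1$ (this is precisely the subcriticality of the dual, proved in \cite{BJR07}). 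Then the sum $\sum_{j \ge 2:\, C_j(G^N)\ge \xi_N} C_j(G^N)$ is bounded above by the analogous sum for the subcritical graph on $\widehat{\mathcal{V}}$, to which the argument of the previous paragraph applies.

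The main obstacle I anticipate is making the branching-process domination quantitatively uniform in the vertex type: the bound on $\mathbb{P}(|T_{x}| \ge \xi_N)$ needs to hold with enough uniformity in $x$ that, after averaging against the empirical measures $m_N$ (which are only weakly convergent to $m$ and are supported on random points), the expected count $N^{-1}\mathbb{E}[\#\{v : |C(v)| \ge \xi_N\}]$ still vanishes. The standard trick here, used in \cite{BJR07}, is to truncate the kernel to a bounded, continuous approximation $k_M := k \wedge M$ on a compact subset, apply the branching comparison cleanly to the truncated process, and use the graphicality condition (specifically the edge-density convergence) to control the error from vertices and edges excluded by the truncation. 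A secondary subtlety is that the coupling of the cluster to the branching process must be valid up to sizes $\xi_N$ that diverge with $N$; one verifies this by checking that the total-variation distance between the first $\xi_N$ generations of the two processes is $o(1)$ when $\xi_N = o(N)$, which is a standard explore-and-reveal computation.
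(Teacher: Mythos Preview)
The paper does not prove this theorem: it is stated in Appendix~\ref{sec: IRG} as a result recalled from \cite[Theorem 3.6]{BJR07}, with no proof given. There is therefore no ``paper's own proof'' to compare against.

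Your outline is broadly the strategy that \cite{BJR07} itself uses --- branching-process domination for the subcritical and critical regimes, and duality (their Theorem 12.1, quoted here as Theorem~\ref{thrm: coupling supercritical and subcritical}) to reduce the supercritical case to the subcritical one. As a sketch it is sound, and you correctly identify the two genuine technical difficulties: uniformity of the branching bound over vertex types when integrating against the random empirical measure $m_N$, and validity of the exploration coupling out to the diverging scale $\xi_N$. One point to flag: in the critical case $\|T\|=1$ the branching process is almost surely finite but its tail can be heavy, so ``$\mathbb{P}_{\mathrm{BP}}(|T_x|\ge \xi_N)\to 0$ pointwise, hence in expectation by dominated convergence'' is correct but delicate --- the domination needs to be by an integrable function of $x$, which in \cite{BJR07} comes from the graphicality hypothesis via a truncation argument of exactly the kind you describe. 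Similarly, after duality the residual graph is in fact \emph{strictly} subcritical when $k\in L^2$ (as stated in Theorem~\ref{thrm: coupling supercritical and subcritical}), which makes the tail bounds easier there; you wrote $\|\widehat T\|\le 1$, but the strict inequality is what \cite{BJR07} proves and uses.
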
   We will also make use of the following monotonicity and continuity properties, from \cite[Theorem 6.4]{BJR07}.
   \begin{theorem}\label{thrm: continuity of rho} Let $k$ be a kernel on a vertex space $\mathcal{V}$, and let $\rho_t(\cdot)=\rho(tk,\cdot)$ be the survival function defined above. Then the map $t\mapsto \rho_t(\cdot)$ is monotonically increasing, in the sense that for all $0\leq s \leq t$ and for all $x$, $\rho_s(x)\le \rho_t(x).$ We also have the following continuity property. Let $t_n\rightarrow t$ be a monotone sequence, either increasing or decreasing. Then \begin{equation}
       \rho_{t_n}(x)\rightarrow \rho_t(x) \hspace{1cm} \text{for $m$- almost all }x, \text{ and}
   \end{equation} \begin{equation}
       \int_{\mathcal{S}}\rho_{t_n}(x)m(dx)\rightarrow \int_{\mathcal{S}}\rho_t(x)m(dx).
   \end{equation} \end{theorem}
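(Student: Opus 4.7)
The plan is to work directly with the fixed-point characterisation in Theorem~\ref{lemma: survival function}. Define the iteration operator $\Phi_t(\rho)(x)=1-\exp(-t(T\rho)(x))$ on $[0,1]$-valued measurable functions. This operator is pointwise monotone increasing in $\rho$ (for fixed $t$) and in $t$ (for fixed $\rho\ge 0$). Iterating from $\rho^{(0)}\equiv 1$ yields a decreasing sequence $\rho^{(n)}_t:=\Phi_t^n(1)$ whose pointwise limit is exactly the maximal solution $\rho_t$: indeed any fixed point $\tilde\rho$ satisfies $\tilde\rho\le 1=\rho^{(0)}$, hence $\tilde\rho=\Phi_t^n(\tilde\rho)\le\rho^{(n)}_t$ for all $n$.

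For the monotonicity claim, if $s\le t$ then joint monotonicity gives $\Phi_s^n(1)\le\Phi_t^n(1)$ by induction, and letting $n\to\infty$ yields $\rho_s\le\rho_t$ pointwise.

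For the decreasing monotone limit $t_n\downarrow t$, monotonicity gives $\rho_{t_n}\downarrow\rho^\star\ge\rho_t$ pointwise. The hypothesis \eqref{eq: BJR 51} provides the dominating function $T\mathbf{1}(x)<\infty$ for $m$-almost all $x$, so we may apply dominated convergence in $\rho_{t_n}(x)=1-\exp(-t_n(T\rho_{t_n})(x))$ to conclude $\rho^\star=\Phi_t(\rho^\star)$ at almost every $x$. By maximality $\rho^\star\le\rho_t$, giving equality. The analogous argument for an increasing sequence $t_n\uparrow t$ produces a pointwise limit $\rho^{\star\star}\le\rho_t$ that is again a fixed point of $\Phi_t$, but one needs to exclude the possibility that $\rho^{\star\star}$ is a strictly smaller fixed point. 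The subcritical case $t\le t_\mathrm{c}$ is trivial, since the dichotomy of Theorem~\ref{lemma: survival function} forces $\rho_t\equiv 0$ and hence $0\le\rho^{\star\star}\le\rho_t=0$. In the supercritical case $t>t_\mathrm{c}$, pick $n$ large enough that $t_n>t_\mathrm{c}$; then $\rho_{t_n}>0$ on a set of full $m$-measure, so $\rho^{\star\star}>0$ $m$-almost everywhere. The task then reduces to showing that for an irreducible kernel the equation $\sigma=1-e^{-tT\sigma}$ admits at most one strictly positive solution in $[0,1]$. This follows by the following argument: given two such solutions $\sigma_1\le\sigma_2$, set $\delta=\sigma_2-\sigma_1$; writing $\sigma_2=1-e^{-tT\sigma_2}$ and $\sigma_1=1-e^{-tT\sigma_1}$ and subtracting gives $\delta=e^{-tT\sigma_1}-e^{-tT\sigma_2}$, which with the mean-value estimate $1-e^{-x}\le x$ and the positivity of $\sigma_1$ gives a strict contraction inequality that, combined with irreducibility (propagating positivity throughout $S$ via $T$), forces $\delta\equiv 0$ $m$-a.e. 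This yields $\rho^{\star\star}=\rho_t$ in the supercritical case.

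The integrated statement $\int\rho_{t_n}\,dm\to\int\rho_t\,dm$ then follows immediately from the $m$-almost-everywhere convergence by dominated convergence, with dominating function $\mathbf{1}$. The main technical obstacle is the uniqueness of positive fixed points in the supercritical case; irreducibility and the monotone-in-$\rho$ structure of $\Phi_t$ are what make this work, and without irreducibility the monotone limit along an increasing sequence could collapse onto a smaller fixed point. Once this uniqueness is in hand, the remaining pieces are routine dominated-convergence arguments using \eqref{eq: BJR 51} as the integrability condition that controls $T\rho_{t_n}$.
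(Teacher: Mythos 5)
The paper itself gives no proof of this statement (it is quoted from \cite{BJR07}), so your proposal stands or falls on its own; its skeleton is sound and in fact parallels the argument in \cite{BJR07}: identify $\rho_t$ as the decreasing limit of the iterates $\Phi_t^n(1)$ (the limit is a fixed point by dominated convergence using \eqref{eq: BJR 51}, and dominates every fixed point, hence is the maximal one), deduce monotonicity in $t$, handle decreasing sequences $t_n\downarrow t$ by monotone/dominated convergence plus maximality, and reduce increasing sequences to uniqueness of the a.e.-positive solution, with the subcritical case trivial and $t_n>t_\mathrm{c}$ eventually in the supercritical case. All of those steps are correct as sketched, and the final passage to $\int\rho_{t_n}\,dm\to\int\rho_t\,dm$ is indeed routine since $m$ is finite.

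The genuine gap is the uniqueness step itself. From two ordered positive solutions $\sigma_1\le\sigma_2$ you obtain $\delta\le t(1-\sigma_1)T\delta$ with $\delta=\sigma_2-\sigma_1$, and then assert that this is a ``strict contraction inequality'' which, with irreducibility ``propagating positivity'', forces $\delta\equiv 0$. As stated this does not follow: $f\mapsto t(1-\sigma_1)Tf$ is not known to be a contraction --- $tT$ itself has norm $t\|T\|>1$ in the supercritical case, and showing that the factor $1-\sigma_1$ pulls the relevant (spectral) norm below $1$ is essentially equivalent to the subcriticality of the dual kernel, i.e.\ to the uniqueness you are trying to establish; and positivity propagation would only show $\delta>0$ a.e.\ when $\delta\not\equiv 0$, which is not by itself a contradiction. (In the scalar case one does get $tc(1-\sigma_1)=u/(e^{u}-1)<1$, but that uses the fixed-point relation quantitatively and does not transfer to the operator setting by your pointwise bound alone.) The step can be repaired with one further idea exploiting the symmetry of $k$: since $tT\sigma_i=-\log(1-\sigma_i)=\sigma_i h(\sigma_i)$ with $h(u)=-u^{-1}\log(1-u)$ strictly increasing on $(0,1)$, Tonelli and symmetry give $\int (T\sigma_2)\sigma_1\,dm=\int\sigma_2(T\sigma_1)\,dm$, hence $\int \sigma_1\sigma_2\,[\,h(\sigma_2)-h(\sigma_1)\,]\,dm=0$; the integrand is nonnegative because $\sigma_1\le\sigma_2$, and both factors are positive $m$-a.e.\ (this is where irreducibility, through the dichotomy of Theorem \ref{lemma: survival function}, actually enters), so $\sigma_1=\sigma_2$ a.e. The integrals involved are finite because $k\in L^1(m\otimes m)$. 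Equivalently, one may multiply your inequality by $\sigma_1/(1-\sigma_1)$ and integrate, using $\int\sigma_1T\delta\,dm=\int\delta\,T\sigma_1\,dm$ and $u/(1-u)>-\log(1-u)$. With this ingredient supplied, the rest of your argument goes through.
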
 The final result which we will need is a `duality' result, connecting the supercritical and subcritical behaviours. This is given by \cite[Theorem 12.1]{BJR07}.
   \begin{theorem}\label{thrm: coupling supercritical and subcritical} Let $k$ be an irreducible graphical kernel on a generalised vertex space $\mathcal{V}$, such that $\|T\|>1$. Let $G^N \sim \mathcal{G}^\mathcal{V}(N, k)$, and form $\widetilde{G}^N$ by deleting all vertexes in the largest component $\mathcal{C}_1(G^N).$ Then, defined on the same underlying probability space, there is a generalised vertex space $\widehat{\mathcal{V}}=(\mathcal{S}, \widehat{m}, (\mathbf{y}_N)_{N\geq 1})$ with \begin{equation}
       \widehat{m}(dx)=(1-\rho(k;x))m(dx)
   \end{equation} and such that $\mathbf{y}_N$ is an enumeration of those $x_i$ not belonging to the component $\mathcal{C}_1(G^N)$, and a random graph $\widehat{G}^N \sim \mathcal{G}^{\widehat{\mathcal{V}}}(N,k)$ such that \begin{equation}
       \mathbb{P}(\widetilde{G}^N=\widehat{G}^N)\rightarrow 1.
   \end{equation}
   Furthermore, if $k\in L^2(\mathcal{S}\times \mathcal{S}, m\otimes m)$, then $\widehat{G}^N$ is subcritical.\end{theorem}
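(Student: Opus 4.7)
The plan is to identify the complement of the giant component via an exploration/sprinkling procedure and show that the induced subgraph on the non-giant vertices retains the distribution of an inhomogeneous random graph, but with its vertex measure reweighted by the non-survival probability $1-\rho(k,\cdot)$.

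First, I would reduce to the case of a finite-type kernel: approximate $k$ from below by kernels $k_r$ taking only finitely many positive values, and use the monotonicity and continuity properties of Theorem \ref{thrm: continuity of rho} so that $\rho(k_r,\cdot)\uparrow \rho(k,\cdot)$ in $L^1(m)$. For finite-type kernels the random graph decomposes into a stochastic block model, on which exploration arguments are considerably cleaner, and a sandwich argument using monotone couplings between $\mathcal{G}^{\mathcal{V}}(N,k_r)$ and $\mathcal{G}^{\mathcal{V}}(N,k)$ transfers the result back to the general case.

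Second, I would reveal the giant component by a breadth-first exploration. The fundamental observation is that, uniformly in the starting vertex $v_i$ of type $x_i$, the exploration process is well-approximated by a multi-type Galton--Watson branching process, where a type-$x$ individual produces a Poisson number of type-$y$ offspring with intensity $k(x,y)\,m(dy)$. The survival probability of this branching process is exactly $\rho(k,x_i)$, which follows directly from the fact that $\rho$ is the maximal fixed point of $\rho(x)=1-\exp(-T\rho(x))$. This gives that each vertex $v_i$ is absorbed into the giant with probability $\rho(k,x_i)+o(1)$. A two-round \emph{sprinkling} argument is then used to couple this to the actual graph: split $k$ into two independent rounds $k=(1-\epsilon)k\oplus \epsilon k$, use the second round to ``sprinkle'' the giant into existence, and note that edges in the second round lying entirely within the set of non-swallowed vertices are never queried during the identification of the giant.

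Third, conditional on the identity of the non-giant vertices $\mathbf{y}_N$, the edges of $\widetilde{G}^N$ are independent Bernoulli with the original probabilities $p_{ij}=1-\exp(-k(y_i,y_j)/N)$, which is by construction the conditional distribution of $\widehat{G}^N\sim \mathcal{G}^{\widehat{\mathcal{V}}}(N,k)$. Combined with a law-of-large-numbers estimate showing that the empirical measure $\widehat{m}_N$ of $\mathbf{y}_N$ converges in probability to $(1-\rho(k,\cdot))m$, this yields the claimed coupling $\mathbb{P}(\widetilde{G}^N=\widehat{G}^N)\to 1$. For the final subcriticality claim when $k\in L^2(m\otimes m)$, set $\psi(x)=1-\rho(k,x)$ and use the fixed-point equation to write $-\log\psi(x)=(T(1-\psi))(x)$; since $-\log \psi \ge 1-\psi$, the reweighted operator $\widehat{T}f(x)=\int k(x,y)f(y)(1-\rho(k,y))m(dy)$ satisfies $\widehat{T}\psi \le \psi$ pointwise, and together with the Perron-type characterisation of the leading eigenvalue in Theorem \ref{lemma: spectrum of T}, this gives $\|\widehat{T}\|_{L^2(\widehat{m})}\le 1$.

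The main obstacle will be step two: synchronising the branching-process heuristic with genuine concentration of the set of non-giant vertices. One must show not only that each vertex is swallowed with marginal probability $\rho(k,x_i)+o(1)$, but that the non-absorption events are sufficiently weakly correlated across vertices of each type for $\widehat{m}_N$ to converge to $(1-\rho(k,\cdot))m$ in probability. This is typically the most delicate point and is where the sprinkling step earns its keep: the small subcritical graph $G^N_{(1)}$ can be analysed deterministically via uniform branching-process convergence on finitely many types, and the additional $\epsilon k$-edges drive absorption into the giant in a way that decouples across vertex types up to negligible error.
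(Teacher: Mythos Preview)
The paper does not prove this theorem: it is quoted without proof from \cite[Theorem 12.1]{BJR07} as one of several results from the theory of inhomogeneous random graphs collected in Appendix~\ref{sec: IRG}. There is therefore no ``paper's own proof'' to compare against; your proposal is essentially a sketch of how the result is established in \cite{BJR07}, and the reduction to finite-type kernels, the branching-process approximation of the exploration, and the sprinkling argument are indeed the ingredients used there.

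One point in your sketch does not go through as written: the subcriticality argument. From $\psi=1-\rho$ and the fixed-point equation you correctly obtain $-\log\psi = T\rho$, and the elementary inequality $-\log\psi\ge 1-\psi$ gives $T\rho\ge\rho$. But this is a statement about $T$, not about the reweighted operator $\widehat{T}f(x)=\int k(x,y)f(y)\psi(y)\,m(dy)$, and it does not imply $\widehat{T}\psi\le\psi$ (indeed $\widehat{T}\psi(x)=\int k(x,y)\psi(y)^2\,m(dy)$, which is not controlled by the above). The argument in \cite{BJR07} proceeds differently: one shows that if $\widehat{\rho}$ denotes the survival probability for the dual kernel, then $\rho+\psi\widehat{\rho}$ is again a solution of the original fixed-point equation, hence $\rho+\psi\widehat{\rho}\le\rho$ by maximality, forcing $\widehat{\rho}=0$ and thus $\|\widehat{T}\|\le1$. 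Strict subcriticality then uses the $L^2$ hypothesis via compactness of $\widehat{T}$ to rule out $\|\widehat{T}\|=1$.
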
 We emphasise here that we have defined the equality $\widetilde{G}^N=\widehat{G}^N$ to include equality of the values $x_i$ associated to each vertex; this follows from the construction in \cite{BJR07}, since the values $\mathbf{y}_N$ associated to $\widehat{G}^N$ are exactly those $x_i$ not belonging to the giant component. This generalises the standard `duality result' of Bollob\'as \cite{BB84} for Erd\H{o}s-R\'enyi graphs. 
\end{appendices}



\end{document}